\documentclass[12pt,leqno]{article}

\date{}
\usepackage{amssymb,amsmath,amsthm}
\usepackage[all]{xy}

\usepackage{enumerate}
\usepackage{mathrsfs}
\usepackage{color}

\newcommand{\nc}{\newcommand}
\nc{\on}{\operatorname}

\newtheorem{theorem}{Theorem}[section]
\newtheorem{proposition}[theorem]{Proposition}
\newtheorem{lemma}[theorem]{Lemma}
\newtheorem{corollary}[theorem]{Corollary}

\theoremstyle{definition}

\newtheorem{definition}[theorem]{Definition}

\newtheorem{example}[theorem]{Example}

\newtheorem{remark}[theorem]{Remark}



\nc{\RR}{\mathrm{R}}
\nc{\LL}{\mathrm{L}}

\newcommand{\reg}{{\rm reg}}

\newcommand{\C}{{\mathbb{C}}}
\newcommand{\N}{{\mathbb{N}}}
\newcommand{\R}{{\mathbb{R}}}

\newcommand{\Z}{{\mathbb{Z}}}

\def\phi{{\varphi}}
\def\epsilon{\varepsilon}

\newcommand{\cor}{{\bf k}}


\def\shl{\mathscr{L}}


\newcommand{\symx}{{\mathfrak{X}}}


\renewcommand{\ker}{\operatorname{Ker}}


\newcommand{\rmptt}{{\{\rm pt\}}}

\renewcommand{\to}[1][]{\xrightarrow[]{#1}}

\newcommand{\isoto}[1][]{\xrightarrow[#1]%
{{\raisebox{-.6ex}[0ex][-.6ex]{$\mspace{1mu}\sim\mspace{2mu}$}}}}

\newcommand{\tto}{\rightrightarrows}
\newcommand{\To}{\mathop{\makebox[2em]{\rightarrowfill}}}


\newcommand{\muhom}{\mu hom}
\newcommand{\muHom}[1][]{\mathrm{Hom}^\mu_{\raise1.5ex\hbox to.1em{}#1}}
\newcommand{\Hom}[1][]{\mathrm{Hom}_{\raise1.5ex\hbox to.1em{}#1}}
\newcommand{\RHom}[1][]{\RR\mathrm{Hom}_{\raise1.5ex\hbox to.1em{}#1}}
\newcommand{\Ext}[2][]{\mathrm{Ext}_{\raise1.5ex\hbox to.1em{}#1}^{#2}}
\renewcommand{\hom}[1][]{{\mathscr{H}\mspace{-4mu}om}_{\raise1.5ex\hbox to.1em{}#1}}
\newcommand{\rhom}[1][]{{\RR\mathscr{H}\mspace{-3mu}om}_{\raise1.5ex\hbox to.1em{}#1}}
\newcommand{\rhomc}[1][]
{{\RR\mathscr{H}\mspace{-3mu}om}^*_{\raise1.5ex\hbox to.1em{}#1}}

\newcommand{\ext}[2][]{{\mathscr{E}xt}_{\raise1.5ex\hbox to.1em{}#1}^{#2}}
\newcommand{\Tor}[2][]{\mathrm{Tor}^{\raise1.5ex\hbox to.1em{}#1}_{#2}}
\newcommand{\tens}[1][]{\mathbin{\otimes_{\raise1.5ex\hbox to-.1em{}{#1}}}}
\newcommand{\ltens}[1][]{\mathbin{\overset{\mathrm{L}}\otimes}_{#1}}

\newcommand{\etens}{\mathbin{\boxtimes}}

\newcommand{\Endo}[1][]{\mathrm{End}_{\raise1.5ex\hbox to.1em{}#1}}

\newcommand{\Aut}[1][]{\mathrm{Aut}_{\raise1.5ex\hbox to.1em{}#1}}

\newcommand{\rsect}{\mathrm{R}\Gamma}

\newcommand{\conv}[1][]{\mathop{\circ}\limits_{#1}}


\newcommand{\eim}[1]{{#1}_!}
\newcommand{\roim}[1]{\RR{#1}_*}

\newcommand{\reim}[1]{\RR{#1}_!}
\newcommand{\opb}[1]{#1^{-1}}

\newcommand{\epb}[1]{#1^{!}}


\newcommand{\eqdot}{\mathbin{:=}}
\newcommand{\seteq}{\mathbin{:=}}

\newcommand{\cl}{\colon}
\newcommand{\scbul}{{\,\raise.4ex\hbox{$\scriptscriptstyle\bullet$}\,}}

\newcommand{\tw}[1]{\widetilde{#1}}

\newcommand{\ol}{\overline}
\newcommand{\bl}{\bigl(}
\newcommand{\br}{\bigr)}
\newcommand{\lp}{{\rm(}}
\newcommand{\rp}{{\rm)}}


\newcommand{\ba}{\begin{array}}
\newcommand{\ea}{\end{array}}

\newcommand{\bnum}{\begin{enumerate}[{\rm(i)}]}
\newcommand{\enum}{\end{enumerate}}
\newcommand{\banum}{\begin{enumerate}[{\rm(a)}]}
\newcommand{\eanum}{\end{enumerate}}

\newcommand{\eq}{\begin{eqnarray}}
\newcommand{\eneq}{\end{eqnarray}}
\newcommand{\eqn}{\begin{eqnarray*}}
\newcommand{\eneqn}{\end{eqnarray*}}

\newcommand{\set}[2]{\left\{#1 \mathbin{;} #2 \right\}}

\nc{\Proof}{\begin{proof}}
\nc{\QED}{\end{proof}}


\def\rop{{\rm op}}

\def\dist{{\rm dist}}

\DeclareMathOperator{\id}{id}

\DeclareMathOperator{\supp}{supp}

\newcommand{\Supp}{\on{Supp}}
\newcommand{\Der}[1][]{\mathsf{D}^{#1}}
\newcommand{\Derb}{\Der[\mathrm{b}]}

\newcommand{\Derlb}{\Der[\mathrm{lb}]}

\newcommand{\SSi}{\mathrm{SS}}

\newcommand{\Int}{{\rm Int}}

\newcommand{\dT}{{\dot{T}}}
\newcommand{\dTM}{{\dT}^*M}

\newcommand{\eu}{\mathrm{eu}}
\newcommand{\DD}{\mathrm{D}}


\nc{\eps}{\varepsilon}
\nc{\hs}{\hspace*}
\nc{\nn}{\nonumber}
\nc{\tM}{\widetilde{M}}
\nc{\h}{\mathbf{h}}
\nc{\tf}{\tilde{f}}
\nc{\codim}{\on{codim}}
\nc{\lh}{H}
\nc{\bwr}{\mbox{\large{$\wr$}}}
\nc{\dTi}{\dT^{*,\mathrm{in}}}
\nc{\Cd}{\mathrm{C}}

\numberwithin{equation}{section}

\begin{document}
\title{Sheaf quantization of Hamiltonian isotopies and applications to
non-displaceability problems}
\author{St{\'e}phane Guillermou, Masaki Kashiwara and Pierre Schapira} 
\maketitle
\footnotetext[1]{Mathematics Subject Classification: 53D35, 14F05}
\footnotetext[2]{M.~K.~was partially supported by Grant-in-Aid for 
Scientific research (B) 23340005, Japan Society for the Promotion of Science}

\begin{abstract}
Let $I$ be an open interval containing $0$,
$M$ a real manifold,
$\dTM$ its cotangent bundle with the zero-section
removed and $\Phi=\{\phi_t\}_{t\in I}$ a homogeneous
Hamiltonian isotopy of $\dTM$ with $\phi_0=\id$.
Let $\Lambda\subset \dTM\times \dTM\times T^*I$ be the conic Lagrangian
submanifold associated with $\Phi$. 
We prove the existence and unicity of a sheaf $K$ on $M\times M\times I$ whose
microsupport is contained in the union of $\Lambda$ and the zero-section and
whose restriction to $t=0$ is the constant sheaf on the diagonal of  
$M\times M$.
We give applications of this result to problems of non-displaceability in
contact and symplectic topology. In particular we prove that some
strong Morse inequalities are stable by Hamiltonian isotopies and we
 also give results of non-displaceability for non-negative isotopies
in the contact setting.
\end{abstract}


\section*{Introduction}
The microlocal theory of sheaves has been introduced and systematically
developed in \cite{KS82,KS85,KS90}, the central idea being that of the
microsupport of sheaves. More precisely, consider a real manifold $M$ of class
$\Cd^\infty$ and a commutative unital ring $\cor$ of finite global dimension.
Denote by $\Derb(\cor_M)$ the bounded derived category of sheaves of
$\cor$-modules on $M$. The microsupport $\SSi(F)$ of an object $F$ of
$\Derb(\cor_M)$ is a closed subset of the cotangent bundle $T^*M$, conic for
the action of $\R^+$ on $T^*M$ and co-isotropic. Hence, this theory is
``conic'', that is, it is invariant by the $\R^+$-action and is related to the
homogeneous symplectic structure rather
than the symplectic structure of $T^*M$.

In order to treat non-homogeneous symplectic problems, a classical trick 
is to add a variable which replaces the homogeneity. This is performed for
complex symplectic manifolds in~\cite{PS04} and later in the real case 
by D.~Tamarkin in~\cite{Ta} 
who adapts the microlocal theory of sheaves to the non-homogeneous situation 
and deduces a new and very original
proof of the classical non-displaceability theorem conjectured by
Arnold. 
(Tamarkin's ideas have also been exposed and developed in~\cite{GS11}.)
Note that the use of sheaf theory in symplectic topology
already appeared in~\cite{KO01}, \cite{N06}, \cite{NZ09} and~\cite{O98}.

In this paper, we will also find a new proof of the non-displaceability 
theorem and other related
results, still remaining in the homogeneous symplectic framework, 
which makes the use of sheaf theory much easier. 
In other words, instead of adapting
microlocal sheaf theory to treat non-homogeneous geometrical problems, 
we translate these geometrical problems to homogeneous ones and apply 
the classical microlocal sheaf theory. 
Note that the converse is not always possible: there are 
interesting geometrical problems, for example those related to the notion of 
non-negative Hamiltonian isotopies,
which make sense in the homogeneous case
and which have no counterpart in the purely symplectic case.

Our main tool is, as seen in the
title of this paper, a quantization of Hamiltonian isotopies in the category
of sheaves. More precisely, we consider a homogeneous 
Hamiltonian isotopy
$\Phi=\{\phi_t\}_{t\in I}$ of $\dTM$ (the complementary of the
zero-section of $T^*M$) defined on an open interval $I$ of
$\R$ containing $0$ such that $\phi_0=\id$. 
Denoting by $\Lambda\subset \dTM\times \dTM\times T^*I$ the conic Lagrangian
submanifold associated with $\Phi$, we prove that there exists a unique 
$K\in\Der(\cor_{M\times M\times I})$ whose
microsupport is contained in the union of $\Lambda$ and the zero-section of
$T^*(M\times M\times I)$ and
whose restriction to $t=0$ is the constant sheaf on the diagonal of  $M\times M$.

We give a few applications of this result to problems of  non-displaceability
in symplectic and contact geometry.
The classical non-displaceability conjecture of Arnold 
says that, on the
cotangent bundle to a compact manifold $M$, the image of the
zero-section of $T^*M$ by a Hamiltonian isotopy always intersects 
the zero-section. 
This conjecture (and its refinements, using Morse inequalities)
have been proved by Chaperon~\cite{Cha83} who treated the case of the torus using the
methods of Conley and Zehnder~\cite{CZ83}, 
then by Hofer~\cite{Ho85} and  Laudenbach and Sikorav\cite{LS85}.
For related results in the contact case, let us quote in particular 
Chaperon~\cite{Cha95}, Chekanov~\cite{Ck96} and  Ferrand~\cite{Fe97}.

In this paper we recover the non-displaceability result in the symplectic case
as well as its refinement using Morse inequalities. Indeed, we deduce these
results from their homogeneous counterparts which are easy corollaries of our
theorem of quantization of homogeneous Hamiltonian isotopies. We also study
non-negative Hamiltonian isotopies (which make sense only in the contact
setting): we prove that given two compact connected submanifolds $X$ and $Y$
in a connected non-compact manifold $M$ and a  non-negative Hamiltonian isotopy
$\Phi=\{\phi_t\}_{t\in I}$ such that $\phi_{t_0}$ interchanges the conormal bundle
to $X$ with that of $Y$, then $X=Y$ and $\phi_t$ induces the identity on the conormal bundle
to $X$ for $t\in[0,t_0]$. The first part of these results
has already been  obtained when $X$ and $Y$ are points in~\cite{CN10,CFP10}.

\medskip
\noindent
{\bf Acknowledgments}
We have been very much stimulated by the interest of 
Claude Viterbo for the applications of sheaf theory to symplectic topology
and it is a pleasure to thank him here.

We are also extremely grateful to Emmanuel Ferrand who pointed out to us the
crucial fact that  the Arnold 
non-displaceability problem could be treated
through homogeneous symplectic methods.

\section{Microlocal theory of sheaves, after~\cite{KS90}}
\label{section:mts}
In this section, we recall some definitions and results from
\cite{KS90}, following its notations with the exception of  slight
modifications. We consider a real manifold $M$ of class $\Cd^\infty$.

\subsection{Some geometrical notions (\cite[\S 4.2,~\S 6.2]{KS90})}
In this paper we say that a $\Cd^1$-map $f\cl M\to N$ is {\em smooth} if its
differential
$d_xf\cl T_xM\to T_{f(x)}N$ is surjective for any $x\in M$.
For a locally closed subset $A$ of $M$, one denotes by $\Int(A)$
its interior and by $\overline{A}$ its closure.
One denotes by $\Delta_M$ or simply $\Delta$ the diagonal of $M\times M$. 

One denotes by $\tau_M\cl TM\to M$ and 
$\pi_M\cl T^*M\to M$ the tangent and
cotangent bundles to $M$. If $L\subset M$ is a submanifold, one
denotes by $T_LM$ its normal bundle and $T^*_LM$ its conormal
bundle. They are defined by the exact sequences 
\eqn
&&0\to TL\to L\times_MTM\to T_LM\to0,\\
&&0\to T^*_LM\to L\times_MT^*M\to T^*L\to0.
\eneqn
One sometimes identifies $M$ with the zero-section $T^*_MM$ of
$T^*M$. 
One sets $\dTM\eqdot T^*M\setminus T^*_MM$ and one denotes by
$\dot\pi_M\cl\dTM\to M$ the projection.

Let $f\cl M\to N$ be  a morphism of  real manifolds. 
To $f$ are associated the tangent morphisms
\eq\label{diag:tgmor}
&&
\ba{c}
\xymatrix{
TM\ar[d]^-{\tau_M}\ar[r]^-{f'}&M\times_NTN\ar[d]\ar[r]^-{f_\tau}&TN\ar[d]^-{\tau_N}\\
M\ar@{=}[r]&M\ar[r]^-f&N.
}\ea\eneq
By duality, we deduce the diagram: 
\eq\label{diag:cotgmor}
&&\ba{c}\xymatrix{
T^*M\ar[d]^-{\pi_M}&M\times_NT^*N\ar[d]\ar[l]_-{f_d}\ar[r]^-{f_\pi}
                                       & T^*N\ar[d]^-{\pi_N}\\
M\ar@{=}[r]&M\ar[r]^-f&N.
}\ea\eneq
One sets 
\eqn
&&T^*_MN\eqdot\ker f_d= \opb{f_d}(T^*_MM)\subset M\times_NT^*N  . 
\eneqn
Note that, denoting by $\Gamma_f$ the graph of $f$ in $M\times N$, 
the projection $T^*(M\times N)\to M\times T^*N$ identifies 
$T^*_{\Gamma_f}(M\times N)$ and $M\times_NT^*N$.

Now consider the homogeneous symplectic manifold $T^*M$: it is endowed
with the Liouville $1$-form given in a local homogeneous symplectic 
coordinate system $(x;\xi)$ on $T^*M$ by
\eqn
&&\alpha_M=\langle\xi,dx\rangle.
\eneqn
The antipodal map $a_M$ is defined by:
\eq\label{eq:antipodal}
&&a_M\cl T^*M\to T^*M,\quad(x;\xi)\mapsto(x;-\xi).
\eneq
If $A$ is a subset of $T^*M$, we denote by $A^a$ instead of $a_M(A)$ 
its image by the antipodal map.

We shall use the Hamiltonian isomorphism 
$H\cl T^*(T^*M)\isoto T(T^*M)$ given in a local symplectic coordinate system 
$(x;\xi)$ by
\eqn
&&H(\langle\lambda,dx\rangle +\langle \mu,d\xi\rangle)
=-\langle\lambda,\partial_\xi\rangle +\langle \mu,\partial_x\rangle.
\eneqn

\subsection{Microsupport (\cite[\S 5.1,~\S 6.5]{KS90})}
We consider a commutative unital ring $\cor$ of finite global dimension 
({\em e.g.} $\cor=\Z$).
(We shall assume that $\cor$ is a field
when we use Morse inequalities in section~\ref{sec:applications}.)
We denote by $\Der(\cor_M)$ (resp.\ $\Derb(\cor_M)$) 
the derived category (resp.\  bounded derived category) of sheaves
of $\cor$-modules on $M$. 
We denote by $\omega_M\in\Derb(\cor_M)$ the dualizing complex on $M$.
Recall that $\omega_M$ is isomorphic to the orientation sheaf shifted by the
dimension. 
We shall recall the definition 
of the microsupport (or singular support) $\SSi(F)$ of a sheaf $F$
(\cite[Def.~5.1.2]{KS90}).

\begin{definition}
Let $F\in \Derb(\cor_M)$ and let $p\in T^*M$. 
One says that $p\notin\SSi(F)$ if there exists an open neighborhood
$U$ of $p$ such that for any $x_0\in M$ and any
real $\Cd^1$-function $\phi$ on $M$ defined in a neighborhood of $x_0$ 
with $(x_0;d\phi(x_0))\in U$, one has
$\rsect_{\{x;\phi(x)\geq\phi(x_0)\}} (F)_{x_0}\simeq0$.
\end{definition}
In other words, $p\notin\SSi(F)$ if the sheaf $F$ has no cohomology 
supported by ``half-spaces'' whose conormals are contained in a 
neighborhood of $p$. 
\begin{itemize}
\item
By its construction, the microsupport is $\R^+$-conic, that is,
invariant by the action of  $\R^+$ on $T^*M$. 
\item
$\SSi(F)\cap T^*_MM=\pi_M(\SSi(F))=\Supp(F)$.
\item
The microsupport satisfies the triangular inequality:
if $F_1\to F_2\to F_3\to[{\;+1\;}]$ is a
distinguished triangle in  $\Derb(\cor_M)$, then 
$\SSi(F_i)\subset\SSi(F_j)\cup\SSi(F_k)$ for all $i,j,k\in\{1,2,3\}$
with $j\not=k$. 
\end{itemize}
In the sequel, for a locally closed subset $Z$ of $M$, we denote by
$\cor_Z$ the constant sheaf with stalk $\cor$ on $Z$, extended by $0$
on $M\setminus Z$.

\begin{example}\label{ex:microsupp}
(i) If $F$ is a non-zero local system on $M$ and $M$ is connected, then $\SSi(F)=T^*_MM$.

\noindent
(ii) If $N$ is a closed submanifold of $M$ and $F=\cor_N$, then 
$\SSi(F)=T^*_NM$, the conormal bundle to $N$ in $M$.

\noindent
(iii) Let $\phi$ be a $\Cd^1$-function such that $d\phi(x)\not=0$ 
whenever $\phi(x)=0$.
Let $U=\{x\in M;\phi(x)>0\}$ and let $Z=\{x\in M;\phi(x)\geq0\}$. 
Then 
\eqn
&&\SSi(\cor_U)=U\times_MT^*_MM\cup\{(x;\lambda d\phi(x));\phi(x)=0,\lambda\leq0\},\\
&&\SSi(\cor_Z)=Z\times_MT^*_MM\cup\{(x;\lambda d\phi(x));\phi(x)=0,\lambda\geq0\}.
\eneqn
\end{example}
For a precise definition of being involutive (or co-isotropic),
we refer to~\cite[Def.~6.5.1]{KS90}
\begin{theorem}
Let $F\in \Derb(\cor_M)$. Then its microsupport 
$\SSi(F)$ is involutive.
\end{theorem}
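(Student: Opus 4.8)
The statement is local on $T^*M$, so the plan is to fix a point $p_0\in\SSi(F)$ and unwind the notion of involutivity recalled in \cite[Def.~6.5.1]{KS90}: it is enough to show that whenever $f$ is a $\Cd^2$-function defined near $p_0$ in $T^*M$ with $f(p_0)=0$ and $df(p_0)$ in the polar of the Whitney normal cone $C_{p_0}(\SSi(F))$ — informally, $f$ has a one-sided contact with $\SSi(F)$ at $p_0$ — then the Hamiltonian vector $H(df(p_0))$ again lies in $C_{p_0}(\SSi(F))$; equivalently, the Hamiltonian flow generated by such an $f$ cannot leave $\SSi(F)$ instantly. I would argue by contradiction, supposing $H(df(p_0))\notin C_{p_0}(\SSi(F))$.

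The engine is the microlocal Morse / non-characteristic deformation machinery of \cite{KS90}: if $\psi\cl M\to\R$ is $\Cd^1$, if $a<b$, if $\psi$ is proper on $\{a\le\psi<b\}\cap\Supp(F)$, and if $\SSi(F)$ contains no covector $(x;\lambda\,d\psi(x))$ with $\lambda>0$ and $a\le\psi(x)<b$, then $\rsect(\{\psi<b\};F)\isoto\rsect(\{\psi<a\};F)$, together with the analogous statement for a deforming family $\{\psi_s\}$. First I would invoke the microlocal cut-off lemma of \cite{KS90} to replace $F$ microlocally near $p_0$ by a sheaf whose microsupport sits in an arbitrarily small conic neighbourhood of $\R^+p_0$, without changing whether $p_0\in\SSi(F)$. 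Then, working in local coordinates with $p_0=(x_0;\xi_0)$ and $\xi_0\ne0$, I would construct a family $\{\psi_s\}_{s\in[0,s_1]}$ of $\Cd^2$-functions near $x_0$ whose Lagrangian graphs $\{(x;d\psi_s(x))\}$ pass through $p_0$ at $s=0$ and, for $s>0$, have been pushed off $\SSi(F)$ near $p_0$ in the direction $-H(df(p_0))$ — this uses the one-sidedness of $\SSi(F)$ along $\{f=0\}$ on the one side and the assumption $H(df(p_0))\notin C_{p_0}(\SSi(F))$ on the other. The deformation result then yields $\rsect_{\{\psi_0\ge\psi_0(x_0)\}}(F)_{x_0}\simeq\rsect_{\{\psi_s\ge\psi_s(x_0)\}}(F)_{x_0}$ for $s$ admissible, while the right-hand side vanishes for $s>0$ (the graph misses $\SSi(F)$); letting $s\to0^+$ contradicts $p_0\in\SSi(F)$.

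The hard part will be the geometry of this deformation, not the homological input. Because $\SSi(F)$ is merely a closed set, every tangency or one-sidedness claim has to be expressed through Whitney normal cones and approximated by smooth hypersurfaces, and the family $\{\psi_s\}$ must be built on $M$ — not on $T^*M$ — so that its graphs realise the prescribed motion of $p_0$ relative to $\SSi(F)$ while keeping the non-characteristic hypothesis valid for every $s>0$; controlling the limit $s\to0^+$, where the deformation becomes characteristic exactly at $p_0$, is the genuinely delicate step. A secondary, more mechanical, task is to organise the interplay with the microlocal cut-off and with the functorial microsupport estimates of \cite[\S6]{KS90} — the behaviour of $\SSi$ under $\rsect_Z$, $\otimes$ and $\opb{f}$ — needed to translate between statements about $F$ on $M$ and statements about $\SSi(F)$ in $T^*M$. (In the subanalytic or constructible setting one could alternatively deduce involutivity from Gabber's theorem on characteristic varieties via Riemann–Hilbert, but the sheaf-theoretic route above is intrinsic and works in general.)
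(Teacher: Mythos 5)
The paper does not prove this theorem: it is quoted from \cite[Th.~6.5.4]{KS90}, and the text explicitly points to \cite[Def.~6.5.1]{KS90} for what ``involutive'' means. Your reduction in the first paragraph unwinds a different (and wrong) definition, and this is fatal. In \cite{KS90}, a closed set $S\subset T^*M$ is co-isotropic at $p$ when $H(\theta)\in C_p(S)$ holds for those $\theta$ that annihilate the \emph{symmetric} normal cone $C_p(S,S)$ (limits of rescaled differences of pairs of points of $S$), i.e.\ for $\theta$ with $C_p(S,S)\subset\set{v}{\langle v,\theta\rangle=0}$. You instead demand the conclusion for every $\theta=df(p_0)$ in the polar of the tangent cone $C_{p_0}(S)$, i.e.\ for every $f$ with one-sided contact. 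That is a strictly stronger statement, and it is false. Take $M=\R$ and $F=\cor_{\{x\ge 0\}}$, so that $\SSi(F)=\{(x;0)\,;\,x\ge0\}\cup\{(0;\xi)\,;\,\xi\ge0\}$, and take $p_0=(0;0)$, $f=x+\xi$. Then $f(p_0)=0$ and $f\ge0$ on $\SSi(F)$, but $H(df(p_0))=\partial_x-\partial_\xi$ (and likewise its opposite) does not lie in $C_{p_0}(\SSi(F))=\SSi(F)$. This $\SSi(F)$ \emph{is} involutive in the sense of \cite{KS90}: at the corner, $C_{p_0}(\SSi(F),\SSi(F))$ spans $T_{p_0}(T^*\R)$, so the condition is vacuous there. (The phenomenon is not an artifact of the zero-section: $\cor_{\{x_1\ge0,\,x_2\ge0\}}$ on $\R^2$ at $p_0=(0,0;1,0)$ with $f=x_2+\xi_2$ gives the same failure inside $\dTM$.) Since you are reducing to a false statement, the deformation you sketch cannot be carried out, and indeed the step you defer as ``the hard part'' — constructing the family $\psi_s$ and controlling $s\to0^+$ — is exactly where it must break down.

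For comparison, the proof in \cite[\S6.5]{KS90} takes a different route from the one you outline: it does not deform Morse functions on $M$ but works on $T^*M$ with the sheaf $\muhom(F,F)$. The inputs are the estimate $\SSi(\muhom(F,F))\subset C(\SSi(F),\SSi(F))$ (transported through the Hamiltonian isomorphism $H$), the fact that $p\in\Supp(\muhom(F,F))\subset\SSi(F)$ whenever $p\in\SSi(F)$ (because $\id_F$ has a non-zero germ in $H^0(\muhom(F,F))_p$), and a propagation-of-support lemma on the manifold $T^*M$: the hypothesis $C_p(\SSi(F),\SSi(F))\subset\set{v}{\langle v,\theta\rangle=0}$ makes $\muhom(F,F)$ non-characteristic in the direction $H(\theta)$ near $p$, so its support — hence $\SSi(F)$ — must contain $H(\theta)$ in its normal cone at $p$. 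If you want to salvage a direct argument on $M$, you must first restate the goal with the correct cone $C_p(\SSi(F),\SSi(F))$, handle zero-section points (your sketch silently assumes $\xi_0\neq0$, though the theorem covers $T^*_MM$), and then confront the fact that test functions on $M$ probe $\SSi(F)$ only through Lagrangian graphs — precisely the difficulty that the passage to $\muhom$ on $T^*M$ is designed to circumvent.
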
 

\subsection{Localization (\cite[\S 6.1]{KS90})}
Now let $A$ be a subset of $T^*M$ and let $Z=T^*M\setminus A$. The full
subcategory $\Derb_Z(\cor_M)$ of $\Derb(\cor_M)$ consisting of sheaves $F$
such that $\SSi(F)\subset Z$ is triangulated. One sets
\eqn
&& \Derb(\cor_M;A)\eqdot \Derb(\cor_M)/\Derb_Z(\cor_M),
\eneqn
the localization of $\Derb(\cor_M)$ by $\Derb_Z(\cor_M)$. Hence, the
objects of $\Derb(\cor_M;A)$ are those of $\Derb(\cor_M)$ but a
morphism $u\cl F_1\to F_2$ in $\Derb(\cor_M)$ becomes an isomorphism
in $\Derb(\cor_M;A)$ if, after embedding  this morphism in a distinguished
triangle $F_1\to F_2\to F_3\to[+1]$, one has $\SSi(F_3)\cap A=\emptyset$. 

When $A=\{p\}$ for some $p\in T^*M$, one simply writes 
$\Derb(\cor_M;p)$ instead of $\Derb(\cor_M;\{p\})$.

\subsection{Functorial operations (\cite[\S 5.4]{KS90})}
Let $M$ and $N$ be two real manifolds. We denote by $q_i$ ($i=1,2$)
the $i$-th projection defined on $M\times N$ and by $p_i$ ($i=1,2$)
the $i$-th projection defined on $T^*(M\times N)\simeq T^*M\times T^*N$.

\begin{definition}
Let $f\cl M\to N$ be a morphism of manifolds and let 
$\Lambda\subset T^*N$ be a closed $\R^+$-conic subset. One says that 
$f$ is non-characteristic for $\Lambda$ \lp or else, $\Lambda$ 
is non-characteristic for $f$\rp\, if
\eqn
&&\opb{f_\pi}(\Lambda)\cap T^*_MN\subset M\times_NT^*_NN.
\eneqn
If $\Lambda$ is a closed $\R^+$-conic subset of $\dT^*N$, we say that
$\Lambda$ is non-characteristic for $f$ if so is $\Lambda\cup T^*_NN$.
\end{definition}
A morphism $f\cl M\to N$ is non-characteristic for a closed
$\R^+$-conic subset $\Lambda$ if and
only if $f_d\cl M\times_NT^*N\to T^*M$ is proper on $\opb{f_\pi}(\Lambda)$
and in this case 
$f_d\opb{f_\pi}(\Lambda)$ is closed and $\R^+$-conic in $T^*M$.

\begin{theorem}\label{th:opboim}\rm{(See \cite[\S 5.4]{KS90}.)}
Let $f\cl M\to N$ be a morphism of manifolds, let
$F\in\Derb(\cor_M)$ and let $G\in\Derb(\cor_N)$.
\bnum
\item
Assume that $f$ is proper on $\Supp(F)$. Then
  $\SSi(\reim{f}F)\subset f_\pi\opb{f_d}\SSi(F)$.
\item
Assume that $f$ is non-characteristic for $\SSi(G)$. Then
$\SSi(\opb{f}G)\subset f_d\opb{f_\pi}\SSi(G)$.
\item
Assume that $f$ is smooth. Then $\SSi(F)\subset M\times_NT^*N$
if and only if, for any $j\in\Z$, the sheaves $H^j(F)$
are  locally constant on the fibers of $f$.
\enum
\end{theorem}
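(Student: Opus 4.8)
The three assertions are local on $M$ and on $N$, so we may argue in local coordinates, and the geometric operations $g\mapsto g_d,\ g\mapsto g_\pi$ together with the non-characteristic condition are compatible with composition of morphisms. Hence the plan is to factor an arbitrary $f\colon M\rightarrow N$ through its graph,
\[
M\xrightarrow{\ i_f\ }M\times N\xrightarrow{\ q_2\ }N,\qquad i_f(x)=(x,f(x)),
\]
so that it suffices to treat (a) the case of a closed embedding and (b) the case of a projection $q_2\colon M\times N\rightarrow N$ (locally, an arbitrary submersion). One checks in passing that ``$f$ proper on $\Supp F$'' becomes ``$q_2$ proper on $\Supp(\reim{i_f}F)$'' and that the non-characteristic hypothesis transfers, so the reduction is legitimate; assembling the general statement afterwards is just the bookkeeping of the set-theoretic identities relating $(i_f)_d,(i_f)_\pi,(q_2)_d,(q_2)_\pi$ to $f_d,f_\pi$.

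\smallskip
\noindent\textbf{Case (a): $f\colon M\hookrightarrow N$ a closed embedding.} Pick local coordinates $(x',x'')$ on $N$ with $M=\{x'=0\}$, making $f_d$ and $f_\pi$ explicit. For (i) we argue straight from the definition of $\SSi$: if $p\notin f_\pi\opb{f_d}\SSi(F)$ and $\phi$ is a test function near $y_0\in N$ with $(y_0;d\phi(y_0))$ close to $p$, then $\rsect_{\{\phi\ge\phi(y_0)\}}(\reim{f}F)_{y_0}\simeq0$ --- trivially when $y_0\notin M$, and when $y_0\in M$ because $\rsect_{\{\phi\ge\phi(y_0)\}}(\reim{f}F)_{y_0}\simeq\rsect_{\{\phi|_M\ge\phi(y_0)\}}(F)_{y_0}$ and the restricted test function $\phi|_M$ falls under the hypothesis. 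For (ii), i.e.\ $\SSi(\opb{f}G)\subset f_d\opb{f_\pi}\SSi(G)$, the non-characteristic hypothesis is exactly what lets one go the other way: given a test function $\phi$ on $M$ with $(x_0;d\phi(x_0))\notin f_d\opb{f_\pi}\SSi(G)$, extend $\phi$ to a test function $\psi$ on $N$ with $d\psi|_M=d\phi$ that is strictly convex in the transverse variable $x'$; non-characteristicity together with this transverse convexity guarantees that the conormals met while shrinking $\{\psi\ge\psi(x_0)\}$ onto $\{\phi\ge\phi(x_0)\}$ never lie in $\SSi(G)$, so the non-characteristic deformation lemma \cite[Prop.~2.7.2]{KS90} gives $\rsect_{\{\phi\ge\phi(x_0)\}}(\opb{f}G)_{x_0}\simeq\rsect_{\{\psi\ge\psi(x_0)\}}(G)_{x_0}\simeq0$.

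\smallskip
\noindent\textbf{Case (b): $f$ a projection (locally, a submersion).} I would prove (iii) first. For the ``if'' part, if the $H^j(F)$ are locally constant on the fibers of $f$ then locally each is a pull-back from $N$, whose microsupport is contained in $M\times_N T^*N$ by Example~\ref{ex:microsupp}(i); passing from the $H^j(F)$ to $F$ is a d\'evissage on the finitely many nonzero cohomology degrees using the triangular inequality for $\SSi$. For the ``only if'' part, if $\SSi(F)\subset M\times_N T^*N$, then testing only with functions of the fiber coordinates the definition of $\SSi$ forces $\rsect$ of $F$ over fiberwise half-spaces to be acyclic, and a Morse-type continuation along each fiber shows that the stalks of $H^j(F)$ do not jump in the fiber direction, so $H^j(F)$ is locally constant there. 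Statement (ii) for $f$ then follows from (iii) --- $\opb{f}G$ always has cohomology locally constant on the fibers --- refined by a microlocal localization remark: a point of $M\times_N T^*N$ lying over $T^*N\setminus\SSi(G)$ cannot belong to $\SSi(\opb{f}G)$, since there $G$ is microlocally zero and this persists under $\opb{f}$. Finally (i) for $f$: after the graph factorization the fiber of $q_2$ is $M$ itself, so, decomposing $M$ and using base change, one reduces to the model projection $q\colon N\times\R\rightarrow N$, where $\reim{q}$ can be written out explicitly and properness of $q$ on $\Supp F$ yields the compatibility of sections over shrinking tubes that gives $\SSi(\reim{q}F)\subset q_\pi\opb{q_d}\SSi(F)$.

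\smallskip
The main obstacle is (ii) in Case (a). Parts (i) and the ``if'' half of (iii) are essentially formal consequences of the definition of microsupport and the triangular inequality, and (iii)'s ``only if'' half together with the pushforward estimate (i) in Case (b) are concrete but routine; by contrast the non-characteristic inverse-image estimate genuinely needs the deformation lemma, and all the real work is in choosing the transverse-convex extension $\psi$ and checking that no conormal encountered during the homotopy lands in $\SSi(G)$.
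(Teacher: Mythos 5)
The paper does not actually prove this statement: it is quoted directly from \cite[\S 5.4]{KS90} (Propositions 5.4.4, 5.4.5 and 5.4.13 there), so there is no in-paper argument to compare against; the relevant comparison is with the proof in that reference, and your strategy --- factor $f$ through its graph and treat closed embeddings and submersions separately --- is precisely the one used there. The outline is sound: the reduction bookkeeping is legitimate, the closed-embedding half of (i) really is a direct consequence of the definition (using that $f_\pi\opb{f_d}\SSi(F)$ is closed and that $\rsect_{\{\phi\ge c\}}(\eim{f}F)_{y_0}\simeq\rsect_{\{\phi|_M\ge c\}}(F)_{y_0}$), and the ``if'' half of (iii) correctly goes through the local identification $F\simeq\opb{p}\roim{p}F$ on product charts. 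Two caveats about where the weight actually sits, though. First, the projection case of (i) is not an ``explicit writing out of $\reim{q}$'': after proper base change it is again a propagation statement along the fibres, and it needs the non-characteristic deformation lemma \cite[Prop.~2.7.2]{KS90} (equivalently the cutoff results of \S 5.2 of that book) just as much as (ii) does --- this is arguably the deepest part of the whole theorem, so classifying it as ``routine'' misplaces the difficulty. Second, in Case (a)(ii) you have correctly located the crux --- constructing the transversally convex extension $\psi$ and verifying that the conormals swept out while contracting $\{\psi\ge\psi(x_0)\}$ onto $\{\phi\ge\phi(x_0)\}\cap M$ avoid $\SSi(G)$ --- but that verification is the entire content of \cite[Prop.~5.4.13]{KS90} and is announced rather than carried out. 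In short: right approach, correct skeleton, with the two genuinely hard deformation arguments left as acknowledged placeholders.
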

There exist estimates of the microsupport for characteristic inverse images
and (in some special situations) for non-proper direct images but we shall not
use them here.

\begin{corollary}\label{cor:opbeqv}
Let $I$ be a contractible manifold and let $p\cl M\times I\to M$ be the projection.
If $F\in\Derb(\cor_{M\times I})$ satisfies  $\SSi(F)\subset T^*M\times T^*_II$,
then $F\simeq\opb{p}\roim{p}F$.
\end{corollary}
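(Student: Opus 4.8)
The plan is to prove this by exhibiting the natural morphism $\opb{p}\roim{p}F \to F$ and showing it is an isomorphism using the microsupport hypothesis together with Theorem~\ref{th:opboim}(iii). First I would recall that, $I$ being contractible, the projection $p$ is a smooth submersion with contractible fibers, so the adjunction unit $\opb{p}\roim{p}F \to F$ is available and it suffices to check it induces an isomorphism on all stalks. Fix a point $x\in M$ and write $i_x\cl\{x\}\times I\into M\times I$ for the inclusion of the fiber. The key reduction is that $(\opb{p}\roim{p}F)_{(x,t)}$ computes $\rsect(I;\opb{i_x}F\,\text{-ish data})$, i.e.\ essentially $\rsect(\{x\}\times I; F|_{\{x\}\times I})$ independently of $t$, while $F_{(x,t)}$ is the stalk; so the claim reduces to showing that $F|_{\{x\}\times I}$, equivalently $\opb{i_x}F$, is a \emph{constant} object of $\Derb(\cor_I)$, and more precisely that the restriction maps to stalks are isomorphisms.

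The heart of the matter is the following: the hypothesis $\SSi(F)\subset T^*M\times T^*_II = T^*M\times I\times I$ (the product of all of $T^*M$ with the zero-section $T^*_II$) says exactly that $F$ has no microsupport in the $I$-codirection. I would apply Theorem~\ref{th:opboim}(iii) to the smooth projection $p$ directly: the condition $\SSi(F)\subset M\times_M T^*M$ in that statement translates, under the identification $T^*(M\times I)\simeq T^*M\times T^*I$, precisely to $\SSi(F)\subset T^*M\times T^*_II$. Hence Theorem~\ref{th:opboim}(iii) gives that every cohomology sheaf $H^j(F)$ is locally constant along the fibers $\{x\}\times I$ of $p$. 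Since $I$ is contractible (in particular connected and simply connected), a locally constant sheaf on $\{x\}\times I\cong I$ is constant, so each $H^j(F)|_{\{x\}\times I}$ is a constant sheaf.

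To upgrade this from "cohomology sheaves are fiberwise constant" to "$\opb{p}\roim{p}F\to F$ is an isomorphism," I would argue as follows. The statement is local on $M$, and it suffices to prove $\opb{i_x}(\opb{p}\roim{p}F) \isoto \opb{i_x}F$ for each $x$, equivalently (after base change along $i_x$, using that $p$ is a submersion so proper/smooth base change applies to $\roim{p}$ and $\opb{p}$ appropriately — here base change for $\opb{p}$ is immediate) that the unit $\opb{q}\roim{q}(F|_{\{x\}\times I}) \to F|_{\{x\}\times I}$ is an isomorphism, where $q\cl\{x\}\times I\to\{x\}$. Now $G\eqdot F|_{\{x\}\times I}\in\Derb(\cor_I)$ has locally constant, hence constant, cohomology sheaves on the contractible manifold $I$. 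For such $G$ one has $\roim{q}G \simeq \rsect(I;G)$ and the unit map is the natural comparison $\rsect(I;G)_I\to G$; since $I$ is contractible this is an isomorphism — one sees this by induction on the amplitude of $G$ using the triangle $\tau^{\le n}G\to G\to\tau^{>n}G\to[+1]$ and the fact that for a single constant sheaf $\cor_I^{\oplus m}[-n]$ on contractible $I$ the map is obviously an isomorphism, the five lemma propagating it. Reassembling over all $x\in M$ gives that $\opb{p}\roim{p}F\to F$ is a stalkwise isomorphism, hence an isomorphism in $\Derb(\cor_{M\times I})$.

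The main obstacle, and the only place requiring care, is the passage from "the cohomology sheaves $H^j(F)$ are fiberwise locally constant" (the literal output of Theorem~\ref{th:opboim}(iii)) to the derived-category statement about the unit morphism: one must handle the spectral sequence / $t$-structure bookkeeping and make sure the comparison map $\opb{p}\roim{p}F\to F$ is genuinely an isomorphism and not merely an isomorphism on each cohomology sheaf of the restriction, which is why I reduce to a fiber $I$ and run the amplitude induction there. (If $F$ is not assumed bounded, a small extra argument with the compatibility of $\roim{p}$ with the relevant colimits over truncations is needed, but in $\Derb$ this is automatic.) Everything else is a direct invocation of the cited microsupport estimates and the contractibility of $I$.
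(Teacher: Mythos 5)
Your first step coincides with the paper's: applying Theorem~\ref{th:opboim}~(iii) to the smooth projection $p$ translates the hypothesis $\SSi(F)\subset T^*M\times T^*_II$ into the statement that each $H^j(F)$ is locally constant on the fibers $\{x\}\times I$. At that point the paper simply invokes \cite[Prop.~2.7.8]{KS90}, which is exactly the assertion that for such an $F$ the unit $\opb{p}\roim{p}F\to F$ is an isomorphism. You instead attempt to reprove that assertion, and your argument has a genuine gap at its decisive step.

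The gap is the base change. You reduce to the fiber by identifying $\opb{i_x}\roim{p}F$ with $\roim{q}\bigl(F|_{\{x\}\times I}\bigr)$, i.e.\ with $\rsect(\{x\}\times I;F|_{\{x\}\times I})$, justified by ``proper/smooth base change.'' But a contractible manifold $I$ is non-compact (unless it is a point), so $p$ is not proper and proper base change does not apply; and there is no smooth base change theorem for $\roim{}$ in topological sheaf theory. Concretely, $(\roim{p}F)_x\simeq\varinjlim_{U\ni x}\rsect(U\times I;F)$, and when $I$ is non-compact the opens $U\times I$ are not cofinal among neighborhoods of the closed fiber $\{x\}\times I$ (a neighborhood may shrink at infinity in the $I$-direction), so this colimit does not tautologically compute the cohomology of the fiber. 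Proving that it nevertheless does, using the fiberwise local constancy --- for instance by exhausting $I$ by compact contractible pieces where $p$ becomes proper, or by the homotopy argument of \cite[\S 2.7]{KS90} --- is precisely the content of the proposition the paper cites; as written, you have assumed it at the key moment. The rest of your argument (locally constant implies constant on a connected simply connected fiber, and the induction on amplitude) is sound, modulo the remark that the base case $\rsect(I;A_I)\simeq A$ for contractible $I$ is itself an instance of the same homotopy-invariance statement rather than something ``obvious.''
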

\begin{proof}
The result follows
from Theorem~\ref{th:opboim}~(iii) and~\cite[Prop.~2.7.8]{KS90}.
\end{proof}

\begin{corollary}\label{cor:rsectFt}
Let $I$ be an open interval of $\R$ and let $q\cl M\times I\to I$ be the projection.
Let $F\in\Derb(\cor_{M\times I})$ such that 
$\SSi(F)\cap T^*_MM\times T^*I\subset T^*_{M\times I}(M\times I)$ and 
$q$ is proper on $\Supp(F)$. Then, setting $F_a\eqdot F\vert_{\{t=a\}}$,
we have isomorphisms 
$\rsect(M; F_s) \simeq \rsect(M; F_t)$ for any $s,t\in I$.
\end{corollary}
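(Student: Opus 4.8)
The statement is that the restriction of $F$ to the slices $\{t=a\}$ is, up to isomorphism in the derived category, independent of $a\in I$, once we know $\SSi(F)$ meets the ``vertical'' part $T^*_MM\times T^*I$ only in the zero-section and that $q\colon M\times I\to I$ is proper on $\Supp(F)$.  The natural idea is to apply the direct image estimate, Theorem~\ref{th:opboim}~(i), to $q$ and then invoke that the pushforward is locally constant on $I$.  Concretely, I would first compute $\SSi(\roim{q}F)$.  Since $q$ is proper on $\Supp(F)$, Theorem~\ref{th:opboim}~(i) gives $\SSi(\roim{q}F)\subset q_\pi\opb{q_d}\SSi(F)$.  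The map $q_d\colon (M\times I)\times_I T^*I\to T^*(M\times I)$ sends $(x,t;\tau)$ to $(x,t;0,\tau)$, so $\opb{q_d}\SSi(F)$ picks out exactly the points of $\SSi(F)$ of the form $(x,t;0,\tau)$, i.e.\ the points lying in $T^*_MM\times T^*I$.  By hypothesis these all satisfy $\tau=0$, hence $q_\pi$ of them lands in the zero-section $T^*_II$.  Therefore $\SSi(\roim{q}F)\subset T^*_II$.

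**Finishing.**  A complex on an interval (or any manifold) with microsupport contained in the zero-section has locally constant cohomology sheaves — this is the content of Theorem~\ref{th:opboim}~(iii) applied to the identity map $I\to I$ (which is trivially smooth), or one can simply quote that $\SSi(G)\subset T^*_II$ forces $H^j(G)$ locally constant.  Since $I$ is an interval, hence connected, a complex with locally constant cohomology on $I$ has all its stalks isomorphic; more precisely, for $a\in I$ the restriction map $\roim{q}F\to (\roim{q}F)_a$ realizes, after composing over a path from $s$ to $t$, an isomorphism $(\roim{q}F)_s\isoto(\roim{q}F)_t$ in $\Derb(\cor)$.  It remains to identify $(\roim{q}F)_a$ with $\rsect(M;F_a)$.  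This is the proper base change theorem for the Cartesian square
\[
\xymatrix{
M\ar[r]\ar[d]&M\times I\ar[d]^-{q}\\
\{a\}\ar[r]&I
}
\]
which, because $q$ is proper on $\Supp(F)$, gives $(\roim{q}F)_a\simeq\rsect(M;F|_{\{t=a\}})=\rsect(M;F_a)$.  Chaining these isomorphisms yields $\rsect(M;F_s)\simeq\rsect(M;F_t)$ for all $s,t\in I$.

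**Where the work is.**  None of the steps is deep; the only genuine point requiring care is the microsupport bound $\SSi(\roim{q}F)\subset T^*_II$, and within it the verification that the hypothesis $\SSi(F)\cap(T^*_MM\times T^*I)\subset T^*_{M\times I}(M\times I)$ is precisely what is needed to kill the fibre-direction covectors that $q_d$ sees.  One should be slightly careful that Theorem~\ref{th:opboim}~(i) requires properness only on $\Supp(F)$, which is exactly what is assumed, so the estimate is legitimate even though $q$ itself is not proper.  After that, everything is a formal application of (iii) of the same theorem together with proper base change, so I would expect the written proof to be only a few lines.
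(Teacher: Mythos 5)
Your proposal is correct and follows essentially the same route as the paper: bound $\SSi(\roim{q}F)$ by $T^*_II$ via Theorem~\ref{th:opboim}, conclude that $\roim{q}F$ is constant on the interval $I$, and identify the stalks $(\roim{q}F)_a$ with $\rsect(M;F_a)$ using properness of $q$ on $\Supp(F)$. The only cosmetic difference is that the paper phrases the middle step as the existence of an isomorphism $\roim{q}F\simeq M_I$ for some $M\in\Derb(\cor)$ (via \cite[Prop.~2.7.8]{KS90}, i.e.\ Corollary~\ref{cor:opbeqv}), whereas you argue stalkwise through locally constant cohomologies; both amount to the same thing on a contractible base.
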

\begin{proof}
It follows from Theorem~\ref{th:opboim} that
$\SSi(\roim{q}F)\subset T^*_II$. Therefore, there exists $M\in\Derb(\cor)$
and an isomorphism $\roim{q}F\simeq M_I$. (Recall that
$M_I=\opb{a_I}M$,
where $a_I\to\rmptt$ is the projection and $M$ is identified to a sheaf
on $\rmptt$.) 
Since $\rsect(M; F_s) \simeq (\roim{q} F)_s$, the result follows.
\end{proof}

\subsection{Morse Lemma and Morse inequalities (\cite[\S 5.4]{KS90})}
In this subsection, we consider a function $\psi\cl M\to\R$ of class $\Cd^1$. We set
\eq\label{eq:lambdaphi}
&& \Lambda_\psi=\{(x;d\psi(x))\}\subset T^*M. 
\eneq
The proposition below is a particular case of the microlocal Morse lemma
(see~\cite[Cor.~5.4.19]{KS90}) and follows from 
Theorem~\ref{th:opboim}~(ii).
The classical theory
corresponds to the constant sheaf $F=\cor_M$.
\begin{proposition}\label{prop:Morse}
Let $F\in\Derb(\cor_M)$, let $\psi\cl M\to\R$ be a function of class $\Cd^1$ and assume that $\psi$
is proper on $\Supp(F)$. Let $a<b$ in $\R$
and assume that $d\psi(x)\notin\SSi(F)$ for $a\leq \psi(x)<b$. 
For $t\in\R$, set $M_t=\opb{\psi}(]-\infty,t[)$. Then the
restriction morphism  $\rsect(M_b;F)\to\rsect(M_a;F)$ is an isomorphism.
\end{proposition}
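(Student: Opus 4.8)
The plan is to reduce the statement to the non-characteristic deformation result contained in Theorem~\ref{th:opboim}~(ii). First I would introduce the open set $W=\opb{\psi}(]-\infty,b[)$ and the locally closed set $Z=\opb{\psi}([a,b[)=W\setminus M_a$, so that there is a distinguished triangle relating $\cor_{M_a}$, $\cor_W$ and $\cor_Z$ (the sheaves extended by zero on $M$). Applying $\rsect(M;F\tens(-))$ — equivalently, computing sections with supports — reduces the claim $\rsect(M_b;F)\isoto\rsect(M_a;F)$ to the vanishing $\rsect_Z(W;F)\simeq 0$, i.e.\ that $F$ has no cohomology on the slab $a\le\psi<b$ with support in that slab. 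By the properness hypothesis on $\psi|_{\Supp(F)}$ the relevant sections may be computed fiberwise over the $\psi$-axis.

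The key step is then a standard ``sweeping out'' argument. For $t\in[a,b]$ set $M_t=\opb{\psi}(]-\infty,t[)$ as in the statement; I want to show $\rsect(M_t;F)$ is independent of $t$ on $[a,b]$. Because $\psi$ is proper on $\Supp(F)$ and $d\psi(x)\notin\SSi(F)$ for $a\le\psi(x)<b$, the pair $(W,\psi)$ satisfies exactly the hypotheses of the microlocal cut-off / non-characteristic deformation lemma: the half-space $\{\psi\le t\}$ can be pushed up to $\{\psi\le t'\}$ for $a\le t\le t'<b$ without meeting $\SSi(F)$ along its moving conormal direction $\{(x;\lambda\,d\psi(x));\lambda\ge 0\}$. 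Concretely one checks that the inclusion $M_t\into M_{t'}$ induces an isomorphism on $\rsect(-;F)$ by writing the cone of the restriction map as $\rsect_{\{t\le\psi<t'\}}(M_{t'};F)$ and invoking that for such a half-space, $\rsect_{\{\psi\ge s\}}(F)_x\simeq 0$ whenever $(x;d\psi(x))\notin\SSi(F)$ — which is precisely the defining property of the microsupport (Definition after which everything in \S1.2 is built). Passing to the limit $t\to b$ (using properness to commute $\rsect$ with the filtered union, cf.\ \cite[Prop.~2.7.8]{KS90} and the argument of Corollary~\ref{cor:rsectFt}) yields $\rsect(M_b;F)\simeq\rsect(M_a;F)$, and since the composite $\rsect(M_b;F)\to\rsect(M_t;F)\to\rsect(M_a;F)$ is the restriction map, the latter is the claimed isomorphism.

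The main obstacle is the passage across the \emph{closed} end $\psi=a$, where the hypothesis $d\psi(x)\notin\SSi(F)$ is assumed to hold: one must be careful that the restriction $\rsect(M_b;F)\to\rsect(M_a;F)$, with $M_a$ open (strict inequality), is genuinely controlled, rather than only the maps between the open slabs $\{\psi<t\}$ for $t<b$. This is handled by the same microsupport estimate applied at level $s=a$ together with a final application of the triangular inequality for $\SSi$; no new input beyond Theorem~\ref{th:opboim}~(ii) and the definition of the microsupport is needed. In fact, as the statement itself notes, this is a special case of the microlocal Morse lemma \cite[Cor.~5.4.19]{KS90}, so the role of the proof here is only to record the reduction; alternatively one may simply cite that corollary and Theorem~\ref{th:opboim}~(ii) directly.
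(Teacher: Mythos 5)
Your proposal is correct and takes essentially the same route as the paper, which offers no argument beyond citing the microlocal Morse lemma \cite[Cor.~5.4.19]{KS90} and Theorem~\ref{th:opboim}; your expanded sketch is precisely the standard non-characteristic deformation proof of that corollary (reduce to the vanishing of $\rsect_Z(M_b;F)$ for $Z=\opb{\psi}([a,b[)$, then sweep using the stalkwise vanishing $\rsect_{\{\psi\geq s\}}(F)_x\simeq 0$ granted by the microsupport hypothesis together with properness). One small slip: the reduction is obtained by applying $\rhom(\scdot,F)$ to the triangle $\cor_{M_a}\to\cor_{M_b}\to\cor_Z\to[+1]$, not $F\tens(\scdot)$, but this is exactly the ``sections with support'' computation you in fact use.
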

\begin{corollary}\label{cor:Morse1}
Let $F\in\Derb(\cor_M)$ and let $\psi\cl M\to\R$ be a function of class $\Cd^1$.
Let $\Lambda_\psi$ be as in {\rm \eqref{eq:lambdaphi}}. Assume that 
\bnum
\item
$\Supp(F)$ is compact,
\item
$\rsect(M;F)\not=0$.
\enum
Then $\Lambda_\psi\cap\SSi(F)\not=\emptyset$.
\end{corollary}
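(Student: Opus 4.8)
The plan is to argue by contradiction. Suppose that $\Lambda_\psi\cap\SSi(F)=\emptyset$. The key observation is that properness of $\psi$ on the compact set $\Supp(F)$ (which is automatic here since $\Supp(F)$ is compact) together with the disjointness hypothesis puts us in a position to apply the microlocal Morse lemma of Proposition~\ref{prop:Morse} on a very large interval.

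First I would choose $a<b$ in $\R$ so that $\Supp(F)\subset\opb{\psi}(]a,b[)$; this is possible because $\psi(\Supp(F))$ is a compact subset of $\R$ by continuity of $\psi$ and compactness of $\Supp(F)$. For this choice, the condition ``$d\psi(x)\notin\SSi(F)$ for $a\leq\psi(x)<b$'' holds vacuously-or-not: on the region where $a\leq\psi(x)<b$ and $x\in\Supp(F)$ we use $\Lambda_\psi\cap\SSi(F)=\emptyset$; on the region where $a\leq\psi(x)<b$ and $x\notin\Supp(F)$ we have $(x;d\psi(x))\notin\SSi(F)$ trivially since $\SSi(F)\subset\pi_M^{-1}(\Supp(F))$. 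Hence Proposition~\ref{prop:Morse} applies and gives an isomorphism $\rsect(M_b;F)\isoto\rsect(M_a;F)$, where $M_t=\opb{\psi}(]-\infty,t[)$.

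Next I would identify the two sides. Since $\Supp(F)\subset\opb{\psi}(]a,b[)\subset M_b$, restriction gives $\rsect(M_b;F)\simeq\rsect(M;F)$ (more precisely, $F\simeq F_{M_b}$ up to the obvious identification, as $F$ is supported inside $M_b$, hence its sections over $M_b$ agree with its global sections). On the other hand $M_a\cap\Supp(F)=\emptyset$, so $F|_{M_a}\simeq 0$ and therefore $\rsect(M_a;F)\simeq 0$. Combining, $\rsect(M;F)\simeq\rsect(M_b;F)\simeq\rsect(M_a;F)\simeq 0$, contradicting hypothesis~(ii). Therefore $\Lambda_\psi\cap\SSi(F)\neq\emptyset$.

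The only delicate point is the bookkeeping around supports: one must be careful that ``$\rsect(M_b;F)\simeq\rsect(M;F)$'' is the statement that sections over an open set containing the support recover global sections, and that ``$d\psi(x)\notin\SSi(F)$ for $a\le\psi(x)<b$'' is correctly verified both on and off the support. Neither of these is a genuine obstacle; the proof is essentially immediate once Proposition~\ref{prop:Morse} is invoked with a large enough window, and no further microlocal input is needed.
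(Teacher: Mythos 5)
Your proof is correct and is exactly the intended derivation: the paper states Corollary~\ref{cor:Morse1} without proof as an immediate consequence of Proposition~\ref{prop:Morse}, and your argument (choose $a<b$ with $\psi(\Supp(F))\subset\;]a,b[$, apply the microlocal Morse lemma, identify $\rsect(M_b;F)\simeq\rsect(M;F)$ and $\rsect(M_a;F)\simeq 0$) is the standard way to do this. The bookkeeping about supports is handled correctly.
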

Until the end of  this subsection as well as in Section~\ref{sec:sympl-case} we assume that
$\cor$ is a field.
The classical Morse inequalities are extended to sheaves
(see~\cite{ST92} and~\cite[Prop.~5.4.20]{KS90}).
Let us briefly recall this result.

For  a bounded complex $E$ of $\cor$-vector spaces 
with finite-dimensional cohomologies, we set 
\eqn
b_j(E)=\dim H^j(E),\quad b^*_l(E)=(-1)^l\sum_{j\leq l}(-1)^jb_j(E).
\eneqn
We consider a map $\psi\cl M \to \R$ of class $\Cd^1$ and define
$\Lambda_\psi$ as above. Note that we do not ask $\psi$ to be smooth. 
Let $F\in\Derb(\cor_M)$. Assume that 
\eq\label{hyp:morse3}
&&\mbox{the set }\Lambda_\psi\cap\SSi(F)\mbox{ is finite, say }\{p_1,\dots,p_N\}
\eneq
and, setting
\eq\label{eq:Vi}
&&x_i=\pi(p_i),\quad V_i\eqdot (\rsect_{\{\psi(x)\geq\psi(x_i)\}}(F))_{x_i},
\eneq
also assume that
\eq\label{hyp:morse4}
&&\mbox{ for all $i\in\{1,\dots,N\},j\in\Z$, the spaces $H^j(V_i)$ are
finite-dimensional.} 
\eneq
Set
\eqn
&&b_j(F)=b_j(\rsect(M;F)),\quad  b^*_j(F)=b^*_j(\rsect(M;F)).
\eneqn
\begin{theorem}\label{th:Morse1}
Let $F\in\Derb(\cor_M)$ and assume that $\psi$ is proper on $\Supp(F)$. 
Assume moreover~\eqref{hyp:morse3} and~\eqref{hyp:morse4}. Then
\eq\label{eq:morseineq1}
&&b_l^*(F)\leq \sum_{i=1}^N b^*_l(V_i)\mbox{ for any $l$}.
\eneq
\end{theorem}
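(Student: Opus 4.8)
The plan is to reduce the global Morse inequalities to the local picture by propagating cohomology across the critical values of $\psi$, exactly as in the classical Morse theory but formulated with sheaf cohomology and microsupport estimates. First I would choose real numbers $a_0 < a_1 < \dots < a_N$ interleaving the critical values $\psi(x_1) < \psi(x_2) < \dots$ (after reordering the $p_i$ so that the values $\psi(x_i)$ are nondecreasing, and grouping points sharing a critical value), with $a_0 < \min_i \psi(x_i)$ small enough that $M_{a_0} \cap \Supp(F) = \emptyset$ and $a_N > \max_i \psi(x_i)$ large enough that $M_{a_N} \supset \Supp(F)$, using that $\psi$ is proper on $\Supp(F)$. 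By Proposition~\ref{prop:Morse} applied on each interval $[a_{k-1}, a_k)$ that contains no critical point in its interior-relevant range, the cohomology $\rsect(M_t; F)$ only jumps as $t$ crosses a critical value. Concretely, $\rsect(M_{a_0};F) \simeq 0$ and $\rsect(M_{a_N};F) \simeq \rsect(M;F)$.

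Next I would analyze a single jump. Fix a critical value $c$ and let $S = \{x_i : \psi(x_i) = c\}$ be the (finite) set of critical points at that level; pick $a < c < b$ with no other critical values in $[a,b]$. The distinguished triangle
\eq\label{eq:morsetriangle}
&& \rsect_{\{a \le \psi < b\}}(M_b; F) \to \rsect(M_b;F) \to \rsect(M_a;F) \to[+1]
\eneq
together with excision identifies the first term, via Proposition~\ref{prop:Morse}-type local analysis and the fact that $\SSi(F)$ meets $\Lambda_\psi$ only at the $p_i$, with $\bigoplus_{x_i \in S} V_i$ where $V_i = (\rsect_{\{\psi(x)\ge\psi(x_i)\}}(F))_{x_i}$ as in~\eqref{eq:Vi}. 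The key point here is a local-to-global statement: on the slab $a \le \psi < b$ the cohomology of $F$ with supports in $\{\psi \ge a\}$ is concentrated near the $x_i$ and equals the direct sum of the stalk contributions $V_i$; this follows from Theorem~\ref{th:opboim}~(ii) (non-characteristic deformation away from the $p_i$) and a Mayer--Vietoris argument to separate the finitely many critical points at the same level.

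From the triangle~\eqref{eq:morsetriangle} one gets, for each jump, the long exact sequence relating $H^j$ of the three terms. The standard linear-algebra lemma on alternating sums over a long exact sequence (the same one used in the classical proof of Morse inequalities, and in~\cite[\S5.4]{KS90}) gives, for each $l$,
\eqn
&& b^*_l(\rsect(M_b;F)) - b^*_l(\rsect(M_a;F)) \le \sum_{x_i \in S} b^*_l(V_i),
\eneqn
using hypothesis~\eqref{hyp:morse4} to guarantee all the $H^j(V_i)$ are finite-dimensional so the quantities $b^*_l$ make sense. Hypothesis~\eqref{hyp:morse3} ensures there are only finitely many jumps. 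Telescoping over $k = 1, \dots, N'$ (the distinct critical values) and using $b^*_l(\rsect(M_{a_0};F)) = b^*_l(0) = 0$ and $b^*_l(\rsect(M_{a_N};F)) = b^*_l(F)$, the contributions $\sum_{x_i \in S}$ over all levels reassemble into $\sum_{i=1}^N b^*_l(V_i)$, yielding~\eqref{eq:morseineq1}.

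The main obstacle is the local identification of the slab term $\rsect_{\{a\le\psi<b\}}(M_b;F)$ with $\bigoplus_i V_i$: one must check that away from the finitely many critical points the restriction $\rsect(M_b;F) \to \rsect(M_a;F)$ is an isomorphism (a non-characteristic deformation, handled by Proposition~\ref{prop:Morse} applied to suitable modifications of $\psi$, or directly by the microlocal Morse lemma), and then that near each $x_i$ the contribution is exactly the stalk $V_i$ and that distinct $x_i$ at the same level do not interact. This is where properness of $\psi$ on $\Supp(F)$ and the finiteness hypothesis~\eqref{hyp:morse3} are essential; the rest is the purely homological bookkeeping of alternating sums along exact sequences, which is routine once~\eqref{hyp:morse4} is in force. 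I would cite~\cite[Prop.~5.4.20]{KS90} and~\cite{ST92} for the precise form of this local computation rather than reproving it.
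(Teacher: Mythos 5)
First, note that the paper does not actually prove Theorem~\ref{th:Morse1}: the result is recalled from \cite{ST92} and \cite[Prop.~5.4.20]{KS90}, and your argument is the standard proof from those sources (sublevel-set slicing, the microlocal Morse lemma between critical values, identification of each jump with the local Morse data $V_i$, subadditivity of $b_l^*$ along distinguished triangles, telescoping). The homological bookkeeping in the second half of your argument is correct.

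There is, however, a genuine gap at the base of the telescoping. You claim that properness of $\psi$ on $\Supp(F)$ yields an $a_0$ with $M_{a_0}\cap\Supp(F)=\emptyset$. It does not: properness only says that $\psi^{-1}([a,b])\cap\Supp(F)$ is compact, not that $\psi$ is bounded below on $\Supp(F)$. What Proposition~\ref{prop:Morse} gives you for free is that $\rsect(M_t;F)$ is constant for $t$ below the smallest critical value, but constant is not zero (take $M=\R$, $F=\cor_\R$, $\psi(x)=x$: then $\Lambda_\psi\cap\SSi(F)=\emptyset$, yet $\rsect(M;F)\simeq\cor$, so the inequality with $N=0$ would fail). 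The hypothesis actually needed --- and the one in loc.\ cit., to which the paper's remark ``the assumption \dots may be weakened, see loc.\ cit.'' implicitly defers --- is that $\Supp(F)\cap\psi^{-1}(]-\infty,t])$ is compact for every $t$; you should assume this rather than attempt to derive it from properness. Separately, be aware that the one genuinely microlocal step --- the identification of $\rsect_{\{a\le\psi<b\}}(M_b;F)$ with $\bigoplus_{x_i\in S}V_i$, which rests on the fact that $(\rsect_{\{\psi(x)\ge c\}}(F))_x\simeq 0$ for every $x$ with $\psi(x)=c$ other than the $x_i$, by the very definition of the microsupport --- is exactly what you end up citing from \cite[Prop.~5.4.20]{KS90} and \cite{ST92}; so as written your proposal is an outline of the cited proof rather than an independent one. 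That is acceptable here, since the paper itself only cites the result, but the argument is not self-contained at its crucial point.
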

In fact the assumption that $\psi$ is proper on $\Supp(F)$ may be weakened, see loc.\ cit.

Notice that \eqref{eq:morseineq1} immediately implies
\eq\label{eq:morseineq2}
&&b_j(F)\leq \sum_{i=1}^N b_j(V_i)\quad\text{for any $j$.}
\eneq

\subsection{Kernels (\cite[\S 3.6]{KS90})}
Let $M_i$ ($i=1,2,3$) be manifolds. For short, we write $M_{ij}\eqdot M_i\times M_j$ 
($1\leq i,j\leq3$) and $M_{123}=M_1\times M_2\times M_3$. 
We denote by $q_i$  the projection $M_{ij}\to M_i$ or the projection $M_{123}\to M_i$
and by $q_{ij}$ the projection $M_{123}\to M_{ij}$. Similarly, we denote by
$p_i$  the projection $T^*M_{ij}\to T^*M_i$ or the projection $T^*M_{123}\to T^*M_i$
and by $p_{ij}$ the projection $T^*M_{123}\to T^*M_{ij}$. We also need
to introduce the map $p_{12^a}$, the composition of $p_{12}$ and the
antipodal map on $T^*M_2$. 

Let $\Lambda_1\subset T^*M_{12}$ and $\Lambda_2\subset T^*M_{23}$. We
set 
\eq\label{eq:convolution_of_sets}
&&\Lambda_1\conv\Lambda_2\eqdot
p_{13}(\opb{{p_{12^a}}}\Lambda_1\cap\opb{p_{23}}\Lambda_2).
\eneq
We consider the operation of convolution of kernels:
\eqn
\conv[M_2]\cl\Derb(\cor_{M_{12}})\times\Derb(\cor_{M_{23}})&\to&\Derb(\cor_{M_{13}})\\
(K_1,K_2)&\mapsto&K_1\conv[M_2] K_2\eqdot
\reim{q_{13}}(\opb{q_{12}}K_1\ltens\opb{q_{23}}K_2).
\eneqn
Let  $\Lambda_i=\SSi(K_i)\subset T^*M_{i,i+1}$  and assume that 
\eq\label{eq:noncharker}
&&\left\{
\parbox{60ex}{
(i) $q_{13}$ is proper on $\opb{q_{12}}\Supp(K_1)\cap\opb{q_{23}}\Supp(K_2)$,
\\[2ex]
(ii) 
$\ba[t]{l}
\opb{p_{12^a}}\Lambda_1\cap\opb{p_{23}}\Lambda_2
\cap (T^*_{M_1}M_1\times T^*M_2\times T^*_{M_3}M_3)\\[1ex]
\hs{10ex} \subset T^*_{M_1\times M_2\times M_3}(M_1\times M_2\times M_3).
\ea$
}\right.
\eneq
It follows from Theorem~\ref{th:opboim} that under the
assumption~\eqref{eq:noncharker} we have:
\eq
&&\SSi(K_1\conv[M_2] K_2)\subset \Lambda_1\conv\Lambda_2.
\label{eq: estss}
\eneq
If there is no risk of confusion, we write $\conv$ instead of
$\conv[M_2]$. 

We will also use a relative version of the convolution of kernels.  
For a manifold $I$, $K_1 \in \Derb(\cor_{M_{12}\times I})$ and 
$K_2 \in\Derb(\cor_{M_{23}\times I})$ we set
\eq
\label{eq:rel_conv}
&K_1\conv|_I K_2\eqdot
\reim{q_{13I}}(\opb{q_{12I}}K_1\ltens\opb{q_{23I}}K_2),
\eneq
where $q_{ijI}$ is the projection $M_{123}\times I \to M_{ij}\times I$.
The above results extend to the relative case. Namely, we assume
the conditions:
\eq\label{eq:noncharkerrel}
&&\left\{
\parbox{60ex}{
(i) $\Supp(K_1)\times_{M_2\times I}\Supp(K_2)\To M_{13}\times I$ is proper,
\\[2ex]
(ii) 
$\ba[t]{l}
\opb{p_{12^aI^a}}\Lambda_1\cap\opb{p_{23I}}\Lambda_2
\cap (T^*_{M_1}M_1\times T^*M_2\times T^*_{M_3}M_3\times T^*I)\\[1ex]
\hs{20ex} \subset T^*_{M_1\times M_2\times M_3\times I}(M_1\times M_2\times M_3\times I),
\ea$
}\right.
\eneq
where 
$p_{12^aI^a}\cl T^*M_1\times T^*M_2\times T^*M_3\times T^*I\To T^*M_1\times T^*M_2\times T^*I$ 
is given by $p_{12^aI^a}(v_1,v_2,v_3,u)=(v_1,-v_2,-u)$.
Then we  have
\eq\label{eq:est_rel}
\SSi(K_1\conv|_IK_2)&\subset &
\Lambda_1\conv |_I\Lambda_2\seteq r_{13}
(r_{12^a}^{-1}\Lambda_1\cap r_{23}^{-1}\Lambda_2).
\eneq
Here, in the diagram
\eqn
&&\hs{-3ex}\xymatrix@C=-6ex@R=5ex{
&{T^*M_1\times T^*M_2\times T^*M_3\times (T^*I\times_IT^*I)}
\ar[dl]_-{r_{12^a}}\ar[d]_-{r_{13}}\ar[dr]^-{r_{23}}\\
{T^*M_1\times T^*M_2\times T^*I}&
{T^*M_1\times T^*M_3\times T^*I}&
{T^*M_2\times T^*M_3\times T^*I},
}
\eneqn
$r_{12^a}$ is given by
$p_{12^a}\cl 
T^*M_1\times T^*M_2\times T^*M_3\to T^*M_1\times T^*M_2$
and the first projection $T^*I\times_IT^*I\to T^*I$,
$r_{23}$ is given by
$p_{23}\cl 
T^*M_1\times T^*M_2\times T^*M_3\to T^*M_2\times T^*M_3$
and the second projection $T^*I\times_IT^*I\to T^*I$,
and $r_{13}$ is given by
$p_{13}\cl 
T^*M_1\times T^*M_2\times T^*M_3\to T^*M_1\times T^*M_3$
and the addition map $T^*I\times_IT^*I\to T^*I$.

\subsection{Locally bounded categories and gluing sheaves}
Although the prestack $U\mapsto\Der(\cor_U)$ ($U$ open in $M$) is not a
stack, we have the following classical result that we shall use.
\begin{lemma}\label{le:stack1}
Let $U_1$ and $U_2$ be two open subsets of $M$ and set
$U_{12}\eqdot U_1\cap U_2$.  Let $F_i\in\Der(\cor_{U_i})$ \lp$i=1,2$\rp\, and
assume we have an isomorphism
$\phi_{21}\cl F_1\vert_{U_{12}}\simeq F_2\vert_{U_{12}}$.  Then there exists
$F\in\Der(\cor_{U_1\cup U_2})$ and isomorphisms
$\phi_{i}\cl F\vert_{U_{i}}\simeq F_i$ \lp$i=1,2$\rp\, such that
$\phi_{12}=\phi_2\vert_{U_{12}}\circ\opb{\phi_1\vert_{U_{12}}}$.  Moreover,
such a triple $(F,\phi_1,\phi_2)$ is unique up to 
a \lp non-unique\rp\, isomorphism.
\end{lemma}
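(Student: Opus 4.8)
The plan is to prove the gluing lemma by reducing the statement about complexes in $\Der(\cor_{U_i})$ to a statement about honest sheaves via the truncation functors, where the naive gluing works because $\Sh(\cor_{-})$ \emph{is} a stack, and then to assemble the pieces. First I would recall that for sheaves (not complexes) the prestack $U\mapsto\Sh(\cor_U)$ is a stack: given sheaves $G_i$ on $U_i$ and an isomorphism $\psi_{21}\cl G_1|_{U_{12}}\isoto G_2|_{U_{12}}$, one forms the sheaf $G$ on $U_1\cup U_2$ by taking, for each open $V$, the fibered product $G_1(V\cap U_1)\times_{G_2(V\cap U_{12})}G_2(V\cap U_2)$ (using $\psi_{21}$ to make sense of the maps), and this comes with canonical restriction isomorphisms and is unique up to unique isomorphism. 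The obstacle to running the same argument on $\Der$ is, of course, that a quasi-isomorphism is not an isomorphism of complexes, so the fibered-product construction at the level of complexes is not well-defined up to quasi-isomorphism, and higher coherence data would in principle be needed — this is exactly why the prestack $U\mapsto\Der(\cor_U)$ fails to be a stack, and why the lemma only asserts existence and uniqueness up to a \emph{non-unique} isomorphism.

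The key trick, and the main step, is to replace the complexes $F_i$ by quasi-isomorphic \emph{genuine complexes of sheaves} and to realize the isomorphism $\phi_{21}$ in $\Der(\cor_{U_{12}})$ by an actual roof or, better, by an honest quasi-isomorphism of complexes. Concretely I would first take, for each $i$, a K-injective (or a bounded-below injective, if one works with $\Derb$, but here $\Der$ is the unbounded derived category) resolution $F_i\to I_i$; then $\phi_{21}$ is represented by a morphism of complexes $I_1|_{U_{12}}\to I_2|_{U_{12}}$ which is a quasi-isomorphism, since maps into K-injectives are computed on the nose. Now the subtle point: I want to glue $I_1$ and $I_2$ along $U_{12}$, but the comparison map goes only one way and is merely a quasi-isomorphism, not an isomorphism of complexes, so I cannot directly form a limit. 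The remedy is to introduce the mapping cylinder: replace $I_2|_{U_{12}}$ by the cylinder of the quasi-isomorphism $I_1|_{U_{12}}\to I_2|_{U_{12}}$, which receives an \emph{injective} quasi-isomorphism from $I_1|_{U_{12}}$ and admits a deformation retraction onto $I_2|_{U_{12}}$; equivalently, after possibly enlarging $I_2$ on $U_{12}$ (and then on all of $U_2$ by pushforward and re-resolving) one may assume the comparison morphism $I_1|_{U_{12}}\to I_2|_{U_{12}}$ is a genuine isomorphism of complexes of sheaves. Once the identification on the overlap is an equality of complexes, the stack property of $\Sh$ applied degreewise produces a complex of sheaves $I$ on $U_1\cup U_2$ with $I|_{U_i}\cong I_i$, and I set $F$ to be the image of $I$ in $\Der(\cor_{U_1\cup U_2})$, with $\phi_i$ the induced isomorphisms; one checks $\phi_2|_{U_{12}}\circ\phi_1|_{U_{12}}^{-1}=\phi_{21}$ by unwinding the construction.

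For the uniqueness clause, suppose $(F,\phi_1,\phi_2)$ and $(F',\phi_1',\phi_2')$ are two such triples. On $U_i$ the composite $\phi_i'^{-1}\circ\phi_i\cl F|_{U_i}\isoto F'|_{U_i}$ is an isomorphism, and on $U_{12}$ the two isomorphisms $\phi_1'^{-1}\phi_1$ and $\phi_2'^{-1}\phi_2$ agree because both equal the comparison of $F|_{U_{12}}$ and $F'|_{U_{12}}$ induced through $\phi_{21}$ (that is: $\phi_2'^{-1}\phi_2 = \phi_2'^{-1}(\phi_2\phi_1^{-1})\phi_1 = \phi_2'^{-1}\phi_{21}\phi_1 = \phi_2'^{-1}(\phi_2'\phi_1'^{-1})\phi_1 = \phi_1'^{-1}\phi_1$). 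Hence we have a pair of local isomorphisms that agree on the overlap; applying the \emph{descent of morphisms} part of the stack property for $\Sh$ — which, unlike descent of objects, \emph{does} hold at the level of $\Der$ in the sense that a morphism in $\Der(\cor_{U_1\cup U_2})$ is determined by its restrictions once the ambient gluing is fixed — but being careful that this descent of morphisms is itself only valid because there is no $H^{-1}$ obstruction here: one uses the Mayer--Vietoris distinguished triangle $\rsect(U_1\cup U_2;\rhom(F,F'))\to\rsect(U_1;\cdot)\oplus\rsect(U_2;\cdot)\to\rsect(U_{12};\cdot)\to[+1]$, from which the long exact sequence gives that a compatible pair of $H^0$-classes lifts to an $H^0$-class on $U_1\cup U_2$, i.e.\ a morphism $F\to F'$ restricting correctly, and symmetrically an inverse; so $F\cong F'$ compatibly with the $\phi_i$. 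The lift need not be unique precisely because the relevant sequence has a potentially nonzero $H^{-1}$ term on $U_{12}$, which is the source of the asserted non-uniqueness. The hard part throughout is the first paragraph's cylinder argument: turning the quasi-isomorphism on the overlap into an honest isomorphism of complexes without disturbing the quasi-isomorphism type on each $U_i$ — everything else is a formal consequence of Mayer--Vietoris for $\rhom$.
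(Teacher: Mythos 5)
Your uniqueness argument is correct and is essentially the paper's: the authors invoke the Mayer--Vietoris triangle $F_{U_{12}}\to F_{U_1}\oplus F_{U_2}\to F\to[+1]$ together with the (non-unique) fill-in axiom for distinguished triangles, which is just the covariant form of your long exact sequence for $\rsect(\,\cdot\,;\rhom(F,F'))$; your identification of $H^{-1}(U_{12};\rhom(F,F'))$ as the source of non-uniqueness is also right. For existence, however, you take a genuinely different and much heavier route. The paper simply defines $F$ as the cone of $u\oplus v\cl \eim{j_{12}}(F_1|_{U_{12}})\to \eim{j_2}F_2\oplus\eim{j_1}F_1$, where $u$ comes from $\phi_{21}$ by adjunction and $v$ is the natural map; restricting this triangle to $U_i$ immediately gives $F|_{U_i}\simeq F_i$, and no chain-level rectification is needed.

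The gap in your existence proof is the sentence ``equivalently, \dots one may assume the comparison morphism $I_1|_{U_{12}}\to I_2|_{U_{12}}$ is a genuine isomorphism of complexes of sheaves.'' What the mapping cylinder actually provides is a termwise injective quasi-isomorphism $I_1|_{U_{12}}\hookrightarrow \mathrm{Cyl}(f)$ with acyclic cokernel $\mathrm{Cone}(f)$, together with a deformation retraction $\mathrm{Cyl}(f)\to I_2|_{U_{12}}$; neither is an isomorphism of complexes, and ``equivalently'' is not justified. Degreewise gluing via the stack property of $\Sh$ requires literal agreement of the two complexes on $U_{12}$, and to achieve it you must modify \emph{both} sides, not only $I_2$: for instance replace $I_1$ on $U_1$ by the pushout of $I_1\leftarrow\eim{j'}(I_1|_{U_{12}})\to\eim{j'}\mathrm{Cyl}(f)$ (a quasi-isomorphic enlargement, the cokernel being $\eim{j'}\mathrm{Cone}(f)$, which is acyclic), and replace $I_2$ on $U_2$ by the fibre product $I_2\times_{\oim{j}(I_2|_{U_{12}})}\oim{j}\mathrm{Cyl}(f)$ (quasi-isomorphic to $I_2$ because the retraction is a termwise split epimorphism with contractible kernel); only then do both restrictions become $\mathrm{Cyl}(f)$ and the degreewise gluing applies. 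As written, enlarging $I_2$ alone can never turn the injection from $I_1|_{U_{12}}$ into a surjection, so the central step does not go through; the strategy is salvageable along the lines above, but the paper's cone construction sidesteps the entire issue.
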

\begin{proof}
(i) For $* = 1,2$ or $12$ we let $j_*$ be the inclusion of $U_*$ in $U_1\cup U_2$.
By adjunction between $\eim{j_{12}}$ and $\opb{j_{12}}$, the morphism
$\phi_{21}$ defines the morphism $u\cl \eim{j_{12}} (F_1|_{U_1}) \to \eim{j_2}F_2$.
We also have the natural morphism
$v \cl \eim{j_{12}} (F_1|_{U_1}) \to \eim{j_1}F_1$.
Then $F$ is given by the distinguished triangle
\eqn
&&\eim{j_{12}}(F_1|_{U_1})\to[u\oplus v]\eim{j_2}F_2\oplus\eim{j_1}F_1\to F \to[+1].
\eneqn
\medskip\noindent
(ii) The unicity follows from the distinguished triangle
$F_{U_{12}} \to F_{U_1} \oplus F_{U_2} \to F \to[+1]$ and the fact that a
commutative square in $\Der(\cor_M)$ 
can be extended to a morphism of distinguished triangles.
\end{proof}

\begin{definition}\label{def:locbnd}
We say that $F\in \Der(\cor_M)$ is locally bounded if for any relatively
compact open subset $U\subset M$ we have $F|_U \in \Derb(\cor_U)$.  We denote
by $\Derlb(\cor_M)$ be the full subcategory of $\Der(\cor_M)$ consisting of
locally bounded objects.
\end{definition}
Local notions defined for objects of $\Derb(\cor_M)$ extend to objects of
$\Derlb(\cor_M)$, in particular the microsupport. The Grothendieck operations
which preserve boundedness properties also preserve the local boundedness
except maybe direct images.
However for $F\in \Derlb(\cor_M)$ and a morphism of manifolds $f\cl M\to N$
which is proper on $\Supp(F)$ we have
$\roim{f}F \simeq \reim{f}F \in \Derlb(\cor_N)$
and Theorem~\ref{th:opboim} still holds in this context.

In particular in the situation of the previous paragraph if we assume
that 
$K_1 \in \Derlb(\cor_{M_{12}})$ and $K_2 \in\Derlb(\cor_{M_{23}})$
satisfy \eqref{eq:noncharker}, then we
obtain $K_1\conv[M_2] K_2 \in \Derlb(\cor_{M_{13}})$ with the same bound for
its microsupport.

The category $\Derlb(\cor_M)$ is stable by the following gluing procedure.

\begin{lemma}\label{le:stack2}
Let $j_n\cl U_n \hookrightarrow M$ \lp$n\in \N$\rp\, be an increasing sequence of
open embeddings of $M$ with $\bigcup_n U_n=M$.
We consider a sequence $\{F_n\}_n$ with $F_n\in\Derlb(\cor_{U_n})$ together with isomorphisms
$u_{n+1,n}\cl F_n \isoto F_{n+1}\vert_{U_n}$.  Then there exists 
$F\in \Derlb(\cor_M)$ and isomorphisms $u_n\cl F\vert_{U_n} \simeq F_n$ 
such that $u_{n+1,n}=u_{n+1}\circ \opb{u_n}$ for all $n$.
Moreover such a family $(F,\{u_n\}_n)$ is unique up to 
a \lp non-unique\rp\ isomorphism.
\end{lemma}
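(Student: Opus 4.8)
The plan is to construct $F$ by a transfinite (here: countable) iteration of the two-open-sets gluing provided by Lemma~\ref{le:stack1}, and then to promote the result from $\Derb$ to $\Derlb$ by a local boundedness check. First I would build an increasing sequence $(G_n,\{u^n_k\}_{k\le n})$ where $G_n\in\Derlb(\cor_{U_n})$ is equipped with compatible isomorphisms $u^n_k\cl G_n\vert_{U_k}\isoto F_k$ for $k\le n$. Start with $G_0=F_0$. Given $G_n$, we wish to glue it with $F_{n+1}$ over the pair of open sets $U_n\subset U_{n+1}$ and $U_{n+1}$ itself (so in the notation of Lemma~\ref{le:stack1}, $U_1=U_n$, $U_2=U_{n+1}$, $U_{12}=U_n$), using the isomorphism $u_{n+1,n}\cl G_n\simeq F_n\simeq F_{n+1}\vert_{U_n}$ obtained by composing $u^n_n$ with $u_{n+1,n}$. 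Lemma~\ref{le:stack1} — which I note is stated for $\Der(\cor_M)$, not merely $\Derb$, so it applies verbatim here — yields $G_{n+1}\in\Der(\cor_{U_{n+1}})$ together with isomorphisms $G_{n+1}\vert_{U_n}\simeq G_n$ and $G_{n+1}\vert_{U_{n+1}}\simeq F_{n+1}$; the first of these, composed with the $u^n_k$, supplies the $u^{n+1}_k$ for $k\le n$, and the second is $u^{n+1}_{n+1}$. By construction $G_{n+1}\vert_{U_n}\simeq G_n$, so these restriction isomorphisms are the transition maps of a compatible system.

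Next I would define $F$ as the colimit of this system in the following concrete sense: for each relatively compact open $V\subset M$, compactness gives some $n$ with $V\subset U_n$, and I set $F\vert_V\eqdot G_n\vert_V$; the transition isomorphisms $G_{n+1}\vert_{U_n}\simeq G_n$ show this is independent of the choice of $n$ up to canonical isomorphism, and restriction to smaller relatively compact opens is compatible. Since $M$ has a countable exhaustion by relatively compact opens (take $V_m$ with $\overline{V_m}\subset V_{m+1}$ and $\bigcup V_m = M$), I can equally well apply Lemma~\ref{le:stack1} iteratively to the $F\vert_{V_m}$ — or simply invoke that the prestack $U\mapsto\Der(\cor_U)$ satisfies gluing along such an exhaustion once compatibility of the local pieces is in hand — to obtain a genuine object $F\in\Der(\cor_M)$ with $F\vert_{U_n}\simeq G_n\simeq F_n$ compatibly. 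Local boundedness of $F$ is then immediate: any relatively compact $U\subset M$ lies in some $U_n$, and $F\vert_U\simeq F_n\vert_U\in\Derb(\cor_U)$ because $F_n\in\Derlb(\cor_{U_n})$. Setting $u_n\cl F\vert_{U_n}\isoto F_n$ to be the composite isomorphism just produced, the relation $u_{n+1,n}=u_{n+1}\circ\opb{u_n}$ is forced by the way $G_{n+1}$ was extracted from Lemma~\ref{le:stack1} (it encodes exactly the compatibility clause $\phi_{12}=\phi_2\vert_{U_{12}}\circ\opb{\phi_1\vert_{U_{12}}}$ of that lemma).

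For uniqueness, suppose $(F',\{u'_n\}_n)$ is another such family. Over each $U_n$ the isomorphism $\opb{(u'_n)}\circ u_n\cl F\vert_{U_n}\isoto F'\vert_{U_n}$ is defined, and on $U_n$ (viewed inside $U_{n+1}$) it agrees with the restriction of $\opb{(u'_{n+1})}\circ u_{n+1}$ because both $u$'s and $u'$'s satisfy the same cocycle relation with the fixed $u_{n+1,n}$. Thus we have a compatible family of isomorphisms $F\vert_{U_n}\simeq F'\vert_{U_n}$; by the uniqueness half of the iterated gluing (equivalently, by repeatedly applying the uniqueness clause of Lemma~\ref{le:stack1}, which asserts the gluing is unique up to non-unique isomorphism) these patch to an isomorphism $F\simeq F'$ — not canonical, since at each stage Lemma~\ref{le:stack1} only controls the object up to non-unique isomorphism, whence the parenthetical ``non-unique'' in the statement.

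The main obstacle I anticipate is not conceptual but bookkeeping: Lemma~\ref{le:stack1} produces its glued object and isomorphisms only up to non-unique isomorphism and with no claim of functoriality, so one cannot naively "take the limit" of the $G_n$ as one would for, say, actual sheaves. The care needed is to phrase everything through restrictions to relatively compact opens — where the sequence stabilizes after finitely many steps — and to check at each inductive step that the \emph{chosen} isomorphisms, not merely abstract ones, are compatible; this is exactly what the compatibility clause $\phi_{12}=\phi_2\vert_{U_{12}}\circ\opb{\phi_1\vert_{U_{12}}}$ in Lemma~\ref{le:stack1} is designed to give, and it propagates cleanly through the induction. No estimate on microsupports or any analysis enters; this is a pure patching argument.
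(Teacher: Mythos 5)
Your inductive step is essentially vacuous: since the $U_n$ are increasing, gluing $G_n$ (on $U_n$) with $F_{n+1}$ (on $U_{n+1}$) via Lemma~\ref{le:stack1} just returns an object isomorphic to $F_{n+1}$, so after $n$ steps you are still only on $U_{n+1}$. The entire content of Lemma~\ref{le:stack2} is the passage to the limit, i.e.\ producing a single object on $M=\bigcup_nU_n$, and no finite iteration of the two-open-set gluing reaches it. At that point you write that one may ``simply invoke that the prestack $U\mapsto\Der(\cor_U)$ satisfies gluing along such an exhaustion once compatibility of the local pieces is in hand'' --- but that is precisely the statement being proved, and the paper warns at the start of this subsection that this prestack is \emph{not} a stack: a compatible family of local objects with compatible isomorphisms does not automatically glue, because of the lack of functoriality you yourself flag. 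Phrasing everything through relatively compact opens does not help; you end with a compatible family indexed by an infinite exhaustion, which is the same problem you started with.

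The missing device is the homotopy colimit (telescope). The paper sets $v_n\cl\eim{j_n}(F_n)\to\eim{j_{n+1}}(F_{n+1})$ by adjunction and defines $F$ by the distinguished triangle
\begin{equation*}
\oplus_{n}\eim{j_n}(F_n)\xrightarrow{\ \oplus_n(\id-v_n)\ }\oplus_{n}\eim{j_n}(F_n)\to F\to[+1],
\end{equation*}
which exists because a morphism in a triangulated category always embeds in a triangle; restricting to $U_n$, the system becomes eventually constant and one reads off $u_n\cl F|_{U_n}\simeq F_n$. This triangle is also what drives the uniqueness: given $(G,\{w_n\})$, adjunction yields $\phi\cl\oplus_n\eim{j_n}(F_n)\to G$ with $\phi\circ v=0$, hence a \emph{global} morphism $\psi\cl F\to G$, which is then an isomorphism because being an isomorphism is local. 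Your uniqueness argument instead produces isomorphisms only over each $U_n$ and then asserts they ``patch'', which is again the unproved gluing statement. So the gap is real and is exactly the step where a genuine idea (the telescope triangle) is required.
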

\begin{proof}
(i) Denote by
$v_n\cl \eim{j_n}(F_n)\to\eim{j_{n+1}}(F_{n+1})$  the morphisms
obtained by  adjunction.
Then define $F\in\Der(\cor_M)$  as the homotopy
colimit of this system, that is, 
$F$ (which is defined up to isomorphism) is given by the distinguished triangle
\eq\label{eq:def-homot-colim}
\oplus_{n\in \N} \eim{j_n}(F_n)
\to[{v \; \eqdot \; \oplus_{n\in \N} (\id_{\eim{j_n}(F_n)} - v_n)}]
\oplus_{n\in \N} \eim{j_n}(F_n)\to F \to[+1].
\eneq
Then we have isomorphisms $u_n\cl F\vert_{U_n} \simeq F_n$ for all
$n\in\N$, $u_{n+1,n}=u_{n+1}\circ \opb{u_n}$ and $F\in \Derlb(\cor_M)$.

\medskip\noindent
(ii) Assume that we have another $G\in \Derlb(\cor_M)$ and isomorphisms
$w_n\cl G|_{U_n} \simeq F_n$.  By adjunction they give
$\phi_n\cl \eim{j_n}(F_n)\to G$ and we let $\phi$ be the sum of the
$\phi_n$'s. Since $u_{n+1,n}=w_{n+1}\circ \opb{w_n}$, we have $\phi\circ v =0$,
where $v$ is defined in~\eqref{eq:def-homot-colim}.
Hence $\phi$ factorizes through $\psi\cl F \to G$.
Then $\psi|_{U_n} = \opb{w_n}\circ u_n$ is an isomorphism.
The property of being an isomorphism being local, we obtain that $\psi$ is
an isomorphism.
\end{proof}

\subsection{Quantized contact transformations (\cite[\S 7.2]{KS90})}
Consider two manifolds $M$ and $N$, two conic open subsets $U\subset T^*M$
and $V\subset T^*N$ and a homogeneous contact transformation $\chi$:
\eq\label{eq:contact1}
&& T^*N\supset V\isoto[\chi] U\subset T^*M.
\eneq
Denote by $V^a$ the image of $V$ by the antipodal map $a_N$ on $T^*N$ and by $\Lambda$
the image of the graph of $\chi$ by $\id_{U}\times a_N$. Hence
$\Lambda$ is a conic Lagrangian submanifold of $U\times V^a$.
A quantized contact transformation (a QCT, for short) above $\chi$ is a kernel
$K\in\Derb(\cor_{M\times N})$ such that 
$\SSi(K)\cap(U\times V^a)=\Lambda$ 
and satisfying some technical
properties that we do not recall here so that the kernel $K$ induces
an equivalence of categories 
\eq\label{eq:QCT1}
&&K\conv\scbul\cl\Derb(\cor_N;V)\isoto\Derb(\cor_M;U). 
\eneq
Given $\chi$ and $q\in V$,
$p=\chi(q)\in U$, there exists such a QCT after replacing $U$ and $V$
by sufficiently small neighborhoods of $p$ and $q$. 

\subsection{The functor $\muhom$ (\cite[\S 4.4, \S 7.2]{KS90})}
The functor of microlocalization along a submanifold has been introduced by
Mikio Sato in the 70's and has been at the origin of what is now called
``microlocal analysis''.  A variant of this functor, the bifunctor
\eq
&&\muhom\cl\Derb(\cor_M)^\rop\times\Derb(\cor_M)\to\Derb(\cor_{T^*M})
\label{eq:muhom}
\eneq
has been constructed in~\cite{KS90}.
Since $\Supp(\muhom(F,F'))\subset\SSi(F)\cap\SSi(F')$, 
\eqref{eq:muhom} induces a bifunctor for any open subset $U$ of $T^*M$:
\eqn
&&\muhom\cl\Derb(\cor_M;U)^\rop\times\Derb(\cor_M;U)\to\Derb(\cor_{U}).
\eneqn
Let us only recall the properties of this functor that we shall use.
Consider a function $\psi\cl M\to\R$ defined in a neighborhood 
$W$ of $x_0\in M$ such that $d\psi(x_0)\not=0$.
Then, 
setting $S\eqdot\set{x\in W}{\psi(x)=\psi(x_0)}$ and
$p=d\psi(x_0)$, we have 
\eqn
&&\rsect_{\{\psi(x)\geq\psi(x_0)\}}(F)_{x_0}\simeq\muhom(\cor_S,F)_p
\quad\text{for any $F\in\Derb(\cor_M)$.}
\eneqn
If $\chi$ is a contact transform as in~\eqref{eq:contact1}
and if $K$ is a QCT as in~\eqref{eq:QCT1}, then $K$ induces a natural
isomorphism for any $F,G\in\Derb(\cor_N;V)$ 
\eq\label{eq:QCT2}
&&\chi_*(\muhom(F,G)\vert_V)\isoto  \muhom(K\conv F,K\conv G)\vert_U.
\eneq

\subsection{Simple sheaves (\cite[\S 7.5]{KS90})}
Let $\Lambda\subset\dTM$ be a locally closed conic Lagrangian
submanifold and let $p\in\Lambda$.
Simple sheaves along $\Lambda$ at $p$ 
are defined in~\cite[Def.~7.5.4]{KS90}. 

When $\Lambda$ is the conormal bundle to a submanifold
$N\subset M$, that is, when the projection $\pi_M\vert_\Lambda\cl\Lambda\to M$ has
constant rank, then an object 
$F\in\Derb(\cor_M)$ is simple along $\Lambda$ at
$p$ if $F\simeq\cor_N\,[d]$ in $\Derb(\cor_M;p)$ for some shift $d\in\Z$.

If $\SSi(F)$ is contained in $\Lambda$ on a neighborhood of $\Lambda$,
$\Lambda$ is connected and $F$ is simple at some point of $\Lambda$,
then $F$ is simple at every point of $\Lambda$.

If $\Lambda_1\subset T^*M_{12}$ and $\Lambda_2\subset T^*M_{23}$ are 
locally closed conic Lagrangian submanifolds and if 
$K_i\in\Derb(\cor_{M_{i,i+1}})$ ($i=1,2$) are simple along $\Lambda_i$,
then
$K_1\conv K_2$ is simple along $\Lambda_1\conv\Lambda_2$
under some conditions (see~\cite[Th.~7.5.11]{KS90}). 
In particular, simple sheaves are stable by QCT. 

Now, let $M$ and $N$ be two manifolds with the same dimension.
Let $F\in\Derb(\cor_{M\times N})$.
Set
\eq
F^{-1}=v^{-1}\rhom(F,\omega_M\etens\cor_N)\in\Derb(\cor_{N\times M}),
\label{def:invkern}
\eneq
where $v\cl N\times M\to M\times N$ is the swap.
Let $q_{ij}$ be the $(i,j)$-th projection from $N\times M\times N$.
Then we have 
$F^{-1}\conv F=\reim{q_{13}}(\opb{q_{12}}F^{-1}\ltens\opb{q_{23}}F)$.
Let $\delta\cl N\to N\times N$ be the diagonal embedding.
Then we have
$\opb{\delta}(F^{-1}\conv F)\simeq
\reim{q_2}(F\ltens \rhom(F,\omega_M\etens\cor_N))$.
Hence $\opb{\delta}(F^{-1}\conv F)\simeq
\reim{q_2}(F\ltens \rhom(F,\epb{q_2}\cor_N))
\to \reim{q_2}(\epb{q_2}\cor_N)\to\cor_N$
gives a morphism
$$F^{-1}\conv F\to\cor_{\Delta_N}.$$
\begin{proposition}%
[{\cite[Proposition 7.1.8, Proposition 7.1.9, Theorem 7.2.1]{KS90}}]%
\label{prop:invker}
Let $F\in\Derb(\cor_{M\times N})$, 
let $p_M\in\dTM$ and let $p_N\in\dT^* N$. Assume the following conditions:
\bnum
\item
$\Supp(F)\to N$ is proper,
\item $F$ is cohomologically constructible \lp see~{\rm \cite[Def.~3.4.1]{KS90}}\rp,
\item
$\SSi(F)\cap (\dTM\times T^*_NN)=\emptyset$,
\item
$\SSi(F)\cap (T^*M\times \{p_N^a\})=\{(p_M,p_N^a)\}$,
\item
$\SSi(F)$ is a Lagrangian submanifold of $T^*(M\times N)$ on a 
neighborhood of $(p_M,p_N^a)$,
\item
$F$ is simple along $\SSi(F)$
at $(p_M,p_N^a)$,
\item
$\SSi(F)\to T^*N$ is a local isomorphism 
at $(p_M,p_N^a)$. 
\enum
Then the morphism $F^{-1}\conv F\to\cor_{\Delta_N}$ is an isomorphism
in $\Derb(\cor_{N\times N};(p_N,p_N^a))$.
\end{proposition}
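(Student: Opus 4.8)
The statement is a local one near $(p_N,p_N^a)\in T^*N\times T^*N$, so the strategy is to reduce to the simplest possible model via a quantized contact transformation and then compute directly. First I would note that by assumption (vii) the projection $\SSi(F)\to T^*N$ is a local isomorphism at $(p_M,p_N^a)$; composing with the inverse we obtain a germ of homogeneous contact transformation $\chi$ from a neighborhood $V$ of $p_N$ in $T^*N$ onto a neighborhood $U$ of $p_M$ in $T^*M$, whose graph (twisted by the antipode on $N$) is precisely $\SSi(F)$ near $(p_M,p_N^a)$. By the existence of QCTs (Section on quantized contact transformations), after shrinking $U$ and $V$ there is a kernel $L\in\Derb(\cor_{M\times N})$ inducing an equivalence $L\conv\scbul\cl\Derb(\cor_N;V)\isoto\Derb(\cor_M;U)$, with $\SSi(L)\cap(U\times V^a)=\Lambda_\chi$. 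Since $F$ is simple along $\SSi(F)=\Lambda_\chi$ at $(p_M,p_N^a)$ by (vi), and $L$ is also simple along $\Lambda_\chi$, the composite $L^{-1}\conv F$ is a kernel on $N\times N$ which, in $\Derb(\cor_{N\times N};(p_N,p_N^a))$, is simple along the graph of $\id$, i.e.\ is isomorphic to $\cor_{\Delta_N}[d]$ for some shift $d$; hypotheses (i)–(v) guarantee the non-characteristic and properness conditions needed for $L^{-1}\conv F$ and $F^{-1}\conv F$ to be well-behaved and to have the expected microsupport estimates via~\eqref{eq: estss}.

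\textbf{Key steps.}
The plan is then: (1) build $\chi$ and the QCT kernel $L$ as above; (2) show $G\eqdot L^{-1}\conv F\simeq\cor_{\Delta_N}$ in $\Derb(\cor_{N\times N};(p_N,p_N^a))$ — here $d=0$ because $F$ is simple (not merely simple up to shift in a way that matters: one tracks the shift and checks it vanishes using the normalization built into~\eqref{def:invkern}, or one absorbs $d$ and sees it must be $0$ from the stalk computation in step (4)); (3) use the functoriality of the inverse kernel and of convolution to identify $F^{-1}\conv F$ with $G^{-1}\conv(L^{-1}\conv L)\conv G$ — more precisely, since $L\conv\scbul$ is an equivalence of localized categories with inverse $L^{-1}\conv\scbul$, one has $L^{-1}\conv L\simeq\cor_{\Delta_N}$ microlocally near $(p_N,p_N^a)$ (this is essentially Proposition 7.2.1 of~\cite{KS90}, the statement that a QCT kernel is invertible), whence $F^{-1}\conv F\simeq G^{-1}\conv G$ in $\Derb(\cor_{N\times N};(p_N,p_N^a))$; (4) compute $G^{-1}\conv G$ directly: for $G\simeq\cor_{\Delta_N}$ one has $G^{-1}\simeq\cor_{\Delta_N}$ and $\cor_{\Delta_N}\conv\cor_{\Delta_N}\simeq\cor_{\Delta_N}$, and one checks that under this chain of identifications the canonical morphism $F^{-1}\conv F\to\cor_{\Delta_N}$ constructed before the Proposition corresponds to the identity of $\cor_{\Delta_N}$, hence is an isomorphism in the localized category.

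\textbf{Main obstacle.}
The genuinely delicate point is step (4), the compatibility of the \emph{canonical} morphism $F^{-1}\conv F\to\cor_{\Delta_N}$ (defined via the trace-like map $\reim{q_2}(F\ltens\rhom(F,\epb{q_2}\cor_N))\to\cor_N$) with the abstract isomorphism $F^{-1}\conv F\simeq\cor_{\Delta_N}$ obtained from the QCT formalism. Producing an abstract isomorphism is easy once one knows both sides are simple along the diagonal; the content of the Proposition is that the \emph{specific} evaluation/coevaluation morphism is the one realizing it. I would handle this by working at the level of $\muhom$: using~\eqref{eq:QCT2}, the QCT $L$ induces $\chi_*(\muhom(F',G')|_V)\isoto\muhom(L\conv F',L\conv G')|_U$, and one checks the canonical morphism induces on $\muhom$ (equivalently, on the stalk $\rsect_{\{\psi\geq\psi(x_0)\}}(\cdot)_{x_0}$ via the identification recalled in the $\muhom$ subsection) the identity map $\cor\to\cor$ at the point $(p_N,p_N^a)$, which forces the shift $d=0$ and the isomorphy. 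Checking that this stalk-level map is the identity and not merely an isomorphism is the one place where the simplicity normalization of~\cite[Def.~7.5.4]{KS90} must be used carefully; everything else is formal manipulation of the six operations and the microsupport estimates~\eqref{eq: estss} already recorded above.
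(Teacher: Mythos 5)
The paper does not actually prove this proposition: it is quoted from \cite[Prop.~7.1.8, Prop.~7.1.9, Th.~7.2.1]{KS90}, so your attempt has to be measured against the argument in the cited source rather than against anything in the text. Measured that way, your architecture has a genuine circularity. In step (3) you need $L^{-1}\conv L\simeq\cor_{\Delta_N}$ in $\Derb(\cor_{N\times N};(p_N,p_N^a))$ for the QCT kernel $L$, and you justify this by invoking ``Proposition 7.2.1 of \cite{KS90}, the statement that a QCT kernel is invertible''. But that is precisely an instance of the statement you are proving: $L$ is a kernel satisfying conditions (i)--(vii) (simple along the Lagrangian attached to a contact transformation), and the assertion that $L^{-1}\conv L\to\cor_{\Delta_N}$ is a microlocal isomorphism is Proposition~\ref{prop:invker} applied to $L$. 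In \cite{KS90} the logical order is the reverse of yours: the $\muhom$-composition formula (Prop.~7.1.8) plus the simplicity normalization (Prop.~7.1.9) are used to prove that the canonical morphism $F^{-1}\conv F\to\cor_{\Delta_N}$ is a microlocal isomorphism, and only \emph{then} does one deduce Theorem~7.2.1 and the existence of QCTs inducing the equivalence~\eqref{eq:QCT1}. Nor can you escape by verifying invertibility for an explicit model kernel such as $\cor_Z$ for a domain with smooth boundary: the paper's own Theorem~\ref{th:2}, which does exactly that, takes Proposition~\ref{prop:invker} as input.

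The repair is essentially contained in your ``main obstacle'' paragraph: drop the QCT detour and run the $\muhom$ argument directly on $F$. By~\eqref{eq: estss} and conditions (iii)--(v), both $F^{-1}\conv F$ and $\cor_{\Delta_N}$ have microsupport contained in $T^*_{\Delta_N}(N\times N)$ near $(p_N,p_N^a)$; the composition formula for $\muhom$ identifies the germ of $\muhom(\cor_{\Delta_N},F^{-1}\conv F)$ at $(p_N,p_N^a)$ with $\muhom(F,F)_{(p_M,p_N^a)}\simeq\cor$ (this is where simplicity, condition (vi), enters), and one checks that the canonical trace morphism induces the identity of $\cor$ under this identification. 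Hence the cone of $F^{-1}\conv F\to\cor_{\Delta_N}$ has vanishing $\muhom(\cor_{\Delta_N},\scbul)$ at $(p_N,p_N^a)$ while its microsupport near that point lies in a Lagrangian, which forces $(p_N,p_N^a)\notin\SSi(\mathrm{cone})$. That computation is the actual content of Prop.~7.1.9 of \cite{KS90}; once you are doing it, the reduction to the diagonal buys you nothing.
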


\section{Deformation of the conormal to the diagonal}

As usual, we denote by $\Delta_M$ or simply $\Delta$ the diagonal of $M\times M$.
We denote by $p_1$ and $p_2$ the first and second projection from 
$T^*(M\times M)$ to $T^*M$ and by $p_2^a$ the composition of $p_2$ and the
antipodal map on $T^*M$.
We also set $n\eqdot\dim M$.
 
Consider a $\Cd^\infty$-function $f(x,y)$ defined on an open neighborhood $\Omega_0
\subset M\times M$ of the diagonal $\Delta_M$.
We assume that
\bnum
\item
$f\vert_{\Delta_M}\equiv0$,
\label{cond1}
\item
$f(x,y)>0$ for $(x,y)\in \Omega_0\setminus \Delta_M$,
\item
the partial Hessian $\dfrac{\partial^2f}{\partial x_i\partial x_j}(x,y)$
is positive definite for any $(x,y)\in\Delta_M$.
\label{cond5}
\enum
Such a pair ($\Omega_0$, $f$) exists.

\begin{proposition}\label{prop:1}
Assume that \lp$\Omega_0$, $f$\rp\,  satisfies the conditions 
\eqref{cond1}--\eqref{cond5} above.
Let $U$ be a relatively compact open subset of $M$.
Then there exist an $\eps>0$ and an open subset $\Omega$ 
of $M\times M$ satisfying the following conditions:
\banum
\item
$\Delta_U\subset\Omega\subset\Omega_0\cap (M\times U)$,
\item
$Z_\eps\seteq \set{(x,y)\in\Omega}{f(x,y)\leq\eps}$ is proper over $U$
by the map induced by the second projection,
\item
for any $y\in U$ and $\eps'\in]0,\eps]$, the open subset
$\set{x\in M}{(x,y)\in\Omega,\;f(x,y)<\eps'}$ is homeomorphic to $\R^n$,
\label{cond:shere}
\item
$d_xf(x,y)\not=0$, $d_yf(x,y)\not=0$ for $(x,y)\in \Omega\setminus \Delta_M$,
\label{cond:diff}
\item
setting 
$\Gamma_{Z_\eps}=\set{(x,y;\xi,\eta)\in T^*(\Omega)}%
{f(x,y)=\eps,\;(\xi,\eta)=\lambda df(x,y),\lambda<0}$,
the projection $p_2^a\cl T^*(M\times U)\to T^*U$ 
induces an isomorphism
$\Gamma_{Z_\eps}\isoto[p_2^a]\dot T^*U$ and
the projection $p_1\cl T^*(M\times U)\to T^*M$ 
induces an open embedding
$\Gamma_{Z_\eps}\hookrightarrow\dTM$.
\eanum
\end{proposition}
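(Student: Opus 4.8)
The plan is to put $f$ into a \emph{parametrized Morse normal form} near $\Delta_M$, namely $f(x,y)=|\Phi(x,y)|^2_g$ for a smooth $\Phi$ which, for each fixed $y$, is a diffeomorphism from a neighbourhood of $y$ onto a neighbourhood of $0\in T_yM$; conditions (a)--(d) then come out immediately, and (e) follows from a perturbation argument that the compactness of $\overline U$ makes legitimate. (Condition (e) says that $\Gamma_{Z_\eps}$ is the twisted graph of a contact transformation $\dot T^*U\isoto p_1(\Gamma_{Z_\eps})\subset\dTM$ --- the microlocal analogue of the ``$f$-distance $\sqrt\eps$'' map --- so that $\cor_{Z_\eps}$ is, microlocally near $\Delta_M$, a quantized contact transformation in the sense of \S\ref{section:mts}.) Concretely: fix a Riemannian metric $g$ on $M$ for which small closed metric balls around points of $K\eqdot\overline U$ are compact, write $d=d_g$, and pick $r_0>0$ and an open $\mathcal V\supset K$ with $\{(x,y)\cl y\in\mathcal V,\ d(x,y)<r_0\}\subset\Omega_0$ (possible by compactness of $K$). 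Pulling $f$ back by $(v,y)\mapsto(\exp_yv,y)$ and applying the Morse lemma with parameter $y\in\mathcal V$ --- the critical point of $x\mapsto f(x,y)$ is $x=y$, nondegenerate of index $0$ and of critical value $0$ by \eqref{cond1}--\eqref{cond5} --- while using that the positive square root of a positive definite symmetric matrix depends smoothly on the matrix (so the normal form is produced with smooth dependence on $y$), I obtain $r_1\in\,]0,r_0]$, an open neighbourhood $W\subset\Omega_0$ of $\Delta_K$, and a smooth $\Phi$ on $W$ with $\Phi(x,y)\in T_yM$, $\Phi(y,y)=0$, $f(x,y)=|\Phi(x,y)|^2_g$, and such that $(x,y)\mapsto(\Phi(x,y),y)$ is a diffeomorphism onto an open subset of $TM$ containing $\{(v,y)\cl y\in\mathcal V,\ |v|_g<r_1\}$; in particular $\Phi_y\eqdot\Phi(\scdot,y)$ is a diffeomorphism onto a neighbourhood of $0\in T_yM$. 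Differentiating $\Phi(y,y)\equiv0$ shows $P(y)\eqdot d_x\Phi(x,y)|_{x=y}$ is invertible and equals $-\,d_y\Phi(x,y)|_{x=y}$; since $g$ is independent of $x$, $d_xf=2\,g_y(\Phi,d_x\Phi(\scdot))$ exactly, and Taylor-expanding gives, near $\Delta_M$,
\[ d_xf(x,y)=2\,g_y(\Phi(x,y),P(y)(\scdot))+O(|\Phi(x,y)|^2),\qquad d_yf(x,y)=-2\,g_y(\Phi(x,y),P(y)(\scdot))+O(|\Phi(x,y)|^2). \]

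Next choose $r_2\in\,]0,r_1]$ so small that $\{(x,y)\cl d(x,y)<r_2\}\subset W$ and that on this set the remainders above are dominated by the main terms (so $d_xf,d_yf$ are nonzero there off $\Delta_M$); there is $\delta_2>0$ with $\{|v|_g<\delta_2\}\subset\Phi_y(\{x\cl d(x,y)<r_2\})$ for all $y\in K$. Put $\Omega\eqdot\{(x,y)\in M\times U\cl d(x,y)<r_2\}$ and fix $\eps>0$ with $\sqrt\eps<\delta_2$ and small enough that $(x,y)\in\Omega$, $f(x,y)\le\eps$ force $d(x,y)<r_2/2$. Then (a) is clear; (d) holds because off $\Delta_M$ one has $\Phi(x,y)\neq0$, hence $d_xf(x,y)\neq0$ and $d_yf(x,y)\neq0$ by the displayed formulas and the choice of $r_2$; (c) holds because in the coordinate $v=\Phi(x,y)$ the set $\{x\cl(x,y)\in\Omega,\ f(x,y)<\eps'\}$ equals $(\Phi_y)^{-1}(\{|v|_g<\sqrt{\eps'}\})$, an open ball; and (b) holds because for $L\subset U$ compact, $Z_\eps\cap q_2^{-1}(L)$ is a closed subset of the compact set $\{(x,y)\cl y\in L,\ d(x,y)\le r_2/2\}$, hence compact, which is the asserted properness of $Z_\eps\to U$.

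For (e), parametrize $\Gamma_{Z_\eps}$ by $(x,y,\lambda)$ with $(x,y)\in\Omega$, $f(x,y)=\eps$, $\lambda<0$, the corresponding point being $(x,y;\lambda\,d_xf(x,y),\lambda\,d_yf(x,y))$; by (d), $\eps$ is a regular value of $f|_\Omega$, $\Gamma_{Z_\eps}$ is a smooth $\R_{>0}$-conic submanifold, $p_1$ sends it to $(x;\lambda\,d_xf(x,y))$ and $p_2^a$ to $(y;-\lambda\,d_yf(x,y))$. For $p_2^a$: for fixed $y\in U$ the fibre of $\Gamma_{Z_\eps}/\R_{>0}$ is the compact sphere $S_y=(\Phi_y)^{-1}(\{|v|_g=\sqrt\eps\})$, and by the displayed formula the map $S_y\to S^*_yU$, $x\mapsto[-d_yf(x,y)]$, is $\Cd^1$-close, uniformly in $y\in K$, to the composite of the diffeomorphism $x\mapsto\Phi(x,y)$ of $S_y$ onto the round sphere with the diffeomorphism of spheres induced by the linear isomorphism $v\mapsto g_y(v,P(y)(\scdot))\cl T_yM\isoto T^*_yM$; since $S_y$ is compact and the limiting diffeomorphisms form a compact family over $K$, for $\eps$ small every $S_y\to S^*_yU$ is a diffeomorphism, so $p_2^a$ is a fibrewise diffeomorphism over $\id_U$, i.e.\ $p_2^a\cl\Gamma_{Z_\eps}\isoto\dot T^*U$. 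For $p_1$: fix $x$ in the relatively compact set $q_1(\Omega)$; since $d_y\Phi(x,y)|_{y=x}=-P(x)$ is invertible and $r_2$ is small, $\Phi'_x\eqdot\Phi(x,\scdot)$ is a diffeomorphism on $\{y\cl d(x,y)<r_2\}$, so the fibre $\{y\cl(x,y)\in\Omega,\,f(x,y)=\eps\}=(\Phi'_x)^{-1}(\{|v|_g=\sqrt\eps\})$ is again a compact topological sphere, on which $y\mapsto[d_xf(x,y)]$ is $\Cd^1$-close, uniformly in $x$, to a diffeomorphism onto $S^*_xM$; hence for $\eps$ small $p_1$ is injective on every such fibre, so on $\Gamma_{Z_\eps}/\R_{>0}$, so on $\Gamma_{Z_\eps}$; being also an immersion between $2n$-manifolds (the immersion property is likewise $\Cd^1$-stable), $p_1\cl\Gamma_{Z_\eps}\into\dTM$ is an open embedding. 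This proves (e).

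The core of the argument is Step concerning (e), together with the requirement that every choice be uniform over $K=\overline U$: one must run the parametrized Morse lemma and all the remainder estimates uniformly, and --- above all --- take the ``thickness'' $r_2$ of the tube $\Omega$ small enough \emph{independently of $\eps$}, both to keep each $\Phi_y$ and each $\Phi'_x$ a diffeomorphism and to ensure that the equation $f=\eps$ cuts the fibres of $\Gamma_{Z_\eps}$ down to genuine compact spheres; this compactness is exactly what turns the $\Cd^1$-closeness estimates into honest diffeomorphism and embedding statements and, in particular, turns $p_1|_{\Gamma_{Z_\eps}}$ from a local diffeomorphism into a globally injective map.
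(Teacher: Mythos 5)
Your argument is correct and establishes all five conclusions, but it is organized differently from the paper's. The paper straightens $f$ near the diagonal by the map $(x,y)\mapsto d_yf(x,y)$, a local diffeomorphism onto a neighbourhood of the zero-section of $T^*M$ (its $x$-differential along $\Delta_M$ is minus the partial Hessian of condition (iii), hence invertible); this transports the whole statement to an elementary lemma about a function on a vector bundle whose fibrewise Hessian is positive definite at the zero-section, and whose level sets $\{f=\eps\}$ meet each ray exactly once. That particular choice of straightening is tailored to (e): after the identification, $p_2^a$ on $\Gamma_{Z_\eps}$ becomes literally the radial map $(v,\mu)\mapsto \mu v$ over the level hypersurface, so the isomorphism onto $\dot T^*U$ costs nothing. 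You instead straighten by $\exp$ together with the parametrized Morse lemma, obtaining $f=|\Phi|^2_g$, and then pay for (e) with uniform $\Cd^1$-perturbation estimates on the compact fibre spheres; this is more work, but it is honest work, and it has the merit of treating the $p_1$ open-embedding half of (e) explicitly, which the paper's reduction leaves implicit. Your insistence on choosing $r_2$ independently of $\eps$ and then $\eps$ uniformly over $\overline U$ and $\overline{q_1(\Omega)}$ is exactly the right bookkeeping. Two small imprecisions to repair: the tube condition should read $\set{(x,y)}{y\in K,\ d(x,y)<r_2}\subset W$ (the unrestricted tube around all of $\Delta_M$ need not lie in $W$, which is only a neighbourhood of $\Delta_K$); and $\Phi'_x=\Phi(x,\scdot)$ takes values in the varying fibres $T_yM$, so it is not literally a diffeomorphism onto an open subset of a fixed vector space --- the statement your perturbation argument actually needs, namely that $\set{y}{(x,y)\in\Omega,\ f(x,y)=\eps}$ is a compact $(n-1)$-sphere because $y\mapsto f(x,y)$ is Morse on $B(x,r_2)$ with a single nondegenerate minimum at $y=x$ (its Hessian there equals the partial Hessian in $x$, by differentiating $df|_{\Delta_M}=0$ twice), does hold and follows from the same normal-form construction applied in the $y$-variable.
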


\Proof
Replacing $\Omega_0$ with the open subset
\eqn
&&\Delta_M\cup\set{(x,y)\in\Omega_0}{d_xf(x,y)\not=0, d_yf(x,y)\not=0},
\eneqn
we may assume from the beginning that
$\Omega_0$ satisfies \eqref{cond:diff}.

Let $F\cl \Omega_0\to T^*M$ be the map
$(x,y)\mapsto d_yf(x,y)$.
This map sends $\Delta_M$ to $T^*_MM$ and 
is a local isomorphism.
Then there exists an open neighborhood $\Omega'\subset \Omega_0$ of 
$\Delta_M$ such that
$F\vert_{\Omega'}\cl\Omega'\to T^*M$ is an open embedding.
Hence by identifying $\Omega'$ as its image, we can reduce 
the proposition to the following lemma.
\QED

\begin{lemma}
Let $p\cl E\to X$ be a vector bundle of rank $n$,
$i\cl X\to E$ the zero-section,  $SE=(E\setminus i(X))/\R_{>0}$
the associated sphere bundle 
and $q\cl E\setminus i(X)\to SE$ the projection.
Let $f$ be a $\Cd^\infty$-function
on a neighborhood $\Omega$ of the zero-section $i(X)$ of $E$.
Assume the following conditions:
\bnum
\item
$f(z)=0$ for $z\in i(X)$,
\item
$f(z)>0$ for $z\in \Omega\setminus i(X)$,
\item for any $x\in X$ the Hessian of $f\vert_{p^{-1}(x)}$ 
at $i(x)$ is positive-definite.
\enum
Then, for any relatively compact open subset $U$ of $X$,
there exist $\eps>0$ and an open subset 
$\Omega'\subset\Omega\cap p^{-1}(U)$ containing 
$i(U)$ that satisfy the following conditions:
\anum
\item
$\set{z\in\Omega'}{f(z)\le \eps}$ is proper over $U$,
\item
$\set{z\in \Omega'}{0<f(z)<\eps}\to (SE\times_XU)\times ]0,\eps[$
given by $z\mapsto (q(z),f(z))$
is an isomorphism,
\item for any $x\in U$ and $t\in]0,\eps[$,
the set $\set{z\in\Omega'\cap p^{-1}(x)}{f(z)<t}$ is homeomorphic to $\R^n$.
\eanum
\end{lemma}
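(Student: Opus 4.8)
The idea is to straighten $f$ fiberwise near the zero-section by a Morse-type change of coordinates, reducing to the model case of the squared norm of a Euclidean metric, and then to choose $\eps$ small enough (using the relative compactness of $U$) that the sublevel sets behave as claimed over all of $U$ simultaneously. First I would work fiberwise: for fixed $x\in X$, the function $f|_{p^{-1}(x)}$ vanishes at $i(x)$, is positive elsewhere near $i(x)$, and has positive-definite Hessian at $i(x)$; by the Morse lemma with parameters (the parameter being $x\in X$, and after choosing a local trivialization of $E$), there is, on a neighborhood of $i(U)$, a fiber-preserving $\Cd^\infty$-diffeomorphism $\Phi$ onto a neighborhood of the zero-section of $E$ such that $f\circ\Phi^{-1}$ equals a fiberwise nondegenerate positive-definite quadratic form $Q(x,\cdot)$ on $p^{-1}(x)$. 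Composing further with a fiberwise linear automorphism diagonalizing $Q(x,\cdot)$ (which depends smoothly on $x$, e.g.\ via Gram–Schmidt), we may assume $f\circ\Phi^{-1}(z)=\|z\|_x^2$ for some smooth family of Euclidean norms on the fibers.

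Having reduced to $f(z)=\|z\|_x^2$ on a neighborhood $\Omega''$ of $i(X)$, I would shrink to obtain the open set $\Omega'$ over $U$. Since $\overline U$ is compact and $\Omega''$ is an open neighborhood of $i(\overline U)$, there is a continuous positive function $\rho$ on $\overline U$ (indeed one may take it locally constant after a further shrink, or just a constant $\delta$) such that the fiberwise ball $\set{z\in p^{-1}(x)}{\|z\|_x<\delta}$ is contained in $\Omega''$ for all $x\in\overline U$; set $\Omega'=\set{z\in p^{-1}(U)}{\|z\|_x<\delta}$ and $\eps=\delta^2$. Then (a) holds because $\set{z\in\Omega'}{f(z)\le\eps'}$ with $\eps'<\eps$ is the closed fiberwise ball of radius $\sqrt{\eps'}<\delta$ over $U$, which is a closed subset of the total ball of radius $\delta$, hence proper over $U$; (b) holds because the map $z\mapsto(q(z),f(z))$ is, in the model, $z\mapsto(z/\|z\|_x,\|z\|_x^2)$ on the punctured ball, which is manifestly an isomorphism onto $(SE\times_XU)\times\,]0,\eps[$; and (c) holds because each fiberwise sublevel set $\set{z\in p^{-1}(x)\cap\Omega'}{\|z\|_x^2<t}$ is an open Euclidean ball, hence homeomorphic to $\R^n$. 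One should also check that $\Phi$ can be chosen to preserve the zero-section pointwise, which is automatic since $f$ and $dQ$ both vanish there, so that $i(U)\subset\Omega'$.

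The main obstacle is the parametrized Morse lemma: one needs the change of variables $\Phi$ to be smooth \emph{jointly} in $(x,z)$ and to restrict to the identity on the zero-section, not merely fiberwise smooth for each $x$. The standard proof of the Morse lemma (writing $f(x,z)-f(x,0)=\int_0^1(1-t)\,\partial^2_z f(x,tz)[z,z]\,dt$ and then applying a smooth square-root/Gram–Schmidt construction to the resulting symmetric-matrix-valued function $H(x,z)=\int_0^1(1-t)\,\partial^2_z f(x,tz)\,dt$, which is positive-definite near $z=0$ by condition (iii) and continuity) does produce a $\Cd^\infty$ map in all variables, and since $H(x,0)$ is the Hessian, the construction can be normalized so that $\Phi(x,0)=0$ and $d_z\Phi(x,0)$ is a smooth family of linear isomorphisms; this is where relative compactness of $U$ enters again, to get a uniform neighborhood on which positive-definiteness and invertibility persist. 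Once this is in place, all the remaining verifications are the routine model-case computations indicated above, and the lemma follows; tracing back through the identification $\Omega'\cong F(\Omega')\subset T^*M$ in Proposition~\ref{prop:1}, conditions (a)–(e) there are exactly the translations of (a)–(c) here together with the fact that $\Gamma_{Z_\eps}$ is, in the model, the unit conormal sphere bundle, which maps isomorphically to $\dot T^*U$ and embeds openly into $\dTM$.
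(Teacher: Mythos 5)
The paper omits the proof of this lemma (``since the proof is elementary, we omit it''), so there is no written argument to compare yours with; your strategy --- a parametrized Morse lemma putting $f$ in the form of a fiberwise positive-definite quadratic norm, followed by a check of (a)--(c) in the model --- is a reasonable one, and your discussion of the parametrized Morse lemma itself (joint smoothness in $(x,z)$ via $H(x,z)=\int_0^1(1-t)\,\partial_z^2f(x,tz)\,dt$, normalization along the zero-section, uniformity over $\ol U$) is essentially correct. But the verification of the conclusions has a genuine gap.

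The problem is in (b): you conflate the two coordinate systems. The projection $q\cl E\setminus i(X)\to SE=(E\setminus i(X))/\R_{>0}$ is defined by the \emph{original} linear structure on the fibers, while the Morse coordinates $w=\Phi(z)$ in which $f=\|w\|_x^2$ differ from the original ones by a fiberwise diffeomorphism that is nonlinear in general (only $d_z\Phi(x,0)$ is linear). So in the model the map of (b) is $w\mapsto\bl q(\Phi^{-1}(w)),\|w\|_x^2\br$, not $w\mapsto(w/\|w\|_x,\|w\|_x^2)$, and its bijectivity is precisely the statement that each ray of $E_x$ meets each level set $\{f=t\}$, $0<t<\eps$, exactly once inside $\Omega'$ --- i.e.\ that $f$ is strictly increasing along rays near the zero-section. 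That does not follow from $f\circ\Phi^{-1}=\|\cdot\|_x^2$ alone, and it is the real content of the lemma (it is what yields $\Gamma_{Z_\eps}\isoto\dT^*U$ in Proposition~\ref{prop:1}). It is, however, immediate from your own integral formula: $f(x,rv)=r^2H(x,rv)(v,v)$ gives $\frac{d}{dr}f(x,rv)=2rH(x,rv)(v,v)+O(r^2)>0$ for $0<r<r_0$ uniformly over $\ol U$ and over $v$ in the compact unit-sphere bundle; the inverse of the map in (b) then comes from the implicit function theorem, (c) follows because the fiberwise sublevel sets are star-shaped with continuous radius function, and the Morse lemma becomes unnecessary. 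A second, minor slip: with $\eps=\delta^2$ and $\Omega'=\set{z}{\|z\|_x<\delta}$ the set $\set{z\in\Omega'}{f(z)\le\eps}$ is all of $\Omega'$, an \emph{open} ball bundle, which is not proper over $U$; condition (a) is about $\eps$ itself, not about $\eps'<\eps$, so you must take $\eps$ strictly smaller than $\delta^2$ (after which your argument for $\eps'<\eps$ applies verbatim).
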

 Since the proof is elementary, we omit it.

\medskip
Recall \eqref{def:invkern}.

\begin{theorem}\label{th:2}
We keep the notations in Proposition~\ref{prop:1}
and set $L=\cor_{Z_\eps}\in\Derb(\cor_{M\times U})$. Then
$\SSi(L)\subset \Gamma_{Z_\eps}\cup Z_\eps$ and $\opb{L}\conv L\isoto\cor_{\Delta_U}$. 
\end{theorem}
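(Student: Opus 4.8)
The plan is to establish the two assertions of Theorem~\ref{th:2} separately, the microsupport bound being essentially a direct computation and the isomorphism $\opb{L}\conv L\isoto\cor_{\Delta_U}$ requiring Proposition~\ref{prop:invker}.

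\textbf{Step 1: The microsupport estimate.} First I would observe that $Z_\eps=\set{(x,y)\in\Omega}{f(x,y)\le\eps}$ is a closed subset of $\Omega$ with $\Cd^1$ (indeed smooth) boundary $\set{f=\eps}$, along which $df\neq0$ by~\eqref{cond:diff} (since the boundary avoids $\Delta_M$). So by Example~\ref{ex:microsupp}~(iii), applied with the function $\eps-f$ on $\Omega$, one gets $\SSi(\cor_{Z_\eps})\subset Z_\eps\times_\Omega T^*_\Omega\Omega\,\cup\,\set{(x,y;\lambda df(x,y))}{f=\eps,\ \lambda\ge0}$, which, after the sign bookkeeping of the convention in Proposition~\ref{prop:1}~(e) (where $\Gamma_{Z_\eps}$ uses $\lambda<0$ for the outward-pointing covector — note $\cor_{Z_\eps}$ is the constant sheaf on the \emph{sublevel} set, so the relevant covectors point \emph{out} of $Z_\eps$, i.e.\ in the direction of increasing $f$), gives exactly $\SSi(L)\subset\Gamma_{Z_\eps}\cup Z_\eps$. (One should double-check the sign convention against Proposition~\ref{prop:1}~(e): there $\Gamma_{Z_\eps}$ is parametrized by $\lambda<0$, so one must verify this matches the conormal directions arising from $\cor_{Z_\eps}$; this is a routine sign check.) Here I am identifying $Z_\eps\times_\Omega T^*_\Omega\Omega$ with the zero-section part, denoted simply $Z_\eps$ in the statement.

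\textbf{Step 2: Verifying the hypotheses of Proposition~\ref{prop:invker}.} The goal is to apply Proposition~\ref{prop:invker} with $F=L=\cor_{Z_\eps}$, $M\rightsquigarrow M$, $N\rightsquigarrow U$. I would check the seven conditions: (i) properness of $\Supp(L)=Z_\eps\to U$ is Proposition~\ref{prop:1}~(b); (ii) $L=\cor_{Z_\eps}$ is cohomologically constructible since $Z_\eps$ is a closed subset with smooth boundary; (iii) $\SSi(L)\cap(\dTM\times T^*_UU)=\emptyset$ follows because a point of $\SSi(L)$ with zero $U$-component lies either in the zero-section (hence not in $\dTM\times\cdots$) or in $\Gamma_{Z_\eps}$, and on $\Gamma_{Z_\eps}$ the covector $d_yf(x,y)$ is nonzero by~\eqref{cond:diff} — indeed by Proposition~\ref{prop:1}~(e), $p_2^a$ maps $\Gamma_{Z_\eps}$ isomorphically onto $\dot T^*U$, so the $U$-component never vanishes; (iv), (v), (vii) all follow from Proposition~\ref{prop:1}~(e): $p_2^a\cl\Gamma_{Z_\eps}\isoto\dot T^*U$ being an isomorphism says precisely that $\SSi(L)$ (off the zero-section) maps isomorphically to $T^*U$ (condition (vii)), that for each $p_U=(p_N^a)$ there is a unique preimage (condition (iv)), and that $\Gamma_{Z_\eps}$, being a smooth conic submanifold of dimension $n=\dim U$, is Lagrangian in $T^*(M\times U)$ near any of its points (condition (v)); (vi) $L$ is simple along $\Gamma_{Z_\eps}$: by Proposition~\ref{prop:1}~(c), for fixed $y$ the fiber $\set{x}{f(x,y)<\eps'}$ is homeomorphic to $\R^n$, hence contractible, so near a boundary point $L$ restricted to the relevant half-space computes as a shift of a constant sheaf — more precisely, $\cor_{Z_\eps}$ is locally isomorphic to $\cor$ of a closed half-space with smooth boundary, which is simple along its conormal. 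Actually the cleanest route for (vi) is that $Z_\eps$ is locally (near its smooth boundary) diffeomorphic to a closed half-space in $\R^{2n}$, and the constant sheaf on a closed half-space with smooth boundary is simple along its conormal bundle by~\cite[\S7.5]{KS90}; then by connectedness of $\Gamma_{Z_\eps}$ (it is isomorphic to $\dot T^*U$, and $U$ may be taken connected — or one works componentwise) and the propagation of simplicity recalled in the subsection on simple sheaves, $L$ is simple at every point of $\Gamma_{Z_\eps}$.

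\textbf{Step 3: From pointwise to global isomorphism.} Proposition~\ref{prop:invker} gives that $L^{-1}\conv L\to\cor_{\Delta_U}$ is an isomorphism in $\Derb(\cor_{U\times U};(p_U,p_U^a))$ for every $p_U\in\dot T^*U$. I must upgrade this to an honest isomorphism in $\Derb(\cor_{U\times U})$. For this I would consider the cone $C$ of the morphism $L^{-1}\conv L\to\cor_{\Delta_U}$ and show $\SSi(C)=\emptyset$, i.e.\ $C=0$. By~\eqref{eq: estss} and the geometry of Proposition~\ref{prop:1}~(e), the microsupport of $L^{-1}\conv L$ is contained in $(\Gamma_{Z_\eps})^{-1}\conv\Gamma_{Z_\eps}$ which, because $p_1$ embeds $\Gamma_{Z_\eps}$ into $\dTM$ and $p_2^a$ identifies it with $\dot T^*U$, is exactly the conormal $T^*_{\Delta_U}(U\times U)$ off the zero-section; together with $\SSi(\cor_{\Delta_U})=T^*_{\Delta_U}(U\times U)$ this forces $\SSi(C)\subset T^*_{\Delta_U}(U\times U)$. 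On the open dense set $\dot T^*_{\Delta_U}(U\times U)$, which consists of points of the form $(p_U,p_U^a)$ after the obvious identification, Proposition~\ref{prop:invker} says $C=0$ in the localized category, i.e.\ $\SSi(C)$ avoids those points; hence $\SSi(C)\subset T^*_{U\times U}(U\times U)$ is contained in the zero-section. Then $C$ has locally constant cohomology sheaves, and since the morphism is an isomorphism on the complement of $\Delta_U$ (there $L^{-1}\conv L$ has microsupport in the zero-section and one checks the stalk directly — or uses that $\Supp$ is contained in a neighborhood of $\Delta_U$ and shrinks), $C$ vanishes on a nonempty open set, hence $C=0$ on each connected component meeting that set; a small additional argument (or taking $U$ connected) finishes it. \emph{The main obstacle} I anticipate is precisely this last globalization: checking carefully that the stalk of $C$ vanishes away from $\Delta_U$ so as to conclude $C=0$ everywhere, and managing the sign/orientation conventions relating $\Gamma_{Z_\eps}$ (defined with $\lambda<0$) to the conormal directions produced by the constant sheaf on a sublevel set. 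Everything else is a direct unwinding of Proposition~\ref{prop:1} and the cited results from~\cite{KS90}.
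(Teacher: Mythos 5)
Your Steps 1 and 2 are fine and agree with what the paper does: the microsupport bound is Example~\ref{ex:microsupp}~(iii) plus the sign check you describe, and the hypotheses of Proposition~\ref{prop:invker} are verified exactly from Proposition~\ref{prop:1}, so you do obtain a morphism $\opb{L}\conv L\to\cor_{\Delta_U}$ whose cone $N$ has microsupport in the zero-section, i.e.\ locally constant cohomology on $U\times U$.

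The gap is in Step 3, in how you produce points where $N$ visibly vanishes. You propose to find them \emph{off} the diagonal, arguing that ``the morphism is an isomorphism on the complement of $\Delta_U$'' because there one ``checks the stalk directly'', or because $\Supp(\opb{L}\conv L)$ is only a neighbourhood of $\Delta_U$. Neither works as stated. The stalk of $\opb{L}\conv L$ at $(y,y')$ with $y\ne y'$ is (up to shift and duality) the compactly supported cohomology of $\Int(Z^{y})\cap Z^{y'}$, where $Z^y=\set{x}{(x,y)\in Z_\eps}$; nothing in Proposition~\ref{prop:1} controls the topology of these pairwise intersections, so ``checking the stalk directly'' is precisely the computation you have not done. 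And the complement of $\Supp(\opb{L}\conv L)$ in $U\times U$ may be empty --- the ``thickened diagonal'' $\set{(y,y')}{Z^y\cap Z^{y'}\ne\emptyset}$ can cover all of $U\times U$, e.g.\ when $U$ has small diameter compared with $\eps$ --- so local constancy cannot be propagated from there. The paper instead restricts to the diagonal: with $\delta\cl U\to U\times U$ the diagonal embedding, $\opb{\delta}(\opb{L}\conv L)\simeq\reim{q_2}(\cor_{\Int(Z_\eps)}\ltens\epb{q_2}\cor_U)\simeq\cor_U$, the last isomorphism using exactly condition~(c) of Proposition~\ref{prop:1} (the fibers of $\Int(Z_\eps)\to U$ are homeomorphic to $\R^n$). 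This gives $\opb{\delta}N\simeq0$, so the open-and-closed set $\Supp(N)$ misses $\Delta_U$; combined with the connectedness of the fibers of $Z_\eps\to U$ one concludes $\Supp(N)=\emptyset$. You need this diagonal computation (or an equivalent one): it is the only place where hypothesis~(c) of Proposition~\ref{prop:1} enters the proof of the isomorphism, and your outline never uses it for this purpose.
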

\Proof
Set $Z=Z_\eps$. We have
$\SSi(\opb{L}\conv L)\subset T^*_{\Delta_U}(U\times U)\cup T^*_{U\times U}(U\times U)$.
By Proposition~\ref{prop:invker}, there exists a morphism
$\opb{L}\conv L\to \cor_{\Delta_U}$ which is an isomorphism
in $\Derb(\cor_{U\times U};\dT^*(U\times U))$.
Hence if $N\to \opb{L}\conv L\to \cor_{\Delta_U}\to[\;+1\;]$
is a distinguished triangle, then
$\SSi(N)\subset T^*_{U\times U}(U\times U)$ and hence 
$N$ has locally constant cohomologies.
In particular $\Supp(N)$ is open and closed in $U\times U$.
Let $\delta\cl U\to U\times U$ be the diagonal embedding.
Then we have
$\opb{\delta}(\opb{L}\conv L)
\simeq \reim{q_2}(L\ltens\rhom(L,\cor_{M\times U})\ltens \epb{q_2}\cor_U)$.
Since $L\simeq\cor_Z$ and
$\rhom(L,\cor_{M\times U})\simeq\cor_{\Int(Z)}$, we have
$\opb{\delta}(\opb{L}\conv L)\simeq\reim{q_2}(\cor_{\Int(Z)}\ltens
\epb{q_2}\cor_U)$.
Since the fibers of $\Int(Z)\to U$ are homeomorphic to $\R^n$,
we have
$\reim{q_2}(\cor_{\Int(Z)}\ltens
\epb{q_2}\cor_U)\simeq\cor_U$.
Thus we obtain that $\opb{\delta}(\opb{L}\conv L)\simeq \cor_U$, and hence
$\opb{\delta}N\simeq 0$.
Hence $\Supp(N)\cap\Delta_U=\emptyset$ and
$\Supp(N)\subset\Supp(\opb{L}\conv L)$.
Since we have
$$\Supp(\opb{L}\conv L)
\subset \set{(y,y')\in U\times U}
{\text{$(x,y)$, $(x,y')\in Z$ for some $x\in M$}}$$
and the fiber of $Z\to U$ is connected,
$y$ and $y'$ belong to the same connected component of $M$ as soon as 
$(y,y')\in \Supp(N)$.
Since $\Supp(N)$ is open and closed in $U\times U$ 
and $\Supp(N)\cap\Delta_U=\emptyset$,
we conclude that $\Supp(N)=\emptyset$.
\QED

\section{Quantization of homogeneous Hamiltonian isotopies}
\label{section:qisot}
Let $M$ be a real manifold of class $\Cd^\infty$ and $I$ an open interval of
$\R$ containing the origin.  
We consider a $C^\infty$-map
$\Phi\cl \dTM\times I\to \dTM$. Setting  $\phi_t=\Phi(\scbul,t)$ ($t\in I$), 
we shall always  assume 
\eq\label{hyp:isot1}
&&\begin{cases}
\mbox{$\phi_t$ is a homogeneous symplectic isomorphism for each $t\in I$,} \\
\phi_0 = \id_{\dTM}.
\end{cases}
\eneq
Let us recall here some classical facts that we will explain with 
more details in Section~\ref{sec:Hisot}.
Set 
\eqn
&& v_\Phi \eqdot \frac{\partial\Phi}{\partial t} \cl \dTM\times I\to T\dTM,\\
&&f = \langle \alpha_M, v_\Phi \rangle \cl \dTM\times I\to \R,\,f_t=f(\cdot,t).
\eneqn
Denote by $H_g$ the Hamiltonian flow of a function $g\cl\dTM\to\R$.
Then
\eqn
&&\frac{\partial\Phi}{\partial t} =H_{f_t}.
\eneqn
In other words, $\Phi$ is a homogeneous Hamiltonian isotopy.

In this situation, there exists a unique conic Lagrangian submanifold 
$\Lambda$ of $\dTM\times\dTM\times T^*I$ closed in
$\dT^*(M\times M\times I)$
such that, setting
\eq\label{eq:lambdat}
\Lambda_{t} = \Lambda\conv T^*_{t}I,
\eneq
$\Lambda_t$ is the graph of $\phi_t$. (See Lemma~\ref{lem:homog-Hamilt-isot}.)

The main result of this section is the existence and unicity of an object
$K\in \Derlb(\cor_{M\times M\times I})$ whose microsupport is contained in
$\Lambda$ outside the zero-section and whose restriction at $t=0$ is
$\cor_\Delta$.  We shall call $K$ the {\em quantization} of $\Phi$ on
$I$ or of $\{\phi_t\}_{t\in I}$.  We
first prove that if such a $K$ exists, then its support has some
properness properties from which we deduce its unicity. 
Then we prove the existence assuming 
\eq\label{hyp:isot2}
&&\left\{\parbox{60ex}{there exists  a compact subset  $A$ of $M$ such that
$\phi_t$ is the identity outside of $\opb{\dot\pi_M}(A)$ for all $t\in I$, 
${\dot\pi_M}\cl\dTM\to M$ denoting the projection.}
\right.
\eneq
 For this purpose we glue local constructions using the unicity. Then we
prove the existence
in general using an approximation of $\Phi$ by Hamiltonian isotopies
satisfying~\eqref{hyp:isot2}.

\subsection{Unicity and support of the quantization}
We introduce the notations
\eq\label{not:B}
&&\ba{rcl}
&&I_t = [0,t] \mbox{ or }[t,0] \mbox{ according to the sign of }t\in I,\\
&&B\eqdot \set{(x,y,t)\in M\times M\times I}{ (\{x\} \times \{y\} \times I_t) 
\cap \dot\pi_{M\times M\times I}(\Lambda) \not= \emptyset}.
\ea\eneq
\begin{lemma}\label{le:ttoproper}
Both projections $B\tto M\times I$ are proper.
\end{lemma}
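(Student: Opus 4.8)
The plan is to show that each fiber of the two projections $B \to M\times I$ is compact, and that these projections are closed maps; properness then follows. First I would analyze the structure of $B$. By definition, a point $(x,y,t)$ lies in $B$ precisely when there exists $s\in I_t$ and a covector over $(x,y,s)$ lying in $\Lambda$, i.e.\ when $\Lambda_s$ (the graph of $\phi_s$, by \eqref{eq:lambdat}) contains a point of the form $((x;\xi),(y;\eta)^a)$ for some nonzero $\xi,\eta$. Equivalently, setting $g_s\cl\dTM\to\dTM$ the inverse symplectomorphism $\phi_s^{-1}$, the point $(x,y,t)$ is in $B$ iff for some $s\in I_t$ there is $\eta\in\dot T^*_yM$ with $\dot\pi_M(\phi_s(y;\eta)) = x$; more symmetrically, iff $x$ lies in the image under $\dot\pi_M\circ\phi_s$ of the conormal fiber over $y$, for some $s$ between $0$ and $t$.

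The key point is to fix $(y_0,t_0)$ and show that $\{x \mid (x,y_0,t_0)\in B\}$ is compact in $M$. Consider the set $\mathcal{K} = \set{\phi_s(y_0;\eta)}{s\in I_{t_0},\ (y_0;\eta)\in\dot T^*_{y_0}M}$. This is the image of $I_{t_0}\times \dot T^*_{y_0}M$ under the continuous map $(s,\eta)\mapsto\phi_s(y_0;\eta)$, but the source is not compact because $\dot T^*_{y_0}M$ is not compact (the fiber minus the origin). However, since each $\phi_s$ is \emph{homogeneous}, the whole construction is $\R^+$-conic: $x$ depends only on the ray through $\eta$. So I would instead work on the sphere bundle: the set of relevant $x$ is the image of $I_{t_0}\times S^*_{y_0}M$ (with $S^*_{y_0}M$ the sphere $(\dot T^*_{y_0}M)/\R^+$, which \emph{is} compact) under the induced continuous map to $M$, hence compact. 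Projecting, $\dot\pi_M(\mathcal K)$ is a compact subset of $M$, and this is exactly the fiber of $B\to M\times I$ over $(y_0,t_0)$ for the second-coordinate projection (a symmetric argument, using $\phi_s^{-1}$, handles the first projection).

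For properness itself, I would combine compactness of fibers with closedness of $B$ and a local-compactness argument: it suffices to check that for every compact $L\subset M\times I$, the preimage of $L$ in $B$ is compact. Writing $L \subset M\times[a,b]$ with $[a,b]\sset I$, the preimage is contained in $\set{\phi_s(y;\eta)}{s\in[a,b],\ y\in \mathrm{pr}_M(L),\ \eta\in S^*_yM}\times L'$ for a suitable compact $L'$, which is the continuous image of a compact set (now that the interval $[a,b]$, the base points, and the sphere directions all range over compacta), hence compact; since $B$ is closed in $M\times M\times I$ (the condition defining $B$ is closed, as $\dot\pi_{M\times M\times I}(\Lambda)$ is closed in $M\times M\times I$ and the family of segments $I_t$ varies upper-semicontinuously), the preimage of $L$ is a closed subset of a compact set, hence compact.

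The main obstacle is the non-compactness of the cotangent fibers, which must be circumvented everywhere by exploiting homogeneity to descend to the cosphere bundle $S^*M$; once that reduction is in place the rest is a routine compactness bookkeeping. One should also be slightly careful that the segment $I_t$ shrinks to $\{0\}$ as $t\to 0$ and that, by \eqref{hyp:isot1}, $\Lambda_0$ is the conormal to the diagonal, so $B$ contains the diagonal slice $\set{(x,x,t)}{x\in M,\ t\in I}$ — this is consistent with properness and causes no trouble, but it is worth noting that fibers of $B\to M\times I$ are always nonempty.
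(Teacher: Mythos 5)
Your proof is correct and follows essentially the same route as the paper's: compute the fiber of each projection as $\dot\pi_M\bl\Phi(\opb{\dot\pi_M}(y)\times I_t)\br$ (resp.\ its analogue for $\opb\Phi$), and obtain compactness of preimages of compact sets from this description together with the closedness of $B$. The only difference is that you make explicit the descent to the cosphere bundle via homogeneity, a point the paper leaves implicit when it asserts that $\dot\pi_M\bl\Phi(\opb{\dot\pi_M}(D)\times I_t)\br$ is compact.
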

\begin{proof}
(i) Let us show that the second projection $q\cl B\to M\times I$ is proper.
We see easily that
$\opb{q}(y,t) = \dot\pi_M( \Phi(\opb{\dot\pi_M}(y)\times I_t))
\times \{y\} \times \{t\}$.
We choose a compact set $D\subset M$ and $t\in I$.  Then
$\opb{q}(D\times I_t)$ is contained in
$\dot\pi_M( \Phi(\opb{\dot\pi_M}(D)\times I_t)) \times D \times I_t$
which is compact.

\noindent
(ii) The first projection is treated similarly by reversing 
the roles of $x$ and $y$ and replacing $\Phi$ by
$\opb\Phi =\{\opb{\phi_t} \}_{t\in I}$.
\end{proof}
Recall that for $F\in\Derlb(\cor_{M\times N})$, 
the object $\opb{F}$ is defined in~\eqref{def:invkern}. For 
an object $K\in\Derlb(\cor_{M\times M\times I})$ and $t_0\in I$, we set 
\eqn
&&K_{t_0}=K\vert_{t=t_0}\simeq K\conv\cor_{t=t_0}\in\Derlb(\cor_{M\times M}).
\eneqn
We also set (keeping the same notation for $v$ as in~\eqref{def:invkern}):
\eqn
&&K^{-1} = \opb{(v\times\id_I)}\rhom(K,\omega_M\etens\cor_M\etens\cor_I).
\eneqn
Then assuming
\eqn
&&\SSi(K)\cap T^*_{M\times M}(M\times M)\times T^*I\subset
T^*_{M\times M\times I}(M\times M \times I), 
\eneqn
we have $(K^{-1})_t \simeq (K_t)^{-1}$ for any $t\in I$. 
(See the proof of~(ii) in the proposition below.) 

\begin{proposition}\label{prop:support_unicity}
We assume that $\Phi$ satisfies hypothesis~\eqref{hyp:isot1}
and that 
$K\in\Derlb(\cor_{M\times M\times I})$ satisfies the following conditions.
\banum
\item
$\SSi(K)\subset\Lambda\cup T^*_{M\times M\times I}(M\times M\times I)$,
\label{cond:qhi1}
\item
$K_0\simeq \cor_\Delta$.
\label{cond:qhi3}
\eanum
Then we have:
\bnum
\item
$\Supp(K) \subset B$ \lp see~\eqref{not:B}\rp\,
and both projections $\Supp(K)\tto M\times I$ are proper,
\item
$K_t\conv\opb{K_t}\simeq \opb{K_t}\conv K_t\simeq \cor_\Delta$
for all $t\in I$,
\item
such a $K$ satisfying the conditions
\eqref{cond:qhi1}--\eqref{cond:qhi3} is unique up to a unique isomorphism,
\item
if there exists an open set $W\subset M$ such that
$\phi_t|_{\opb{\dot\pi_M}(W)} = \id$ for all $t\in I$, then
$K|_{(W\times M \cup M\times W)\times I}
\simeq \cor_{\Delta\times I}|_{(W\times M \cup M\times W)\times I}$.
\enum
\end{proposition}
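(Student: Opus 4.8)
The four assertions are proved in the order (i), (ii), (iii), (iv), each feeding into the next. For (i), the containment $\Supp(K)\subset B$ is local on $I$ and is established by a connectedness/continuity argument: one first observes that at $t=0$ we have $\Supp(K_0)=\Delta\subset B$, and then shows that the set of $t\in I$ for which $\Supp(K_t)\subset B$ (with the correct fibre description) is open and closed. The key input is the microsupport bound \eqref{cond:qhi1}: since $\SSi(K)\subset\Lambda\cup T^*_{M\times M\times I}(M\times M\times I)$ and $\Lambda_t$ is the graph of the symplectic isomorphism $\phi_t$, the propagation theorem (Theorem~\ref{th:opboim}, applied to the inclusion of slices $\{t=t_0\}$ and to a small interval around $t_0$) forces $\Supp(K)$ to move along the flow $\Phi$; combined with the explicit description $\opb{q}(y,t)=\dot\pi_M(\Phi(\opb{\dot\pi_M}(y)\times I_t))\times\{y\}\times\{t\}$ used in Lemma~\ref{le:ttoproper}, this pins $\Supp(K)$ inside $B$. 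Properness of the two projections $\Supp(K)\tto M\times I$ is then immediate from Lemma~\ref{le:ttoproper}, since $\Supp(K)\subset B$ and the projections from $B$ are proper.

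For (ii), the point is that each slice $K_t$ is (microlocally) a quantized contact transformation lifting the graph of $\phi_t$. From \eqref{cond:qhi1} one gets $\SSi(K)\cap(T^*_{M\times M}(M\times M)\times T^*I)\subset T^*_{M\times M\times I}(M\times M\times I)$, which is exactly the non-characteristic condition needed to restrict to $\{t=t_0\}$ and to commute the formation of $K\mapsto K^{-1}$ with restriction; this gives $(K^{-1})_t\simeq(K_t)^{-1}$. Then $K_t\conv \opb{K_t}$ has microsupport in the composition $\Lambda_t\conv\Lambda_t^a\subset T^*_{\Delta}(M\times M)\cup T^*_{M\times M}(M\times M)$, so it is a local system along $\Delta$ shifted; at $t=0$ it is $\cor_\Delta$, and by the connectedness argument of (i) (the fibres of $\Supp(K)\to M$ being connected) it stays $\cor_\Delta$ for all $t$. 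The argument is the relative analogue of Theorem~\ref{th:2}, using Proposition~\ref{prop:invker} slicewise together with the propagation along $\Phi$; the same reasoning with the roles of the two factors exchanged gives $\opb{K_t}\conv K_t\simeq\cor_\Delta$.

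Assertion (iii) follows formally from (i) and (ii): if $K$ and $K'$ both satisfy \eqref{cond:qhi1}--\eqref{cond:qhi3}, consider $K'\conv\opb{K}$ over $I$. By the microsupport estimate for relative convolution \eqref{eq:est_rel} (the properness hypothesis \eqref{eq:noncharkerrel} holds by part (i)), its microsupport lies in $\Lambda\conv|_I\Lambda^a$, which meets the zero-section set only in $T^*_{M\times M\times I}(M\times M\times I)$; hence $K'\conv\opb{K}$ has locally constant cohomology along $\Delta\times I$ in the $M\times M$ directions, and by Corollary~\ref{cor:opbeqv} it is pulled back from $I$ along the relevant projection. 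Its restriction at $t=0$ is $K'_0\conv\opb{(K_0)}\simeq\cor_\Delta\conv\cor_\Delta\simeq\cor_\Delta$ by \eqref{cond:qhi3} and (ii); so $K'\conv\opb{K}\simeq\cor_{\Delta\times I}$, whence $K'\simeq K'\conv\opb{K}\conv K\simeq\cor_{\Delta\times I}\conv K\simeq K$. Uniqueness of the isomorphism follows because $\Hom(\cor_\Delta,\cor_\Delta)=\cor$ and the gluing of local isomorphisms over $I$ is rigid (Lemma~\ref{le:stack2}).

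For (iv): on $\opb{\dot\pi_M}(W)$ the isotopy is the identity, so over $(W\times M\cup M\times W)\times I$ the Lagrangian $\Lambda$ coincides with $T^*_{\Delta\times I}(M\times M\times I)$ restricted there; thus $K|_{(W\times M\cup M\times W)\times I}$ satisfies, on that open set, exactly the defining conditions of the quantization of the \emph{trivial} isotopy, and $\cor_{\Delta\times I}$ restricted there also does. By the uniqueness statement (iii), applied over the open submanifold $(W\times M\cup M\times W)\times I$ — or more precisely over each of the two pieces $W\times M\times I$ and $M\times W\times I$ and then glued via Lemma~\ref{le:stack1} — the two restrictions are isomorphic. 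The main obstacle in the whole argument is part (i): one must turn the microsupport estimate into genuine control of $\Supp(K)$ fibrewise over $I$, which requires combining the propagation theorem along the direction of $I$ with the graph structure of $\Lambda_t$ and a continuity/connectedness argument; once $\Supp(K)\subset B$ is secured, properness and all of (ii)--(iv) follow by soft arguments.
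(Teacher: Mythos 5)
Your overall strategy --- control $\Supp(K)$ via the microsupport bound and the initial condition at $t=0$, then deduce (ii)--(iv) by convolution arguments that reduce everything to the slice $t=0$ --- is the strategy of the paper, and your treatments of (i), (iii) and (iv) are essentially the paper's. (For (i) the paper argues pointwise, choosing a tube $U\times V\times J$ around the whole segment $\{x\}\times\{y\}\times I_t$ on which $K$ is locally constant and vanishes at $t=0$, rather than running an open--closed argument in $t$; for (iv) it combines the support bound of (i) with the vanishing of the $\tau$-component of $\Lambda$ over $(W\times M\cup M\times W)\times I$ and Corollary~\ref{cor:opbeqv}. These are workable variants of what you propose.)

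The one step that would fail as written is in (ii). From $\SSi(K_t\conv\opb{K_t})\subset T^*_{\Delta}(M\times M)\cup T^*_{M\times M}(M\times M)$ you cannot conclude that $K_t\conv\opb{K_t}$ is ``a local system along $\Delta$ shifted'': an object with microsupport in that union may have a locally constant summand supported on all of $M\times M$ (e.g.\ $\cor_\Delta\oplus\cor_{M\times M}$). Likewise, Proposition~\ref{prop:invker} applied slicewise only yields an isomorphism in the localized category $\Derb(\cor_{M\times M};(p,p^a))$, i.e.\ it controls the cone only outside the zero-section, and it moreover requires cohomological constructibility of $K_t$, which you have not established; upgrading that microlocal isomorphism to a genuine one is exactly the delicate point of Theorem~\ref{th:2}, where it is handled using the explicit support of $\cor_{Z_\eps}$ and the connectedness of its fibers --- an argument unavailable for a general $K_t$. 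The correct mechanism is the one you yourself deploy in (iii): form the relative convolution $F=K\conv|_I\opb{K}$, observe that in the estimate $\SSi(F)\subset\Lambda\conv|_I v(\Lambda)$ (with $v$ the swap-and-antipodal involution) the two $\tau$-components $-f$ and $+f$ cancel, so that $\SSi(F)\subset T^*(M\times M)\times T^*_II$; then Corollary~\ref{cor:opbeqv} shows $F$ is pulled back from $M\times M$ (constant along $I$ --- note that in (iii) you wrote ``pulled back from $I$'', which is the wrong direction), and evaluation at $t=0$ gives $F_0\simeq K_0\conv\opb{K_0}\simeq\cor_\Delta$, using the commutation $(\opb{K})_0\simeq\opb{(K_0)}$ that you correctly justify. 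This is precisely the paper's proof of (ii). Once (ii) is repaired this way the rest goes through; for (iv) note only that assertion (iii) is global, so you must invoke its localized form over $M\times W\times I$ and over $W\times M\times I$ separately (the relative-convolution proof localizes without change, as in the paper's Lemma~\ref{lem:local-unicity}) before gluing with Lemma~\ref{le:stack1}.
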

\begin{proof}
(i) Let us prove~(i).  Since $\Lambda$ is closed and conic,
$\dot\pi_{M\times M\times I}(\Lambda)$ is closed.  So if $(x,y,t) \not\in B$
we may find open connected neighborhoods $U$ of $x$, 
$V$ of $y$ and $J$ of $I_t$ such that
$\opb{\dot\pi_{M\times M\times I}}(U\times V\times J)$ does not meet $\Lambda$.
By condition (a) this implies that $\SSi(K|_{U\times V\times J})$ is contained
in the zero-section. Hence $K$ is locally constant on $U\times V\times J$.
Now $0\in J$ and $U\times V$ does not meet $\Delta_M$ since
$\dot\pi_{M\times M\times I}(\Lambda)$ contains $\Delta_M \times \{0\}$.
Hence $K|_{U\times V\times \{0\}} = 0$ and we deduce 
$K|_{U\times V\times J} = 0$. In particular $(x,y,t) \not\in \Supp(K)$ and
this proves $\Supp(K) \subset B$. To conclude, we apply
Lemma~\ref{le:ttoproper}.

\bigskip\noindent
(ii) Let us prove~(ii). We set $F=K\conv|_I \opb{K}$
(Notation~\eqref{eq:rel_conv}). 
Hence (ii) is implied by
$F\simeq \cor_{\Delta\times I}$. 
Let $v$ be the involution of
$T^*M\times T^*M\times T^*I$
given by $v(x,\xi,x',\xi',t,\tau) = (x',-\xi',x,-\xi,t,-\tau)$.
Then we have
\eq\label{est:dual}
&&\SSi(\opb{K})\cap\dT^*(M\times M\times I)\subset v(\Lambda).
\eneq
Hence by \eqref{eq:est_rel},
$\SSi(F)$ satisfies:
\eqn
\SSi(F)&\subset&
T^*_{\Delta_M\times I}(M\times M\times I)
                 \cup T^*_{M\times M\times I}(M\times M\times I)\\
&\subset&T^*(M\times M)\times T^*_II.
\eneqn
By Corollary~\ref{cor:opbeqv}, $F$ is constant on the fibers of 
$M\times M\times I\to M\times M$.
Denote by $i_0\cl M\times M\to M\times M\times I$
the inclusion associated to $\{t=0\}\subset I$.
It is thus enough to prove the isomorphism 
$\opb{i_0}F\simeq \cor_{\Delta}$. We have
\eqn
\epb{i_0}\rhom(K,\cor_{M\times M\times I})
&\simeq&\rhom(\opb{i_0}K,\epb{i_0}\cor_{M\times M\times I})\\
&\simeq&\rhom(K_0,\cor_{M\times M})\ltens \epb{i_0}\cor_{M\times M\times I}.
\eneqn
On the other hand, the  condition on $\SSi(K)$ implies
\eqn
&&\epb{i_0}\rhom(K,\cor_{M\times M\times I})
\simeq \opb{i_0}\rhom(K,\cor_{M\times M\times I})
\ltens \epb{i_0}\cor_{M\times M\times I}.
\eneqn
Therefore
$\opb{i_0}\rhom(K,\cor_{M\times M\times I})\simeq\rhom(K_0,\cor_{M\times M})$
which gives the isomorphism $\opb{i_0}\opb{K}\simeq \opb{K_0}$.
Thus we obtain
$\opb{i_0}F\simeq K_0\conv \opb{K_0}\simeq\cor_{\Delta}$
as required.

\bigskip\noindent
(iii) is a particular case of the more precise
Lemma~\ref{lem:local-unicity} below.

\bigskip\noindent
(iv) 
We set $\tw W= (W\times M \cup M\times W)\times I$. 
Then $B\cap \tw W=\Delta_W\times I$.
Hence (i) implies that $\Supp(K)\cap \tw W\subset\Delta_W\times I$.
Then (b) implies (iv).
\end{proof}

\begin{lemma}\label{lem:local-unicity}
Let $\Phi_i\cl \dTM \times I \to \dTM$ \lp$i=1,2$\rp\, be two maps
satisfying~\eqref{hyp:isot1} and
define $\Lambda_i \subset \dTM\times\dTM\times T^*I$ as in
Lemma~\ref{lem:homog-Hamilt-isot}.
Assume that there exist
$K_i \in\Derlb(\cor_{M\times M\times I})$ \lp$i=1,2$\rp\,
satisfying conditions~\eqref{cond:qhi1}--\eqref{cond:qhi3} of
Proposition~\ref{prop:support_unicity}.
Also assume that there exists an open set $U\subset M$ 
such that
\eq\label{eq:local-unicity1}
\Phi_1|_{\opb{\dot\pi_M}(U) \times I} = \Phi_2|_{\opb{\dot\pi_M}(U) \times I}. 
\eneq
Then there exists a unique isomorphism
$\psi\cl K_1|_{M\times U \times I} \isoto K_2|_{M\times U \times I}$ 
such that
$$\xymatrix{
{\opb{i_0}K_1}\ar[rr]^{\opb{i_0}\psi}\ar[dr]^-{\sim}
&&{\opb{i_0}K_2}\ar[dl]_-{\sim}\\
&\cor_{\Delta_U}
}$$
commutes, where $i_0\cl M\times U \to M\times U\times I$ 
is the inclusion by $0\in I$.
\end{lemma}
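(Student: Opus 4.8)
The strategy is to reduce the statement to the unicity part (iii) of Proposition~\ref{prop:support_unicity}, applied over the open subset $U$, by showing that both restrictions $K_i|_{M\times U\times I}$ are ``quantizations over $U$'' of the common isotopy $\Phi_1|_{\opb{\dot\pi_M}(U)\times I}=\Phi_2|_{\opb{\dot\pi_M}(U)\times I}$. The point is that conditions \eqref{cond:qhi1}--\eqref{cond:qhi3} are local over the base $M$ in the second factor: the microsupport estimate $\SSi(K_i)\subset\Lambda_i\cup T^*_{M\times M\times I}(M\times M\times I)$ restricts along the open embedding $M\times U\times I\into M\times M\times I$, and by \eqref{eq:local-unicity1} the Lagrangians $\Lambda_1$ and $\Lambda_2$ coincide over $\dTM\times\dot T^*U\times T^*I$ (since $\Lambda_i$ is determined by the graphs of $\phi_{i,t}$ via \eqref{eq:lambdat}, and these graphs agree on the relevant part once we restrict the second cotangent variable to lie over $U$). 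Likewise $(K_i)_0\simeq\cor_\Delta$ restricts to $\cor_{\Delta_U}$ on $M\times U$. So on $M\times U\times I$ both $K_1$ and $K_2$ satisfy the hypotheses of the unicity statement, and we get a canonical isomorphism.

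\textbf{Key steps.} First I would set up the precise local version of Proposition~\ref{prop:support_unicity}(iii): one checks that the proof of (iii) — which the authors defer to this very lemma, so I must instead argue directly — goes through verbatim over an open set of the form $M\times U\times I$, because the arguments in (i) and (ii) are local in the $y$-variable (the properness of $\Supp(K)\to M\times I$ in (i) is used only to get local constancy off $\Lambda$, and the computation in (ii) of $K_t\conv\opb{K_t}\simeq\cor_{\Delta}$, hence $F\simeq\cor_{\Delta\times I}$, restricts to $M\times U$). Second, granting the local analogue of (i)--(ii), the uniqueness follows by the standard rigidity argument: form the relative convolution $G\eqdot K_1\conv|_I \opb{K_2}$ restricted suitably, show $\SSi(G)\subset T^*(M\times M)\times T^*_II$ using \eqref{eq:est_rel} together with the fact that $\Lambda_1=\Lambda_2$ over $\dTM\times\dot T^*U\times T^*I$ so that $\Lambda_1\conv|_I v(\Lambda_2)$ lands in the zero-section over the appropriate region; then Corollary~\ref{cor:opbeqv} makes $G$ constant along $I$, and restricting to $t=0$ gives $\opb{i_0}G\simeq K_0\conv\opb{K_0}\simeq\cor_{\Delta_U}$ (using part (ii) of the Proposition). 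Hence $G\simeq\cor_{\Delta_U\times I}$, and convolving the isomorphism $G\simeq\cor_{\Delta_U\times I}$ with $K_2$ on the right, while using $\opb{K_2}\conv K_2\simeq\cor_\Delta$ from Proposition~\ref{prop:support_unicity}(ii), produces the desired $\psi\cl K_1|_{M\times U\times I}\isoto K_2|_{M\times U\times I}$. Third, I would check compatibility with the given trivializations at $t=0$: by construction $\opb{i_0}\psi$ is obtained from the identity of $\cor_{\Delta_U}$ under $\opb{i_0}G\simeq\cor_{\Delta_U\times I}|_{t=0}$, so the triangle commutes; uniqueness of $\psi$ with this property follows because any two such differ by an automorphism of $K_2|_{M\times U\times I}$ restricting to the identity at $t=0$, and the same rigidity (constancy along $I$ of $\RR\Hom$, again via Corollary~\ref{cor:opbeqv}) forces that automorphism to be the identity.

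\textbf{Main obstacle.} The delicate point is matching the Lagrangians: I must verify carefully that $\Phi_1=\Phi_2$ on $\opb{\dot\pi_M}(U)\times I$ implies $\Lambda_1\cap(\dTM\times\dot T^*U\times T^*I)=\Lambda_2\cap(\dTM\times\dot T^*U\times T^*I)$, and, more importantly, that the convolution $\Lambda_1\conv|_I v(\Lambda_2)$, when computed over $M\times U\times I$, is contained in the zero-section — i.e. that the ``composite isotopy'' $\phi_{1,t}\circ\phi_{2,t}^{-1}$ is the identity on the part of $\dTM$ lying over $U$-related directions, keeping track of the fact that convolution mixes the two $M$-factors. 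This requires being precise about which cotangent directions of the middle $M_2=M$ factor are constrained once we restrict the outer $M_3$-variable to $U$, and invoking hypothesis~\eqref{eq:noncharkerrel} to license the microsupport estimate \eqref{eq:est_rel}. Once this geometric bookkeeping is done, the sheaf-theoretic part is a routine repetition of the proof of Proposition~\ref{prop:support_unicity}(ii)--(iii) in the relative-over-$U$ setting.
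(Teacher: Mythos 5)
Your overall strategy --- form a composite kernel from $K_1$ and $\opb{K_2}$, show its microsupport over the relevant region lies in $T^*(M\times U)\times T^*_II$, apply Corollary~\ref{cor:opbeqv} to conclude it is $\cor_{\Delta_U\times I}$, and convolve back --- is the paper's, and your treatment of uniqueness at the end is also correct. But there is a genuine error in the order of composition, and it breaks the key localization step. You set $G=K_1\conv|_I\opb{K_2}$, whose associated Lagrangian $\Lambda_1\conv|_I v(\Lambda_2)$ is the family of graphs of $\phi_{1,t}\circ\phi_{2,t}^{-1}$: a point of it has first component $\phi_{1,t}(v)$ and second component $\phi_{2,t}(v)^a$. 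Restricting the second variable to lie over $U$ therefore constrains $\phi_{2,t}(v)\in\opb{\dot\pi_M}(U)$, i.e.\ $v\in\phi_{2,t}^{-1}(\opb{\dot\pi_M}(U))$, and on that ($t$-dependent) set hypothesis~\eqref{eq:local-unicity1} gives no information: $\phi_{1,t}\circ\phi_{2,t}^{-1}$ is the identity only on $\phi_{2,t}(\opb{\dot\pi_M}(U))$, which is not of the product form (open set)$\,\times I$, so the microsupport of $G|_{M\times U\times I}$ need not lie in $T^*(M\times U)\times T^*_II$ and Corollary~\ref{cor:opbeqv} does not apply. Your claim that ``$\Lambda_1=\Lambda_2$ over $\dTM\times\dot T^*U\times T^*I$ implies $\Lambda_1\conv|_I v(\Lambda_2)$ lands in the zero-section over the appropriate region'' is exactly where this goes wrong: equality of the $\Lambda_i$ over $U$ in the second slot concerns $v\in\opb{\dot\pi_M}(U)$, but after convolving, the second slot of the composite carries $\phi_{2,t}(v)$, not $v$.

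The fix is to compose in the other order, as the paper does: set $L=\opb{K_2}\conv|_I K_1$, whose Lagrangian is the family of graphs of $\phi_{2,t}^{-1}\circ\phi_{1,t}$ with second component $v^a$; restricting $v\in\opb{\dot\pi_M}(U)$ now uses~\eqref{eq:local-unicity1} directly to get $\Lambda\cap\dT^*(M\times U\times I)=\dT^*_{\Delta_U}(M\times U)\times T^*_II$, hence $L|_{M\times U\times I}\simeq\cor_{(\Delta_M\cap M\times U)\times I}$ by Corollary~\ref{cor:opbeqv} and $L_0\simeq\cor_\Delta$. This order also makes the final step work: $(K_2\conv|_I L)|_{M\times U\times I}\simeq K_2\conv|_I(L|_{M\times U\times I})$ because the second variable of the convolution is the second variable of $L$; with your $G$ on the left, $G\conv|_I K_2$ contracts the $U$-restricted variable of $G$ against the unrestricted first variable of $K_2$, so even granting $G|_{M\times U\times I}\simeq\cor_{\Delta_U\times I}$ you could not recover $K_1|_{M\times U\times I}\simeq K_2|_{M\times U\times I}$. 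The remaining points of your argument (the correct observation that invoking Proposition~\ref{prop:support_unicity}~(iii) would be circular, and the rigidity argument for uniqueness of $\psi$) are sound.
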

\begin{proof}
We define $\opb{\Phi_2}\cl \dTM \times I \to \dTM$ by
$\opb{\phi_{2,t}} = (\phi_{2,t})^{-1}$ for all $t\in I$.
Then, similarly to \eqref{est:dual},
we have $\SSi(\opb{K_2}) \subset v(\Lambda_2)$ outside the zero-section.
We also define $\Phi \cl \dTM \times I \to \dTM$ by
$\phi_t = \opb{\phi_{2,t}} \circ \phi_{1,t}$.
Its associated Lagrangian submanifold is
$\Lambda = v(\Lambda_2) \conv|_I \Lambda_1$ (see \eqref{eq:est_rel}).
By~\eqref{eq:local-unicity1} we have $\phi_t(x,\xi) = (x,\xi)$ for all
$t\in I$ and $(x,\xi) \in \opb{\dot\pi_M}(U)$. Hence
\eqn
&&\Lambda\cap\dT^*(M\times U\times I)=\dT^*_{\Delta_U}(M\times U)\times T^*_II.
\eneqn
We set $L= \opb{K_2} \conv|_I K_1$.
By  \eqref{eq:est_rel},  we have the inclusion 
$\SSi(L)\subset\Lambda$ outside the zero-section. It follows that $\SSi(L)$ is contained in
$T^*(M\times U)\times T^*_II$ over $M\times U \times I$.
Since $L_0 \simeq \cor_{\Delta_M}$, we deduce from
Corollary~\ref{cor:opbeqv} that
$L|_{M\times U \times I} \simeq \cor_{( \Delta_M \cap M\times U) \times I}$.
Then
\eqn
&& K_1|_{M\times U \times I}\simeq (K_2 \conv|_I L)|_{M\times U \times I}
\simeq K_2 \conv|_I (L|_{M\times U \times I})
\simeq K_2|_{M\times U \times I}
\eneqn
as claimed.

The uniqueness of $\psi$ follows from the uniqueness of the isomorphism
$L|_{M\times U \times I} \simeq \cor_{\Delta_U \times I}$.
\end{proof}

\subsection{Existence of the quantization -- ``compact'' case}
Lemma~\ref{lem:MainLemma} below is the main step in the proof of
Theorem~\ref{th:3}.
We prove the existence of a quantization of 
a homogeneous Hamiltonian isotopy
$\Phi\cl \dTM\times I\to \dTM$ satisfying hypothesis~\eqref{hyp:isot1}
and~\eqref{hyp:isot2} (that is $\phi_t$ is the identity map outside
$\opb{\dot\pi_M}(A)$ for each $t\in I$, where $A\subset M$ is compact).
In the course of the proof we shall need an elementary lemma that we state
without proof.
\begin{lemma}\label{le:def_conormal}
Let $N$ be a manifold, $V_0\subset N$ an open subset with 
a smooth boundary, 
$C\subset N$ a compact subset and $I$ an open interval of $\R$
containing $0$.  Let $\Lambda \subset \dT^*(N\times I)$ be a closed
conic Lagrangian submanifold and  set $\Lambda_t = \Lambda \conv T^*_tI$ for
$t\in I$. We assume
\banum
\item $\Lambda_0 = \SSi(\cor_{\overline{V_0}})\cap \dT^*N$,
\item $\Lambda \cap \dT^*((N\setminus C) \times I)
 = (\Lambda_0 \cap \dT^*(N\setminus C)) \times T^*_II$,
\item $\Lambda\subset\dT^*N\times T^*I$ and 
$\Lambda\to\dT^*N\times I$ is a closed embedding.
\eanum
Then there exist $\eps>0$ with $\pm\eps\in I$ 
and an open subset $V \subset N\times ]-\eps,\eps[$
with a smooth boundary such that
\bnum
\item $V_0 = V \cap (N\times \{0\})$,
\item
$\Lambda = \SSi(\cor_{\overline V})\cap \dT^*(N\times ]-\eps,\eps[) $,
\item
$\Lambda_t = \SSi(\cor_{\overline{V \cap (N\times \{t\})}})\cap \dT^*N$
for any $t\in ]-\eps,\eps[$.
\enum
\end{lemma}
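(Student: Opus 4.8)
The plan is to translate the statement, via Example~\ref{ex:microsupp}~(iii), into an assertion about the front projection of the conic Lagrangian $\Lambda$, and then to run an openness-plus-compactness argument that keeps that front projection under control for $t$ small, the compactness being supplied by hypotheses~(b) and~(c).

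First I would use the smoothness of $\partial V_0$ to choose a $\Cd^\infty$-function $g_0\cl N\to\R$ with $V_0=\{g_0>0\}$ and $dg_0\not=0$ on $\partial V_0=\{g_0=0\}$; then $\overline{V_0}=\{g_0\geq0\}$ and Example~\ref{ex:microsupp}~(iii) gives
\eqn
&&\Lambda_0=\SSi(\cor_{\overline{V_0}})\cap\dT^*N=\{(x;\lambda\,dg_0(x))\,;\,g_0(x)=0,\ \lambda>0\},
\eneqn
so that $\widehat\Lambda_0\eqdot\Lambda_0/\R_{>0}$ is a section of the cosphere bundle $S^*N$ over $\partial V_0$, i.e.\ a coorientation of $\partial V_0$. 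Set $\widehat\Lambda\eqdot\Lambda/\R_{>0}$; by~(c) it is a closed submanifold of $S^*N\times I$, and by~(b) we have $\widehat\Lambda\cap\opb{\widehat\pi}((N\setminus C)\times I)=\widehat\Lambda_0\vert_{N\setminus C}\times I$, where $\widehat\pi\cl\widehat\Lambda\to N\times I$ is the induced map. Since $\Lambda$ is conic Lagrangian the Liouville $1$-form vanishes on it, hence wherever $\widehat\pi$ is an immersion its image is a hypersurface whose conormal direction is the covector carried by $\widehat\Lambda$; condition~(c) says exactly that this covector always has a nonzero $N$-component, that is, the image is transverse to the slices $N\times\{t\}$.

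The key step is to show that there is an $\eps>0$ with $\pm\eps\in I$ such that $\widehat\pi$ is a closed embedding on $\widehat\Lambda\cap\opb{\widehat\pi}(N\times]-\eps,\eps[)$. Being an immersion at a point, and being injective on a neighborhood of a point, are open conditions on $\widehat\Lambda$; both hold at every point of $\widehat\Lambda_0=\widehat\Lambda\cap\{t=0\}$, where $\widehat\pi$ restricts to the embedding $\partial V_0\hookrightarrow N$, and at every point over $N\setminus C$, where by~(b) $\widehat\pi$ is the product of $\partial V_0\setminus C\hookrightarrow N\setminus C$ with $\id_I$. Thus the locus of $\widehat\Lambda$ where one of them fails is disjoint from $\widehat\Lambda_0$ and contained in $\opb{\widehat\pi}(C\times I)$. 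Now $\widehat\pi$ is proper, being the composite of the closed embedding $\widehat\Lambda\hookrightarrow S^*N\times I$ with the proper map $S^*N\times I\to N\times I$ (here $S^*N\to N$ has compact fibers); hence $\opb{\widehat\pi}(C\times[a,b])$ is compact for any $[a,b]\subset I$, so a sequence of ``bad'' points with $t\to0$ would subconverge to a point of $\widehat\Lambda_0$, contradicting the openness there. Therefore the bad locus avoids $\{|t|<\eps\}$ for $\eps$ small; an injective proper immersion being a closed embedding, we obtain the smooth closed hypersurface $W\eqdot\widehat\pi(\widehat\Lambda)\subset N\times]-\eps,\eps[$, cooriented by the direction transported from $\widehat\Lambda$.

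It remains to realize $W$ as the boundary of an open set and read off the conclusions. Since $W$ coincides with the product $(\partial V_0\setminus C)\times]-\eps,\eps[$ over $N\setminus C$ and $\partial V_0$ bounds $V_0$, an isotopy-extension argument localized over the compact set $C$ shows that $W$ separates $N\times]-\eps,\eps[$; let $V$ be the open region on the positive side of $W$ that contains $(V_0\setminus C)\times]-\eps,\eps[$, and choose a $\Cd^\infty$ defining function $g$ of $W$ with $\{g>0\}=V$ and $dg$ in the cooriented direction. Then $V$ has smooth boundary $W$, and by the transversality noted above $W\cap(N\times\{0\})=\partial V_0$ with its original coorientation, so $V\cap(N\times\{0\})=V_0$: this is~(i). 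Example~\ref{ex:microsupp}~(iii) applied to $g$ gives
\eqn
&&\SSi(\cor_{\overline V})\cap\dT^*(N\times]-\eps,\eps[)=\{((x,t);\lambda\,d_{(x,t)}g)\,;\,g(x,t)=0,\ \lambda>0\}=\Lambda,
\eneqn
which is~(ii); restricting $g$ to a slice $\{t=t_0\}$ — legitimate by the transversality, which also yields $\overline{V\cap(N\times\{t_0\})}=\overline V\cap(N\times\{t_0\})$ — and using~\eqref{eq:lambdat}, one gets $\SSi(\cor_{\overline{V\cap(N\times\{t_0\})}})\cap\dT^*N=\Lambda_{t_0}$ for every $t_0\in]-\eps,\eps[$, which is~(iii). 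I expect the main obstacle to be the key step: excluding the birth of caustics in the front projection of $\widehat\Lambda$ for small time. It is precisely here that~(b) and~(c) are indispensable — (b) confines the possible degeneracies to the region over the compact set $C$, while~(c) makes that region genuinely compact inside $\widehat\Lambda$ — so that the openness-plus-compactness argument can be carried out; the final passage ``cooriented closed hypersurface $\Rightarrow$ boundary of an open subset'' is a soft topological point, settled using the product structure over $N\setminus C$.
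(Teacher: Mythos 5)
The paper states this lemma without proof (``an elementary lemma that we state without proof''), so there is no reference argument to compare yours with; I can only assess your write-up on its own terms. Your overall strategy is the natural one: read the hypotheses as saying that the front projection $\widehat\pi\cl\widehat\Lambda\to N\times I$ of $\widehat\Lambda=\Lambda/\R_{>0}$ is under control at $t=0$ and outside $C\times I$, use the properness of $\widehat\pi$ (which does follow from~(c), as you say) to propagate this control to $|t|<\eps$, and then reconstruct $V$ from the cooriented front $W=\widehat\pi(\widehat\Lambda)$. The central openness-plus-compactness step is sound, including the treatment of global injectivity by extracting a convergent subsequence of pairs of ``bad'' points.

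Two steps are under-justified. First, your reason for $\widehat\pi$ being an immersion \emph{at} the points of $\widehat\Lambda_0$ --- that $\widehat\pi$ restricts there to the embedding $\partial V_0\hookrightarrow N$ --- only controls $d\widehat\pi$ on the hyperplane $T\widehat\Lambda_0\subset T\widehat\Lambda$; a kernel vector of $d\widehat\pi$ transverse to $\widehat\Lambda_0$ must still be excluded, and without this the openness argument does not get started at $t=0$. The missing ingredient is that $t\cl\Lambda\to I$ is a submersion: if $dt$ vanished on $T_p\Lambda$, then (since $\Lambda$ is Lagrangian for $\omega_N+dt\wedge d\tau$ and $\Lambda\to\dT^*N\times I$ is an immersion by~(c)) $T_p\Lambda$ would inject onto an $(n{+}1)$-dimensional isotropic subspace of $T(\dT^*N)$, which is impossible. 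Granting this, any $w\in T_p\Lambda$ killed by $d\pi$ satisfies $dt(w)=0$, hence lies in $T_p(\Lambda\cap\{t=0\})$, where the kernel is exactly the Euler direction; so $\widehat\pi$ is indeed an immersion along $\widehat\Lambda_0$. Second, the passage from $W$ to $V$ is only gestured at: ``the open region on the positive side of $W$ containing $(V_0\setminus C)\times]-\eps,\eps[$'' is not yet a definition (that set may be empty or disconnected, and it says nothing about $V_0\cap\Int(C)$). The clean version of your isotopy-extension idea is to choose a vector field $\partial_t+v$ on $N\times]-\eps,\eps[$ tangent to $W$ (possible since $W$ is transverse to every slice) with $v$ supported in $C'\times]-\eps,\eps[$ for a compact neighborhood $C'$ of $C$, integrate it to diffeomorphisms $\psi_t$ of $N$ equal to the identity outside $C'$ with $\psi_t(\partial V_0)=W\cap(N\times\{t\})$, and set $V=\bigcup_t\psi_t(V_0)\times\{t\}$; this gives~(i) at once, and~(ii)--(iii) follow provided you also check that the coorientation of $W$ coming from $\Lambda$ agrees with the inward coorientation of $\partial V$, which holds because they agree on $\{t=0\}$ and every connected component of $W$ meets $\{t=0\}$ (components contained in $C\times]-\eps,\eps[$ are proper over the interval, the others contain a product piece). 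Both repairs are routine, so I would call your proof correct in outline but incomplete at these two points.
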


\begin{lemma}\label{lem:MainLemma}
Assume that $\Phi$ satisfies hypotheses~\eqref{hyp:isot1}
and~\eqref{hyp:isot2}.
Then there exists $K\in\Derlb(\cor_{M\times M\times I})$ satisfying
conditions~\eqref{cond:qhi1}--\eqref{cond:qhi3} of
Proposition~\ref{prop:support_unicity}.
\end{lemma}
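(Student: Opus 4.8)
The plan is to build the quantization first over a short time interval about an arbitrary parameter, and then propagate it over all of $I$. For the propagation step, suppose that for every $\tau_0\in I$ the isotopy $\{\phi_{\tau_0+s}\circ\phi_{\tau_0}^{-1}\}_s$ — which again satisfies~\eqref{hyp:isot1} and~\eqref{hyp:isot2} — admits a quantization over some $]-a,a[$, i.e.\ an object of $\Derlb$ fulfilling the analogues of~\eqref{cond:qhi1}--\eqref{cond:qhi3}; the case $\tau_0=0$ gives $K$ over a neighbourhood of $0$. If $K$ is already defined over an open interval $J\ni 0$ and $\tau_1\in J$ lies near an endpoint of $J$, then by Proposition~\ref{prop:support_unicity}(i)--(ii) the slice $K_{\tau_1}$ has proper support and $K_{\tau_1}\conv\opb{K_{\tau_1}}\simeq\cor_\Delta$; convolving the short-time quantization of $\{\phi_{\tau_1+s}\circ\phi_{\tau_1}^{-1}\}_s$ with $K_{\tau_1}$ along $M$, relatively in the time variable, produces $\widetilde K$ over $]\tau_1-a,\tau_1+a[$ with $\SSi(\widetilde K)$ controlled by~\eqref{eq:est_rel}, satisfying~\eqref{cond:qhi1}, and with $\widetilde K|_{t=\tau_1}\simeq K_{\tau_1}$. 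The proof of Lemma~\ref{lem:local-unicity} applies verbatim with base point $\tau_1$ (replacing~\eqref{cond:qhi3} by $K_{\tau_1}\conv\opb{K_{\tau_1}}\simeq\cor_\Delta$), so $\widetilde K\simeq K$ over the overlap canonically and Lemma~\ref{le:stack1} glues them; since $\Phi$ is defined on all of $I$, the length $a$ stays bounded below along any compact subinterval, so iterating and using Lemma~\ref{le:stack2} for an exhaustion yields $K$ over $I$.

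For the short-time construction, fix a relatively compact open $U\supset A$; since $\phi_t$ fixes $\opb{\dot\pi_M}(M\setminus A)$ pointwise it preserves every conic open set containing $\opb{\dot\pi_M}(A)$. Proposition~\ref{prop:1} and Theorem~\ref{th:2} provide $\eps>0$, $Z=Z_\eps\subset M\times U$ proper over $U$, and $L=\cor_Z$ with $\SSi(L)\subset\Gamma_Z\cup Z$ and $\opb L\conv L\simeq\cor_{\Delta_U}$, where $\Gamma_Z=\SSi(L)\cap\dT^*(M\times U)$ is identified by $p_2^a$ with $\dT^*U$ and by $p_1$ with a conic open $W\subset\dTM$; shrinking $\eps$ one may take $W=\opb{\dot\pi_M}(B)$ with $B$ a neighbourhood of $U$, so $W\supset\opb{\dot\pi_M}(A)$ and $\phi_t(W)=W$. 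Thus $L$ is a quantized contact transformation above $\chi\colon\dT^*U\isoto W$, and $\Gamma_Z$ — the microsupport of the constant sheaf on a domain with smooth boundary — resolves the conormal $T^*_\Delta(M\times M)$, which by itself is no such microsupport. Letting the Hamiltonian isotopy act on the first factor, the closed conic Lagrangians $\Lambda'_t:=(\phi_t\times\id_U)(\Gamma_Z)$ ($\Lambda'_0=\Gamma_Z$) assemble into $\Lambda'\subset\dT^*(M\times U\times I)$, which — because $\phi_t\equiv\id$ off $\opb{\dot\pi_M}(A)$ and $Z$ is proper over $U$ — satisfies hypotheses~(a)--(c) of Lemma~\ref{le:def_conormal} with $N=M\times U$, $V_0=\Int Z$ and a suitable compact $C$. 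That lemma yields $a>0$ and an open $\widehat V\subset(M\times U)\times]-a,a[$ with smooth boundary and $\widehat V_0=\Int Z$; setting $\widehat L:=\cor_{\overline{\widehat V}}$ we get $\widehat L_0=L$, $\SSi(\widehat L)\subset\Lambda'\cup(\text{the zero-section})$, and each slice $\widehat L_t$ simple along $\Lambda'_t$.

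It remains to manufacture from $\widehat L$ an honest $K$ on $M\times M\times]-a,a[$ with $\SSi(K)\subset\Lambda\cup(\text{the zero-section})$ and $K_0\simeq\cor_\Delta$, and I expect this to be the crux. First, $\opb L\conv\widehat L$ on $U\times U\times]-a,a[$ (convolution over $M$) is a quantization over $U$ of the conjugated isotopy $\psi_t:=\chi^{-1}\circ\phi_t\circ\chi$ of $\dT^*U$, with value $\cor_{\Delta_U}$ at $t=0$ — its microsupport estimate coming from~\eqref{eq:est_rel}, its normalisation from Theorem~\ref{th:2}, and its slices being simple by stability of simplicity under $L$ (cf.\ Proposition~\ref{prop:invker}). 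Since $L$ identifies the microlocal category attached to $W\subset\dTM$ with that attached to $\dT^*U$, and $\chi\circ\psi_t\circ\chi^{-1}=\phi_t$ on $W$ while $\phi_t\equiv\id$ off $W$, transporting $\opb L\conv\widehat L$ back by $L$ gives a quantization of $\Phi$ over the region $W\supset\opb{\dot\pi_M}(A)$, equal to $\cor_{\Delta\times]-a,a[}$ over $\dTM\setminus W$. One then glues over the cover $M\times M=(U_1\times U_1)\cup\bigl((M\setminus A)\times M\cup M\times(M\setminus A)\bigr)$, with $A\subset U_1$ and $\overline{U_1}\subset U$: the transported piece (on the first open) and $\cor_{\Delta\times]-a,a[}$ (on the second) agree on the overlap, where both reduce by Corollary~\ref{cor:opbeqv} to $\cor_{\Delta\times]-a,a[}$, so Lemma~\ref{le:stack1} assembles $K\in\Derlb(\cor_{M\times M\times]-a,a[})$, with the required microsupport bound and $K_0\simeq\cor_\Delta$ inherited from the pieces. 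The main obstacle lies precisely here: the quantized contact transformation $L$ is only a one-sided inverse ($\opb L\conv L\simeq\cor_{\Delta_U}$, whereas $L\conv\opb L$ is not $\cor_{\Delta_M}$), so $\widehat L$ cannot simply be conjugated back to $M\times M$; it is the compact-support hypothesis~\eqref{hyp:isot2} that rescues the construction, by confining the nontrivial part of $\Phi$ to the region $W$ on which $L$ is a genuine microlocal equivalence, while the support estimate of Proposition~\ref{prop:support_unicity}(i) guarantees that the glued object is locally bounded. The verification of hypotheses~(a)--(c) of Lemma~\ref{le:def_conormal} for the family $\Lambda'_t$ — properness, the compact $C$, and the closed-embedding property — is the other delicate point.
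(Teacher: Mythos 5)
Your overall architecture --- a short-time construction followed by propagation using the unicity statement and the gluing Lemmas~\ref{le:stack1} and~\ref{le:stack2} --- is the same as the paper's, and the propagation part is essentially right (though your claim that the short-time length $a$ ``stays bounded below along any compact subinterval'' is unjustified; the paper avoids needing it by taking a maximal interval of existence and showing the extension step forces it to be all of $I$). The genuine gap is in the short-time construction, exactly where you say you expect the crux to be: you never produce an object on $M\times M\times\mathopen]-a,a\mathclose[$ satisfying both \eqref{cond:qhi1} and \eqref{cond:qhi3}, and the route you sketch cannot be completed as stated. Because you deform $\Gamma_Z$ by $(\phi_t\times\id_U)$, i.e.\ take $\Lambda'_t=\{(\phi_t(\chi(r)),r^a)\}$, the object $\opb{L}\conv\widehat L$ quantizes the conjugated isotopy $\chi^{-1}\circ\phi_t\circ\chi$ of $\dT^*U$ and not $\phi_t$. ``Transporting back by $L$'' then means forming something like $L\conv(\opb{L}\conv\widehat L)\conv\opb{L}$ or $\widehat L\conv\opb{L}$; any such object restricts at $t=0$ to $L\conv\opb{L}$, which is only \emph{microlocally} isomorphic to $\cor_{\Delta_M}$ over $W\times W^a$ and is genuinely different from $\cor_{\Delta_M}$ as a sheaf. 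Hence condition \eqref{cond:qhi3} fails for the transported piece, and your gluing argument breaks: on the overlap of your two-set cover, Corollary~\ref{cor:opbeqv} only says the transported piece is constant in $t$ and equal to its $t=0$ slice, which is $(L\conv\opb{L})\vert$, not $\cor_{\Delta}$. So the two pieces do not agree on the overlap and Lemma~\ref{le:stack1} does not apply.

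The missing idea is the choice of \emph{which factor of $\Gamma_Z$ the isotopy acts through}. Since $\phi_t$ preserves $\dT^*U$ (it is the identity outside $\opb{\dot\pi_M}(A)$ and $A\subset U$), the paper deforms $\Gamma_Z$ on the $\dT^*U$ side, setting $\tw\Lambda_t=\Gamma_Z\conv\bigl(\text{graph of }\phi_t\vert_{\dT^*U}\bigr)=\{(\chi(\phi_t(r)),r^a)\}$; this family still satisfies the hypotheses of Lemma~\ref{le:def_conormal}, and now
$\SSi(\opb{L}\conv|_I\tw L)$ is controlled by $\opb{\Gamma_Z}\conv\Gamma_Z\conv\Lambda^U_t=\Lambda^U_t$, so $\opb{L}\conv|_I\tw L$ is \emph{directly} a quantization of $\phi_t\vert_{\dT^*U}$ on $U\times U\times\mathopen]-\eps,\eps\mathclose[$ with value $\cor_{\Delta_U}$ at $t=0$. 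Only the one-sided identity $\opb{L}\conv L\simeq\cor_{\Delta_U}$ is ever used; no transport back through $L$ is needed. The passage from $U\times U$ to $M\times M$ is then the harmless gluing of this object with $\cor_{\Delta\times I}$ over the complement of $A\times A$, where the support estimate of Proposition~\ref{prop:support_unicity} shows it is already the constant sheaf on the diagonal. Your instinct that hypothesis~\eqref{hyp:isot2} is what saves the construction is correct, but it enters through this mechanism, not through making $L$ a two-sided microlocal equivalence on $W$.
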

\begin{proof}
(A) Local existence. 
We first prove that there exists $\eps>0$ such that 
there exists a quantization $\tw K\in\Derb(\cor_{M\times M\times]-\eps,\eps[}) $ 
of $\Phi$ on $]-\eps,\eps[$.

We use the results and notations of
Proposition~\ref{prop:1} and Theorem~\ref{th:2}. We choose a relatively
compact open subset $U$ such that $A\subset U\subset M$ where $A$ is given
in hypothesis~\eqref{hyp:isot2}. 
We choose $f$ and $\eps_1$ as in Proposition~\ref{prop:1}
(in which $\epsilon_1$ was denoted by $\epsilon$).  Then
$L\seteq\cor_{Z_{\eps_1}}\in\Derb(\cor_{M\times U})$ satisfies
$\SSi(L)=\Gamma_{Z_{\eps_1}}\cup Z_{\eps_1}$, and
$L^{-1}\conv L\simeq\cor_{\Delta_U}$.
We define for $t \in I$
\eqn
&&\tw\Lambda \eqdot \Gamma_{Z_{\eps_1}} \conv \Lambda
\subset \dTM\times\dT^*U\times  T^*I,\\
&&\tw\Lambda_t \eqdot \Gamma_{Z_{\eps_1}} \conv \Lambda_t
\subset \dT^*M \times \dT^*U.
\eneqn
We remark that $\tw\Lambda_t = \tw\Lambda \conv T^*_tI$ and
$\tw\Lambda_0 = \SSi(\cor_{Z_{\eps_1}})\cap\dT^*(U\times I)$.
We apply Lemma~\ref{le:def_conormal} with
$N= M\times U$, $C= A\times A$, $V_0 = \Int Z_{\eps_1}$.
We obtain $\eps>0$ and an open subset $V\subset M\times U\times ]-\eps,\eps[$
such that  $\tw L \eqdot \cor_{\ol V}
\in\Derb(\cor_{M\times U\times ]-\eps,\eps[})$ satisfies:
\banum
\item
$\SSi(\tw L)\subset (\tw\Lambda \times_I ]-\eps,\eps[) \cup 
T^*_{M\times U\times ]-\eps,\eps[}(M\times U\times ]-\eps,\eps[)$,
\item
$\tw L|_{M\times U\times \{0\}} \simeq \cor_{Z_{\eps_1}}$,
\item
the projection $M\times U\times ]-\eps,\eps[ \to U\times ]-\eps,\eps[$
is proper on $\Supp(\tw L)$.
\eanum
Now we set
\eqn
&&K=\opb{L}\conv|_I\tw L\in\Derb(\cor_{U\times U\times ]-\eps,\eps[}).
\eneqn
Then $K$ satisfies properties \eqref{cond:qhi1}--\eqref{cond:qhi3}
of Proposition~\ref{prop:support_unicity} when replacing $M$ and
 $I$ with $U$ and $]-\eps,\eps[$. 
We deduce in particular
\eqn
&&K\vert_{((U\times U)\setminus (A\times A))\times ]-\eps,\eps[}
\simeq (\cor_{\Delta_M\times ]-\eps,\eps[})
\vert_{((U\times U)\setminus (A\times A))\times ]-\eps,\eps[}.
\eneqn
Applying Lemma~\ref{le:stack1},
 $K$ extends to
$\tilde K\in \Derb(\cor_{M\times M\times ]-\eps,\eps[})$ with 
\eqn
&&\tilde K\vert_{((M\times M)\setminus (A\times A))\times ]-\eps,\eps[}
\simeq (\cor_{\Delta_M\times ]-\eps,\eps[})
\vert_{((M\times M)\setminus(A\times A))\times ]-\eps,\eps[}
\eneqn
and $\tilde K\in\Derb(\cor_{M\times M\times ]-\eps,\eps[})$ 
is a quantization of $\Phi$ on $]-\eps,\eps[$.

\medskip\noindent
(B) Gluing (a). Assume
$K^{t_0,t_1} \in \Derlb(\cor_{M\times M\times ]t_0,t_1[})$ is a quantization
of the isotopy $\{\phi_t\}_{t\in]t_0,t_1[}$ for an open interval 
$J=]t_0,t_1[\subset I$ containing the origin.

Assume that $J\not=I$.
We shall show that there exist an open interval $J'\subset I$ 
and a quantization of the
isotopy $\{\phi_t\}_{t\in J'}$ such that $J\subset J'$ and $J'\not=J$.

For an interval $I'\subset ]t_0,t_1[$, we write
$K^{t_0,t_1}|_{I'}$ for $K^{t_0,t_1}|_{M\times M\times I'}$.

Assume that $t_1\in I$.
By applying the result of (A) to the isotopy 
$\{\phi_t\circ\opb{\phi_{t_1}}\}_{t\in I}$,
there exist 
$t_0<t_3<t_1<t_4$ with $t_4\in I$ and a quantization $L^{t_3,t_4}
\in\Derb(\cor_{M\times M\times ]t_3,t_4[})$ of 
the isotopy $\{\phi_t\circ\opb{\phi_{t_1}}\}_{t\in]t_3,t_4[}$. 
Choose $t_2$ with $t_3<t_2<t_1$ and set
\eqn
F&=&(K^{t_0,t_1}\vert_{]t_3,t_1[})\conv
(K^{t_0,t_1}_{t_2})^{-1},\\
F'&=&(L^{t_3,t_4}\vert_{]t_3,t_1[})\conv
(L^{t_3,t_4}_{t_2})^{-1}.
\eneqn
Then both $F$ and $F'$ are a quantization of
the isotopy $\{\phi_t\circ\opb{\phi_{t_2}}\}_{t\in]t_3,t_1[}$.
By unicity of the quantization (Proposition~\ref{prop:support_unicity}),
$F$ and $F'$ are isomorphic and hence we have an isomorphism
\eqn
&&K^{t_3,t_4}\vert_{]t_3,t_1[}\simeq K^{t_0,t_1}\vert_{]t_3,t_1[}
\mbox{ in }\Derlb(\cor_{M\times M\times]t_3,t_1[}),
\eneqn
where
$K^{t_3,t_4}
=L^{t_3,t_4}\conv (L^{t_3,t_4}_{t_2})^{-1}\conv K^{t_0,t_1}_{t_2}
\in \Derlb(\cor_{M\times M\times ]t_3,t_4[})$.
By Lemma~\ref{le:stack1} there exists
$K^{t_0,t_4}\in\Derlb(\cor_{M\times M\times ]t_0,t_4[})$ such that
$K^{t_0,t_4}\vert_{]t_0,t_1[}\simeq K^{t_0,t_1}$ and
$K^{t_0,t_4}\vert_{]t_3,t_4[}\simeq K^{t_3,t_4}$. Then
$K^{t_0,t_4}$ is a quantization of the isotopy $\{\phi_t\}_{t\in]t_0,t_4[}$.
Similarly, if $t_0\in I$,
then there exists $t_{-1}\in I$ with $t_{-1}<t_0$ and a quantization
$K^{t_{-1},t_1}$ on $]t_{-1},t_1[$.

\medskip\noindent
(C) Gluing (b). Consider an increasing sequence of open intervals
$I_n\subset I$ and assume we have constructed quantizations $K_n$ of 
$\{\phi_t\}_{t\in I_n}$.
By unicity in Proposition~\ref{prop:support_unicity} we have
$K_{n+1}\vert_{M\times M\times I_n}\simeq K_n$. 
Set $J=\bigcup_nI_n$. By Lemma~\ref{le:stack2} there
exists $K_J\in \Derlb(\cor_{M\times M\times J})$ such that
$K_J\vert_{M\times M\times I_n}\simeq K_n$. Then $K_J$ is a quantization of
$\{\phi_t\}_{t\in J}$.
 
\medskip\noindent
(D) 
Consider the set of pairs $(J,K_J)$ where $J$ is an open interval contained in
$I$ and containing $0$ and $K_J$ is a quantization of $\{\phi_t\}_{t\in J}$.
This set, ordered by inclusion, is inductively ordered by (C). Let $(J,K_J)$
be a maximal element. It follows from (B) that $J=I$.
\end{proof}

\subsection{Existence of the quantization -- general case}
In this section we remove hypothesis~\eqref{hyp:isot2} in 
Lemma~\ref{lem:MainLemma}.
We consider $\Phi\cl \dTM \times I \to \dTM$ which only
satisfies~\eqref{hyp:isot1} and we consider $f\cl \dTM\times I\to\R$ and
$\Lambda\subset \dTM\times\dTM\times T^*I$ as above.
We will define approximations of $\Phi$ by Hamiltonian isotopies
satisfying~\eqref{hyp:isot2} such that their quantizations ``stabilize''
over compact sets which allows us to define the quantization of $\Phi$.

We consider a $\Cd^\infty$-function $g\cl M\to \R$ with compact support. The
function
$$
f_g \cl \dTM\times I \to\R,
\qquad
(x,\xi,t)  \mapsto g(x) f(x,\xi,t)
$$
is homogeneous of degree $1$ and has support in
$\opb{\dot\pi_M}(A) \times I$ with $A = \supp(g)$ compact.  So its Hamiltonian
flow is well-defined and satisfies~\eqref{hyp:isot1} and~\eqref{hyp:isot2}.
We denote it by
$$
\Phi_g \cl \dTM\times I\to \dTM
$$
and we let $\Lambda_g \subset \dTM \times \dTM \times T^*I$ be the
Lagrangian submanifold associated to $\Phi_g$ in
Lemma~\ref{lem:homog-Hamilt-isot}. By Lemma~\ref{lem:MainLemma} there exists
a unique
$K_g \in \Derlb(\cor_{M\times M\times I})$ such that
$$
\SSi(K_g) \subset \Lambda_g \cup T^*_{M\times M\times I}(M\times M\times I)
\quad\mbox{and}\quad
K_g|_{M\times M\times \{0\}} \simeq \cor_{\Delta_M}.
$$
\begin{lemma}\label{lem:approx-Phi}
Let $U\subset M$ be a relatively compact open subset and $J\subset I$ a
relatively compact open subinterval. Then there exists a $\Cd^\infty$-function
$g\cl M\to \R$ with compact support such that
\eq\label{eq:approx-Phi1}
\Phi_g|_{\dot\pi_M(U) \times J} = \Phi|_{\dot\pi_M(U) \times J}. 
\eneq
\end{lemma}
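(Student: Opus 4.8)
The plan is to choose $g$ equal to $1$ on a neighbourhood of a compact subset of $M$ large enough to carry all the $\Phi$-trajectories issued from points over $U$ during the time-interval $J$, and then to deduce $\Phi_g=\Phi$ over that region from uniqueness of integral curves of (time-dependent) vector fields.

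The first step is a reduction to a compact set by homogeneity. Since each $\phi_t$ is a homogeneous symplectic isomorphism (and, once $g$ is chosen, so is each $\phi_{g,t}$, by~\eqref{hyp:isot1}), it commutes with the $\R^+$-action on $\dTM$; hence the base point $\dot\pi_M(\Phi(p,t))$ depends only on the ray $\R_{>0}\cdot p$ and on $t$. Fix an auxiliary Riemannian metric on $M$ and let $S^*M\subset\dTM$ be the unit cosphere bundle, so that $S^*M\cap\opb{\dot\pi_M}(D)$ is compact for every compact $D\subset M$. Let $\hat J$ be the smallest subinterval of $I$ containing $J\cup\{0\}$; since $I$ is an interval containing $0$ and $\ol J$, its closure $\ol{\hat J}$ is a compact subinterval of $I$. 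Then
\[
C\eqdot\dot\pi_M\bigl(\Phi((S^*M\cap\opb{\dot\pi_M}(\ol U))\times\ol{\hat J})\bigr)
\]
is compact (continuous image of a compact set), and by the homogeneity remark it contains $\dot\pi_M(\Phi(\opb{\dot\pi_M}(U)\times J))$. Next I would pick a $\Cd^\infty$-function $g\cl M\to\R$ with compact support and with $g\equiv1$ on an open neighbourhood $\mathcal O$ of $C$. Since $f_g=(g\circ\dot\pi_M)\cdot f$ and $g\circ\dot\pi_M\equiv1$ on the open set $\opb{\dot\pi_M}(\mathcal O)$, the functions $f_{g,t}$ and $f_t$ agree there, hence so do their differentials, and hence the Hamiltonian vector fields $H_{f_{g,t}}$ and $H_{f_t}$ agree at every point of $\opb{\dot\pi_M}(\mathcal O)$, for every $t\in I$.

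Finally I would check~\eqref{eq:approx-Phi1}, i.e.\ $\Phi_g(p,t)=\Phi(p,t)$ for $p\in\opb{\dot\pi_M}(U)$ and $t\in J$; by homogeneity of both flows one may assume $p\in S^*M$. Recall that $t\mapsto\Phi(p,t)$ (resp.\ $t\mapsto\Phi_g(p,t)$) is the integral curve issued at $t=0$ from $p$ of $H_{f_t}$ (resp.\ $H_{f_{g,t}}$), both defined for all $t\in I$, $\Phi_g$ being defined on $\dTM\times I$ by~\eqref{hyp:isot2}. Write $\gamma=\Phi(p,\cdot)$ and $\gamma_g=\Phi_g(p,\cdot)$, so $\gamma(0)=\gamma_g(0)=p$; for every $t\in\ol{\hat J}$ the point $\gamma(t)$ lies over $C$, hence in $\opb{\dot\pi_M}(\mathcal O)$. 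Now run the usual continuation argument: put $s^+=\sup\set{t\in\hat J\cap[0,\infty)}{\gamma_g\equiv\gamma\text{ on }[0,t]}$; if $s^+<\sup\hat J$, then $\gamma_g(s^+)=\gamma(s^+)\in\opb{\dot\pi_M}(\mathcal O)$ by continuity, and since on this open set $\gamma_g$ solves the same equation as $\gamma$ — which stays in $\opb{\dot\pi_M}(\mathcal O)$ throughout $\hat J$ — uniqueness of integral curves would force $\gamma_g\equiv\gamma$ on a neighbourhood of $s^+$, contradicting the choice of $s^+$. Hence $\gamma_g\equiv\gamma$ on $\hat J\cap[0,\infty)$, and symmetrically on $\hat J\cap(-\infty,0]$, in particular on $J$, which gives~\eqref{eq:approx-Phi1}. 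The only point needing a little care is that a priori one controls only the $\Phi$-trajectory over the compact set $C$, not the $\Phi_g$-trajectory; it is precisely this bootstrapping along $s^+$ that transfers the control from $\gamma$ to $\gamma_g$. Everything else is the homogeneity reduction to $S^*M$ together with standard uniqueness of solutions of ordinary differential equations.
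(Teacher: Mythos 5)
Your proposal is correct and follows essentially the same route as the paper: take $C=\dot\pi_M\bigl(\Phi(\opb{\dot\pi_M}(\ol U)\times\ol J)\bigr)$ (compact by homogeneity), choose $g\equiv1$ near $C$ with compact support, and conclude by uniqueness of integral curves since $f$ and $f_g$ agree on a neighbourhood of the trajectories. Your explicit reduction to the cosphere bundle and the continuation argument along $s^+$ merely spell out details the paper leaves implicit.
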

\begin{proof}
We assume without loss of generality that $0\in J$.
Since $\Phi$ is homogeneous, the set
$C \eqdot
\dot\pi_M( \Phi( \opb{\dot\pi_M}(\overline U) \times \overline J) )$
is a compact subset of $M$.  We choose a $\Cd^\infty$-function $g\cl M\to \R$
with compact support such that $g$ is equal to the constant function
$1$ on some neighborhood of $C$.

Then for any $p \in \opb{\dot\pi_M}(U)$ the functions $f$ and $f_g$ coincide
on a neighborhood of
$\gamma_{p,J} \eqdot \{(\phi_t(p),t); \;t\in J\} \subset \dTM\times I$ which
is the trajectory of $p$ by the flow $\Phi$.  Hence their Hamiltonian vector
fields coincide on $\gamma_{p,J}$ and their flows also.
\end{proof}

\begin{theorem}\label{th:3}
We consider $\Phi\cl \dTM\times I\to \dTM$ and we assume that it satisfies
hypothesis~\eqref{hyp:isot1}.
Then there exists $K\in\Derlb(\cor_{M\times M\times I})$ satisfying
conditions~\eqref{cond:qhi1}--\eqref{cond:qhi3} of
Proposition~\ref{prop:support_unicity}.
\end{theorem}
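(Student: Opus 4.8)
The plan is to build $K$ by gluing, over an exhaustion of $M\times M\times I$, the quantizations $K_g$ produced by Lemma~\ref{lem:MainLemma} for compactly supported Hamiltonians, using Lemma~\ref{lem:approx-Phi} to make $\Phi_g$ agree with $\Phi$ on larger and larger regions and Lemma~\ref{lem:local-unicity} to get canonical gluing isomorphisms. First I would fix an increasing sequence of relatively compact open subsets $U_1\subset U_2\subset\cdots$ of $M$ with $\bigcup_n U_n=M$, and an increasing sequence of relatively compact open subintervals $J_1\subset J_2\subset\cdots$ of $I$ with $0\in J_1$ and $\bigcup_n J_n=I$. For each $n$, Lemma~\ref{lem:approx-Phi} applied to $U_n$ and $J_n$ gives a compactly supported $g_n\cl M\to\R$ with $\Phi_{g_n}=\Phi$ on $\opb{\dot\pi_M}(U_n)\times J_n$; I denote by $K_{g_n}\in\Derlb(\cor_{M\times M\times I})$ its quantization (Lemma~\ref{lem:MainLemma}) and set $W_n=M\times U_n\times J_n$, so that $(W_n)_n$ is an increasing cover of $M\times M\times I$ by open subsets.

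For the gluing: since $\Phi_{g_n}$ and $\Phi_{g_{n+1}}$ both coincide with $\Phi$ on $\opb{\dot\pi_M}(U_n)\times J_n$, they coincide with one another there. Restricting everything to $J_n$ (which contains $0$, so that~\eqref{hyp:isot1} still holds and $K_{g_n}|_{M\times M\times J_n}$, $K_{g_{n+1}}|_{M\times M\times J_n}$ are quantizations on $J_n$), Lemma~\ref{lem:local-unicity} with $I$ replaced by $J_n$ and with the open set $U_n$ provides a \emph{unique} isomorphism $\psi_n\cl K_{g_n}|_{W_n}\isoto K_{g_{n+1}}|_{W_n}$ compatible with the identifications of both restrictions at $t=0$ with $\cor_{\Delta_{U_n}}$. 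Feeding $(K_{g_n}|_{W_n},\psi_n)_n$ into Lemma~\ref{le:stack2} yields $K\in\Derlb(\cor_{M\times M\times I})$ together with isomorphisms $u_n\cl K|_{W_n}\isoto K_{g_n}|_{W_n}$ with $\psi_n=u_{n+1}\circ\opb{u_n}$.

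It remains to check conditions~\eqref{cond:qhi1}--\eqref{cond:qhi3} of Proposition~\ref{prop:support_unicity}. For the microsupport estimate, by $u_n$ and locality of $\SSi$ one has $\SSi(K)\cap T^*W_n=\SSi(K_{g_n})\cap T^*W_n\subset(\Lambda_{g_n}\cup T^*_{M\times M\times I}(M\times M\times I))\cap T^*W_n$; the essential point is that $\Lambda_{g_n}$ and $\Lambda$ coincide over $W_n$. This follows from $\Phi_{g_n}=\Phi$ on $\opb{\dot\pi_M}(U_n)\times J_n$: the conic Lagrangian attached to a homogeneous Hamiltonian isotopy (Lemma~\ref{lem:homog-Hamilt-isot}), restricted to the region lying over $U_n$ in the source factor and over $J_n$ in the $I$-factor, depends only on the restriction of the isotopy there — this is exactly the computation carried out in the proof of Lemma~\ref{lem:local-unicity}, where one checks that an isotopy equal to the identity over an open $U$ has Lagrangian equal to $\dT^*_{\Delta_U}(M\times U)\times T^*_II$ over $M\times U\times I$. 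Since $\Lambda$ is closed and the $W_n$ cover $M\times M\times I$, we obtain~\eqref{cond:qhi1}. For~\eqref{cond:qhi3}, restricting the $u_n$ and $\psi_n$ to $t=0\in J_n$ and using $(K_{g_n})_0\simeq\cor_{\Delta_M}$ yields isomorphisms $K_0|_{M\times U_n}\simeq\cor_{\Delta_M}|_{M\times U_n}$ which, by the commuting triangle in Lemma~\ref{lem:local-unicity}, are compatible along $U_n\subset U_{n+1}$; the uniqueness in the gluing lemma (Lemma~\ref{le:stack2}, applied to the cover $\{M\times U_n\}_n$ of $M\times M$) then gives $K_0\simeq\cor_{\Delta_M}$. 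Hence $K$ is a quantization of $\Phi$ on $I$, and by Proposition~\ref{prop:support_unicity} it is the unique one.

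I expect the main obstacle to be purely organizational: lining up the intervals $J_n$ with the regions $W_n=M\times U_n\times J_n$ so that Lemma~\ref{lem:local-unicity} applies on each overlap, checking that the resulting isomorphisms $\psi_n$ are mutually compatible (so that Lemma~\ref{le:stack2} can be invoked), and verifying that replacing $\Phi$ by the approximation $\Phi_{g_n}$ does not alter the associated Lagrangian over $W_n$. Once these are in place, the microsupport and boundary conditions on $K$ are immediate from those of the $K_{g_n}$, and Lemma~\ref{le:stack2} delivers local boundedness for free.
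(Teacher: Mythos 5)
Your proposal is correct and follows essentially the same route as the paper: approximate $\Phi$ by the compactly supported isotopies $\Phi_{g_n}$ of Lemma~\ref{lem:approx-Phi}, quantize each by Lemma~\ref{lem:MainLemma}, identify the restrictions over $M\times U_n\times J_n$ via Lemma~\ref{lem:local-unicity}, and glue with Lemma~\ref{le:stack2}. Your extra care in restricting to $J_n$ before invoking Lemma~\ref{lem:local-unicity} and in checking that $\Lambda_{g_n}$ agrees with $\Lambda$ over $M\times U_n\times J_n$ only makes explicit what the paper leaves implicit.
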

\begin{proof}
We consider an increasing sequence of relatively compact open subsets
$U_n \subset M$, $n\in \N$, and an increasing sequence of relatively compact
open subintervals $J_n\subset I$, $n\in \N$, such that
$\bigcup_{n\in \N} U_n = M$ and $\bigcup_{n\in \N} J_n = I$.
By Lemma~\ref{lem:approx-Phi} we can choose $\Cd^\infty$-functions
$g_n\cl M\to \R$ with compact supports such that $\Phi_{g_n}$ and $\Phi$
coincide on $\dot\pi_M(U_n) \times J_n$.
We let $K_n \in \Derlb(\cor_{M\times M\times I})$ be the quantization of
$\Phi_{g_n}$. In particular
\eq
\label{eq:SSKn}
&&\ba{l}
 \SSi(K_n) \cap \dT^*(M\times U_n \times J_n)
 \subset \Lambda \cap \dT^*(M\times U_n \times J_n),  \\[.5ex]
K_n|_{M\times M\times \{0\}} \simeq \cor_{\Delta_M}.
\ea
\eneq
By Lemma~\ref{lem:local-unicity} we have isomorphisms
$K_{n+1}|_{M\times U_n \times J_n}\simeq K_n|_{M\times U_n \times
  J_n}$.
Then Lemma~\ref{le:stack2} implies that
there exists $K\in \Derlb(\cor_{M\times M\times I})$ 
with isomorphisms 
$K|_{M\times U_n \times J_n}\simeq K_n|_{M\times U_n \times J_n}$
for any $n\in \N$.
Then $K$ satisfies~\eqref{cond:qhi1}--\eqref{cond:qhi3} of
Proposition~\ref{prop:support_unicity} by \eqref{eq:SSKn}.
\end{proof}
\begin{remark}\label{rem:compact-case-bounded}
If $\Phi$ also satisfies~\eqref{hyp:isot2} and $J$ is a relatively compact
subinterval of $I$, then the restriction $K|_{M\times M\times J}$ belongs to
$\Derb(\cor_{M\times M\times J})$. See Example~\ref{exam:counter}.
\end{remark}
\begin{remark}\label{rem:contractible}
Theorem~\ref{th:3} extends immediately when replacing the open
interval $I$ with a smooth contractible manifold $U$ with a marked
point $u_0$.
Indeed, consider a homogeneous Hamiltonian isotopy
$\Phi\cl \dT^*M \times U \to \dT^*M$ with $\phi_{u_0}=\id_{\dT^*M}$. 
We can construct a Lagrangian submanifold 
$\Lambda$ of $\dT^*(M\times M\times U)$ as in Lemma~\ref{lem:homog-Hamilt-isot}.
Set $\delta=\id_{M\times M}\times\delta_U$ where 
$\delta_U\cl U\to U\times U$ is the diagonal embedding.
One easily sees that
$\delta_\pi \opb{\delta_d} (\Lambda)$ is the graph of a homogeneous symplectic
diffeomorphism $\tw \Phi$ of $\dT^*(M\times U)$.

Now let $h\cl U\times I \to U$ be a retraction
to $u_0$ such that $h(U\times \{0\}) = \{u_0\}$ and $h|_{U\times \{1\}} =\id_U$.
We set $\Phi_t = \Phi \circ (\id_{\dT^*M} \times h_t)$ and we apply the above
procedure to each $\Phi_t$.
Then $\{\tw\Phi_t\}_{t\in I}$ is a homogeneous Hamiltonian isotopy of
$\dT^*(M\times U)$ with $\tw\Phi_0 = \id$ and
Theorem~\ref{th:3} associates with it a kernel
$K\in \Derlb(\cor_{M\times U \times M\times U \times I})$.
One checks that $K$ is supported on $M\times M\times \Delta_U \times I$
and that $K|_{M\times M\times \Delta_U \times \{1\}}$ is the desired kernel.
\end{remark}

\begin{example}
Let $M=\R^n$ and denote by $(x;\xi)$ the homogeneous symplectic
coordinates on $T^*\R^n$. Consider the isotopy 
$\phi_t(x;\xi)= (x-t\frac{\xi}{\vert\xi\vert};\xi)$, $t\in I=\R$.
Then 
\eqn
&&\Lambda_t=\{(x,y,\xi,\eta);
\vert x-y\vert=\vert t\vert,\;\xi=-\eta=s(x-y),\; st<0\}
\quad\mbox{ for }t\neq0,\\
&&\Lambda_0=\dot T^*_\Delta (M\times M).
\eneqn
The isomorphisms 
\eqn
&&\rhom(\cor_{\Delta\times\{t=0\}},\cor_{M\times M\times\R})
\simeq\cor_{\Delta\times\{t=0\}}[-n-1]\\
&&\rhom(\cor_{\{\vert x-y\vert\leq-t\}},\cor_{M\times M\times\R})
\simeq \cor_{\{\vert x-y\vert<-t\}}
\eneqn
together with the morphism
$\cor_{\{\vert x-y\vert\leq-t\}}\to\cor_{\Delta\times\{t=0\}}$
induce the morphism
$\cor_{\Delta\times\{t=0\}}[-n-1]\to \cor_{\{\vert x-y\vert<-t\}}$.
Hence we obtain
\eqn
&&\cor_{\{\vert x-y\vert\leq t\}}
\to \cor_{\Delta\times\{t=0\}}
\to\cor_{\{\vert x-y\vert<-t\}}[n+1].
\eneqn
Let $\psi$ be the composition.
Then there exists a distinguished triangle in $\Derb(\cor_{M\times M\times I})$:
\eqn
&&\cor_{\{\vert x-y\vert<-t\}}[n]\to K\to 
\cor_{\{\vert x-y\vert\leq t\}}\xrightarrow[{\ \psi\ }]{+1}
\eneqn
We can verify that $K$ satisfies the
properties~\eqref{cond:qhi1}--\eqref{cond:qhi3} of
Proposition~\ref{prop:support_unicity}.
{}From this distinguished triangle, we deduce the isomorphisms 
in $\Derb(\cor_{M\times M})$:
$K_t\simeq \cor_{\{\vert x-y\vert\leq t\}}$ for $t\geq0$
and  $K_t\simeq\cor_{\{\vert x-y\vert<-t\}}[n]$ for $t<0$.
\end{example}

\begin{example}\label{exam:counter}
We shall give an example where the quantization 
 $K\in \Derlb(\cor_{M\times M\times I})$ of a Hamiltonian isotopy does not belong to
$\Derb(\cor_{M\times M\times I})$.
Let us take the $n$-dimensional unit sphere $M=\mathbb{S}^n$ ($n\ge2$)
endowed  with the canonical Riemannian metric.
The metric defines an isomorphism
$T^*M\simeq TM$ and the length function $f\cl \dT^*M\to\R$.
Then $f$ is a positive-valued function on $\dT^*M$
homogeneous of degree $1$.
Set $I=\R$ and let $\Phi=\{\phi_t\}_{t\in I}$ be the Hamiltonian
isotopy associated with $f$. 
Then for $p\in \dT^*M$, $\{\pi_M(\phi_t(p))\}_{t\in I}$
is a geodesic. For $x$, $y\in M$, $\dist(x,y)$ 
denotes the distance between $x$ and $y$.
Let $a\cl M\to M$ be the antipodal map.
Then we have $\dist(x,y)+\dist(x,y^a)=\pi$.
For any integer $\ell$ we  set
\eqn
C_{\ell}=\left\{
\ba{ll}
\set{(x,y,t)\in M\times M\times \R}%
{\text{$t\ge\ell\pi$ and $\dist(x,a^\ell(y))\le t-\ell\pi$}}
&\text{if $\ell\ge0$,}\\[2ex]
\Bigl\{(x,y,t)\in M\times M\times\R\ ;\
\parbox{31.5ex}%
{$t<(\ell+1)\pi$ and \hfill\\$\dist(x,a^{\ell+1}(y))<-t+(\ell+1)\pi$}\Bigr\}
&\text{if $\ell<0$.}
\ea
\right.
\eneqn
Let $K$ be the quantization of $\Phi$. Then we have
\eqn
H^k(K)&\simeq&
\begin{cases}
\cor_{C_\ell}&\text{if $k=(n-1)\ell$ for some $\ell\in\Z_{\ge0}$,}\\
\cor_{C_\ell}&\text{if $k=(n-1)\ell-1$ for some $\ell\in\Z_{<0}$,}\\
0&\text{otherwise.}
\end{cases}
\eneqn
\end{example}

\subsection{Deformation of the microsupport}
We consider $\Phi\cl \dTM\times I\to \dTM$ satisfying
hypothesis~\eqref{hyp:isot1}
as in Theorem~\ref{th:3}, we denote as usual by $\Lambda$ the
Lagrangian submanifold associated to its graph and define $\Lambda_t$
as in \eqref{eq:lambdat}.
Let $S_0\subset \dTM$ be a closed conic subset. 
We set $S=\Lambda\circ S_0$ and $S_t=\Lambda_t\circ S_0$,
closed conic subsets of  $\dTM\times T^*I$ and  $\dTM$ respectively. 
We let $i_t\cl M\to M\times I$ be
the natural inclusion for $t\in I$. Consider the functor
\eq\label{eq:pullback-at-t}
\opb{i_t} \cl \Derlb_{S\cup T^*_{M\times I}(M\times I)}(\cor_{M\times I})
\to \Derlb_{S_t \cup T^*_MM}(\cor_M).
\eneq
\begin{proposition}\label{pro:famLagr0}
For any $t\in I$ the functor~\eqref{eq:pullback-at-t} is an equivalence
of categories. In particular for any $F\in \Derlb(\cor_M)$ such that
$\SSi(F) \subset S_t\cup T^*_MM$ there exists a unique
$G\in \Derlb(\cor_{M\times I})$ such that $\opb{i_t}(G) \simeq F$ and
$\SSi(G) \subset S\cup T^*_{M\times I}(M\times I)$.
If $\Phi$ also satisfies~\eqref{hyp:isot2} and we replace $I$ by a relatively
compact subinterval, then~\eqref{eq:pullback-at-t} induces an equivalence
between bounded derived categories.
\end{proposition}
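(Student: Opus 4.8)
The plan is to exhibit an explicit quasi-inverse of $\opb{i_t}$ built from the quantization kernel of $\Phi$. Write $0_X\eqdot T^*_XX$ for the zero-section of $T^*X$, put $\widehat S_0\eqdot S_0\times 0_I\subset T^*(M\times I)$, and let $p\cl M\times I\to M$ be the projection. Let $K\in\Derlb(\cor_{M\times M\times I})$ be the quantization of $\Phi$ given by Theorem~\ref{th:3} and $\opb K$ its $I$-relative inverse; by Proposition~\ref{prop:support_unicity} and its proof we have $\SSi(K)\subset\Lambda\cup 0_{M\times M\times I}$, $\SSi(\opb K)$ is contained in $v(\Lambda)$ off the zero-section (with $v$ as in~\eqref{est:dual}), both projections $\Supp(K)\tto M\times I$ (hence $\Supp(\opb K)\tto M\times I$) are proper, $\opb{i_t}K\simeq K_t$ and $\opb{i_t}\opb K\simeq\opb{K_t}$ for all $t$, $K\conv|_I\opb K\simeq\cor_{\Delta\times I}\simeq\opb K\conv|_I K$, and $K_t\conv\opb{K_t}\simeq\opb{K_t}\conv K_t\simeq\cor_\Delta$. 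Since $S_0\subset\dTM$, the conic set $S=\Lambda\circ S_0$ avoids $0_M\times T^*I$, so $i_t$ is non-characteristic for $S\cup 0_{M\times I}$ and the functor~\eqref{eq:pullback-at-t} is well defined.

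I would then factor $\opb{i_t}$ through three equivalences. First, applying the relative-convolution microsupport estimate~\eqref{eq:est_rel} — whose properness hypothesis in~\eqref{eq:noncharkerrel} holds because $\Supp(K)$ and $\Supp(\opb K)$ are proper over $M\times I$, and whose non-characteristic hypothesis holds because the relevant $\dTM$-components are nonzero — together with the identities $\Lambda\circ\widehat S_0=S$ and $v(\Lambda)\circ S=\widehat S_0$ (which encode $\Lambda_t\circ S_0=\phi_t(S_0)=S_t$ and the cancellation of the two $T^*I$-directions when $\opb K$ undoes the isotopy), one gets that $K\conv|_I(\scbul)$ and $\opb K\conv|_I(\scbul)$ are mutually quasi-inverse equivalences between $\Derlb_{\widehat S_0\cup 0_{M\times I}}(\cor_{M\times I})$ and $\Derlb_{S\cup 0_{M\times I}}(\cor_{M\times I})$, the composites being $\simeq\cor_{\Delta\times I}\conv|_I(\scbul)\simeq\id$ by associativity. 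Second, since $I$ is contractible, Corollary~\ref{cor:opbeqv} and Theorem~\ref{th:opboim}(iii) (both valid for locally bounded objects) show that $\opb p\cl\Derlb_{S_0\cup 0_M}(\cor_M)\to\Derlb_{\widehat S_0\cup 0_{M\times I}}(\cor_{M\times I})$ is an equivalence, a quasi-inverse being $\roim p$, and also $\opb{i_t}$ for any $t$. Third, $K_t\conv(\scbul)$ is an equivalence $\Derlb_{S_0\cup 0_M}(\cor_M)\to\Derlb_{S_t\cup 0_M}(\cor_M)$ with quasi-inverse $\opb{K_t}\conv(\scbul)$, because $\SSi(K_t)$ is off the zero-section the graph of $\phi_t$ and $K_t\conv\opb{K_t}\simeq\opb{K_t}\conv K_t\simeq\cor_\Delta$.

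Concretely, I would define $\Psi(F)\eqdot K\conv|_I\bigl((\opb{K_t}\conv F)\etens\cor_I\bigr)$ for $F\in\Derlb_{S_t\cup 0_M}(\cor_M)$ and check that it is a quasi-inverse of~\eqref{eq:pullback-at-t}. Since restriction to $t$ commutes with $\conv|_I$, $\opb{i_t}\Psi(F)\simeq K_t\conv\opb{K_t}\conv F\simeq\cor_\Delta\conv F\simeq F$. Conversely, for $G\in\Derlb_{S\cup 0_{M\times I}}(\cor_{M\times I})$ put $G'\eqdot\opb K\conv|_I G\in\Derlb_{\widehat S_0\cup 0_{M\times I}}(\cor_{M\times I})$; by Corollary~\ref{cor:opbeqv}, $G'\simeq\opb p\,\opb{i_t}G'\simeq\opb p(\opb{K_t}\conv\opb{i_t}G)\simeq(\opb{K_t}\conv\opb{i_t}G)\etens\cor_I$, whence $\Psi(\opb{i_t}G)\simeq K\conv|_I G'\simeq K\conv|_I\opb K\conv|_I G\simeq\cor_{\Delta\times I}\conv|_I G\simeq G$. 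Thus~\eqref{eq:pullback-at-t} is an equivalence, and the statement about an individual $F$ (existence and uniqueness of $G$) is just its essential surjectivity together with full faithfulness. For the last assertion, under~\eqref{hyp:isot2} and over a relatively compact subinterval $J\subset I$ one has $K|_{M\times M\times J}\in\Derb(\cor_{M\times M\times J})$ by Remark~\ref{rem:compact-case-bounded}, and the properness of the supports over $M\times J$ makes all the functors above preserve bounded derived categories, so the same argument gives the equivalence between the bounded categories.

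The step I expect to be the main obstacle is proving the two geometric identities $\Lambda\circ\widehat S_0=S$ and $v(\Lambda)\circ S=\widehat S_0$ — equivalently, that the $T^*I$-component of $\SSi(\opb K\conv|_I G)$ collapses onto the zero-section when $\SSi(G)\subset S\cup 0_{M\times I}$, which is precisely what permits the application of Corollary~\ref{cor:opbeqv} to $G'$ — together with the careful bookkeeping of the non-characteristic and properness conditions~\eqref{eq:noncharkerrel} at each invocation of~\eqref{eq:est_rel}. Once these facts are in place the remainder is a formal manipulation in the kernel calculus, with local boundedness carried along by the properness of $\Supp(K)$ and $\Supp(\opb K)$ over $M\times I$.
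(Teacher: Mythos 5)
Your proposal is correct and follows essentially the same route as the paper: the paper's proof is precisely the commutative square in which $K\conv|_I(\scbul)$ is an equivalence from the category of objects with microsupport in $(S_0\times T^*_II)\cup T^*_{M\times I}(M\times I)$ onto $\Derlb_{S\cup T^*_{M\times I}(M\times I)}(\cor_{M\times I})$, restriction at $t$ is an equivalence on the former (quasi-inverse $\opb{q_1}$, i.e.\ your $\opb{p}$ via Corollary~\ref{cor:opbeqv}), and the last assertion follows from Remark~\ref{rem:compact-case-bounded}. The only cosmetic difference is that the paper first reduces to $t=0$ by replacing $\Phi$ with $\Phi\circ\opb{\phi_t}$, whereas you absorb $\opb{K_t}$ into the explicit quasi-inverse $\Psi$; the verifications you flag as the main obstacles are exactly the ones the paper leaves implicit.
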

\begin{proof}
Replacing $\Phi$ by $\Phi\circ\opb{\phi_t}$ we may as well assume that
 $t=0$.
Let $K\in \Derlb(\cor_{M\times M\times I})$ be the quantization of 
$\Phi$.
Then we obtain the commutative diagram:
$$
\xymatrix@C=2cm{
\Derlb_{(S_0\times T^*_II)\cup T^*_{M\times I}(M\times I)}(\cor_{M\times I})
\ar[r]^{K \conv\vert_I \cdot}_\sim  \ar[d]_{r_0}  
& \Derlb_{S\cup T^*_{M\times I}(M\times I)}(\cor_{M\times I})
 \ar[d]^{\opb{i_0}}  \\
\Derlb_{S_0 \cup T^*_MM}(\cor_M)   \ar@{=}[r] &
\Derlb_{S_0 \cup T^*_MM}(\cor_M)  ,}
$$
where $r_0$ is induced by $\opb{i_0}$.
It is known that $r_0$ is an equivalence of categories, with inverse given by
$\opb{q_1}$, where $q_1\cl M\times I \to M$ is the projection.  Hence so
is the morphism
$\opb{i_0}$ defined by~\eqref{eq:pullback-at-t}.

\medskip\noindent 
The last assertion follows from Remark~\ref{rem:compact-case-bounded}.
\end{proof}

Now we consider a closed conic Lagrangian submanifold $S_0 \subset \dTM$
and a deformation of $S_0$ indexed by $I$, $\Psi\cl S_0\times I \to \dTM$ as
in Definition~\ref{def:family-Lagr}.  We let $S\subset \dTM\times T^*I$ be the
corresponding Lagrangian submanifold defined in Lemma~\ref{lem:family-Lagr}.
Then Propositions~\ref{prop:extending-isotopies} and~\ref{pro:famLagr0}
imply the following result.
\begin{corollary}\label{thm:famLagr}
We consider a deformation $\Psi\cl S_0\times I \to \dTM$ as in 
Definition~\ref{def:family-Lagr} and we
assume that it satisfies~\eqref{hyp:famLagr}.
Then for any $t\in I$ the functor~\eqref{eq:pullback-at-t} is an equivalence
of categories. 
Moreover if we replace $I$ by a relatively compact subinterval,
it induces an equivalence between the bounded derived categories.
\end{corollary}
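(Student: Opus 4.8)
The plan is to realize the abstract deformation $\Psi$ as the restriction to $S_0$ of an ambient homogeneous Hamiltonian isotopy and then quote Proposition~\ref{pro:famLagr0}. First I would apply Proposition~\ref{prop:extending-isotopies}: under hypothesis~\eqref{hyp:famLagr} the deformation $\Psi\cl S_0\times I\to\dTM$ extends to a homogeneous Hamiltonian isotopy $\Phi\cl\dTM\times I\to\dTM$ satisfying~\eqref{hyp:isot1}, in the sense that the associated conic Lagrangian submanifold $\Lambda\subset\dTM\times\dTM\times T^*I$ and its slices $\Lambda_t=\Lambda\conv T^*_tI$ satisfy $S_t=\Lambda_t\circ S_0$ for all $t\in I$. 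I would also check that the closed conic Lagrangian $S\subset\dTM\times T^*I$ produced by Lemma~\ref{lem:family-Lagr} coincides with $\Lambda\circ S_0$; this identification is what makes the functor~\eqref{eq:pullback-at-t} appearing in the statement of the corollary literally the functor to which Proposition~\ref{pro:famLagr0} applies.

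Granting this, the first assertion is immediate: Proposition~\ref{pro:famLagr0}, applied to $\Phi$ and to the closed conic subset $S_0\subset\dTM$, states exactly that for each $t\in I$ the functor
\[
\opb{i_t}\cl\Derlb_{S\cup T^*_{M\times I}(M\times I)}(\cor_{M\times I})\to\Derlb_{S_t\cup T^*_MM}(\cor_M)
\]
is an equivalence of categories, and produces the claimed unique $G$ deforming a given $F$. For the bounded statement I would arrange, again via Proposition~\ref{prop:extending-isotopies}, that the extending isotopy $\Phi$ can be chosen to also satisfy the compact-support hypothesis~\eqref{hyp:isot2} --- this is possible precisely because~\eqref{hyp:famLagr} controls the deformation away from a compact set. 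Then the last assertion of Proposition~\ref{pro:famLagr0}, which rests on Remark~\ref{rem:compact-case-bounded}, gives that after replacing $I$ by a relatively compact subinterval $J$ the functor~\eqref{eq:pullback-at-t} restricts to an equivalence $\Derb_{S\cup T^*_{M\times J}(M\times J)}(\cor_{M\times J})\isoto\Derb_{S_t\cup T^*_MM}(\cor_M)$ of bounded derived categories.

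The main obstacle is not in the derived-categorical part, which is routine once the isotopy is available, but in the geometric reduction step: one must verify that the ambient isotopy furnished by Proposition~\ref{prop:extending-isotopies} genuinely induces the prescribed family $\{S_t\}_{t\in I}$, that the two a priori distinct descriptions of $S$ --- via Lemma~\ref{lem:family-Lagr} and via $\Lambda\circ S_0$ --- agree, and that in the compactly supported case the extension can be taken to satisfy~\eqref{hyp:isot2}. Once these matching statements are in place, nothing further is needed beyond combining Propositions~\ref{prop:extending-isotopies} and~\ref{pro:famLagr0}.
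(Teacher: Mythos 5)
Your proposal matches the paper's argument exactly: the corollary is deduced by combining Proposition~\ref{prop:extending-isotopies} (which under~\eqref{hyp:famLagr} extends $\Psi$ to an ambient homogeneous Hamiltonian isotopy satisfying~\eqref{hyp:isot1} and~\eqref{hyp:isot2}) with Proposition~\ref{pro:famLagr0}, the bounded statement following from the last assertion there. The compatibility checks you flag (that $S$ from Lemma~\ref{lem:family-Lagr} agrees with $\Lambda\circ S_0$ and that the extension induces the given family $\{S_t\}$) are exactly what the paper leaves implicit, so nothing is missing.
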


\section{Applications to non-displaceability}
\label{sec:applications}
We denote by $\Phi=\{\phi_t\}_{t\in I}\cl \dTM\times I\to \dTM$ 
a homogeneous Hamiltonian isotopy as in Theorem~\ref{th:3}.
Hence, $\Phi$ satisfies hypothesis~\eqref{hyp:isot1}.

Let $F_0\in\Derb(\cor_M)$. We assume
\eq\label{hyp:morse1}
&&\text{$F_0$ has a compact support.}
\eneq
We let $\Lambda \subset \dT^*(M\times M\times I)$ be the conic Lagrangian
submanifold associated to $\Phi$ in Lemma~\ref{lem:homog-Hamilt-isot} and
we let $K\in\Derb(\cor_{M\times M\times I})$ be the quantization of
$\Phi$ on $I$ constructed in Theorem~\ref{th:3}. We set:
\eq\label{eq:phiKF}\ba{rcl}
&&F=K\conv F_0\in\Derb(\cor_{M\times I}),\\
&&F_{t_0}=F\vert_{\{t=t_0\}} \simeq K_{t_0} \conv F_0  \in\Derb(\cor_{M})
\quad\text{for $t_0\in I$.}
\ea\eneq
Then 
\eq\label{eq:propertiesFt}
&&\left\{
\parbox{65ex}{
$\SSi(F) \subset (\Lambda\circ \SSi(F_0)) \cup T^*_{M\times I}(M\times I)$,
 \\[1ex]
$\SSi(F)\cap T^*_MM\times T^*I\subset T^*_{M\times I}(M\times I)$,
\\[1ex]
 the projection $\Supp (F) \to I$ is proper.
}\right.
\eneq
The first assertion follows from \eqref{eq: estss},
and the second assertion follows from the first.
The third one follows from
Proposition~\ref{prop:support_unicity}~(i) and~\eqref{hyp:morse1}.
In particular we have
\eq\label{eq:propertiesFt2}
&&\left\{
\parbox{60ex}{
$F_t$ has a compact support in $M$,\\[1ex]
$\SSi(F_t)\cap\dTM=\phi_t(\SSi(F_0)\cap\dTM)$,
}\right.
\eneq
where the last equality follows from~\eqref{eq:propertiesFt} applied
to $\Phi$ and $\{\opb{\phi_t}\}_{t\in I}$.

\subsection{Non-displaceability: homogeneous case}
We consider a $\Cd^1$-map $\psi\cl M \to \R$
and we assume that 
\eq\label{hyp:morse2}
&&\mbox{the differential $d\psi(x)$ never vanishes.}
\eneq
Hence  the section of $T^*M$ defined by $d\psi$
\eq
&& \Lambda_\psi \eqdot \{(x;d\psi(x)); \; x\in M\}
\label{eq:lambdapsi}
\eneq
is contained in $\dTM$.
\begin{theorem}\label{th:ndispl1}
We consider $\Phi=\{\phi_t\}_{t\in I}$ satisfying~\eqref{hyp:isot1}, 
$\psi\cl M\to\R$ satisfying~\eqref{hyp:morse2} and
$F_0\in\Derb(\cor_M)$ with compact support. We  assume $\rsect(M;F_0)\not=0$.
Then for any $t\in I$,
$\phi_t(\SSi(F_0)\cap\dTM) \cap \Lambda_\psi \neq \emptyset$.
\end{theorem}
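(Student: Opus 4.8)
The plan is to apply the quantization machinery together with the Morse-theoretic criterion of Corollary~\ref{cor:Morse1}. Fix $t\in I$ and consider $F=K\conv F_0\in\Derb(\cor_{M\times I})$ and $F_t=K_t\conv F_0$ as in~\eqref{eq:phiKF}. By~\eqref{eq:propertiesFt2}, $F_t$ has compact support and $\SSi(F_t)\cap\dTM=\phi_t(\SSi(F_0)\cap\dTM)$. So the conclusion $\phi_t(\SSi(F_0)\cap\dTM)\cap\Lambda_\psi\neq\emptyset$ is equivalent to $\SSi(F_t)\cap\Lambda_\psi\neq\emptyset$, and by Corollary~\ref{cor:Morse1} (applied with $F=F_t$, whose support is compact) it suffices to prove that $\rsect(M;F_t)\neq0$.

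The heart of the matter is therefore to show $\rsect(M;F_t)\not\simeq 0$, and the natural way is to show $\rsect(M;F_t)\simeq\rsect(M;F_0)$, which is nonzero by hypothesis. This is exactly the situation of Corollary~\ref{cor:rsectFt} applied to $F=K\conv F_0$ on $M\times I$: the second condition in~\eqref{eq:propertiesFt} gives $\SSi(F)\cap (T^*_MM\times T^*I)\subset T^*_{M\times I}(M\times I)$, and the third gives that the projection $q\cl\Supp(F)\to I$ is proper. Hence Corollary~\ref{cor:rsectFt} yields $\rsect(M;F_s)\simeq\rsect(M;F_t)$ for all $s,t\in I$. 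Taking $s=0$ and using $K_0\simeq\cor_\Delta$ (condition~\eqref{cond:qhi3}), so that $F_0'\eqdot F|_{t=0}\simeq\cor_\Delta\conv F_0\simeq F_0$, we get $\rsect(M;F_t)\simeq\rsect(M;F_0)\neq0$ for every $t\in I$.

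The remaining point is the identification $\cor_\Delta\conv F_0\simeq F_0$, which is a routine consequence of the definition of the convolution $\conv[M]$ and base change, so I would only mention it. The step I expect to require the most care is checking that the hypotheses of Corollary~\ref{cor:rsectFt} are genuinely met by $F=K\conv F_0$ — in particular that $q\cl M\times I\to I$ is proper on $\Supp(F)$. This is where~\eqref{hyp:morse1} (compactness of $\Supp F_0$) enters, via Proposition~\ref{prop:support_unicity}~(i), which controls $\Supp(K)$ by the set $B$ of~\eqref{not:B} and guarantees the needed properness; the estimate $\Supp(K\conv F_0)\subset q_{13}(\opb{q_{12}}\Supp(K)\cap\opb{q_{23}}\Supp(F_0))$ then does the rest. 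Once these verifications are in place, the theorem follows immediately by combining Corollaries~\ref{cor:rsectFt} and~\ref{cor:Morse1}.
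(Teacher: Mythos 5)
Your proposal is correct and follows exactly the paper's argument: pass from $F_0$ to $F_t=K_t\conv F_0$, use~\eqref{eq:propertiesFt} and Corollary~\ref{cor:rsectFt} to get $\rsect(M;F_t)\simeq\rsect(M;F_0)\neq0$, and conclude with Corollary~\ref{cor:Morse1} via the microsupport identity of~\eqref{eq:propertiesFt2}. The extra verifications you flag (properness of $\Supp(F)\to I$ via Proposition~\ref{prop:support_unicity}~(i) and $\cor_\Delta\conv F_0\simeq F_0$) are precisely the points the paper delegates to the discussion preceding the theorem.
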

\begin{proof}
Let $F,F_t$ be as in~\eqref{eq:phiKF}. Then $F_t$ has compact support and $\rsect(M;F_t)\not=0$
by Corollary~\ref{cor:rsectFt}.
Since 
$\SSi(F_t) \subset \phi_t(\SSi(F_0)\cap\dTM) \cup T^*_MM$, the result follows from Corollary~\ref{cor:Morse1}.
\end{proof}

\begin{corollary}\label{cor:ndispl1}
Let $\Phi=\{\phi_t\}_{t\in I}$ and $\psi\cl M\to\R$ 
be as in Theorem~\ref{th:ndispl1}.
Let $N$ be a non-empty compact  submanifold of $M$. 
Then for any $t\in I$,
$\phi_t(\dot T^*_NM) \cap \Lambda_\psi \neq \emptyset$.
\end{corollary}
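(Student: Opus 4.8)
The plan is to deduce this immediately from Theorem~\ref{th:ndispl1} by taking for $F_0$ the constant sheaf $\cor_N$. First I would verify the three hypotheses of that theorem. Since $N$ is compact it is in particular closed in $M$, so $\cor_N\in\Derb(\cor_M)$ is well defined with $\Supp(\cor_N)=N$ compact; thus the compact support assumption holds. By Example~\ref{ex:microsupp}~(ii) we have $\SSi(\cor_N)=T^*_NM$, and therefore $\SSi(\cor_N)\cap\dTM=\dot T^*_NM$. Finally, since $N$ is closed in $M$, $\rsect(M;\cor_N)\simeq\rsect(N;\cor_N)$, and $H^0$ of the latter is $\cor^{\pi_0(N)}$, which is nonzero because $N\neq\emptyset$; hence $\rsect(M;F_0)\not=0$.

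With these checks in place, Theorem~\ref{th:ndispl1} applied to $\Phi$, $\psi$ and $F_0=\cor_N$ yields, for every $t\in I$,
\[
\phi_t(\dot T^*_NM)\cap\Lambda_\psi=\phi_t\bigl(\SSi(F_0)\cap\dTM\bigr)\cap\Lambda_\psi\neq\emptyset,
\]
which is exactly the assertion. There is no real obstacle here; the only mild point of care is that $N$ is assumed only to be a submanifold, but compactness forces it to be closed in $M$, which is what guarantees both $\SSi(\cor_N)=T^*_NM$ and $\rsect(M;\cor_N)\simeq\rsect(N;\cor_N)$. One could equally well run the argument with any compactly supported $F_0$ satisfying $\SSi(F_0)\cap\dTM\subset\dot T^*_NM$ and $\rsect(M;F_0)\neq0$ (for instance a simple sheaf along $T^*_NM$), but $\cor_N$ is the most economical choice.
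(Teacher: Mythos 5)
Your proof is correct and is exactly the argument the paper intends (the paper leaves it implicit, just as it writes "Apply Theorem~\ref{th:morse1} with $F_0=\cor_N$" for the analogous Corollary~\ref{cor:ndispl2}): apply Theorem~\ref{th:ndispl1} with $F_0=\cor_N$, using $\SSi(\cor_N)=T^*_NM$ and $\rsect(M;\cor_N)\neq0$ for $N$ compact and non-empty. Your hypothesis checks are all accurate.
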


\subsection{Non-displaceability: Morse inequalities}
In this subsection and in subsection~\ref{sec:sympl-case} below we assume that
$\cor$ is a field.
Let $F_0\in\Derb(\cor_M)$ and set
\eqn
&&S_0=\SSi(F_0)\cap\dTM.
\eneqn
Now we consider the hypotheses
\eq\label{hyp:morse5a}
&&\parbox{63ex}{$\psi$ is of class $\Cd^2$ and 
the differential $d\psi(x)$ never vanishes,}\\
\label{hyp:morse5b}
&&\left\{
\parbox{63ex}{there exists an open 
subset $S_{0,\reg}$ of $S_0$
such that $S_{0,\reg}$ is a Lagrangian submanifold of class $\Cd^1$
and $F_0$ is a simple sheaf along $S_{0,\reg}$.
}\right.
\eneq
\begin{lemma}\label{le:puretransv}
Let $\Lambda$ be a smooth Lagrangian submanifold defined in a
neighborhood of $p\in\dTM$, let $G\in\Derb(\cor_M)$ and assume $G$ is
simple along $\Lambda$ at $p$. Assume~\eqref{hyp:morse5a} 
and assume that $\Lambda$ and $\Lambda_\psi$
intersect transversally at $p$. Set $x_0=\pi(p)$. Then 
\eqn
&&\sum_j\dim H^j(\rsect_{\{\psi(x)\geq\psi(x_0)\}}(G))_{x_0}=1.
\eneqn
\end{lemma}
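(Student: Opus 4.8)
The plan is to convert the local cohomology into a stalk of the functor $\muhom$, straighten the geometry by a quantized contact transformation, and compute in a normal form.

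First, since $d\psi$ never vanishes and $\psi$ is of class $\Cd^2$, the level set $S\eqdot\{x;\psi(x)=\psi(x_0)\}$ is a $\Cd^2$-hypersurface containing $x_0$, with $p=(x_0;d\psi(x_0))\in T^*_SM=\SSi(\cor_S)$, and $\cor_S$ is simple along $T^*_SM$. By the formula for $\muhom$ recalled above,
\eqn
&&(\rsect_{\{\psi(x)\geq\psi(x_0)\}}(G))_{x_0}\simeq\muhom(\cor_S,G)_p,
\eneqn
so it suffices to prove that $\muhom(\cor_S,G)_p$ has one-dimensional total cohomology.

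Next I would check that $\Lambda$ and $T^*_SM$ intersect cleanly along the ray through $p$, i.e.\ $T_p\Lambda\cap T_p(T^*_SM)=\R e_p$, where $e_p$ is the Euler vector at $p$. Both Lagrangians being conic, both tangent spaces contain $e_p$. Conversely, $T_p(T^*_SM)=\{(\delta x,\mu e_p+\mathrm{Hess}_\psi(x_0)\delta x);\mu\in\R,\ \delta x\in\ker d\psi(x_0)\}$ and $T_p\Lambda_\psi=\{(\delta x,\mathrm{Hess}_\psi(x_0)\delta x);\delta x\in T_{x_0}M\}$; for $(\delta x,\delta\xi)\in T_p\Lambda\cap T_p(T^*_SM)$, subtracting the radial part yields a vector $(\delta x,\mathrm{Hess}_\psi(x_0)\delta x)$ lying both in $T_p\Lambda$ (since $e_p\in T_p\Lambda$) and in $T_p\Lambda_\psi$, hence $0$ by the transversality hypothesis; so $\delta x=0$. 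Equivalently, $\Lambda$ and $T^*_SM$ induce transversal Lagrangian subspaces in the reduced symplectic space $e_p^{\perp}/\R e_p$.

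Because $\Lambda$ and $T^*_SM$ are conic Lagrangians meeting cleanly along the ray through $p$, there is a homogeneous contact transformation $\chi$ defined near $p$ with $\chi(p)=q\eqdot(0;dx_n)\in\dT^*\R^n$ carrying $(\Lambda,T^*_SM)$ to the model pair $(T^*_H\R^n,T^*_{\{0\}}\R^n)$ with $H=\{x_n=0\}$; the existence of this normal form reduces, by a Darboux-type argument, to the linear statement of the previous paragraph together with the fact that linearizations of homogeneous contact transformations realize every symplectomorphism of the reduced space (compare the normal forms in \cite[\S7]{KS90}). Choosing a quantized contact transformation $\shk$ above $\chi$ and using that simplicity is preserved by quantized contact transformations, $\shk\conv G$ is simple along $T^*_H\R^n$ and $\shk\conv\cor_S$ is simple along $T^*_{\{0\}}\R^n$, so $\shk\conv G\simeq\cor_H[d]$ and $\shk\conv\cor_S\simeq\cor_{\{0\}}[e]$ near $q$, for some $d,e\in\Z$. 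By the compatibility of $\muhom$ with quantized contact transformations,
\eqn
&&\muhom(\cor_S,G)_p\;\simeq\;\muhom(\shk\conv\cor_S,\shk\conv G)_q
\;\simeq\;\muhom(\cor_{\{0\}},\cor_H)_q[d-e].
\eneqn
Finally a direct computation gives $\muhom(\cor_{\{0\}},\cor_H)_q\simeq\cor[1-n]$ (for instance, $\muhom(\cor_{\{0\}},\cor_H)$ is the Sato microlocalization $\mu_{\{0\}}(\cor_H)$, which, $H$ being linear, equals the Fourier--Sato transform $\cor_{H^{\perp}}[1-n]$ of $\cor_H$, and $q$ lies on $H^\perp$). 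Hence $\muhom(\cor_S,G)_p$, and therefore $(\rsect_{\{\psi(x)\geq\psi(x_0)\}}(G))_{x_0}$, is isomorphic to $\cor$ placed in a single degree, which gives $\sum_j\dim H^j=1$. The two displayed computations are routine; the step needing genuine care is the normal form, that is, deducing from the transversality of $\Lambda$ and $\Lambda_\psi$ that the pair $(\Lambda,T^*_SM)$ can be brought by a quantized contact transformation to the model pair, while tracking the fact that this moves $G$ and $\cor_S$ to the model simple sheaves $\cor_H$ and $\cor_{\{0\}}$ up to shift.
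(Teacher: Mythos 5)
Your argument is correct, but it takes a genuinely different (and much longer) route than the paper. The paper's proof is essentially one line: by \cite[Def.~7.5.4]{KS90}, ``$G$ is simple along $\Lambda$ at $p$'' is \emph{defined} by the property that, for a $\Cd^2$-function $\psi$ with $p=(x_0;d\psi(x_0))$ and $\Lambda_\psi$ transversal to $\Lambda$ at $p$, the complex $\rsect_{\{\psi(x)\geq\psi(x_0)\}}(G)_{x_0}$ is concentrated in a single degree with rank-one cohomology (the independence of the choice of $\psi$, up to a shift by an inertia index, being \cite[Prop.~7.5.3]{KS90}). So the lemma is a direct unwinding of the definition. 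What you have done instead is reconstruct the content \emph{behind} that definition: the identification of the local cohomology with $\muhom(\cor_S,G)_p$, the clean-intersection computation $T_p\Lambda\cap T_p(T^*_SM)=\R e_p$ (your linear-algebra argument, subtracting the radial component and invoking transversality with $\Lambda_\psi$, is correct and uses the $\Cd^2$ hypothesis exactly where needed), the reduction by a quantized contact transformation to the model pair $(T^*_H\R^n,T^*_{\{0\}}\R^n)$, and the Fourier--Sato computation $\muhom(\cor_{\{0\}},\cor_H)_q\simeq\cor[1-n]$. All of these steps are sound, and you correctly observe that the unspecified degree shifts $d,e$ (which is where the inertia index of the triple of Lagrangian planes would enter) are harmless because the lemma only asserts the total dimension. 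The one step you rightly flag as delicate --- the simultaneous normal form for the germ of the pair $(\Lambda,T^*_SM)$ along the ray through $p$ --- is precisely the material of \cite[\S 7.5 and App.~A]{KS90}, so your proof is in effect a condensed re-derivation of the consistency of the definition of simple sheaves rather than an application of it. It buys self-containedness at the price of re-proving cited foundational results; the paper's appeal to the definition is shorter and is the intended argument.
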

\begin{proof}
By the definition
(\cite[Definition~7.5.4]{KS90}),
$\rsect_{\{\psi(x)\geq\psi(x_0)\}}(G)_{x_0}$ is
concentrated in a single degree and its cohomology in this degree has
rank one.
\end{proof}
 In the sequel, for a finite set $A$, we denote by $\# A$ its cardinal.
\begin{theorem}\label{th:morse1}
We consider $\Phi=\{\phi_t\}_{t\in I}$ satisfying \eqref{hyp:isot1}, $\psi\cl M\to\R$ 
satisfying~\eqref{hyp:morse5a} and
$F_0\in\Derb(\cor_M)$ with compact support. We also assume~\eqref{hyp:morse5b}.
Let  $t_0\in I$. Assume that 
$\Lambda_\psi\cap\phi_{t_0}(S_0)$ is contained in 
$\Lambda_\psi\cap\phi_{t_0}(S_{0,\reg})$ and the intersection is
finite and transversal. Then 
\eqn
&&\#\bl\phi_{t_0}(S_0) \cap \Lambda_\psi\br\ge\sum_jb_j(F_0).
\eneqn
\end{theorem}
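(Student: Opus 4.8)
The plan is to apply the sheaf-theoretic Morse inequalities (Theorem~\ref{th:Morse1}) to the sheaf $F_{t_0}=K_{t_0}\conv F_0$, exactly as Theorem~\ref{th:ndispl1} applies Corollary~\ref{cor:Morse1} to the same object. The quantization machinery provides everything we need about $F_{t_0}$: by \eqref{eq:propertiesFt2} it has compact support, and $\SSi(F_{t_0})\cap\dTM=\phi_{t_0}(S_0)$. So the left-hand side of the Morse inequality \eqref{eq:morseineq1}, applied to $F=F_{t_0}$ with the function $\psi$, involves $b_l^*(F_{t_0})=b_l^*(\rsect(M;F_{t_0}))$, and by Corollary~\ref{cor:rsectFt} (whose hypotheses hold by \eqref{eq:propertiesFt}) we have $\rsect(M;F_{t_0})\simeq\rsect(M;F_0)$, so $b_j(F_{t_0})=b_j(F_0)$ for all $j$.

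The first step is to verify the hypotheses of Theorem~\ref{th:Morse1} for $F_{t_0}$ and $\psi$. Hypothesis \eqref{hyp:morse3}: the set $\Lambda_\psi\cap\SSi(F_{t_0})$. Since $\psi$ never vanishes, $\Lambda_\psi\subset\dTM$, so $\Lambda_\psi\cap\SSi(F_{t_0})=\Lambda_\psi\cap\phi_{t_0}(S_0)$, which by assumption is finite; call the points $p_1,\dots,p_N$ and set $x_i=\pi(p_i)$. Hypothesis \eqref{hyp:morse4}: we need the local Morse groups $V_i=(\rsect_{\{\psi(x)\geq\psi(x_i)\}}(F_{t_0}))_{x_i}$ to be finite-dimensional; in fact we will show each $V_i$ has total dimension exactly $1$. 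Indeed, by assumption $p_i\in\phi_{t_0}(S_{0,\reg})$, and $\phi_{t_0}(S_{0,\reg})$ is a $\Cd^1$ Lagrangian submanifold along which $F_{t_0}$ is simple — here I use that simplicity is preserved under the equivalence induced by convolution with the quantization kernel $K_{t_0}$, which is a QCT in the sense recalled in Section~\ref{section:mts} (simple sheaves are stable by QCT, and $K_{t_0}\conv\opb{K_{t_0}}\simeq\cor_\Delta$ by Proposition~\ref{prop:support_unicity}(ii), so $F_0\mapsto K_{t_0}\conv F_0$ is such an equivalence near the relevant points). Since the intersection $\Lambda_\psi\cap\phi_{t_0}(S_{0,\reg})$ is transversal at $p_i$, Lemma~\ref{le:puretransv} applies with $G=F_{t_0}$, $\Lambda=\phi_{t_0}(S_{0,\reg})$, $p=p_i$, giving $\sum_j\dim H^j(V_i)=1$.

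With these hypotheses checked, Theorem~\ref{th:Morse1} together with \eqref{eq:morseineq2} gives $b_j(F_{t_0})\leq\sum_{i=1}^N b_j(V_i)$ for every $j$. Summing over $j$ and using $\sum_j b_j(V_i)=1$ for each $i$, we get $\sum_j b_j(F_{t_0})\leq N=\#\bl\phi_{t_0}(S_0)\cap\Lambda_\psi\br$. Since $b_j(F_{t_0})=b_j(F_0)$, this is precisely $\#\bl\phi_{t_0}(S_0)\cap\Lambda_\psi\br\geq\sum_j b_j(F_0)$.

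**The main obstacle** I anticipate is the simplicity transfer: one must be careful that the convolution $F_0\mapsto K_{t_0}\conv F_0$ really is (locally, near each $p_i$) an equivalence of the form covered by the quantized-contact-transformation formalism, so that simplicity of $F_0$ along $S_{0,\reg}$ passes to simplicity of $F_{t_0}$ along $\phi_{t_0}(S_{0,\reg})$. This requires checking the non-characteristic conditions \eqref{eq:noncharker} near the relevant points and invoking the stability of simple sheaves under convolution with simple kernels (\cite[Th.~7.5.11]{KS90}) — $K_{t_0}$ being simple along the graph of $\phi_{t_0}$ as a consequence of its construction in Theorem~\ref{th:3}. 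Everything else is a routine assembly of results already quoted in Section~\ref{section:mts}.
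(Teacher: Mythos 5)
Your proof is correct and follows essentially the same route as the paper's: identify $\Lambda_\psi\cap\SSi(F_{t_0})$ with $\Lambda_\psi\cap\phi_{t_0}(S_0)$, use Corollary~\ref{cor:rsectFt} to get $b_j(F_{t_0})=b_j(F_0)$, apply Lemma~\ref{le:puretransv} to see that each local Morse group has total dimension one, and conclude by the Morse inequalities~\eqref{eq:morseineq2}. The only difference is that you make explicit the transfer of simplicity from $F_0$ along $S_{0,\reg}$ to $F_{t_0}$ along $\phi_{t_0}(S_{0,\reg})$ via the kernel $K_{t_0}$, a step the paper leaves implicit when invoking Lemma~\ref{le:puretransv}.
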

\begin{proof}
It follows from Corollary~\ref{cor:rsectFt} that $b_j(F_t)=b_j(F_0)$ for
all $j\in\Z$ and all $t\in I$. 

Let $\{q_1,\dots,q_L\}=\Lambda_\psi\cap\phi_{t_0}(\Lambda_0)$,
$y_i=\pi(q_i)$ and
set 
\eqn
&&W_i\eqdot\rsect_{\{\psi(x)\geq\psi(y_i)\}}(F_{t_0})_{y_i}.
\eneqn
By Lemma~\ref{le:puretransv}, $W_i$ is a bounded
complex with finite-dimensional cohomologies 
 and it follows from
the Morse inequalities~\eqref{eq:morseineq2} that
\eqn
&&\sum_jb_j(F_{t_0})\leq \sum_j\sum_ib_j(W_i).
\eneqn
Moreover
\eqn
&&\sum_j\dim H^j(\rsect_{\{\psi(x)\geq\psi(y_i)\}}(F_{t_0})_{y_i})=1
\quad\text{for any $i$,}
\eneqn
and it implies
\eqn
&&\sum_j\sum_i b_j(W_i)=\#\bl\SSi(F_{t_0})\cap \Lambda_\psi\br
=\#\bl\phi_{t_0}(\SSi(F_0)\cap\dTM) \cap \Lambda_\psi\br.
\eneqn
\end{proof}

\begin{corollary}\label{cor:ndispl2}
Let $\Phi=\{\phi_t\}_{t\in I}$, let $\psi\cl M\to\R$ 
satisfying~\eqref{hyp:morse5a} and let $N$ be a compact submanifold of $M$.
Let  $t_0\in I$. Assume that 
$\phi_{t_0}(\dT_N^*M)$ and $\Lambda_\psi$ intersect transversally. Then
\eqn
&&\#(\phi_{t_0}(\dT_N^*M) \cap \Lambda_\psi)\ge
\sum_j \dim H^j(N;\cor_N).
\eneqn
\end{corollary}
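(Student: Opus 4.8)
The plan is to deduce this from Theorem~\ref{th:morse1} applied to the sheaf $F_0=\cor_N$, so the task is essentially to check that this choice of $F_0$ meets the hypotheses there. Since $N$ is a compact (hence closed) submanifold, $\cor_N$ has compact support, and by Example~\ref{ex:microsupp}~(ii) we have $\SSi(\cor_N)=T^*_NM$; thus $S_0=\SSi(\cor_N)\cap\dTM$ equals the punctured conormal bundle $\dot T^*_NM$, which is a smooth conic Lagrangian submanifold of $\dTM$. Because $\cor_N$ is the constant sheaf on the submanifold $N$, it is simple along $T^*_NM$ at every point, so hypothesis~\eqref{hyp:morse5b} holds with $S_{0,\reg}=S_0$. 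With this choice, the condition $\Lambda_\psi\cap\phi_{t_0}(S_0)\subset\Lambda_\psi\cap\phi_{t_0}(S_{0,\reg})$ is an equality, the transversality of the intersection is exactly the hypothesis of the corollary, and $\phi_{t_0}(S_0)=\phi_{t_0}(\dot T^*_NM)$.

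The one point in the hypotheses of Theorem~\ref{th:morse1} that is not literally given is the \emph{finiteness} of $\phi_{t_0}(\dot T^*_NM)\cap\Lambda_\psi$, so I would verify this next. Since $\Lambda_\psi$ and $\phi_{t_0}(\dot T^*_NM)$ are half-dimensional submanifolds of $\dTM$ (each of dimension $n$), transversality makes their intersection a $0$-dimensional submanifold, so it is enough to see it is compact. Let $F_{t_0}=K_{t_0}\conv\cor_N$ be as in~\eqref{eq:phiKF}. By~\eqref{eq:propertiesFt2}, $F_{t_0}$ has compact support and $\SSi(F_{t_0})\cap\dTM=\phi_{t_0}(\dot T^*_NM)$; moreover $\Lambda_\psi\subset\dTM$ because $d\psi$ never vanishes. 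Hence $\phi_{t_0}(\dot T^*_NM)\cap\Lambda_\psi=\SSi(F_{t_0})\cap\Lambda_\psi$ is closed in $\dTM$ and is contained in $\Lambda_\psi\cap\opb{\pi_M}(\Supp(F_{t_0}))$, which $\pi_M$ maps homeomorphically onto the compact set $\Supp(F_{t_0})$. A closed subset of a compact set is compact, and a compact $0$-dimensional manifold is finite.

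All the hypotheses of Theorem~\ref{th:morse1} being satisfied, it yields $\#\bl\phi_{t_0}(\dot T^*_NM)\cap\Lambda_\psi\br\geq\sum_j b_j(\cor_N)$. Finally, since $N$ is closed in $M$ we have $\rsect(M;\cor_N)\simeq\rsect(N;\cor_N)$, so $b_j(\cor_N)=\dim H^j(\rsect(M;\cor_N))=\dim H^j(N;\cor_N)$, and the inequality becomes the assertion. The argument is thus a direct specialization of Theorem~\ref{th:morse1}; the only step requiring a small argument is the compactness/finiteness observation of the second paragraph, and I do not expect any genuine obstacle.
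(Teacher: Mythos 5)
Your proof is correct and is exactly the paper's approach: the paper's entire proof is ``Apply Theorem~\ref{th:morse1} with $F_0=\cor_N$.'' Your additional verification that transversality plus the compactness of $\Supp(F_{t_0})$ forces the intersection to be finite is a sound filling-in of a hypothesis the paper leaves implicit.
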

\begin{proof}
Apply Theorem~\ref{th:morse1} with $F_0=\cor_N$. 
\end{proof}
\begin{remark}\label{rem:corner}
Corollaries~\ref{cor:ndispl1} and~\ref{cor:ndispl2}
extend to the case where $N$ is replaced
with a compact submanifold with boundary or even with corners. In this
case, one has to replace the conormal bundle $T^*_NM$ with the
microsupport of the constant sheaf $\cor_N$ on $M$. Note that this microsupport
is easily calculated. 
For Morse inequalities on manifolds with
boundaries, see the recent paper~\cite{L10} and 
see also~\cite{KO01,O98} for related results.
\end{remark}

\subsection{Non-displaceability:  non-negative Hamiltonian isotopies}
Consider as above a manifold $M$ and $\Phi\cl \dTM\times I\to \dTM$ a
homogeneous Hamiltonian isotopy, 
that is, $\Phi$ satisfies~\eqref{hyp:isot1}.
We define $f\cl \dTM\times I\to\R$ homogeneous of degree $1$
and $\Lambda\subset \dTM\times\dTM\times T^*I$ as
in Lemma~\ref{lem:homog-Hamilt-isot}.
The following definition is due to~\cite{EKP06}
and is used in~\cite{CN10, CFP10} where the authors prove
Corollary~\ref{cor:positiso} below 
in the particular case where $X$
and $Y$ are points and other related results.

\begin{definition}\label{def:positisot}
The isotopy $\Phi$ is said to be non-negative if 
$\langle \alpha_M, H_f \rangle \geq 0$.
\end{definition}
Let $\eu_M$ be the Euler vector field on $T^*M$. 
Then $\langle\alpha_M,H_f\rangle=\eu_M(f)$ and since $f$ is 
of degree $1$ we have $\eu_M(f)=f$. Hence
$\Phi$ is non-negative if and only if $f$ is a non-negative valued function.  
We let $(t,\tau)$ be the
coordinates on $T^*I$. Then by~\eqref{eq:def-lambda} this condition is  also
equivalent to 
\eqn
&&\Lambda\subset \{\tau \leq 0 \}. 
\eneqn

In order to prove Theorem~\ref{th:positiso} below,
we shall give several results in sheaf theory.

\begin{proposition}\label{pro:microsupp}
Let $N$ be a manifold, $I$ an open interval of $\R$ containing $0$. Let
$F\in\Derb(\cor_{N\times I})$ and, for $t\in I$, 
set $F_t=F|_{N\times\{t\}}\in\Derb(\cor_N)$.
Assume that
\banum
\item
$\SSi(F)\subset\{\tau\leq 0\}$,
\item
$\SSi(F)\cap (T^*_NN\times T^*I)\subset
 T^*_{N\times I}(N\times I)$,
\item
$\Supp(F)\to I$ is proper.
\eanum
Then we have:
\bnum
\item
for all $a\leq b$ in $I$ there are natural morphisms
$r_{b,a}\cl F_a\to F_b$,
\item $r_{b,a}$ induces a commutative diagram of isomorphisms 
$$\xymatrix{
{\rsect(N\times I;F)}\ar[d]^\wr\ar[dr]^\sim\\
{\rsect(N;F_a)}\ar[r]^{\sim}_{r_{b,a}}& {\rsect(N;F_b).}
}$$
\enum
\end{proposition}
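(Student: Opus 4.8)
The strategy is to use the non-negativity condition $\SSi(F)\subset\{\tau\le 0\}$ to produce, for each pair $a\le b$, a restriction-type morphism $F_a\to F_b$ (note the direction: it goes ``forward in time'' precisely because $\tau\le 0$), and then to show that when combined with the properness and the condition along the zero-section these morphisms become isomorphisms after taking $\rsect(N;\cdot)$.

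\textbf{Construction of $r_{b,a}$.} Fix $a\le b$ in $I$ and consider the half-line $\{t\ge a\}$, or more precisely work on $N\times I$ with the closed set $\{t\ge a\}$ and the open set $\{t>a\}$. First I would observe that condition (a) forces $F$ to behave like a sheaf that ``propagates upward'': for the function $\psi(x,t)=-t$ one has $d\psi=(0;-1)$, i.e. $\tau=-1<0$, which by (a) lies outside $\SSi(F)$ away from the zero-section, while condition (b) controls what happens on the zero-section. Concretely, let $j_a\cl N\times\{a\}\hookrightarrow N\times I$ and let $s\cl N\times[a,b]\hookrightarrow N\times I$, $p\cl N\times[a,b]\to N\times\{a\}$. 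I would first establish that the natural morphism $F|_{N\times[a,b]}\to \opb{p}(F_a)$, or rather its adjoint, gives what we want; the cleanest route is to use Proposition~\ref{prop:Morse} (microlocal Morse lemma) applied to $\psi=-t$ on $N\times I$ (or its restriction), which under (a) and the properness (c) gives that the restriction $\rsect(\{t<b\};F)\to\rsect(\{t<a'\};F)$ is an isomorphism for $a'\le a$ — but to get a \emph{morphism} $F_a\to F_b$ at the level of sheaves on $N$ I would instead argue as follows. Set $G=\roim{q}F$ where $q\cl N\times I\to I$; then (b) and (c) give $\SSi(G)\subset\{\tau\le 0\}$ as well (by Theorem~\ref{th:opboim}(i)), so $G$ is a sheaf on $I$ with microsupport in $\{\tau\le 0\}$, hence (by Example~\ref{ex:microsupp}(iii)-type reasoning) it is ``constant to the right'': the restriction maps $G_b\to G_a$ are... no: with $\tau\le 0$ the propagation is such that $\cor_{[a,+\infty)}$-type sheaves appear, giving maps $G_a\to G_b$ for $a\le b$. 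I would make this precise by noting $\rsect(N;F_t)\simeq G_t$ and constructing $r_{b,a}$ as the composite coming from $\opb{i_a}F\to \opb{i_a}\roim{s}\opb{s}F$ where the relevant adjunction and the vanishing of $\rsect_{\{t\le a\}}$-sections (guaranteed by $\tau\le 0$ via the definition of microsupport) do the job.

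\textbf{The isomorphism statement.} Once $r_{b,a}$ is constructed, part (ii) is essentially Corollary~\ref{cor:rsectFt}: conditions (b) and (c) are exactly its hypotheses (microsupport meeting $T^*_NN\times T^*I$ only in the zero-section, and properness of $\Supp(F)\to I$), so $\roim{q}F\simeq V_I$ for some $V\in\Derb(\cor)$, whence $\rsect(N;F_t)\simeq V$ for all $t$ and $\rsect(N\times I;F)\simeq\rsect(I;\roim{q}F)\simeq V$ as well (since $I$ is contractible, $\rsect(I;V_I)\simeq V$). The only remaining point is that the isomorphisms $\rsect(N;F_a)\xleftarrow{\sim}\rsect(N\times I;F)\xrightarrow{\sim}\rsect(N;F_b)$ thus obtained are \emph{compatible} with the morphism $r_{b,a}$ constructed in (i), i.e. that the triangle commutes; this follows by tracing through the construction of $r_{b,a}$ as a morphism of global sections induced functorially, since both $\rsect(N;F_a)\to\rsect(N;F_b)$ and the zig-zag through $\rsect(N\times I;F)$ are induced by the same adjunction units/counits and a contractibility argument forces them to agree.

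\textbf{Main obstacle.} The genuinely delicate step is the first one — constructing $r_{b,a}$ at the \emph{sheaf} level on $N$ (not merely after $\rsect$) and verifying it is the ``right'' morphism. The sign/direction of the arrow is entirely dictated by the condition $\tau\le 0$, and one must set up the Morse-type argument or the adjunction on $N\times\{t\ge a\}$ carefully so that non-negativity of the isotopy is actually used; getting this without extra regularity hypotheses on $F$ (only the three stated conditions) is where the care is needed. Once the morphism is in hand, part (ii) is a formal consequence of Corollary~\ref{cor:rsectFt} together with the contractibility of $I$, and the commutativity of the diagram is a diagram-chase through the adjunctions.
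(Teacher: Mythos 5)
Your reduction of part (ii) to Corollary~\ref{cor:rsectFt} is exactly what the paper does, and your diagnosis that the real content lies in producing $r_{b,a}$ at the level of sheaves on $N$ is also correct. But that step is precisely where your argument stops short: none of the mechanisms you sketch actually yields a morphism $F_a\to F_b$. Passing to $G=\roim{q}F$ on $I$ only gives maps between $\rsect(N;F_a)$ and $\rsect(N;F_b)$, not between $F_a$ and $F_b$; the adjunction morphism $\opb{i_a}F\to\opb{i_a}\roim{s}\opb{s}F$ has target (something isomorphic to) $F_a$ again, since $a\in[a,b]$, so it never sees $F_b$; and the microlocal Morse lemma applied to $\psi=-t$ again only produces isomorphisms of global sections. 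You flag this as the ``genuinely delicate step'' but do not resolve it.

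The missing idea is a propagation statement at the sheaf level, which the paper obtains from \cite[Prop.~5.2.3]{KS90}: setting $F'=F\otimes\cor_{N\times I_b}$ with $I_b=\set{t\in I}{t\le b}$, condition (a) gives $F'\simeq F'\conv\cor_{\DD}$ with $\DD=\set{(s,t)\in I\times I}{t\le s}$, whence the chain of isomorphisms
$F_a\simeq F'\conv\cor_{\{a\}}\simeq F'\conv\cor_{\DD}\conv\cor_{\{a\}}\simeq F'\conv\cor_{[a,b]}\simeq F\conv\cor_{[a,b]}$.
Only with this identification in hand does the morphism $\cor_{[a,b]}\to\cor_{\{b\}}$ induce $r_{b,a}\cl F_a\to F_b$, and the commutativity of the triangle in (ii) then falls out of the same chain (comparing with $\rsect(N\times[a,b];F)$ and $\rsect(N\times I;F)$) rather than requiring a separate contractibility argument. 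Without this convolution step, or an equivalent substitute converting the condition $\SSi(F)\subset\{\tau\le0\}$ into a sheaf-level isomorphism $F_a\simeq F\conv\cor_{[a,b]}$, the proof of part (i) is incomplete.
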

\begin{proof}
By a similar argument to Corollary~\ref{cor:rsectFt}, 
(b) and (c) imply that
$\rsect(N\times I;F)\to\rsect(N;F_t)$ is an isomorphism
for any $t\in I$.

\medskip\noindent
For $b\in I$ set $I_b=\set{t\in I}{t\le b}$
and $F'=F\otimes\cor_{N\times I_b}$.
Then $F'$ also satisfies (a).
Hence \cite[Prop. 5.2.3]{KS90} implies that
$F'\simeq F'\conv\cor_D$,
where
\eqn
&&\DD=\set{(s,t)\in I\times I}{t\le s}.
\eneqn
We deduce the isomorphisms, for any $a\in I_b$\;:
\eq\label{eq:microsupp2}
&&F_{a} \simeq F' \conv \cor_{\{a\}} 
\simeq F'\conv\cor_{\DD}\conv\cor_{\{a\}} 
\simeq F'\conv\cor_{[a,b]}
\simeq F\conv\cor_{[a,b]}.
\eneq
The morphism $r_{b,a}$ is then induced by the morphism 
$\cor_{[a,b]} \to \cor_{\{b\}}$.
Hence we obtain 
a commutative diagram 
\eqn
&&\xymatrix{
&\rsect(N\times I;F)\ar[dl]_\sim\ar[dr]^\sim\\
\rsect(N\times [a,b];F)\ar[d]^\wr\ar[rr]
&&\rsect(N\times \{b\}\;;F)\ar[d]^\wr\\
\rsect(N;F\circ \cor_{[a,b]})\ar[rr]\ar[d]^\wr&&\rsect(N;F\circ \cor_{\{b\}})
\ar[d]^\wr\\
\rsect(N;F_a)\ar[rr]^{r_{b,a}}&&\rsect(N;F_b).
}
\eneqn
\end{proof}

We recall that $\omega_X$ denotes the dualizing complex of a manifold
$X$.
\begin{lemma}\label{le:positiso}
Let $M$ be a manifold and $X$ a locally closed subset of $M$.
Let $i_X\cl X\to M$ be the embedding.
We assume that the base ring $\cor$ is not reduced to $\{0\}$.
\bnum
\item
Let $F\in\Der(\cor_M)$ and assume that there exists a morphism
$u\cl F\to\roim{i_X}\cor_X$ which induces an isomorphism
$H^0(M;F)\isoto H^0(M;\roim{i_X}\cor_X)$. Then $X\subset\Supp(F)$.
\item
Let $G\in\Der(\cor_M)$ and  assume that there exists a morphism 
$v\cl \eim{i_X}\omega_X\to G$ which induces an isomorphism
$H^0_c(M;\eim{i_X}\omega_X)\isoto H^0_c(M;G)$. 
Then $X\subset\Supp(G)$.
\enum
\end{lemma}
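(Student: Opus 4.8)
The plan is to prove both parts by contradiction; the two arguments are formally dual, each producing a distinguished nonzero class, transporting it across the given isomorphism, and contradicting the vanishing of $F$ (resp.\ $G$) on the open complement of its support.

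\emph{Part (i).} Suppose $X\not\subset\Supp(F)$ and put $U=M\setminus\Supp(F)$, so that $X\cap U\neq\emptyset$ and $F|_U\simeq0$. The constant section $1$ of $\cor_X$ gives a class $s\in\Gamma(X;\cor_X)=H^0(M;\roim{i_X}\cor_X)$, nonzero because $\cor\neq\{0\}$. As $u$ induces an isomorphism on $H^0(M;-)$, pick $\sigma\in H^0(M;F)$, i.e.\ a morphism $\sigma\cl\cor_M\to F$ in $\Der(\cor_M)$, with $u\circ\sigma$ representing $s$. Restrict to $U$. On the one hand $\sigma|_U$ factors through $F|_U\simeq0$, hence $(u\circ\sigma)|_U=0$. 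On the other hand, by the base-change isomorphism $(\roim{i_X}\cor_X)|_U\simeq\roim{i_{X\cap U}}\cor_{X\cap U}$, the element $(u\circ\sigma)|_U$ of $H^0(U;(\roim{i_X}\cor_X)|_U)=\Gamma(X\cap U;\cor)$ is the restriction of $s$, namely the constant section $1$, which is nonzero since $X\cap U\neq\emptyset$ and $\cor\neq\{0\}$. This contradiction proves (i).

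\emph{Part (ii).} Suppose $X\not\subset\Supp(G)$ and fix $x_0\in X\setminus\Supp(G)$; then $U=M\setminus\Supp(G)$ is an open neighbourhood of $x_0$ and $G|_U\simeq0$. Here the dual of the section $1$ is the fundamental class of the point $x_0$. Since $\omega_X\simeq\epb{i_X}\omega_M$ and $x_0\in X$, local duality gives $\rsect_{\{x_0\}}(X;\omega_X)\simeq\omega_{\{x_0\}}\simeq\cor$, and the canonical morphism $\rsect_{\{x_0\}}(X;\omega_X)\to\rsect_c(X;\omega_X)$ (sections supported at the compact set $\{x_0\}$ have compact support) is the proper direct image along $\{x_0\}\hookrightarrow X$; composing it with the integration morphism $\rsect_c(X;\omega_X)\to\cor$ yields $\id_\cor$, by functoriality of proper direct images, so this morphism is split injective. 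It thus produces a class $c_0\in H^0_c(X;\omega_X)=H^0_c(M;\eim{i_X}\omega_X)$ which is nonzero (again because $\cor\neq\{0\}$), whence $v_*(c_0)\neq0$ as $v$ induces an injection on $H^0_c(M;-)$. But since $x_0\in U$ and $\{x_0\}\hookrightarrow X$ factors through $X\cap U$, the same functoriality shows that $c_0$ lies in the image of the extension-by-zero map $H^0_c(U;(\eim{i_X}\omega_X)|_U)\to H^0_c(M;\eim{i_X}\omega_X)$; this map is natural in the argument sheaf, so applying it to $v\cl\eim{i_X}\omega_X\to G$ gives a commutative square whose opposite edge is the corresponding map for $G$ and whose $G$-side lower-left corner $H^0_c(U;G|_U)$ vanishes because $G|_U\simeq0$. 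Hence $v_*(c_0)=0$, a contradiction, and (ii) follows.

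The rest is routine bookkeeping with the six operations. The only delicate points are the naturality and base-change statements needed to compare global data on $M$ with local data on $U$, and --- in part (ii) --- the identification $\rsect_{\{x_0\}}(X;\omega_X)\simeq\cor$ together with the nonvanishing of $c_0$; the latter is precisely where the hypothesis $\cor\neq\{0\}$ enters, just as the nonvanishing of $1\in\Gamma(X;\cor_X)$ does in part (i).
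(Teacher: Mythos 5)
Your proof is correct and follows essentially the same strategy as the paper's: in (i) the constant section of $\cor_X$ is detected at points of $X$, and in (ii) the class supported at a point $x_0\in X$ with integral $1$ in $H^0_c(X;\omega_X)$ is detected there. The only (cosmetic) difference is that you argue by contradiction via restriction to the open set $M\setminus\Supp$, whereas the paper argues directly pointwise using the stalk $H^0(F)_x$ and the local cohomology $H^0_{\{x\}}(M;G)$.
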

\begin{proof}
(i) Let $x\in X$ and let $i_x\cl\{x\}\hookrightarrow M$
be the inclusion. For $x\in X$, the composition $\cor\to H^0(M;\roim{i_X}\cor_X)\to \cor$
is the identity. Hence, in the commutative diagram
\eqn
&&\xymatrix{
H^0(M;F)\ar[r]^-{\sim}_-{u}\ar[d]_{i_x^{-1}}
                  &H^0(M;\roim{i_X}\cor_X)\ar@{->>}[d]_{i_x^{-1}}\\
H^0(F)_x\ar[r]    &{\cor}
}\eneqn
the map $H^0(F)_x\to \cor$ is surjective. We conclude that 
$x\in\Supp(F)$.

\medskip\noindent
(ii) For $x\in X$, the composition 
$H^0_{\{x\}}(M; \eim{i_X}\omega_X) \to  H^0_c(M; \eim{i_X}\omega_X) \to \cor$
is an isomorphism.
Hence in the commutative diagram induced by $v$
\eqn
&&\xymatrix{
H^0_{\{x\}}(M; \eim{i_X}\omega_X) \ar[r]\ar@{>->}[d]_a& H^0_{\{x\}}(M;G)\ar[d] \\
H^0_c(M; \eim{i_X}\omega_X) \ar[r]_-{b}^-{\sim}& H^0_c(M;G)\;,
}
\eneqn
the morphism $a$ is injective and $b$ is bijective.
Hence $\cor\simeq H^0_{\{x\}}(M; \eim{i_X}\omega_X)\to H^0_{\{x\}}(M;G)$
is injective.
Therefore $H^0_{\{x\}}(M;G)$ does not vanish and 
$x\in\Supp(G)$.
\end{proof}

\begin{lemma}\label{lem:geometry-positiso}
Let $M$ be a non-compact connected manifold and let $X$ be a compact
connected submanifold of $M$. Then we have:
\bnum
\item
The open subset $M\setminus X$ has at most two connected
components. 
\item
Assume that 
there exists a relatively compact connected component $U$
of $M\setminus X$.
Then such a connected component is unique,
$X$ is a hypersurface and $X$ coincides with the boundary of 
$U$.
\enum
\end{lemma}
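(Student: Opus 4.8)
The plan is to argue by cases on the codimension $c$ of $X$ in $M$, and to control the connected components of $M\setminus X$ by means of a tubular neighbourhood. First I would record two elementary facts. Since $X$ is compact it is closed in $M$, and since it is a submanifold of positive codimension it is nowhere dense; the case $c=0$ cannot occur, for then $X$ would be open and compact, hence equal to the connected noncompact $M$. Consequently $\overline{M\setminus X}=M$. Next I choose a tubular neighbourhood $T$ of $X$: an open neighbourhood fibered over $X$ with fibres open $c$-disks, so that $T\setminus X$ is homotopy equivalent to the associated sphere bundle. If $c\ge 2$ this sphere bundle has connected fibres over the connected base $X$, hence $T\setminus X$ is connected; if $c=1$ it is a double covering of $X$, hence has one or two components. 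In all cases $T\setminus X$ has at most two connected components.

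The crucial step is then to show that every connected component $V$ of $M\setminus X$ contains at least one connected component of $T\setminus X$. Since $M\setminus X$ is open in the locally connected space $M$, its components are open; hence the frontier $\overline V\setminus V$ is contained in $X$ (a frontier point lying in $M\setminus X$ would lie in another component, which is open and disjoint from $V$, contradicting that the point is adherent to $V$), and it is nonempty, for otherwise $V$ would be open and closed and thus equal to $M$, contradicting $X\neq\emptyset$. Choosing $x\in\overline V\cap X$ and using that $T$ is a neighbourhood of $x$, we get $V\cap T\neq\emptyset$; moreover $V\cap(T\setminus X)=V\cap T$ is clopen in $T\setminus X$ by the same component argument, hence a nonempty union of components of $T\setminus X$. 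Therefore the map sending a component of $T\setminus X$ to the component of $M\setminus X$ containing it is onto, and $M\setminus X$ has at most two components, which is assertion (i).

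For assertion (ii), assume $U$ is a relatively compact component of $M\setminus X$. If $c\ge 2$, the second paragraph shows $M\setminus X$ is connected, so $M\setminus X=U$ and $M=\overline{M\setminus X}=\overline U$ would be compact; hence $c=1$, i.e.\ $X$ is a hypersurface. If $M\setminus X$ had a single component it would be $U$, again forcing $M$ compact; so by (i) it has exactly two components $U$ and $V$, and $V$ is not relatively compact (otherwise $M=\overline U\cup\overline V$ is compact), which gives the uniqueness of $U$. It remains to prove $X=\overline U\setminus U$. The inclusion $\overline U\setminus U\subset X$ is the frontier statement above. For the converse I would show $X_U:=X\cap\overline U$ is open and closed in $X$: it is closed as an intersection of closed sets, and it is open because near any $x\in X_U$ the set $M\setminus X$ has exactly the two local sides of the hypersurface as components; as $x\in\overline U$, the component $U$ meets, hence contains, one of these local sides, and therefore $\overline U$ contains the nearby piece of $X$. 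Since $X_U$ is nonempty ($\overline U\setminus U\neq\emptyset$ because $U$ is open and relatively compact while $M$ is connected and noncompact) and $X$ is connected, $X_U=X$; as $X\cap U=\emptyset$, this yields $X=\overline U\setminus U$, the boundary of $U$.

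The part requiring the most care is the local-to-global step in the second paragraph, together with the analogous openness statement in the last paragraph: one must check that each connected component of $M\setminus X$ actually reaches $X$ (nonempty frontier contained in $X$) and, along its frontier, fills up an entire local side of the hypersurface; once these are established the component count through the tubular neighbourhood and all the remaining deductions are purely formal.
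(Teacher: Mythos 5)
Your proof is correct, but it follows a genuinely different route from the paper. The paper obtains (i) from the local cohomology exact sequence $H^0(M;\C)\to H^0(M\setminus X;\C)\to H^1_X(M;\C)$, noting that $H^1_X(M;\C)\simeq H^0(X;\shh^1_X(\C_M))$ has dimension at most $1$ because $\shh^1_X(\C_M)$ is locally $\C_X$ or $0$ and $X$ is connected; the same computation gives the codimension dichotomy used in (ii) (in codimension $\ge 2$ one has $\shh^1_X=0$, so $M\setminus X$ is connected), and the final identification of $X$ with $\partial U$ is declared obvious. You replace the sheaf-cohomological count by a tubular neighbourhood $T$ of $X$: the sphere bundle controlling $T\setminus X$ has at most two components, and your clopen/frontier argument shows every component of $M\setminus X$ absorbs a component of $T\setminus X$, which is exactly the local-to-global surjectivity needed. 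The two approaches buy different things: the paper's is shorter and fits its sheaf-theoretic toolkit, while yours is elementary (no cohomology with supports) and, usefully, makes explicit the step the paper leaves implicit --- that $X\cap\overline U$ is open in $X$ because $U$ must contain a full local side of the hypersurface along its frontier, whence $X=\overline U\setminus U$ by connectedness of $X$. All the individual steps you flag as delicate (nonempty frontier contained in $X$, clopenness of $V\cap(T\setminus X)$, the exclusion of codimension $0$) check out.
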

\Proof
(i)\quad We have an exact sequence 
$$H^0(M;\C)\to H^0(M\setminus X;\C)\to H^1_X(M;\C).$$
The last term $H^1_X(M;\C)$ is isomorphic to $H^0(X;\lh^1_X(\C_M))$.
Since $\lh^1_X(\C_M)$ is locally isomorphic to $\C_X$ or $0$,
we have $\dim H^1_X(M;\C)\le 1$. Hence we obtain
$\dim H^0(M\setminus X;\C)\le 2$.
Hence $M\setminus X$ has at most two connected
components.

\noindent
(ii)\quad 
Assume that there exists a relatively compact connected component $U$
of $M\setminus X$. If $M\setminus X$ has another
relatively compact connected component $V$, then $M=X\cup U\cup V$ 
by (i) and it is compact. It is a contradiction.
Hence  a relatively compact connected component $U$
of $M\setminus X$ is unique if it exists.
If $X$ is not a hypersurface then $M\setminus X$ is connected and not
relatively compact. It is a contradiction . Hence $X$ is a hypersurface.
Then it is obvious that $X$ coincides with the boundary of $U$.
\QED

Until the end of this subsection, we assume
that $\Phi=\{\phi_t\}_{t\in I}\cl\dT^*M\times I\to\dT^*M$ is
a non-negative homogeneous Hamiltonian isotopy.

We define $g\cl\dTM\times I\to \R$
by
\eq\label{eq:inv-opp-isot}
g(p,t) = f(\phi_t(p)^a, t)\quad (p\in\dT^*M,\ t\in I).
\eneq
Here $a\cl \dTM\to \dTM$ is the antipodal map.

\begin{lemma}\label{lem:inv-opp-isot}
Let $\Psi$ be the symplectic isotopy given by
$\Psi=\{a\circ \opb{\phi_t} \circ a\}_{t\in I}$.
Then we have $\dfrac{\partial\Psi}{\partial t}=H_{g_t}$,
and $\Psi$ is a non-negative Hamiltonian isotopy.
\end{lemma}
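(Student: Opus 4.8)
The plan is to prove both assertions at once, by a direct computation: differentiate the family $\psi_t\eqdot a\circ\opb{\phi_t}\circ a$ and carefully track signs. First I would record two elementary facts about homogeneous Hamiltonian isotopies of $\dTM$. (1) If a symplectic isotopy $\{\chi_t\}_{t\in I}$ is generated by the time-dependent Hamiltonian $h_t$, i.e.\ $\partial_t\chi_t=H_{h_t}\circ\chi_t$, then $\{\opb{\chi_t}\}_{t\in I}$ is generated by $-h_t\circ\chi_t$; this is obtained by differentiating $\chi_t\circ\opb{\chi_t}=\id$ in $t$ and using $\chi_t^*H_{h_t}=H_{h_t\circ\chi_t}$. (2) If $\sigma$ is a fixed homogeneous diffeomorphism of $\dTM$ with $\sigma^*(d\alpha_M)=\epsilon\,d\alpha_M$ for some $\epsilon\in\{+1,-1\}$, then $\{\sigma\circ\chi_t\circ\opb\sigma\}_{t\in I}$ is generated by $\epsilon\,(h_t\circ\opb\sigma)$; this follows from the identity $\sigma^*H_k=\epsilon\,H_{k\circ\sigma}$, itself obtained by pulling back the defining relation $\iota_{H_k}d\alpha_M=-dk$ along $\sigma$. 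Note that $a^*\alpha_M=-\alpha_M$, so $a$ is an anti-symplectic homogeneous involution with $\opb a=a$.

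Then I would apply these facts. Since $\{\phi_t\}_{t\in I}$ is generated by $f_t=f(\scbul,t)$, fact (1) shows that $\{\opb{\phi_t}\}_{t\in I}$ is generated by $-f_t\circ\phi_t$; conjugating by $a$ and using fact (2) with $\sigma=a$ and $\epsilon=-1$, the isotopy $\Psi=\{a\circ\opb{\phi_t}\circ a\}_{t\in I}$ is generated by $-\bigl((-f_t\circ\phi_t)\circ a\bigr)=f_t\circ\phi_t\circ a$, that is, by $p\mapsto f(\phi_t(p^a),t)=g(p,t)$. This yields $\partial_t\Psi=H_{g_t}$. Moreover $a\circ\opb{\phi_t}\circ a$ is the composition of two anti-symplectic maps and one symplectic map, all homogeneous, hence a homogeneous symplectic isomorphism; so $\Psi$ is a homogeneous Hamiltonian isotopy.

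For the non-negativity I would use that, as recalled in the text, $\Phi$ being non-negative means $\langle\alpha_M,H_f\rangle=\eu_M(f)=f\ge0$, i.e.\ $f_t\ge0$ on all of $\dTM$ for every $t$. Since $\phi_t\circ a$ is a bijection of $\dTM$, the function $g_t=f_t\circ\phi_t\circ a$ is again everywhere non-negative; being homogeneous of degree $1$ it satisfies $\langle\alpha_M,H_{g_t}\rangle=g_t\ge0$, which is precisely the non-negativity of $\Psi$.

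The one point that needs care is the sign bookkeeping: there are two sign changes — one from inverting $\phi_t$, one from conjugating by the anti-symplectic map $a$ — and one must verify that they combine so that $\Psi$ is generated by $+g_t$, not $-g_t$; the opposite choice would turn the statement into non-positivity. Everything else reduces to the two standard facts above and is routine.
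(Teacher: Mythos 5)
Your proof is correct, and the sign bookkeeping you single out as the delicate point does come out right: $\{\opb{\phi_t}\}_{t\in I}$ is generated by $-f_t\circ\phi_t$, conjugation by the anti-symplectic involution $a$ flips the sign once more, and $\Psi$ is therefore generated by $+f_t\circ\phi_t\circ a$, which is non-negative and homogeneous of degree $1$ because $f$ is. Your route is genuinely different from the paper's: the paper never differentiates the flow. It instead rewrites the Lagrangian $\Lambda$ attached to $\Phi$ by Lemma~\ref{lem:Hamilt-isot} as $\set{(w^a,\psi_t(w),t,-f(w^a,t))}{w\in\dTM,\,t\in I}$, observes that $f(w^a,t)=g(\psi_t(w),t)$, and reads off the generating Hamiltonian of $\Psi$ from the uniqueness statement of that lemma. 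Your argument is the more elementary and self-contained one, resting on the two standard transformation rules (inversion and conjugation by an anti-symplectomorphism), both of which you state correctly; the paper's is a one-line application of machinery it has already built, at the cost of silently invoking the graph characterization once more for $\Psi$. One point worth recording explicitly: your computation yields the generating function $p\mapsto f(\phi_t(p^a),t)$, whereas the displayed definition~\eqref{eq:inv-opp-isot} reads $g(p,t)=f(\phi_t(p)^a,t)$; these two functions differ unless $\phi_t$ commutes with $a$, which a homogeneous symplectomorphism need not do. Since the paper's own verification $f(w^a,t)=g(\phi_t^{-1}(w^a)^a,t)$ holds only with the reading $g(p,t)=f(\phi_t(p^a),t)$, the superscript $a$ in~\eqref{eq:inv-opp-isot} is misplaced, and the formula you derive is the intended (and correct) one; this is a typo in the paper, not a gap in your argument.
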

\begin{proof}
Set $\psi_t=a\circ \opb{\phi_t}\circ a$.
Let $\Lambda$ be the Lagrangian manifold associated to $\Phi$ as in 
Lemma \ref{lem:Hamilt-isot}:
\eqn
\Lambda & = &
\set{\bl\phi_t(v), v^a, t, -f(\phi_t(v),t)\br}{ v\in\dT^*M, t\in I}.
\eneqn
Then we have
\eqn
\Lambda & = &
\set{\bl w,  \phi_t^{-1}(w)^a, t, -f(w,t)\br}{ w\in\dT^*M, t\in I}\\
&=&
\set{\bl w^a,  \psi_t(w), t, -f(w^a,t)\br}{ w\in\dT^*M, t\in I}.
\eneqn
Since $f(w^a,t)=g(\phi_t^{-1}(w^a)^a,t)$, the set
$$\set{\bl w^a, \psi_t(w), t, -g(\psi_t(w),t)\br}{ w\in\dT^*M, t\in I}$$
is Lagrangian.
Hence Lemma~\ref{lem:Hamilt-isot} implies that
$\partial\Psi/\partial t=g_t$.
The non-negativity of $\Psi$ is obvious since $g$ itself is non-negative. 
\end{proof}

\begin{lemma}\label{lem:stable-trivial}
Let $\Lambda_1$ and $\Lambda_2$ be conic closed Lagrangian submanifolds of
$\dT^*M$. If either $\phi_t(\Lambda_1)\subset \Lambda_2$ for all
$t\in[0,1]$, or if $\Lambda_1\subset \phi_t(\Lambda_2)$ for all
$t\in[0,1]$, then $\phi_t|_{\Lambda_1} = \id_{\Lambda_1}$ for all $t\in[0,1]$.
\end{lemma}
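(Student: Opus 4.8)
The plan is to argue directly in the symplectic geometry of $\dTM$, with no sheaf theory, using two elementary facts. First, the Liouville form restricts to zero on every conic Lagrangian submanifold $\Lambda\subset\dTM$: since $\alpha_M=\iota_{\eu_M}\omega_M$, since $\eu_M$ is tangent to $\Lambda$ (as $\Lambda$ is conic), and since $\omega_M$ vanishes on $T\Lambda$ (as $\Lambda$ is Lagrangian), we get $\alpha_M|_\Lambda=0$. Second, for a homogeneous function $g$ of degree $1$ one has $\langle\alpha_M,H_g\rangle=\eu_M(g)=g$, which is already recalled in the paper. Recall also that $\Phi$ non-negative means that $f$, hence each $f_t$, is a non-negative valued function on $\dTM$.

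First I would treat the case $\phi_t(\Lambda_1)\subset\Lambda_2$ for all $t\in[0,1]$. Fix $p\in\Lambda_1$ and set $c(t)=\phi_t(p)$; by hypothesis $c$ is a smooth curve contained in $\Lambda_2$, so $\dot c(t)=H_{f_t}(c(t))$ lies in $T_{c(t)}\Lambda_2$. By the first fact $\langle\alpha_M,\dot c(t)\rangle=0$, while by the second $\langle\alpha_M,\dot c(t)\rangle=\langle\alpha_M,H_{f_t}\rangle(c(t))=f(\phi_t(p),t)$. Hence $f_t$ vanishes at $\phi_t(p)$ for every $t\in[0,1]$; since $f_t\geq0$, the point $\phi_t(p)$ is a global minimum of $f_t$, so $df_t(\phi_t(p))=0$ and therefore $H_{f_t}(\phi_t(p))=0$. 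Thus $\dot c\equiv0$, so $\phi_t(p)=c(0)=p$ for all $t$. As $p\in\Lambda_1$ was arbitrary, $\phi_t|_{\Lambda_1}=\id_{\Lambda_1}$.

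For the case $\Lambda_1\subset\phi_t(\Lambda_2)$ for all $t\in[0,1]$, I would reduce to the previous one. Writing $a$ for the antipodal map and $\psi_t=a\circ\opb{\phi_t}\circ a$, the inclusion is equivalent to $\psi_t(\Lambda_1^a)\subset\Lambda_2^a$, because $a$ carries conic closed Lagrangian submanifolds to conic closed Lagrangian submanifolds and $a\circ\opb{\phi_t}\circ a(\Lambda_1^a)=(\opb{\phi_t}\Lambda_1)^a$. By Lemma~\ref{lem:inv-opp-isot}, $\Psi=\{\psi_t\}$ is again a non-negative homogeneous Hamiltonian isotopy with $\psi_0=\id$, so the first case yields $\psi_t|_{\Lambda_1^a}=\id$, and conjugating back by $a$ gives $\opb{\phi_t}|_{\Lambda_1}=\id$, i.e. $\phi_t|_{\Lambda_1}=\id_{\Lambda_1}$. (One can also argue directly: the curve $d(t)=\opb{\phi_t}(p)$ lies in $\Lambda_2$ and satisfies $d\phi_t(\dot d(t))=-H_{f_t}(p)$; since a homogeneous symplectomorphism preserves $\alpha_M$, one gets $\langle\alpha_M,\dot d(t)\rangle=-f(p,t)$, so $\alpha_M|_{\Lambda_2}=0$ forces $f_t(p)=0$, hence $df_t(p)=0$, hence $p$ is a stationary point of $H_{f_t}$ and so fixed by $\phi_t$.)

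The substantive points — and where I would be most careful — are precisely the two rigidity facts above: the vanishing of $\alpha_M$ on conic Lagrangians, and the passage from ``$f_t$ vanishes along the trajectory'' to ``the trajectory is constant'', which crucially uses non-negativity of $f_t$ to force $df_t=0$ at the relevant point, together with uniqueness of integral curves of the smooth time-dependent vector field $H_{f_t}$. Beyond this there is no analytic difficulty.
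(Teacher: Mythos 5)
Your proof is correct and follows essentially the same route as the paper: the vanishing of $\alpha_M$ on conic Lagrangians forces $f_t$ to vanish along the motion, non-negativity then gives $df_t=0$ and hence $H_{f_t}=0$ there, and the second case is reduced to the first via $a\circ\opb{\phi_t}\circ a$ and Lemma~\ref{lem:inv-opp-isot}. The only (harmless, slightly cleaner) difference is that you argue pointwise along the trajectory inside $\Lambda_2$, whereas the paper first uses connectedness to show $\phi_t(\Lambda_1)=\Lambda_1$ and then works with tangency to $\Lambda_1$.
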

\begin{proof}
(i) Let us treat the case where $\phi_t(\Lambda_1)\subset \Lambda_2$ for all $t\in[0,1]$.
 We may assume that $\Lambda_1$ is connected.
Then
$\phi_t(\Lambda_1)$ is a connected component of $\Lambda_2$, hence
does not depend on $t$.
Therefore $\phi_t(\Lambda_1)=\Lambda_1$ for all $t\in[0,1]$.
The hypothesis implies that $H_{f_t} = \partial\Phi/\partial t$ is tangent to
$\Lambda_1$ for all $t\in[0,1]$. By Lemma~\ref{lem:homog-Hamilt-isot},
$f_t = \langle \alpha_M, H_{f_t} \rangle$. 
Since $\Lambda_1$ is conic Lagrangian, the Liouville form $\alpha_M$ vanishes
on the tangent bundle of $\Lambda_1$ and we deduce that $f$ is identically $0$
on $\Lambda_1\times [0,1]$.

Since  $f_t$ is a non-negative function on $\dTM$, all points of 
$\Lambda_1$ are minima of $f$. It follows that $d(f_t) = 0$ on $\Lambda_1$ for all $t\in[0,1]$.
Hence $H_{f_t}$ also vanishes on $\Lambda_1$ and therefore
$\phi_t|_{\Lambda_1} = \id_{\Lambda_1}$ for all $t\in[0,1]$.

\noindent
(ii) Now assume that $\Lambda_1\subset\phi_t(\Lambda_2)$ for all $t\in[0,1]$. 
Set $\psi_t=a\circ\phi_t^{-1}\circ a$.
Then $\{\psi_t\}_{t\in I}$ is a non-negative Hamiltonian isotopy
by Lemma~\ref{lem:inv-opp-isot},
and $\psi_t(\Lambda_1^a)\subset\Lambda_2^a$ holds for any $t\in [0,1]$.
Hence step~(i) implies that $\psi_t\vert_{\Lambda_1^a}=\id_{\Lambda_1^a}$.
\end{proof}

\begin{theorem}\label{th:positiso}
Let $M$ be a connected and non-compact manifold and let $X,Y$ be two compact
connected submanifolds of $M$.  
Let $\Phi=\{\phi_t\}_{t\in I}\cl\dT^*M\times I\to\dT^*M$ be
a non-negative homogeneous Hamiltonian isotopy.
Assume that $[0,1]\subset I$ and $\phi_1(\dT^*_XM)=\dT^*_YM$. Then $X=Y$ and 
$\phi_t|_{\dT^*_XM} = \id_{\dT^*_XM}$ for all $t\in[0,1]$.\footnote{
In an earlier draft of this paper, we only proved the first part of the
conclusion of Theorem~\ref{th:positiso}, namely that $X=Y$. We 
thank Stephan Nemirovski who asked us the question whether 
$\phi_t\vert_{\dT^*_XM}$ is the identity of $\dT^*_XM$ for all $t\in[0,1]$.
}
\end{theorem}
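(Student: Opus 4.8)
The plan is to transport $\eim{i_X}\omega_X$ (the Verdier dual of $\cor_X$ on $M$) through the quantization $K$ of $\Phi$, use non-negativity to produce comparison morphisms between the time slices, and combine the resulting support estimates with the geometry of $M\setminus X$ and $M\setminus Y$ described in Lemma~\ref{lem:geometry-positiso}. Throughout, $X$ and $Y$ have positive codimension because $M$ is connected and non-compact, so $\dT^*_XM$ and $\dT^*_YM$ are non-empty.

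\emph{Step 1: $X\subseteq Y$.} Let $i_X\cl X\into M$ be the inclusion, set $G_0=\eim{i_X}\omega_X$, let $K\in\Derlb(\cor_{M\times M\times I})$ be the quantization of $\Phi$ (Theorem~\ref{th:3}), and put $G=K\conv G_0$, $G_t=G\vert_{\{t=t_0\}}$. Since $G_0$ has compact support and is simple along $\dT^*_XM$, and the quantization is microlocally a quantized contact transformation above $\phi_t$, the properties~\eqref{eq:propertiesFt}--\eqref{eq:propertiesFt2} give that $G_t$ has compact support, that $\SSi(G_t)\cap\dTM=\phi_t(\dT^*_XM)$, and that $G_t$ is simple along $\phi_t(\dT^*_XM)$; in particular $G_1$ is simple along $\dT^*_YM$, hence $Y\subseteq\Supp(G_1)$. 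As $\Phi$ is non-negative, $\Lambda\subset\{\tau\leq0\}$, so $\SSi(G)\subset\{\tau\leq0\}$; together with the second line of~\eqref{eq:propertiesFt} and properness over $I$, Proposition~\ref{pro:microsupp} provides morphisms $r_{t,0}\cl G_0\to G_t$ inducing isomorphisms $\rsect(M;G_0)\isoto\rsect(M;G_t)$, hence also isomorphisms on $\rsect_c$ since all these sheaves have compact support. Applying Lemma~\ref{le:positiso}(ii) to $v=r_{1,0}\cl\eim{i_X}\omega_X\to G_1$, which induces an isomorphism on $H^0_c$, we get $X\subseteq\Supp(G_1)$. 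Now $G_1$ is locally constant on $M\setminus Y$ and has compact support, so $\Supp(G_1)\setminus Y$ is a union of relatively compact connected components of $M\setminus Y$; by Lemma~\ref{lem:geometry-positiso} either $\Supp(G_1)=Y$, in which case $X\subseteq Y$ as wanted, or $Y$ is a hypersurface, $Y=\partial U$ for the unique such component $U$, and $\Supp(G_1)=\overline U$. The second alternative is excluded: a local computation shows that for $G_1$ simple along the conormal to a hypersurface and supported on one of its closed sides, $\epb{i_X}G_1$ is concentrated in strictly positive degrees, hence $\Hom(\eim{i_X}\omega_X,G_1)=0$, contradicting that $r_{1,0}$ is non-zero (it induces a non-zero map on $\rsect$, which is non-zero since $H^0(M;G_0)=\cor$).

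\emph{Step 2: $X=Y$.} By Lemma~\ref{lem:inv-opp-isot} the isotopy $\Psi=\{a\circ\opb{\phi_t}\circ a\}_{t\in I}$ is a non-negative homogeneous Hamiltonian isotopy, and since conormal bundles are stable under the antipodal map, $\psi_1(\dT^*_YM)=a(\opb{\phi_1}(a(\dT^*_YM)))=a(\opb{\phi_1}(\dT^*_YM))=a(\dT^*_XM)=\dT^*_XM$. Step~1 applied to $(\Psi,Y,X)$ in place of $(\Phi,X,Y)$ gives $Y\subseteq X$, whence $X=Y$ as subsets, hence as submanifolds.

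\emph{Step 3: $\phi_t\vert_{\dT^*_XM}=\id$ for $t\in[0,1]$.} Now $X=Y$, so $\Supp(G_1)=X$ and $\SSi(G_1)\cap\dTM=\dT^*_XM$; with simplicity this forces $G_1$ to be a shifted rank-one local system on $X$, and comparing with the isomorphism $\rsect(M;G_0)\isoto\rsect(M;G_1)$ (using that $X$ is connected) identifies $G_1\simeq\eim{i_X}\omega_X$ in such a way that $r_{1,0}$ is an isomorphism of sheaves. Since $r_{1,0}=r_{1,t}\circ r_{t,0}$, each $r_{t,0}\cl G_0\to G_t$ is a split monomorphism, so $G_0$ is a direct summand of $G_t$ and $\dT^*_XM=\SSi(G_0)\cap\dTM\subseteq\SSi(G_t)\cap\dTM=\phi_t(\dT^*_XM)$. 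But $\phi_t$ is a diffeomorphism of $\dTM$ carrying the closed submanifold $\dT^*_XM$ onto $\phi_t(\dT^*_XM)$, hence preserves their (finite) number of connected components, so the inclusion is an equality: $\phi_t(\dT^*_XM)=\dT^*_XM$ for every $t\in[0,1]$. Lemma~\ref{lem:stable-trivial} with $\Lambda_1=\Lambda_2=\dT^*_XM$ then gives $\phi_t\vert_{\dT^*_XM}=\id_{\dT^*_XM}$ for all $t\in[0,1]$.

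I expect the two delicate points to be the exclusion of the ``$Y=\partial U$'' alternative in Step~1 (the local computation of $\epb{i_X}G_1$ must be carried out carefully, controlling the microlocal shifts of $G_1$ along the two components of the hypersurface conormal) and, above all, Step~3: the heart of the additional conclusion is upgrading the global-section isomorphism $r_{1,0}$ to an honest isomorphism of sheaves and then propagating the inclusion $\dT^*_XM\subseteq\phi_t(\dT^*_XM)$ to all intermediate times --- this is where non-negativity, via Proposition~\ref{pro:microsupp}, is used most essentially.
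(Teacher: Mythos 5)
Your Steps 2 and 3 follow a workable route (in particular, Step 3's split-monomorphism argument — deducing $\dT^*_XM\subset\phi_t(\dT^*_XM)$ from the factorization $r_{1,0}=r_{1,t}\circ r_{t,0}$ of an isomorphism — is a legitimate alternative to the paper's stalk-level diagram chase). But everything rests on Step 1 concluding $\Supp(G_1)=Y$, and your exclusion of the alternative $\Supp(G_1)=\ol U$ is a genuine gap, not a routine local computation. The claim that $\epb{i_X}G_1$ is ``concentrated in strictly positive degrees'' for $G_1$ simple along a hypersurface conormal and supported on one closed side depends on the shift of $G_1$ on $U$, which is a global (Maslov-type) invariant of the isotopy, not something readable off the local shape of $G_1$. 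Concretely, for the non-negative flow $\phi_t(x;\xi)=(x+t\xi/|\xi|;\xi)$ on $\R^n$ and $X=\{0\}$, the paper's explicit example gives $G_1\simeq\cor_{\{|x|<1\}}[n]$: it is supported on the closed unit ball, simple along (half of) the conormal to $S^{n-1}$, and $\Hom(\eim{i_X}\omega_X,G_1)\simeq\cor$ sits exactly in degree $0$. This example does not satisfy the theorem's hypothesis (only half the conormal of $S^{n-1}$ is reached), but it shows that the vanishing you invoke cannot follow from the local description you give; any correct exclusion must use the global data (that $\SSi(G_1)\cap\dTM$ is the \emph{full} conormal $\dT^*_YM$ and that $\rsect(M;G_1)\simeq\rsect(X;\omega_X)$), and you have not supplied such an argument.

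This is exactly the point where the paper diverges: it does not try to rule out this configuration but instead splits into two cases according to whether $M\setminus X$ (or $M\setminus Y$) has a relatively compact component, and in the hypersurface case it transports $\cor_{\ol V}$ rather than a dualizing complex, accepting that supports fill the bounded regions and comparing $\ol U$ with $\ol V$ via Lemma~\ref{le:positiso}. Since your Step 3 also needs $G_1\simeq\eim{i_X}\omega_X$ (hence $\Supp(G_1)=X$), the gap propagates to the final conclusion as well. A secondary, smaller issue: identifying $G_1$ with $\eim{i_X}\omega_X$ from the isomorphism on global sections relies on classifying rank-one local systems on $X$ by their cohomology, which needs $\cor$ to be a field, whereas the theorem is stated over a general base ring; the paper's arguments in this subsection only use $H^0$-level information and avoid this.
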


\begin{proof}
By Lemma~\ref{lem:stable-trivial} it is enough to prove that
$X=Y$ and $\phi_t(\dT^*_XM)\subset \dT^*_XM$ for all $t\in[0,1]$.

We will distinguish two cases (see Lemma~\ref{lem:geometry-positiso}), respectively treated in~(ii)
and~(iii) below:
\banum
\item $M\setminus X$ or $M\setminus Y$ has no relatively
compact connected component,
\item both $X$ and $Y$ are the boundaries of relatively compact connected open
  subsets $U$ and $V$ of $M$, respectively.
\eanum

\medskip
\noindent
(i) Let $K\in\Derlb(\cor_{M\times M\times I})$ be the quantization of $\Phi$
on $I$ given by Theorem~\ref{th:3}. 
By Proposition~\ref{prop:support_unicity}~(ii),
the convolution with $K_1$ gives an equivalence of categories
\eqn
&&\Derlb_{T^*_XM\cup T^*_MM}(\cor_M)
\isoto[\;K_1\conv\cdot\ ]\Derlb_{T^*_YM \cup T^*_MM}(\cor_M).
\eneqn
Moreover
$\SSi(K)\subset\Lambda\cup T^*_{M\times M\times I}(M\times M\times I)$,
so that $\SSi(K)\subset \{ \tau \leq 0 \}$. 
We consider $F_0\in \Derb(\cor_M)$ with compact support.
We set:
\eqn
&&F=K\conv F_0,\quad F_{t_0}=F\conv\cor_{\{t=t_0\}} \, (t_0\in I).
\eneqn
Then $F$ satisfies~\eqref{eq:propertiesFt} and we have
$\SSi(F)\subset \{ \tau \leq 0 \}$.  Hence we may apply
Proposition~\ref{pro:microsupp} and we deduce that,
for all $a,b\in I$ with $a\leq b$,
there are natural morphisms $r_{b,a}\cl F_a\to F_b$ which induce isomorphisms
$\rsect(M;F_a) \isoto \rsect(M;F_b)$.

\medskip\noindent
(ii) Let us assume hypothesis~(a).
By Lemma~\ref{lem:inv-opp-isot}, 
replacing $\phi_t$ with $a\circ\opb{\phi_t}\circ a$
and $X$ with $Y$ if necessary,
we may assume that any of the connected
components of $M\setminus X$ is not relatively compact.

\smallskip\noindent
(ii-a) Let us show that $X=Y$.
There exists $F_0\in \Derlb(\cor_M)$ such that $F_1\simeq \cor_Y$.
We have $F_0 \simeq K_1^{-1} \conv \cor_Y$ so that $F_0$ has compact support.
We have also $\SSi(F_t)\cap \dT^*M=\phi_t(\dT^*_XM)$.
Since $\SSi(F_0) \subset T^*_XM \cup T^*_MM$, $F_0$ is locally constant
outside $X$. Since $M\setminus X$ has no compact connected component, we
deduce $\Supp(F_0) \subset X$.
Hence by Lemma~\ref{le:positiso} (i),
we have $Y\subset \Supp(F_0)\subset X$.

Since $M\setminus X\subset M\setminus Y$ and $M\setminus X$ has no relatively compact connected component,
$M\setminus Y$ has also no relatively compact connected component.
Hence by interchanging $X$ and $Y$ with the use of Lemma~\ref{lem:inv-opp-isot}, 
we obtain $X\subset Y$.
Thus we obtain $X=Y$.

\medskip\noindent
(ii-b) Let us show
$\dT^*_XM\subset\phi_t(\dT^*_XM)$.
Assuming that $p\in \dT^*_XM \setminus \phi_t(\dT^*_XM)$,
let us derive a contradiction.
Take a $\Cd^1$-function $g$ such that
$p=(x;dg(x))$ and $g\vert_X=0$. 
Since $\Supp(F_0)\cap \{g<0\}=\emptyset$, we obtain $\lh^0_{\{g<0\}}(F_0)_x\simeq0$.
Since $dg(x)\not\in\SSi(F_t)$, the morphism
$\lh^0(F_t)_x\to\lh^0_{\{g<0\}}(F_t)_x$ is an isomorphism.
Then we have a commutative diagram
\eqn
&&\xymatrix{
&H^0(M; F_0) \ar[r]^\sim \ar[d] & 
H^0(M; F_t) \ar[r]^\sim \ar[d] & 
H^0(M; \cor_Y) \ar[d]^{\bwr}\ar[r]^-{\sim}&\cor  \\
&H^0(F_0)_x\ar[d] \ar[r]&\lh^0(F_t)_x\ar[d]^-{\bwr} \ar[r] & (\cor_Y)_x \\
0\ar@{-}[r]^-{\sim}&\lh^0_{\{g<0\}}(F_0)_x\ar[r]&
\lh^0_{\{g<0\}}(F_t)_x}
\eneqn
Hence $\cor\simeq H^0(M; F_t)\to\lh^0_{\{g<0\}}(F_t)_x$ is a monomorphism and 
also the zero morphism. This is a contradiction.
Thus we obtain the desired result $\dT^*_XM\subset \phi_t(\dT^*_XM)$.
Thanks to Lemma~\ref{lem:stable-trivial}, this completes the proof of the theorem under hypothesis~(a).

\bigskip\noindent
(iii) Now we assume hypothesis~(b).
In this case, $X$ and $Y$ are hypersurfaces of $M$.
Let $\dTi_YM$ be the ``inner'' conormal of $Y$, so that
$\SSi(\cor_{\ol V}) = \ol{V} \cup \dTi_YM$ (see
Example~\ref{ex:microsupp}).

\smallskip\noindent
(iii-a) Let us first prove that $U=V$. As in~(ii-a) there exists
$F_0\in \Derlb(\cor_M)$ with compact support such that
$F_1\simeq \cor_{\ol V}$. As in~(ii-a) we see that $\Supp(F_0) \subset 
\ol{U}=U\cup X$.
Part~(i) gives a morphism $r_{1,0}\cl F_0\to \cor_{\ol V}$ which induces
$H^0(M; F_0) \isoto H^0(M; \cor_{\ol V})\isoto\cor$.  
Hence Lemma~\ref{le:positiso} implies that
$\ol{V}\subset \Supp(F_0) \subset \ol{U}$.
Then Lemma~\ref{lem:inv-opp-isot} implies the reverse inclusion.
Hence $U = V$ and $X=Y$.

\medskip\noindent
(iii-b)  Let us prove that $\dTi_XM\subset\phi_t\phi_1^{-1}(\dTi_XM)$ 
for all $t\in [0,1]$.
The proof is similar to the one in (ii-b).
Assuming there exist $t\in[0,1]$ and 
$p\in(\dTi_XM)\setminus \phi_t\phi_1^{-1}(\dTi_XM)$,
we shall derive a contradiction.
Write $p=(x;dg(x))$ for a $\Cd^1$-function $g$ such that
$g\vert_X=0$. Hence $\{g>0\}$ coincides with $U$ on a neighborhood of $x$.
Then we have a commutative diagram
\eqn
&&\xymatrix{
&H^0(M; F_0) \ar[r]^\sim \ar[d] & 
H^0(M; F_t) \ar[r]^\sim \ar[d] & 
H^0(M; \cor_{\ol{V}}) \ar[d]^{\bwr}\ar[r]^-{\sim}&\cor  \\
&H^0(F_0)_x\ar[d] \ar[r]&H^0(F_t)_x\ar[d]^-{\bwr} \ar[r] & (\cor_{\ol{V}})_x \\
0\ar@{-}[r]^-{\sim}&\lh^0_{\{g<0\}}(F_0)_x\ar[r]&
\lh^0_{\{g<0\}}(F_t)_x}
\eneqn
Hence $\cor\simeq H^0(M; F_t)\to\lh^0_{\{g<0\}}(F_t)_x$ is a monomorphism and
equal to the zero morphism.
This is a contradiction.
Hence we obtain $\dTi_XM\subset\phi_t\phi_1^{-1}(\dTi_XM)$,
or equivalently $\phi_t^{-1}(\dTi_XM) \subset\phi_1^{-1}(\dTi_XM)$.
Hence Lemma~\ref{lem:stable-trivial}
implies that $\phi_t^{-1}\vert_{\dTi_XM}=\id_{\dTi_XM}$, or
$\phi_t\vert_{\dTi_XM}=\id_{\dTi_XM}$.
Lemma~\ref{lem:inv-opp-isot} permits us
to apply this to $a\circ\phi_t\circ a$,
and we obtain $\phi_t\vert_{a\dTi_XM}=\id_{a\dTi_XM}$.
Thus we obtain $\phi_t\vert_{\dT^*_XM}=\id_{\dT^*_XM}$.
\end{proof}

\begin{corollary}\label{cor:positiso}
Let $M$ be a connected manifold such that
the universal covering $\tM$ of $M$ is
non-compact.
Let $X$ and $Y$ be simply connected and compact submanifolds of $M$
with codimension $\ge2$. 
Let $\Phi\cl\dTM\times I\to \dTM$ be
a non-negative homogeneous Hamiltonian isotopy 
such that $[0,1]\subset I$
and $\phi_1(\dT^*_XM)=\dT^*_YM$. Then $X=Y$ and 
$\phi_t|_{\dT^*_XM} = \id_{\dT^*_XM}$ for all $t\in[0,1]$.
\end{corollary}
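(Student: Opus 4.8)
The plan is to reduce the statement to Theorem~\ref{th:positiso} by passing to the universal covering $p\colon\tM\to M$. Since $p$ is a covering map, hence a local diffeomorphism, it induces a homogeneous covering map $\widetilde p\colon\dT^*\tM\to\dTM$ that is a local symplectomorphism, namely $\widetilde p^*\alpha_M=\alpha_{\tM}$. First I would lift the isotopy: the time-dependent Hamiltonian vector field $H_{f_t}$ generating $\Phi$ pulls back along $\widetilde p$ to a vector field on $\dT^*\tM$ whose flow, by the path-lifting property of $\widetilde p$ and uniqueness of lifts, is defined for all $t\in I$, is equivariant under the deck transformation group, and yields a map $\widetilde\Phi\colon\dT^*\tM\times I\to\dT^*\tM$ with $\widetilde\phi_0=\id$ and $\widetilde p\circ\widetilde\phi_t=\phi_t\circ\widetilde p$. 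Equivariance together with the surjectivity of $\phi_t$ forces each $\widetilde\phi_t$ to be a bijection, hence a diffeomorphism, and since $f_t$ is homogeneous of degree $1$ the lift $\widetilde\phi_t$ is homogeneous symplectic, so $\widetilde\Phi$ satisfies~\eqref{hyp:isot1}. Its generating function is $\langle\alpha_{\tM},H_{\widetilde f_t}\rangle=f_t\circ\widetilde p\ge 0$, so $\widetilde\Phi$ is again a non-negative homogeneous Hamiltonian isotopy.

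Next I would lift $X$ and $Y$. Because $X$ is simply connected the covering $p^{-1}(X)\to X$ is trivial, so I may pick a connected component $\widetilde X$ of $p^{-1}(X)$; then $p$ restricts to a diffeomorphism $\widetilde X\isoto X$, so $\widetilde X$ is a compact connected submanifold of $\tM$ of codimension $\ge 2$, and $\widetilde p$ restricts to a diffeomorphism $\dT^*_{\widetilde X}\tM\isoto\dT^*_XM$. This is the place where the hypothesis $\codim X\ge 2$ is used: it makes the conormal fibres $\R^{\codim X}\setminus\{0\}$ connected, hence $\dT^*_{\widetilde X}\tM$ is connected, and it is therefore a connected component of $\widetilde p^{-1}(\dT^*_XM)$. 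Consequently the diffeomorphism $\widetilde\phi_1$ carries $\dT^*_{\widetilde X}\tM$ onto a connected component of $\widetilde p^{-1}\bl\phi_1(\dT^*_XM)\br=\widetilde p^{-1}(\dT^*_YM)$; as $p^{-1}(Y)$ is a disjoint union of components $\widetilde Y_i$ each homeomorphic to $Y$, with $\widetilde p^{-1}(\dT^*_YM)=\bigsqcup_i\dT^*_{\widetilde Y_i}\tM$, this means $\widetilde\phi_1(\dT^*_{\widetilde X}\tM)=\dT^*_{\widetilde Y}\tM$ for a suitable component $\widetilde Y$ of $p^{-1}(Y)$, which is again a compact connected submanifold of $\tM$.

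Finally, $\tM$ is connected and non-compact by hypothesis, so Theorem~\ref{th:positiso} applies to $\tM$, $\widetilde X$, $\widetilde Y$ and $\widetilde\Phi$, and gives $\widetilde X=\widetilde Y$ together with $\widetilde\phi_t|_{\dT^*_{\widetilde X}\tM}=\id$ for all $t\in[0,1]$. Applying $p$ yields $X=p(\widetilde X)=p(\widetilde Y)=Y$, and applying $\widetilde p$, using $\widetilde p\circ\widetilde\phi_t=\phi_t\circ\widetilde p$ and the surjectivity of $\widetilde p|_{\dT^*_{\widetilde X}\tM}$ onto $\dT^*_XM$, yields $\phi_t|_{\dT^*_XM}=\id$ for all $t\in[0,1]$. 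The step I expect to require the most care is the construction of the lifted isotopy $\widetilde\Phi$: checking that the lifted flow is globally defined on all of $I$, that the $\widetilde\phi_t$ are genuine diffeomorphisms (not merely local ones) and are homogeneous symplectic, and that the generating Hamiltonian is exactly $f_t\circ\widetilde p$ so that non-negativity is preserved. Everything after that is routine covering-space bookkeeping.
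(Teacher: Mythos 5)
Your proposal is correct and follows essentially the same route as the paper: lift the Hamiltonian $f$ to $\dT^*\tM$ via the covering projection to obtain an intertwined non-negative homogeneous isotopy $\widetilde\Phi$, use simple connectedness to split $p^{-1}(X)$ and $p^{-1}(Y)$ into components diffeomorphic to $X$ and $Y$, use $\codim\ge 2$ to see that the corresponding punctured conormals are the connected components of the preimages, and then apply Theorem~\ref{th:positiso} on the connected non-compact manifold $\tM$ before projecting back down. Your extra care about completeness and equivariance of the lifted flow is a sound way to justify what the paper asserts more briefly when it declares $\widetilde\Phi$ to be ``the isotopy associated with $f\circ(p\times\id_I)$''.
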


\begin{proof}
Let $q\cl\tM\to M$ 
and $p\cl\dT^*\tM\to\dT^*M$ be the canonical projections.
Let $f\cl \dT^*M\times I\to\R$ be as in Lemma~\ref{lem:homog-Hamilt-isot}
for $\Phi$ and set $\tilde{f}\seteq f\circ (p\times\id_{I})
\cl\dT^*\tM\times I\to\R$.
Let $\tilde{\Phi}\cl \dT^*\tM \times I \to \dT^*\tM$ the non-negative
homogeneous Hamiltonian isotopy associated with $\tilde{f}$.
We set 
$\tilde{\phi_t}\seteq\tilde{\Phi}\vert_{\dT^*\tM \times\{t\}}
\cl\dT^*\tM\to\dT^*\tM$. Then $p\circ \tilde{\phi}_t = \phi_t \circ p$.
Let 
$\opb{q}(X) =\bigsqcup_{j\in J}\tilde X_j$ and 
$\opb{q}(Y) =\bigsqcup_{k\in K}\tilde Y_k$
be the decompositions into connected components.
Then by the assumption, $\tilde X_j\to X$
and $\tilde Y_k\to Y$ are isomorphisms, and hence
$\tilde X_j$ and $\tilde Y_k$ are connected and compact.
Since $p^{-1}(\dT^*_XM)=\sqcup_j\dT^*_{\tilde X_j}\tM$, we have
$$\bigsqcup_{j\in J} \tilde{\phi_1}(\dT^*_{\tilde X_j}\tM)
= \bigsqcup_{k\in K} \dT^*_{\tilde Y_k}\tM.$$
Since $\codim \tilde X_j$, $\codim \tilde Y_k>1$,
the unions are decompositions into connected components.
So, for a given $j\in J $, there exists  $k\in K$ such that
$\tilde\phi_1(\dT^*_{\tilde X_j}\tM) = \dT^*_{\tilde Y_k}\tM$. 
Hence Theorem~\ref{th:positiso} implies $\tilde X_j=\tilde Y_k$ and 
$\tilde\phi_t|_{\dT^*_{\tilde X_j}\tM} = \id_{\dT^*_{\tilde X_j}\tM}$
for all $t\in[0,1]$.
Finally we conclude $X=q(\tilde X_j)=q(\tilde Y_k)=Y$ and
$\phi_t|_{\dT^*_XM} = \id_{\dT^*_XM}$ for all $t\in[0,1]$.
\end{proof}

\subsection{Non-displaceability:  symplectic case}
\label{sec:sympl-case}
In this section we assume that $\cor$ is a field.
Using Theorems~\ref{th:ndispl1} and~\ref{th:morse1} we recover a well-known
result solving a conjecture by Arnold~\cite{Cha83, CZ83, Fe97, Ho85, LS85}.
We first state an easy geometric lemma.
\begin{lemma}\label{lem:pullback-pushforward}
Let $p\cl E \to X$ be a smooth morphism, let $A,B$ be submanifolds of $X$
and $A'$ a submanifold of $E$.
We assume that $p$ induces a diffeomorphism $p|_{A'}\cl A'\isoto A$.
We set $B'=\opb{p}(B)$. Then
\eq
&&\text{$p$ induces a bijection $A'\cap B' \isoto A\cap B$}, 
\label{eq:morseNH1}\\
&& \text{\parbox[t]{60ex}{$A'$ and $B'$ intersect transversally 
if and only if $A$ and $B$ intersect transversally.}}
\label{eq:morseNH2}
\eneq
\end{lemma}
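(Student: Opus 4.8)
The plan is to verify both assertions pointwise, relying only on the fact that $p$ is a submersion (``smooth'' in the sense of Section~\ref{section:mts}) together with the standard description of the tangent space to the preimage of a submanifold by a submersion.

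First I would prove~\eqref{eq:morseNH1}. Since $p|_{A'}\cl A'\isoto A$ is bijective, the restriction of $p$ to $A'\cap B'$ is injective, and for $e\in A'\cap B'$ we have $p(e)\in p(A')=A$ and $p(e)\in B$ (because $e\in\opb{p}(B)$), so $p$ sends $A'\cap B'$ into $A\cap B$. Conversely, given $x\in A\cap B$, the unique point $e\eqdot(p|_{A'})^{-1}(x)\in A'$ satisfies $p(e)=x\in B$, hence $e\in\opb{p}(B)=B'$; thus $e\in A'\cap B'$ is the preimage of $x$. This yields the bijection $A'\cap B'\isoto A\cap B$ induced by $p$.

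Next I would prove~\eqref{eq:morseNH2}, which by~\eqref{eq:morseNH1} amounts to comparing transversality at a point $e\in A'\cap B'$ with transversality at $x\eqdot p(e)\in A\cap B$. Since $p$ is a submersion, $d_ep\cl T_eE\to T_xX$ is surjective; since $p|_{A'}$ is a diffeomorphism, $d_ep$ restricts to an isomorphism $T_eA'\isoto T_xA$; and since $B'=\opb{p}(B)$ we have $T_eB'=\opb{(d_ep)}(T_xB)$, so that $d_ep(T_eB')=T_xB$ and $\ker(d_ep)\subset T_eB'$. If $T_eA'+T_eB'=T_eE$, then applying the surjection $d_ep$ gives $T_xA+T_xB=T_xX$. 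Conversely, suppose $T_xA+T_xB=T_xX$ and let $v\in T_eE$; writing $d_ep(v)=a+b$ with $a\in T_xA$, $b\in T_xB$, and choosing $a'\in T_eA'$ with $d_ep(a')=a$, we get $v-a'\in\opb{(d_ep)}(T_xB)=T_eB'$, whence $v=a'+(v-a')\in T_eA'+T_eB'$; thus $T_eE=T_eA'+T_eB'$. So the two transversality conditions are equivalent point by point, which gives~\eqref{eq:morseNH2}.

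The whole argument is formal. The only ingredient deserving a word of justification is the identity $T_eB'=\opb{(d_ep)}(T_xB)$, valid because $p$ is a submersion (locally express $B$ as the zero set of a submersion and compose with $p$); this also shows that $B'=\opb{p}(B)$ is automatically a submanifold of $E$ of the same codimension as $B$, so that the statement makes sense. I do not expect any genuine obstacle.
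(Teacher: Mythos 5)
Your proof is correct. Note that the paper states this as ``an easy geometric lemma'' and omits the proof entirely, so there is nothing to compare against; your argument is the standard one the authors presumably had in mind. Both steps check out: the bijection~\eqref{eq:morseNH1} follows formally from $B'=\opb{p}(B)$ and the bijectivity of $p|_{A'}$, and for~\eqref{eq:morseNH2} the key identity $T_eB'=\opb{(d_ep)}(T_xB)$ (valid because the submersion $p$ is transversal to $B$, which also makes $B'$ a submanifold) together with $d_ep(T_eA')=T_xA$ gives the pointwise equivalence of the two transversality conditions, and~\eqref{eq:morseNH1} matches up the points at which transversality must be checked.
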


\begin{theorem}\label{thm:morseNH}
Let $N$ be a non-empty compact manifold.
Let $\Phi\cl T^*N\times I\to T^*N$ be a
Hamiltonian isotopy and assume that there exists a compact set $C\subset T^*N$ such
that $\Phi|_{(T^*N \setminus C) \times I}$ is the projection on the first
factor. We let $c=\sum_j\dim H^j(N;\cor_N)$, the sum
of the Betti numbers of $N$. 
Then for any $t\in I$ the intersection
$\phi_t(T^*_NN) \cap T^*_NN$ is never empty.
Moreover its cardinality is at
least $c$ whenever the intersection is transversal.
\end{theorem}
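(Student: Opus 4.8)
The plan is to deduce this from its already-established homogeneous counterparts, Theorems~\ref{th:ndispl1} and~\ref{th:morse1}, via the classical homogenisation trick of adding one variable. Put $M=N\times\R$, with $u$ the coordinate on $\R$ and $(u;\upsilon)$ the dual coordinate. Since $\Phi$ is the identity outside a compact set, one may choose a Hamiltonian $f_t\colon T^*N\to\R$ generating $\Phi$ with compact support (after enlarging $C$ and a constant normalisation; the case $\dim N=1$ being classical, one may assume $\dim N\ge2$, so that the cosphere bundle of $N$ is connected and the normalisation is unobstructed). First I would lift $f_t$ to $\tilde f_t\colon\dT^*(N\times\R)\to\R$, equal to $\upsilon\, f_t(x;\xi/\upsilon)$ for $\upsilon>0$ and to $0$ for $\upsilon\le0$. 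Compactness of $\supp f_t$ makes $\tilde f_t$ of class $\Cd^\infty$ (near $\upsilon=0$ one has $|\xi/\upsilon|\to\infty$, so $\tilde f_t\equiv0$ there), homogeneous of degree $1$, and independent of $u$; hence it generates a homogeneous Hamiltonian isotopy $\tilde\Phi=\{\tilde\phi_t\}_{t\in I}$ of $\dT^*(N\times\R)$ with $\tilde\phi_0=\id$, i.e.\ $\tilde\Phi$ satisfies~\eqref{hyp:isot1}. If one also wants~\eqref{hyp:isot2}, it suffices to restrict $I$ to a relatively compact subinterval containing $0$ and $t_0$ and to multiply $\tilde f_t$ by a function $\chi(u)$ equal to $1$ on a large interval; since the trajectories used below stay in a bounded range of $u$ this is harmless, and it makes the quantization bounded, so that Theorems~\ref{th:ndispl1} and~\ref{th:morse1} apply verbatim.

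The lift is built so that $\tilde\phi_t$ preserves $\upsilon$ (as $\partial_u\tilde f_t=0$), is the identity on $\{\upsilon\le0\}$, and on the slice $\{\upsilon=1\}\cong T^*N\times\R_u$ has $(x,\xi)$-component equal to $\phi_t$ (Hamilton's equations for $\tilde f_t|_{\upsilon=1}$ are those for $f_t$, plus $\dot u=\partial_\upsilon\tilde f_t$): writing $\phi_t(p)=(y_t(p),\eta_t(p))$ for $p\in T^*N$, one gets $\tilde\phi_t(x;\xi,u;\upsilon)=\bl y_t(q);\,\upsilon\,\eta_t(q),\,u+\theta_t(q);\,\upsilon\br$ for $\upsilon>0$, with $q=(x;\xi/\upsilon)$ and $\theta_t$ an action integral. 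Next, take $\psi\colon M\to\R$, $\psi(x,u)=u$; it satisfies~\eqref{hyp:morse2} and~\eqref{hyp:morse5a}, and $\Lambda_\psi=\{(x,u;0,1)\}$ equals $\{\xi=0,\ \upsilon=1\}$. Take $F_0=\cor_{N\times\{0\}}\in\Derb(\cor_M)$: it has compact support, $\rsect(M;F_0)\simeq\rsect(N;\cor_N)\neq0$, and $S_0\eqdot\SSi(F_0)\cap\dT^*M$ is the conormal bundle $T^*_{N\times\{0\}}M$ with its zero-section removed — a smooth conic Lagrangian along which $F_0$ is simple — so~\eqref{hyp:morse5b} holds with $S_{0,\reg}=S_0$. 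From the displayed formula one checks that the $\{\upsilon<0\}$ half of $S_0$ is fixed by $\tilde\phi_t$ and misses $\Lambda_\psi$, while $\tilde\phi_t(S_0)\cap\Lambda_\psi$ is carried bijectively, by the projection $p\colon\{\upsilon=1\}\to T^*N$, onto the set of $x\in N$ with $\phi_t(x,0)\in T^*_NN$, hence onto $\phi_t(T^*_NN)\cap T^*_NN$.

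With this dictionary: Theorem~\ref{th:ndispl1} applied to $\tilde\Phi$, $\psi$, $F_0$ gives $\tilde\phi_t(S_0)\cap\Lambda_\psi\neq\emptyset$ for every $t\in I$, which is exactly $\phi_t(T^*_NN)\cap T^*_NN\neq\emptyset$; this is the first assertion. For the cardinality statement, fix $t_0\in I$ and assume $\phi_{t_0}(T^*_NN)$ and $T^*_NN$ meet transversally; being compact, the intersection is then finite. I would transport this using Lemma~\ref{lem:pullback-pushforward} with the above $p$, taking $A'=\tilde\phi_{t_0}(S_0)\cap\{\upsilon=1\}$ (so $p|_{A'}$ is a diffeomorphism onto $A=\phi_{t_0}(T^*_NN)$), $B=T^*_NN$ and $B'=p^{-1}(B)=\Lambda_\psi$: the lemma yields transversality of $A'$ and $\Lambda_\psi$ in $\{\upsilon=1\}$ together with a bijection $A'\cap\Lambda_\psi\isoto\phi_{t_0}(T^*_NN)\cap T^*_NN$. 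Since $\tilde\phi_{t_0}(S_0)$ is conic and the radial direction at a point of $\Lambda_\psi$ is transverse to $\{\upsilon=1\}$ and tangent to $\tilde\phi_{t_0}(S_0)$, transversality in $\{\upsilon=1\}$ upgrades to transversality of $\tilde\phi_{t_0}(S_0)$ and $\Lambda_\psi$ in $\dT^*M$, with finite intersection. Theorem~\ref{th:morse1} then gives $\#\bl\tilde\phi_{t_0}(S_0)\cap\Lambda_\psi\br\ge\sum_j b_j(F_0)=\sum_j\dim H^j(N;\cor_N)=c$, and the bijection identifies the left-hand side with $\#\bl\phi_{t_0}(T^*_NN)\cap T^*_NN\br$.

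The main obstacle I anticipate is not conceptual but bookkeeping: verifying that the homogenisation transports the intersection and its transversality faithfully — that $\Lambda_\psi=p^{-1}(T^*_NN)$, that slicing the conic Lagrangian $\tilde\phi_{t_0}(S_0)$ by $\{\upsilon=1\}$ exactly recovers $\phi_{t_0}(T^*_NN)$, and that transversality passes between $\dT^*M$ and the slice — together with the preliminaries that make the boundedness and properness hypotheses of Theorems~\ref{th:ndispl1} and~\ref{th:morse1} literally hold (the cut-off in $u$ and the passage to a relatively compact subinterval, and the compact-support normalisation of $f_t$). Everything else is forced by the formula for $\tilde\phi_t$.
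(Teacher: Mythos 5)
Your proof is correct and follows essentially the same route as the paper: homogenise $\Phi$ to a homogeneous Hamiltonian isotopy of $\dT^*(N\times\R)$ (the paper packages your explicit lift $\upsilon\,f_t(x;\xi/\upsilon)$ as Proposition~\ref{prop:homogenisation}), apply Theorems~\ref{th:ndispl1} and~\ref{th:morse1} with $F_0=\cor_{N\times\{0\}}$ and $\psi$ the projection to $\R$, and transport the intersection and its transversality back via Lemma~\ref{lem:pullback-pushforward}. The only differences are cosmetic: the paper applies Lemma~\ref{lem:pullback-pushforward} twice through an intermediate set $\Sigma_t$ where you slice at $\upsilon=1$ and then invoke conicity, and your cutoff in $u$ is unnecessary since Theorems~\ref{th:ndispl1} and~\ref{th:morse1} only require hypothesis~\eqref{hyp:isot1}.
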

\begin{proof}
(i) We set $M=N\times \R$ and identify $N$ with $N\times\{0\}$.
We let $\tw\Phi\cl \dTM \times I \to \dTM$ be the homogeneous
Hamiltonian isotopy given by
Proposition~\ref{prop:homogenisation} and we set
$\tw\phi_t = \tw\Phi(\cdot,t)$.

We apply Theorems~\ref{th:ndispl1} and~\ref{th:morse1} to $M$, $\tw\Phi$,
$F_0 = \cor_N$ and $\psi = t$, the projection from $M$ to $\R$.
We obtain that the intersection $\tw\phi_t(\dT^*_NM) \cap \Lambda_\psi$
is a non-empty set whose cardinality is at least $c$ whenever
the intersection is transversal.
  
\medskip\noindent
(ii) Now we compare $\tw\phi_t(\dT^*_NM) \cap \Lambda_\psi$ with the
intersection considered in the theorem.

\noindent
(ii-a)
We apply Lemma~\ref{lem:pullback-pushforward} with
$X = T^*N$, $E= T^*N \times \R^\times$, $p(x,\xi,\sigma) = (x,\xi/\sigma)$,
$A=T^*_NN$, $B= \phi_t(T^*_NN)$ and $A' = T^*_NN \times \{1\} \subset E$.
We set $\Sigma_t\eqdot B'=\opb{p}(B)$. We have
\eq\label{eq:morseNH3}
&&\Sigma_t=
\{(\sigma\cdot\phi_t(x,0),\sigma)\in T^*N\times\R^\times;\;
x\in N,\sigma\in\R^\times\}.
\eneq
By Lemma~\ref{lem:pullback-pushforward},
$(T^*_NN \times \{1\}) \cap \Sigma_t \isoto T^*_NN  \cap \phi_t(T^*_NN)$
and one of these intersections is transversal if and only if the other one
is.

\smallskip\noindent
(ii-b)
We apply Lemma~\ref{lem:pullback-pushforward} with
$X = T^*N\times \R^\times$, $E = T^*N \times \dT^*\R$,
$p(x,\xi,s,\sigma)=(x,\xi,\sigma)$,
$A=\Sigma_t$, $B = T^*_NN\times\{1\}$ and $A'=\tw\phi_t(\dT^*_NM)$.
We must check that the restriction of $p$ to $\tw\phi_t(\dT^*_NM)$ induces an
isomorphism $\tw\phi_t(\dT^*_NM)\isoto\Sigma_t$.
This follows from~\eqref{eq:morseNH3} and the identity
(see~\eqref{eq:twphi}):
\eqn
&&\tw\phi_t(\dT^*_NM)=
\{(\sigma\cdot\phi_t(x,0),u(x,0,t),\sigma); x\in N,\sigma\in \R^\times\}.
\eneqn
We see easily that $B'=\opb{p}(B)$ is  $\Lambda_\psi$.
Hence Lemma~\ref{lem:pullback-pushforward} implies
$$
\tw\phi_t(\dT^*_NM) \cap \Lambda_\psi \isoto 
\Sigma_t \cap (T^*_NN \times \{1\})
$$
and one of these intersections is transversal if and only if the other one
is. Together with~(ii-a) and~(i) this gives the theorem.
\end{proof}

\appendix

\section{Appendix: Hamiltonian isotopies}
\label{sec:Hisot}

We first recall some notions of symplectic geometry. Let $\symx$ be a
symplectic manifold with symplectic form $\omega$. We denote by $\symx^a$ the
same manifold endowed with the symplectic form $-\omega$.  The symplectic
structure induces the Hamiltonian isomorphism $\h\cl T\symx \isoto T^*\symx$
by $\h(v) = \iota_v(\omega)$, where $\iota_v$ denotes the
contraction with $v$.
To a vector field $v$ on $\symx$ we associate in this way a $1$-form
$\h(v)$ on $\symx$.  For a $\Cd^\infty$-function
$f\cl \symx\to \R$ the Hamiltonian vector field of $f$ is by definition
$H_f \eqdot -\h^{-1}(df)$.

The vector field $v$ is called symplectic if its flow preserves $\omega$.
This is equivalent to $\shl_v(\omega) = 0$ where $\shl_v$ denotes
the Lie derivative of $v$.  By Cartan's formula
($\shl_v= d\,\iota_v+ \iota_v\,d$) this is again equivalent
to $d(\h(v)) = 0$ (recall that $d\omega=0$).  The vector field $v$ is
called Hamiltonian if $\h(v)$ is exact, or equivalently
$v=H_f$ for some function $f$ on $\symx$.

In this section we consider an open interval $I$ of $\R$ containing the
origin. We will use the following general notation: for a map
$u\cl X\times I \to Y$ and $t\in I$ we let $u_t\cl X\to Y$ be the map
$x\mapsto u(x,t)$.

\subsection{Families of symplectic isomorphisms}\label{app:symp}
Let $\Phi\cl \symx \times I\to \symx$ be a $\Cd^\infty$-map such that
$\phi_t\eqdot\Phi(\cdot,t)\cl \symx \to \symx$ is a symplectic isomorphism for
each $t\in I$ and is the identity for $t=0$. The map $\Phi$ induces a time
dependent vector field on $\symx$
\eq
&& v_\Phi \eqdot \frac{\partial\Phi}{\partial t} \cl \symx\times I\to T\symx.
\eneq 
Since $\phi_t^*(\omega) =\omega$ we obtain by derivation
$\shl_{(v_\Phi)_t} (\omega)= 0$ for any $t\in I$, that is, $(v_\Phi)_t$ is a
symplectic vector field.  So the corresponding ``time dependent'' $1$-form
$\beta =\h(v_\Phi) \cl \symx\times I\to T^*\symx$ satisfies $d(\beta_t) =0$
for any $t\in I$.  The map $\Phi$ is called a Hamiltonian isotopy if
$(v_\Phi)_t$ is Hamiltonian, that is, if $\beta_t$ is exact for any $t$. 
In this case, integrating the $1$-form $\beta$ (which is $\Cd^\infty$ with
respect to the parameter $t$) we obtain a $\Cd^\infty$-function
$f\cl \symx \times I\to\R$ such that $\beta_t =- d(f_t)$.
Hence we have
\eq\label{def:f}
&&\frac{\partial\Phi}{\partial t} =H_{f_t}.
\eneq

\medskip

The fact that the isotopy $\Phi$ is Hamiltonian can be interpreted as a
geometric property of its graph as follows.  For a given $t\in I$ we let
$\Lambda_{t}$ be the graph of $\phi_t^{-1}$
and we let $\Lambda'$ be the family of $\Lambda_t$'s:
\eqn
&&\Lambda_t=\set{(\phi_t(v),v)}{v\in\symx^a} \subset \symx\times\symx^a, \\
&& \Lambda'= \set{(\phi_t(v),v,t)}{v\in\symx^a, \; t\in I}
\subset  \symx \times \symx^a\times I.
\eneqn
Then $\Lambda_t$ is a Lagrangian submanifold of $\symx \times \symx^a$
and we ask whether
we can lift $\Lambda'$ as a Lagrangian submanifold $\Lambda$ of
$\symx \times \symx^a \times T^*I$ so that
\eq\label{eq:Lambda-Lambda'}
(\id_{\symx \times \symx^a} \times \pi_I)|_\Lambda \cl
\Lambda \isoto \Lambda'.
\eneq
\begin{lemma}\label{lem:Hamilt-isot}
We consider a $\Cd^\infty$-map $\Phi\cl \symx \times I\to \symx$ such that
$\phi_t$ is a symplectic isomorphism for each $t\in I$ and we use the above
notations.
Then there exists a Lagrangian submanifold
$\Lambda \subset \symx \times \symx^a \times T^*I$
satisfying~\eqref{eq:Lambda-Lambda'} if and only if $\Phi$ is a Hamiltonian
isotopy.
In this case the possible $\Lambda$ can be written
\eq
\label{eq:def-lambda}
\Lambda & = &
\set{\bl\Phi(v,t), v, t, -f(\Phi(v,t),t)\br}{ v\in\symx, t\in I},
\eneq
where the function $f\cl \symx \times I\to\R$ is defined 
by $(v_\Phi)_t = H_{f_t}$
up to addition of a function depending only on $t$.
\end{lemma}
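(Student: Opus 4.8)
The plan is to reduce the existence of a Lagrangian lift $\Lambda$ to the solvability of a single first‑order equation. Since $\pi_I\colon T^*I\to I$ is a vector bundle, condition~\eqref{eq:Lambda-Lambda'} says exactly that $\Lambda$ is the image of a smooth section, over the submanifold $\Lambda'$, of the bundle $\symx\times\symx^a\times T^*I\to\symx\times\symx^a\times I$ obtained by pulling back $T^*I\to I$. Trivialising $T^*_tI\simeq\R$ by means of $\alpha_I=\tau\,dt$ and parametrising $\Lambda'\simeq\symx\times I$ by $(v,t)\mapsto(\Phi(v,t),v,t)$, this amounts to the existence of a unique $\Cd^\infty$-function $h\colon\symx\times I\to\R$ with $\Lambda=\im(j_h)$, where $j_h(v,t)=(\Phi(v,t),v,t,h(v,t))$. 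The map $j_h$ is an embedding and $\dim(\symx\times I)=\tfrac12\dim(\symx\times\symx^a\times T^*I)$, so $\Lambda$ is Lagrangian iff it is isotropic, i.e. iff $j_h^*\Omega=0$, where $\Omega$ is the natural symplectic form of $\symx\times\symx^a\times T^*I$, namely the sum of $\omega$ on the first factor, $-\omega$ on the second, and $\omega_I$ on $T^*I$. The whole statement thus reduces to computing $j_h^*\Omega$.

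For that computation, write $p_1,p_2,p_3$ for the three projections, so $p_1\circ j_h=\Phi$, $p_2\circ j_h$ is the projection $\symx\times I\to\symx$, and $p_3\circ j_h$ has coordinates $(t,h(v,t))$; hence $j_h^*\Omega=\Phi^*\omega-\omega+j_h^*(p_3^*\omega_I)$ on $\symx\times I$. Since each $\phi_t$ is symplectic, the pullback of $\Phi^*\omega$ by the slice inclusion $\symx\hookrightarrow\symx\times\{t\}$ is $\phi_t^*\omega=\omega$, so $\Phi^*\omega=\omega+dt\wedge\gamma_t$ for a time-dependent $1$-form $\gamma_t$ on $\symx$; evaluating on $(\partial_t,\xi)$ and using $(v_\Phi)(v,t)=X_t(\phi_t(v))$ for the generating symplectic vector field $X_t=(v_\Phi)_t\circ\phi_t^{-1}$ gives $\gamma_t(\xi)=\omega\bigl(X_t\circ\phi_t,\,d\phi_t(\xi)\bigr)=(\phi_t^*\iota_{X_t}\omega)(\xi)$, i.e. $\gamma_t=\phi_t^*\beta_t$ with $\beta_t=\h(X_t)$ the $1$-form of Section~\ref{sec:Hisot}, which is closed for each $t$. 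On the other hand $j_h^*(p_3^*\omega_I)=d\bigl(h(v,t)\bigr)\wedge dt=d_vh_t\wedge dt$, the $\symx$-differential of $h_t=h(\cdot,t)$ wedged with $dt$ (the $dt\wedge dt$ term drops). Therefore, with the sign conventions of Section~\ref{sec:Hisot}, $j_h^*\Omega=dt\wedge(\phi_t^*\beta_t-d_vh_t)$.

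The conclusion follows at once: since a form $dt\wedge\eta$ vanishes on $\symx\times I$ iff the $\symx$-form $\eta$ vanishes, $\Lambda=\im(j_h)$ is Lagrangian iff $\phi_t^*\beta_t=d_vh_t$ for all $t\in I$, i.e. iff $\phi_t^*\beta_t$ is exact with a primitive depending $\Cd^\infty$ on $t$; and since $\phi_t$ is a diffeomorphism this holds for every $t$ precisely when $\beta_t$ is exact for every $t$, which is by definition the condition that $\Phi$ be a Hamiltonian isotopy. This proves the equivalence. For the explicit form~\eqref{eq:def-lambda}, assume $\Phi$ Hamiltonian and choose $f\colon\symx\times I\to\R$ with $(v_\Phi)_t=H_{f_t}$ as in~\eqref{def:f}, so that $\beta_t=\h(H_{f_t})=-df_t$ and $\phi_t^*\beta_t=-d(f_t\circ\phi_t)$; then $d_vh_t=-d(f_t\circ\phi_t)$ forces $h(v,t)=-f(\Phi(v,t),t)+c(t)$ for some function $c$ of $t$ alone, and replacing $f$ by $f-c$ (allowed, since $f$ is determined only up to a function of $t$) brings $\Lambda$ into the form~\eqref{eq:def-lambda}. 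Conversely every such $\Lambda$ is Lagrangian by the computation above, so these are exactly the possible lifts.

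I expect the main obstacle to be precisely the computation of $\Phi^*\omega$ and the identification of $\gamma_t$: one must keep the velocity field $v_\Phi=\partial\Phi/\partial t$, which lives along $\phi_t$, distinct from the genuine time-dependent symplectic vector field $X_t=(v_\Phi)_t\circ\phi_t^{-1}$ on $\symx$, and then track the pullback by $\phi_t$ together with the sign conventions for $\omega_I$ and for $H_f=-\h^{-1}(df)$ so that the final formula lands exactly on~\eqref{eq:def-lambda}. Once that bookkeeping is in place, the remaining ingredients — $dt\wedge\eta=0\Leftrightarrow\eta=0$ on $\symx\times I$, and the invariance of exactness of $1$-forms under pullback along a diffeomorphism — are routine.
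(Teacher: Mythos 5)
Your proof is correct and follows essentially the same route as the paper: both write $\Lambda$ as a graph over $\Lambda'$ via a function $h=\tau$ on $\symx\times I$ and reduce the Lagrangian condition to the exactness of $\beta_t=\h((v_\Phi)_t\circ\phi_t^{-1})$, yielding $\tau_t=-f_t\circ\phi_t$ up to a function of $t$. The only difference is presentational — you compute the pullback $j_h^*\Omega$ and its $dt\wedge(\phi_t^*\beta_t-d_vh_t)$ component, whereas the paper evaluates $\Omega$ on the tangent vectors $\theta_0,\theta_\nu$ pointwise; the two computations are term-for-term the same.
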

If $\Lambda$ exists we also have, extending
notation~\eqref{eq:convolution_of_sets} to the case where one manifold is
not necessarily a cotangent bundle:
\eqn
&&\Lambda_{t} = \Lambda\conv T^*_{t}I.
\eneqn
\begin{proof}
We write $T^*I \simeq I\times\R$. A manifold $\Lambda$
satisfying~\eqref{eq:Lambda-Lambda'} is written
$$
\Lambda = \set{\bl\Phi(v,t), v, t, \tau(v,t)\br}{v\in\symx^a, t\in I}
$$
for some function $\tau\cl \symx^a \times I \to \R$.  Let us write down the
condition that $\Lambda$ be Lagrangian.  For a given
$(v,t) \in \symx^a \times I$ and $p= (\Phi(v,t), v, t, \tau(v,t)) \in \Lambda$
the tangent space $T_p\Lambda$ is generated by the vectors
$$
\theta_0 = ( (v_\Phi)_t, 0, 1,
\frac{\partial\tau}{\partial t} )
\quad \mbox{and} \quad
\theta_\nu = ( (d\phi_t)(\nu), \nu,0, (d\tau_t)(\nu) ),
$$
where $\nu$ runs over $T_v\symx^a$.  Since $\phi_t$ is a symplectic
isomorphism the $\theta_\nu$'s are mutually orthogonal for the symplectic
structure of $\symx \times \symx^a \times T^*I$. Hence $\Lambda$ is Lagrangian
if and only if $\theta_0$ and $\theta_\nu$ also are orthogonal, which is
written:
\eqn
0 &=&
\omega\bl (v_\Phi)_t , (d\phi_t)(\nu)\br - (d\tau_t)(\nu) \\
&=&
\bl \h((v_\Phi)_t) - d(\tau_t\circ\phi_t^{-1})\br ((d\phi_t)(\nu)).
\eneqn
This holds for all $\nu \in T_v\symx^a$ if and only if
$\h((v_\Phi)_t)=d(\tau_t\circ\phi_t^{-1})$, or equivalently
$-H_{\tau_t\circ\phi_t^{-1}}=(v_\Phi)_t$.
\end{proof}

\paragraph{Exact case}
We assume that the symplectic form $\omega$ is exact and write $\omega =
d\alpha$. We consider $\Phi\cl \symx \times I\to \symx$ as above but now
we ask that $\phi_t^*(\alpha) =\alpha$ for all $t\in I$.
Then it is well-known (see for example~\cite[Corollary 9.19]{MDS95})
that $\Phi$ is a Hamiltonian isotopy. More precisely 
$v_\Phi$ is the Hamiltonian vector field of
\eq\label{eq:defin_of_f}
f = \langle \alpha, v_\Phi \rangle 
\cl \symx\times I\to \R.
\eneq
Indeed the condition on $\phi_t$ implies by derivation
$\shl_{v_\Phi} (\alpha) = 0$. Hence Cartan's formula
yields:
$$
d(f_t) = \shl_{(v_\Phi)_t} (\alpha)
-  \iota_{(v_\Phi)_t} (\omega) 
= -  \iota_{(v_\Phi)_t} (\omega) 
= -\h((v_\Phi)_t).
$$
This holds in particular when $\symx = \dTM$ for some manifold $M$. We
consider $\Phi\cl \dTM\times I\to \dTM$ such that 
\eq\label{hyp:isot1bis}
&&\begin{cases}
\mbox{$\phi_t$ is a homogeneous symplectic isomorphism for each $t\in I$,} \\
\phi_0 = \id_{\dTM}.
\end{cases}
\eneq
In this case the function $f$ given in~\eqref{eq:defin_of_f} is homogeneous of
degree $1$ in the fibers of $\dTM$ and it is the only homogeneous function
such that $(v_\Phi)_t = H_{f_t}$. So we have the first part of the following
lemma.
\begin{lemma}\label{lem:homog-Hamilt-isot}
Let $\Phi\cl \dTM\times I\to \dTM$ satisfying~\eqref{hyp:isot1}. Then
\bnum
\item 
$\Phi$ is a Hamiltonian isotopy and there exists a unique conic Lagrangian submanifold
$\Lambda$ of $\dTM\times \dTM\times T^*I$
satisfying~\eqref{eq:Lambda-Lambda'}{\rm:} setting
$f = \langle \alpha_M, \partial\Phi/\partial t \rangle$ we have
$$
\Lambda  = 
\set{\bl\Phi(x,\xi,t), (x, -\xi), (t, -f(\Phi(x,\xi,t),t))\br}%
{ (x,\xi)\in \dTM, t\in I},
$$
\item
the set $\Lambda\cup T^*_{M\times M\times I}(M\times M\times I)$ is
closed in $T^*(M\times M\times I)$
and for any $t\in I$ the inclusion $i_t\cl M\times M \to M\times M\times I$ is
non-characteristic for $\Lambda$ and the graph of $\phi_t$ is
$\Lambda_{t} = \Lambda\conv T^*_{t}I$.
\enum
\end{lemma}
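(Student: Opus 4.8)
The plan is to deduce part~(i) from the ``Exact case'' discussion above combined with Lemma~\ref{lem:Hamilt-isot}, and then to check the three assertions of part~(ii) directly from the explicit formula for $\Lambda$.

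For~(i): since $\phi_t$ is a homogeneous symplectic isomorphism it preserves the Liouville form $\alpha_M$, so the computation in the ``Exact case'' paragraph shows that $\Phi$ is a Hamiltonian isotopy and that $f=\langle\alpha_M,v_\Phi\rangle$ is the unique homogeneous (of degree $1$) function with $(v_\Phi)_t=H_{f_t}$. Lemma~\ref{lem:Hamilt-isot} then provides a Lagrangian submanifold $\Lambda\subset\dTM\times\dTM\times T^*I$ satisfying~\eqref{eq:Lambda-Lambda'} of the form~\eqref{eq:def-lambda}, in which $f$ may be modified by adding an arbitrary function $c=c(t)$ of $t$ alone. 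Among these lifts I claim exactly one is conic: since $\phi_t$ commutes with the $\R^+$-action and $f_t$ is homogeneous of degree $1$, the lift with $c\equiv 0$ is invariant under the $\R^+$-action on $T^*(M\times M\times I)$; conversely, for the lift with parameter $c$ the last coordinate of the point over $(v,t)$ is $-f(\Phi(v,t),t)-c(t)$, which scales by $\lambda$ together with $v\mapsto\lambda v$ only when $c(t)=0$. Hence the conic $\Lambda$ is unique and equals the displayed submanifold.

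For~(ii): the only point requiring an argument is the closedness of $\Lambda\cup T^*_{M\times M\times I}(M\times M\times I)$. Take $p_n\in\Lambda$ with $p_n\to p$ in $T^*(M\times M\times I)$ and write $p_n$ through the parametrization by $(v_n,t_n)\in\dTM\times I$ coming from~\eqref{eq:def-lambda}. Projecting to base points and to the second cotangent factor we get $t_n\to t$ in $I$ and $v_n\to(x,\xi)$ in $T^*M$. If $\xi\ne 0$ then $v_n\to v\in\dTM$ and continuity of $\Phi$ and of $f$ gives $p\in\Lambda$. If $\xi=0$, fix a Riemannian metric and use homogeneity to write $\phi_{t_n}(v_n)=\|v_n\|\cdot\phi_{t_n}(v_n/\|v_n\|)$ and $f(\phi_{t_n}(v_n),t_n)=\|v_n\|\,f(\phi_{t_n}(v_n/\|v_n\|),t_n)$; since $v_n/\|v_n\|$ eventually stays in the unit cosphere bundle over a relatively compact neighborhood of $x$ and $t_n$ in a compact subinterval, the cotangent norm of $\phi_{t_n}(v_n/\|v_n\|)$ and the number $f(\phi_{t_n}(v_n/\|v_n\|),t_n)$ are bounded, so both expressions tend to $0$ as $\|v_n\|\to 0$, and hence $p$ lies in the zero-section. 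This proves $\ol\Lambda\subset\Lambda\cup T^*_{M\times M\times I}(M\times M\times I)$, so the latter set is closed. The non-characteristic property of $i_t$ is then immediate from the formula: the covector part of any point of $\Lambda$ has the form $(\zeta,-\xi,\tau)$ with $(x,\xi)=v\in\dTM$, hence $\xi\ne 0$, so $\Lambda$ is disjoint from $\ker(i_t)_d=\{(0,0,\tau)\}$, and adding the zero-section $T^*_{M\times M\times I}(M\times M\times I)$ contributes to $\ker(i_t)_d$ only zero covectors, which is exactly the non-characteristic condition. Finally, unwinding~\eqref{eq:convolution_of_sets}, $\Lambda\conv T^*_tI$ is the set of $T^*(M\times M)$-components of the points of $\Lambda$ lying over $t$, namely $\{(\phi_t(v),v^a)\,;\,v\in\dTM\}$, which is $\Lambda_t$ and is by convention the graph of $\phi_t$.

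The main obstacle is the closedness statement in the limiting case $\xi\to 0$, handled above by the homogeneity-plus-compactness argument; everything else is a formal reading of the explicit description of $\Lambda$.
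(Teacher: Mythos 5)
Your proof is correct and follows essentially the same route as the paper's: part~(i) is the ``Exact case'' computation combined with Lemma~\ref{lem:Hamilt-isot}, and part~(ii) rests on the fact that, by homogeneity of $\Phi$ and of $f$, the vanishing of the second covector component of a limit point forces the first component and $\tau$ to vanish as well, which is exactly the content of the paper's local-coordinate inequalities $|\tau|\leq D|\xi|$, $|\xi|\leq D|\eta|$, $|\eta|\leq D|\xi|$ on $\Lambda$ over compact sets. Your sequential Riemannian-metric version of that estimate, and your explicit check that only the lift with $c\equiv 0$ in Lemma~\ref{lem:Hamilt-isot} is conic, are sound elaborations of details the paper leaves implicit.
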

\begin{proof}
(i)  is already proved. 

\medskip\noindent
(ii) In local homogeneous symplectic coordinates 
$(x,y;\xi,\eta)\in T^*(M\times M)$, 
$(t;\tau)\in T^*I$, the construction of $\Lambda$
implies that
for any compact set $C\subset M\times M \times I$
there exists $D>0$ such that $|\tau|\leq D\vert\xi\vert$,
$\vert\xi\vert\leq D\vert\eta\vert$ and $\vert\eta\vert\leq D\vert\xi\vert$
for any
$(x,y,t;\xi,\eta,\tau)\in \Lambda \cap \opb{\pi_{M\times M \times I}}(C)$.
Hence the same inequalities hold on the closure $\ol{\Lambda}$ of $\Lambda$.
Hence if $(x,y,t;\xi,\eta,\tau)\in\ol{\Lambda}\setminus
(\dTM\times\dTM\times T^*I)$, then
$\xi=\eta=0$ and $\tau=0$, and hence it belongs to the zero-section.  

Hence $\Lambda\cup T^*_{M\times M\times I}(M\times M\times I)$ is closed.
We also have seen that $\Lambda$ does not meet
$T^*_{M\times M}(M\times M) \times \dT^*I$ which is the
non-characteristicity condition.
\end{proof}

\subsection{Families of conic Lagrangian submanifolds}
Since the results in this section are well-known (they go back to
Paulette Libermann), we state them without proofs.
Note that we only use them in Corollary~\ref{thm:famLagr}.

\begin{definition}\label{def:family-Lagr}
Let $M$ be a manifold and $I$ an open interval containing $0$. 
Let $S_0$ be a closed conic Lagrangian submanifold of $\dTM$.
A deformation of $S_0$ indexed by $I$ is the data of a $\Cd^\infty$-map
$\Psi\cl S_0\times I \to \dTM$ such that, setting
$\psi_t \eqdot \Psi(\cdot,t)$ and $S_t=\psi_t(S_0)$, we have
\bnum
\item $\psi_0$ is the identity embedding,
\item $\psi_t$ is homogeneous for the action of $\R_{>0}$ for each $t\in I$,
\item $S_t$ is a closed conic Lagrangian submanifold of $\dTM$
for each $t\in I$,
\item the map $S_0\times I \to (\dTM) \times I$, $(s,t) \mapsto (\Psi(s,t),t)$,
is an embedding.
\enum
\end{definition}
 We let
$S' = \{ (s,t); \; t\in I, s \in S_t    \} \subset (\dTM)\times I$
be the image of the embedding in (iv). So it is a closed submanifold of
$(\dTM)\times I$.
Note that $\psi_t$ induces a diffeomorphism $\psi_t \cl S_0 \isoto S_t$ for each
$t\in I$.

\begin{lemma}\label{lem:family-Lagr}
Let $S_0$ be a closed conic Lagrangian submanifold of $\dTM$ and
let $\Psi\cl S_0\times I \to \dTM$ be a deformation of $S_0$ as above.
Then there exists a unique closed conic Lagrangian submanifold
$S\subset \dT^*(M\times I)$ such that 
$\dT^*(M\times I)\to T^*M\times I$ induces a diffeomorphism
$S\isoto S'$.
\end{lemma}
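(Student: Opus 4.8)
The plan is to adapt the proof of Lemma~\ref{lem:Hamilt-isot} to the present situation: the sought submanifold $S$ must be the graph over $S'$ of a single function with values in the fibres of $T^*I$, and that function is dictated by the Liouville form. I would start from the observation that, since each $S_t=\psi_t(S_0)$ is a conic Lagrangian submanifold of $\dTM$, the Liouville form $\alpha_M$ restricts to $0$ on $S_t$: for any conic Lagrangian $L\subset\dT^*N$ the Euler vector field $\eu_N$ is tangent to $L$, and $\alpha_N=\iota_{\eu_N}d\alpha_N$, so $\alpha_N|_L=0$ because $d\alpha_N|_L=0$. Hence $\psi_t^*\alpha_M=0$ for every $t\in I$, so the $1$-form $\Psi^*\alpha_M$ on $S_0\times I$ has no component along $S_0$ and can be written $\Psi^*\alpha_M=g\,dt$ for a unique $\Cd^\infty$-function
\[
g\cl S_0\times I\to\R,\qquad g=\langle\alpha_M,\partial\Psi/\partial t\rangle .
\]
Since $\psi_t$ is homogeneous of degree $1$ and $\alpha_M$ is homogeneous of degree $1$ on $\dTM$, the function $g$ is homogeneous of degree $1$ in the conic variable; transporting it through the embedding of Definition~\ref{def:family-Lagr}(iv) I also view $g$ as a smooth function $\tilde g$ on $S'$.

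For existence, I would write $\rho\cl\dT^*(M\times I)\to T^*M\times I$ for the projection and let $S$ be the image of the map
\[
\sigma\cl S_0\times I\to T^*(M\times I),\qquad \sigma(s,t)=\bl\Psi(s,t),\,t,\,-g(s,t)\br,
\]
using $T^*_{(x,t)}(M\times I)\simeq T^*_xM\times T^*_tI\simeq T^*_xM\times\R$. Then $\rho\circ\sigma$ is exactly the embedding $(s,t)\mapsto(\Psi(s,t),t)$ of Definition~\ref{def:family-Lagr}(iv), with image $S'$; hence $\sigma$ is an injective immersion, actually an embedding (applying $\rho$ turns a convergent sequence $\sigma(s_n,t_n)$ back into a convergent sequence $(s_n,t_n)$), its image misses the zero-section because $\Psi$ takes values in $\dTM$, and $\rho$ restricts to a diffeomorphism $S\isoto S'$. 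The submanifold $S$ is conic, because the $\R_{>0}$-action on $T^*(M\times I)$ scales $(\xi,\tau)$ and $\sigma(\lambda s,t)=\lambda\,\sigma(s,t)$ by homogeneity of $\psi_t$ and of $g$. The step I expect to require the most care is checking that $S$ is \emph{closed} in $\dT^*(M\times I)$: over the open set $\opb\rho(\dTM\times I)$ the manifold $S$ is just the graph of $-\tilde g$ over the closed subset $S'$ of $\dTM\times I$, hence closed there, so it is enough to rule out limit points with vanishing $T^*M$-component, which one does as in the proof of Lemma~\ref{lem:homog-Hamilt-isot}(ii): for a compact $C\subset M\times I$, homogeneity of degree $1$ of $\tilde g$ furnishes $D>0$ with $|\tau|=|\tilde g|\le D|\xi|$ on $S\cap\opb{\pi_{M\times I}}(C)$, hence on its closure, so a limit point with $\xi=0$ has $\tau=0$ and lies in the zero-section.

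Finally I would verify that $S$ is Lagrangian and unique. Writing $\alpha_{M\times I}=\alpha_M+\tau\,dt$ (with $\alpha_M$ pulled back along $T^*(M\times I)\to T^*M$), the construction gives $\sigma^*\alpha_{M\times I}=\Psi^*\alpha_M-g\,dt=0$, so $\alpha_{M\times I}|_S=0$; hence $d\alpha_{M\times I}|_S=0$, i.e. $S$ is isotropic, and since $\dim S=\dim(S_0\times I)=\dim(M\times I)$ it is Lagrangian. For uniqueness, let $S$ be any closed conic Lagrangian submanifold of $\dT^*(M\times I)$ mapped diffeomorphically onto $S'$ by $\rho$; being a graph over $S'$ it has the form $S=\set{\bl\Psi(s,t),t,\tau(s,t)\br}{(s,t)\in S_0\times I}$ for some smooth $\tau$, and the Euler-vector-field argument above shows $\alpha_{M\times I}|_S=0$, which pulls back along $\sigma$ to $g\,dt+\tau\,dt=0$, forcing $\tau=-g$. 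Thus $S$ is the submanifold just constructed; the identity $S_t=S\conv T^*_tI$ then follows exactly as in Lemma~\ref{lem:Hamilt-isot}.
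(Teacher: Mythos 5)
Your proof is correct. Note that the paper deliberately states Lemma~\ref{lem:family-Lagr} without proof (``Since the results in this section are well-known\dots we state them without proofs''), so there is no in-text argument to compare against; but your construction is the natural one and is consistent with the adjacent material in the appendix. Where the proof of Lemma~\ref{lem:Hamilt-isot} verifies the Lagrangian condition by a direct symplectic-orthogonality computation on tangent vectors, you exploit the extra structure of the conic case: a conic Lagrangian in $\dT^*N$ is $\alpha_N$-null (via $\alpha_N=\iota_{\eu_N}\omega_N$ and tangency of the Euler field), which gives $\Psi^*\alpha_M=g\,dt$ at once and makes both the isotropy of $S$ and the uniqueness of $\tau=-g$ immediate from $\sigma^*\alpha_{M\times I}=0$. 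This is a mild streamlining rather than a different route, and your value $-g(s,t)$ for the $T^*I$-component matches the formula $-f(\Phi(v,t),t)$ in \eqref{eq:def-lambda}. The two points that genuinely needed care --- that $\sigma$ is an embedding (pulled back through $\rho$) and that $S$ is closed in $\dT^*(M\times I)$, ruled out at $\xi=0$ by the homogeneous bound $|\tau|\le D|\xi|$ over compacta --- are both handled correctly, the latter exactly as in the proof of Lemma~\ref{lem:homog-Hamilt-isot}~(ii).
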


Moreover for any $t\in I$ the inclusion $i_t\cl M \to M\times I$ is
non-characteristic for ${S}$  and we have $S_{t} = S\conv T^*_{t}I$.

We remark that $S$, like $S'$, only depends on the family of $\{S_t\}_t$, not
on the parametrization $\Psi$.

For a deformation of a closed conic Lagrangian submanifold we consider a
condition similar to~\eqref{hyp:isot2}.
\eq\label{hyp:famLagr}
&&\parbox{60ex}{
there exists a compact subset $A$ of $M$ such that for all $t\in I$: \\[1ex]
\hs{15ex}$\left\{\ba{l}
\psi_t|_{S_0 \cap \opb{\dot\pi_M}(M\setminus A)}
= \id_{S_0 \cap \opb{\dot\pi_M}(M\setminus A)}, \\[1ex]
\psi_t(S_0 \cap \opb{\dot\pi_M}(A)) \subset \opb{\dot\pi_M}(A).
\ea\right.$}
\eneq

\begin{proposition}\label{prop:extending-isotopies}
Let $S_0$ be a closed conic Lagrangian submanifold of $\dTM$ and
let $\Psi\cl S_0\times I \to \dTM$ be a deformation of $S_0$
satisfying~\eqref{hyp:famLagr}.

Then there exists $\Phi\cl \dTM\times I\to \dTM$ satisfying
hypotheses~\eqref{hyp:isot1} and~\eqref{hyp:isot2} such that
$$
\Phi|_{S_0 \times I} = \Psi.
$$  
\end{proposition}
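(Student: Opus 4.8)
The plan is to realize $\Psi$ as the restriction to $S_0$ of the Hamiltonian flow of a suitable time-dependent Hamiltonian on $\dTM$, homogeneous of degree $1$ in the fibres, built directly from the velocity field of $\Psi$. First I would introduce the total space $S'=\set{(\psi_t(s),t)}{s\in S_0,\ t\in I}\subset\dTM\times I$, which is a closed submanifold by Definition~\ref{def:family-Lagr}~(iv) and the remarks following it, together with the velocity field $w$ along it: for $(p,t)\in S'$, writing $p=\psi_t(s)$, put $w(p,t)\eqdot\partial_t\Psi(s,t)\in T_p(\dTM)$, which is well defined because $\psi_t\cl S_0\isoto S_t$ is a diffeomorphism. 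Since the $\psi_t$ are $\R_{>0}$-homogeneous, $w$ is $\R_{>0}$-equivariant, so the function $f\cl S'\to\R$, $f(p,t)=\langle\alpha_M(p),w(p,t)\rangle$, is homogeneous of degree $1$ in the fibres of $\dot\pi_M$, and by \eqref{hyp:famLagr} both $w$ and $f$ vanish on $S'\cap\opb{\dot\pi_M}(M\setminus A)$. The goal then becomes to construct a $\Cd^\infty$-function $\hat f\cl\dTM\times I\to\R$, homogeneous of degree $1$ in the fibre variable, with $\hat f|_{S'}=f$, with $d(\hat f_t)(p)=-\iota_{w(p,t)}(d\alpha_M)$ for every $(p,t)\in S'$, and with $\supp(\hat f_t)\subset\opb{\dot\pi_M}(A')$ for some fixed compact $A'\supset A$; the desired $\Phi$ is then the flow of $\{H_{\hat f_t}\}_{t\in I}$.

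The only point where the conic-Lagrangian nature of the deformation really enters is the check that the two conditions imposed on $\hat f$ along $S'$ are compatible, i.e. that $d(f_t|_{S_t})$ coincides with the restriction of $-\iota_{w(\cdot,t)}(d\alpha_M)$ to $TS_t$. I would prove this by pulling back the Liouville form: on $S_0\times I$ write $\Psi^*\alpha_M=\beta_t+g_t\,dt$ with $\beta_t=\psi_t^*\alpha_M$ and $g_t=f_t\circ\psi_t$. Then $\beta_t\equiv0$ because $S_t$ is conic Lagrangian (the Euler field is tangent to $S_t$ and $TS_t$ is isotropic, so $\alpha_M$ vanishes on $TS_t$), while $\psi_t^*(d\alpha_M)=0$ because $S_t$ is Lagrangian; expanding $d(\Psi^*\alpha_M)=\Psi^*(d\alpha_M)$ and comparing the $dt$-components yields $dg_t(v)=(d\alpha_M)(d\psi_t(v),w)$ for $v\in TS_0$, which is exactly the claimed identity after transport by $\psi_t$. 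The value $\hat f_t(p)=f(p,t)$ is then automatic from degree-one homogeneity and Euler's relation.

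Next I would perform the extension $\R_{>0}$-equivariantly: fixing a positive $\Cd^\infty$ function $e$ on $\dTM$ homogeneous of degree $1$, homogeneous degree-one functions on $\dTM$ correspond via division by $e$ to functions on the quotient $(\dTM)/\R_{>0}$, which has compact fibres; one extends the induced $1$-jet data along the corresponding closed submanifold to a global smooth function by the standard tubular-neighbourhood construction, multiplies back by $e$, and finally multiplies by a base cutoff $\chi(x)$ with $\chi\equiv1$ near $A$ and $\supp\chi\subset A'$ compact. This last multiplication does not disturb the $1$-jet along $S'$, because that jet already vanishes wherever $\chi\neq1$ (there $\Psi$ is the identity). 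Then set $\phi_t$ to be the time-$t$ flow of $H_{\hat f_t}$: the flow is complete on $I$ since $H_{\hat f_t}$ is $\R_{>0}$-invariant and vanishes outside $\opb{\dot\pi_M}(A')$, so its trajectories descend to the compact sphere bundle over $A'$ while the radial component grows at most exponentially; being the flow of the Hamiltonian vector field of a degree-one homogeneous function, each $\phi_t$ is a homogeneous symplectic isomorphism with $\phi_0=\id$ which is the identity outside $\opb{\dot\pi_M}(A')$, giving \eqref{hyp:isot1} and \eqref{hyp:isot2}. Finally, for $p\in S_t$ one has $H_{\hat f_t}(p)=-\h^{-1}\bigl(-\iota_{w(p,t)}(d\alpha_M)\bigr)=w(p,t)$, so for each $s\in S_0$ the curve $t\mapsto\psi_t(s)$ solves $\dot p=H_{\hat f_t}(p)$, $p(0)=s$; uniqueness of integral curves gives $\phi_t|_{S_0}=\psi_t$, that is, $\Phi|_{S_0\times I}=\Psi$.

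I expect the extension step to be the main obstacle: one must prescribe the $1$-jet of $\hat f$ along $S'$ while simultaneously keeping $\hat f$ homogeneous of degree one and supported over a compact base. This is routine differential topology of Libermann type, but the bookkeeping has to be handled with care. Alternatively one could organise the argument through the conic Lagrangian submanifold $S\subset\dT^*(M\times I)$ of Lemma~\ref{lem:family-Lagr}, but the direct flow construction above seems the shortest.
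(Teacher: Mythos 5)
The paper does not actually prove this proposition: the authors state at the head of the subsection that these results ``go back to Paulette Libermann'' and are given without proof, so there is no argument to compare yours against. Your proof is the expected one and, as far as I can check, it is correct and complete modulo the extension lemma you explicitly flag. The essential verifications all go through: the prescribed $1$-jet along $S_t$ (value $f_t=\langle\alpha_M,w_t\rangle$, differential $-\iota_{w_t}\omega$) is consistent along $TS_t$ by your Liouville-form computation, where the vanishing of $\beta_t=\psi_t^*\alpha_M$ (from $S_t$ being conic Lagrangian) is exactly what kills the extra term in $d(\Psi^*\alpha_M)=\Psi^*\omega$ and leaves $dg_t(v)=\omega(d\psi_t(v),w)$; the Euler contraction $\iota_{\eu_M}\omega=\alpha_M$ makes the prescription compatible with degree-one homogeneity; hypothesis~\eqref{hyp:famLagr} forces $w$, hence the whole prescribed $1$-jet, to vanish on $S'$ over $M\setminus A$, so the base cutoff $\chi$ does not perturb the jet; and completeness of the flow on all of $I$ follows since the descended vector field on the cosphere bundle is compactly supported and the radial component obeys a linear ODE along each trajectory. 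Two small points deserve explicit care in a written-up version: the tubular neighbourhood used for the jet extension must be chosen fibered over $I$ (possible because $S'\to I$ is a submersion), so that $\hat f$ is jointly smooth in $(p,t)$; and the division by a positive degree-one function $e$ turns the prescribed $1$-jet into one with vanishing Euler contraction, which is precisely the condition for it to descend to the quotient $(\dTM)/\R_{>0}$ where the extension is performed. Your sign conventions also match the paper's ($H_f=-\h^{-1}(df)$, so $d\hat f_t=-\iota_{w_t}\omega$ gives $H_{\hat f_t}=w_t$ on $S_t$).
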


\subsection{Adding a variable}\label{sec:adding}
In this subsection we recall the link between non-homogeneous symplectic
geometry and homogeneous symplectic geometry with an extra variable.

We denote by $(s,\sigma)$ the coordinates on $T^*\R$ 
with $\sigma ds$ as the Liouville form.
For a manifold $M$  we define the map
\eqn
&&\rho=\rho_M\cl T^*M\times\dT^*\R\to T^*M,
\qquad (x,\xi,s,\sigma) \mapsto (x,\xi/\sigma).
\eneqn

We consider a Hamiltonian isotopy $\Phi\cl T^*M\times I\to T^*M$ as in
Appendix~\ref{app:symp} but we do not assume that
it is homogeneous.
We shall show that $\Phi$ lifts to a homogeneous Hamiltonian isotopy of 
$T^*M\times \dT^*\R$.

Let $f\cl T^*M\times I\to\R$ be a function 
such that $\partial\Phi/\partial t=H_{f_t}$ (see \eqref{def:f}).
We set
\eqn
&&\tf\seteq(f\conv \rho)\cdot \sigma. 
\eneqn
Then $\tf_t$ is a homogeneous 
function on $T^*M\times \dT^*\R$ of degree $1$.

\begin{proposition}\label{prop:homogenisation}
Let $\Phi\cl T^*M\times I\to T^*M$ be a Hamiltonian isotopy
and let $f$ and $\tf$ be as above.
\bnum
\item
Then there exists a homogeneous Hamiltonian isotopy
$\tw \Phi\cl (T^*M\times \dT^*\R)\times I\to T^*M\times \dT^*\R$
such that $\partial\tw\Phi/\partial t=H_{\tf_t}$ and 
the following diagram commutes:
\eqn
&&\xymatrix@C=2cm{
T^*M\times\dT^*\R\times I\ar[r]^{\tw\Phi}\ar[d]_{\rho\times\id_I}
                                       &T^*M\times\dT^*\R\ar[d]_\rho  \\
T^*M\times I\ar[r]^{\Phi}              & T^*M .
}\eneqn
Moreover there exists a $\Cd^\infty$-function $u\cl (T^*M) \times I\to\R$ 
such that
\eq\label{eq:twphi}
&&\tw \Phi(x,\xi,s,\sigma,t) = (x',\xi', s+ u(x,\xi/\sigma,t),\sigma),
\eneq
where $(x',\xi'/\sigma) = \phi_t(x,\xi/\sigma)$.
\item We assume moreover that 
$M$ is connected and 
$\phi_t$ is the identity outside a compact subset $C\subset T^*M$.
Then $\tw \Phi$ extends to a homogeneous Hamiltonian isotopy
$\tw \Phi\cl \dT^*(M\times \R)\times I\to \dT^*(M\times\R)$ such that
\eq\label{eq:twphi2}
&&\tw \Phi(x,\xi,s,0,t) = (x,\xi, s+ v(t),0),
\eneq
for some $\Cd^\infty$-function $v\cl I\to\R$.
\enum
\end{proposition}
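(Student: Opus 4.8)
The plan is to realise $\tw\Phi$ as the Hamiltonian flow of the function $\tf_t$ on $T^*M\times\dT^*\R$ and to read off all the asserted properties from a short computation in local symplectic coordinates, together with the standard ``symplectization'' behaviour of the map $\rho$; the construction is the one alluded to in the title of Appendix~\ref{app:symp} (``Adding a variable'').

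\textbf{Part (i).} Work in local homogeneous symplectic coordinates $(x,\xi;s,\sigma)$ on $T^*M\times\dT^*\R$, with Liouville form $\langle\xi,dx\rangle+\sigma\,ds$, and write $\eta=\xi/\sigma$ for the fibre coordinate of the $T^*M$-factor, so $\rho(x,\xi,s,\sigma)=(x,\eta)$. Since $\tf_t=\sigma\cdot(f_t\circ\rho)$ is homogeneous of degree $1$, a direct computation of its Hamiltonian vector field (in the sign conventions of \S1.1) gives
\[
\dot x=(\partial_\eta f_t)(x,\eta),\qquad
\dot\xi=-\sigma\,(\partial_x f_t)(x,\eta),\qquad
\dot\sigma=0,\qquad
\dot s=f_t(x,\eta)-\langle\eta,(\partial_\eta f_t)(x,\eta)\rangle .
\]
Thus $\sigma$ is a first integral, and since $\eta=\xi/\sigma$ then satisfies $\dot x=\partial_\eta f_t$, $\dot\eta=-\partial_x f_t$, the pair $(x(t),\eta(t))$ solves Hamilton's equations for $f_t$ on $T^*M$ and hence equals $\phi_t(x(0),\eta(0))$ by \eqref{def:f} and uniqueness of solutions. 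Because $\phi$ is defined for all $t\in I$ and the remaining $s$-equation then has right-hand side a globally defined smooth function of $(x(0),\eta(0),t)$, the flow of $H_{\tf_t}$ is complete on $I$; it is the required $\tw\Phi$, it is a homogeneous Hamiltonian isotopy (homogeneity because $\tf_t$ is homogeneous of degree $1$) with $\tw\phi_0=\id$. The commuting square $\rho\circ\tw\phi_t=\phi_t\circ\rho$ is immediate from the relation $(x(t),\eta(t))=\phi_t(x(0),\eta(0))$ and $\dot\sigma=0$, and integrating the $s$-equation yields \eqref{eq:twphi} with $u(x,\eta,t)=\int_0^t\bigl(f_\tau-\langle\alpha_M,\partial\Phi/\partial\tau\rangle\bigr)\!\bigl(\phi_\tau(x,\eta)\bigr)\,d\tau$, a $\Cd^\infty$-function on $T^*M\times I$ (we used $\langle\eta,\partial_\eta f_\tau\rangle=\langle\alpha_M,H_{f_\tau}\rangle$).

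\textbf{Part (ii).} Now assume $M$ connected and $\phi_t=\id$ outside a compact $C\subset T^*M$. Then $\partial\Phi/\partial t$ vanishes on $T^*M\setminus C$, hence so does $df_t$, so $f_t$ is locally constant there. Fix a Riemannian metric and $R>0$ with $C\subset\{|\xi|_g\le R\}$; since $M$ is connected and $\pi_M(C)\subsetneq M$, the open set $\{|\xi|_g>R\}\cup\opb{\pi_M}(M\setminus\pi_M(C))$ is connected and contained in $T^*M\setminus C$, so $f_t$ is equal on it to a single $\Cd^\infty$-function $c\colon I\to\R$. Hence $\tf_t=\sigma\,(f_t\circ\rho)$ equals $\sigma\,c(t)$ on the open subset $\{|\xi|_g>R\,|\sigma|\}$ of $T^*(M\times\R)$, which contains $\{\sigma=0\}\cap\dT^*(M\times\R)$; therefore $\tf_t$ extends to a $\Cd^\infty$-function on all of $\dT^*(M\times\R)$, still homogeneous of degree $1$ and equal to $\sigma\,c(t)$ near $\{\sigma=0\}$. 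The Hamiltonian vector field of this extension still has $\dot\sigma=-\partial_s\tf_t=0$, so $\{\sigma\ne0\}$ and $\{\sigma=0\}$ are flow-invariant: on the former the flow is the $\tw\Phi$ of Part (i), while where $\tf_t=\sigma c(t)$ one has $H_{\tf_t}=c(t)\,\partial_s$, so the flow there is $(x,\xi,s,\sigma)\mapsto(x,\xi,s+v(t),\sigma)$ with $v(t)=\int_0^t c(\tau)\,d\tau$. Since trajectories through $\{\sigma\ne0\}$ keep $\sigma$ fixed and those through $\{\sigma=0\}$ keep $\xi$ fixed and nonzero, no trajectory meets the zero-section, the flow is complete on $I$, and it is the desired extension $\tw\Phi\colon\dT^*(M\times\R)\times I\to\dT^*(M\times\R)$, given on $\{\sigma=0\}$ by \eqref{eq:twphi2}.

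\textbf{Expected difficulty.} Part (i) is essentially bookkeeping once $\tf$ is written down, and the completeness of the flow follows formally from that of $\Phi$. The one genuinely delicate point is the claim in Part (ii) that $f_t$ takes a single constant value outside a large compact subset of $T^*M$: this is exactly where connectedness of $M$ is used, and it requires a little care with the topology of the complement of a compact subset of $T^*M$ (for instance it would fail for $M=S^1$ with $C$ large, which is why such an $M$ does not occur in the applications). Everything downstream — the smooth extension of $\tf$ and the identification of the extended flow near $\{\sigma=0\}$ — is then immediate.
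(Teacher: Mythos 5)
Your proposal is correct and follows essentially the same route as the paper's proof: compute $H_{\tf_t}$, identify its $T^*M$-component with $H_{f_t}$ and its $\partial/\partial s$-component with $(f_t-\eu_M(f_t))\circ\rho$, integrate the latter to obtain $u$, and for (ii) extend across $\{\sigma=0\}$ using the constancy of $f_t$ near fibre-infinity. Your justification of that constancy (via connectedness of the relevant region outside $C$) is in fact more explicit than the paper's one-line assertion that $f_t$ and $g_t$ are constant outside $C$, and your caveat about $M=S^1$ is a fair observation about the limits of that assertion.
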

\Proof
We have to describe the Hamiltonian vector field $H_{\tw f}$ of $\tw f$.
We denote by $p\cl T^*M\times \dT^*\R \to\dT^*\R$ the projection
$(x,\xi,s,\sigma) \mapsto (s,\sigma)$.
Then $(\rho,p)$ defines an isomorphism
\eq
&&\psi\cl T^*M\times \dT^*\R\isoto T^*M\times \dT^*\R.
\eneq
For a point $q= (x,\xi,s,\sigma)\in  T^*M\times\dT^*\R$,
$\psi$ defines an isomorphism on the tangent spaces:
\eq
\label{eq:dec-tan-space}
&& d\psi=d\rho_q \times dp_q \cl T_q(T^*M\times T^*\R )
\isoto T_{(x,\xi/\sigma)}(T^*M) \oplus T_{(s,\sigma)}(T^*\R).
\eneq
Setting $\omega_M = d\alpha_M$, where $\alpha_M$ is the Liouville form on
$T^*M$, we have
\eqn
\alpha_{M\times \R}|_{T^*M\times\dT^*\R}
&=& \sigma \rho^*(\alpha_M) + p^*(\alpha_\R),  \\
\omega_{M\times \R}|_{T^*M\times\dT^*\R}
&=&  \sigma \rho^*(\omega_M) + p^*(\omega_\R)
+ d\sigma \wedge \rho^*(\alpha_M).
\eneqn
In the sequel we fix $t$ and we set $\tf_t=\tw f(\cdot,t)$
and $f_t= f(\cdot,t)$.
Then $H_{\tf_t}$ is determined by $\iota_{H_{\tf_t}} (\omega_{M\times \R})
= - d\tf_t$. We decompose $(H_{\tf_t})_q = v_M +v_\R$ according
to~\eqref{eq:dec-tan-space} and we also use the decomposition of
$T^*_q(T^*M\times T^*\R )$ induced by~\eqref{eq:dec-tan-space}.
Then we find
$$
\iota_{H_{\tf_t}} (\omega_{M\times \R})
= \bigl(
\sigma \iota_{v_M}(\omega_M) + \langle v_\R, d\sigma \rangle \alpha_M
\bigr)  + \bigl(
 \iota_{v_\R}(\omega_\R) - \langle v_M, \alpha_M \rangle d\sigma
\bigr).
$$
Since $d\tf_t = \sigma \rho^* df_t + \rho^*(f_t) d\sigma$ we obtain
\eqn
- df_t&=& \iota_{v_M}(\omega_M)
+ \sigma^{-1} \langle v_\R, d\sigma \rangle \alpha_M,\\
- \rho^*(f_t) d\sigma &=& \iota_{v_\R}(\omega_\R) 
- \langle v_M, \alpha_M \rangle d\sigma.
\eneqn
The second equality gives $v_\R = a\, \frac{\partial}{\partial s}$ for some
function $a$. Then we have $\langle v_\R, d\sigma \rangle =0$, which implies
$v_M = H_{f_t}$ by the first equality, and hence
$a= (f_t - \langle H_{f_t}, \alpha_M \rangle) \circ \rho
= (f_t - \eu_M(f_t))\circ \rho$. Finally, letting
$g\seteq f-\eu_M(f)$ be a function on $T^*M\times I$,
we obtain
$$
\psi_*(H_{\tf_t}) = H_{f_t}
+ \rho^*(g_t)\frac{\partial}{\partial s} .
$$
Let us define $u\cl T^*M\times I\to \R$ by the differential equation:
\eq
&&\left\{\ba{rl}
\dfrac{\partial u}{\partial t}&=g_t\circ \phi_t,\\[1ex]
u\vert_{t=0}&=0.
\ea\right.
\eneq
We define $\tw \Phi$ by \eqref{eq:twphi}.
Then, we can see easily that 
$$
\dfrac{\partial\tw\Phi}{\partial t}=H_{\tf_t}.
$$
Hence $\tw\Phi$ is the desired homogeneous Hamiltonian isotopy.

\bigskip
\noindent
(ii)\quad The functions $f_t$ and $g_t$ are constant functions outside $C$.
Hence $u_t$ is also a constant function outside $C$ taking the value
$v(t)$.
Then 
$\tw\Phi$ extends
to a homogeneous Hamiltonian isotopy
$\tw \Phi\cl \dT^*(M\times \R)\times I\to \dT^*(M\times\R)$
by \eqref{eq:twphi2}.
\QED

\providecommand{\bysame}{\leavevmode\hbox to3em{\hrulefill}\thinspace}

\vspace*{1cm}
\noindent
\parbox[t]{21em}
{\scriptsize{
\noindent
St{\'e}phane Guillermou\\
Institut Fourier, Universit{\'e} de Grenoble I, \\
email: Stephane.Guillermou@ujf-grenoble.fr\\

\medskip\noindent
Masaki Kashiwara\\
Research Institute for Mathematical Sciences, Kyoto University \\  
e-mail: masaki@kurims.kyoto-u.ac.jp

\medskip\noindent
Pierre Schapira\\
Institut de Math{\'e}matiques,
Universit{\'e} Pierre et Marie Curie\\
e-mail: schapira@math.jussieu.fr\\
}}


\begin{thebibliography}{15}


\bibitem{Cha83} M.~Chaperon,
{\em }
S{\'e}minaire Bourbaki, Vol. 1982/83, 
Ast{\'e}risque {\bf 105-106}, Soc.\ Math.\ France p.~231--249(1983).

\bibitem{Cha95} \bysame,
{\em On generating families,} A.~Floer Memorial
 Progr. Math., {\bf 133} Birkh{\"a}user, Basel, p.~283--296 (1995).

\bibitem{Ck96} Y.~Chekanov, 
{\em Critical points of quasifunctions and 
generating families of Legendrian manifolds,} 
Funct. Anal. Appl. {\bf 30} (1996).

\bibitem{CN10} V.~Chernov and S.~Nemirovski,
{\em Non-negative Legendrian isotopy in $ST^*M$,}
Geom. Topol. {\bf 14} p.~611-626 (2010).
\/\texttt{arXiv:0905.0983}

\bibitem{CFP10} V.~Colin, E.~Ferrand and P.~Pushkar,
{\em Positive isotopies of Legendrian submanifolds and applications,}
\/\texttt{arXiv:1004.5263}


\bibitem{CZ83} C.~Conley and E.~Zehnder,
{\em The Birkhoff Lewis fixed point theorem and a conjecture of V.~I.~Arnold,} 
Invent. Math. {\bf 73} p.~33--49 (1983).

\bibitem{EKP06} Y.~Eliashberg, S.~Kim and L.~Polterovich, 
{\em Geometry of contact transformations and domains: orderability versus
squeezing,}
Geom. Topol.  {\bf 10} p.~1635--1747  (2006).

\bibitem{Fe97} E.~Ferrand,
{\em On a theorem of Chekanov,}
Symplectic singularities and geometry of gauge fields,
Banach Center Publ., 39, Polish Acad. Sci., Warsaw, p.~39--48 (1997). 

\bibitem{GS11} S.~Guillermou and P.~Schapira,
{\em Microlocal theory of sheaves and Tamarkin's non displaceability theorem,}\\
\/\texttt{arXiv:1106.1576}

\bibitem{Ho85} H.~Hofer,
{\em Lagrangian embeddings and critical point theory,}
Ann. Inst. H. Poincar{\'e} Anal. Non Lin{\'e}aire 2 , no. 6, p.~407--462 (1985).


\bibitem{KS82} M.~Kashiwara and P.~Schapira,
{\em Micro-support des faisceaux: applications aux modules diff{\'e}rentiels,}
C.~R.~Acad.\ Sci.\ Paris s{\'e}rie I Math {\bf 295} 8, 487--490 France (1982).

\bibitem{KS85} \bysame,
{\em Microlocal study of sheaves,}
Ast{\'e}risque {\bf 128} Soc.\ Math.\ France (1985).

\bibitem{KS90} \bysame,
{\em Sheaves on Manifolds,}
\/Grundlehren der Math. Wiss. {\bf 292} Springer-Verlag (1990).
                                                                 
\bibitem{KS05}  \bysame,
{\em Categories and Sheaves,}
\/Grundlehren  der Math. Wiss. {\bf 332} Springer-Verlag (2005).





\bibitem{KO01} R.~Kasturirangan and Y.-G.~Oh,
{\em Floer homology of open subsets and a relative version of Arnold's
 conjecture,}
Math. Z.  {\bf 236} p.~151--189  (2001).

\bibitem{L10} F.~Laudenbach,
{\em A Morse complex on manifolds with boundary,}\\
\/\texttt{arXiv:1003.5077}

\bibitem{LS85} F.~Laudenbach and J-C.~Sikorav,
{\em Persistance d'intersection avec la section nulle au cours d'une
isotopie hamiltonienne dans un fibr{\'e} cotangent,}
Invent. Math. {\bf 82}  p.~349--357 (1985).

\bibitem{MDS95} D.~McDuff and D.~Salamon,
{\em Introduction to symplectic topology,}
Oxford Mathematical Monographs, Oxford University Press, (1995).

\bibitem{N06} D.~Nadler,
{\em Microlocal branes are constructible sheaves,} \\
\/\texttt{arXiv:0612399 }

\bibitem{NZ09} D.~Nadler and E.~Zaslow, 
{\em Constructible sheaves and the Fukaya category,}
J. Amer. Math. Soc. {\bf 22} p.~233--286 (2009).

\bibitem{O98} Y.-G.~Oh,
{\em Naturality of Floer homology of open subsets in Lagrangian intersection
 theory,}  
in The Third Pacific Rim Geometry Conference (Seoul, 1996), p.~261--280,
Monogr. Geom. Topology, {\bf 25}, Int. Press, Cambridge, MA, (1998).

\bibitem{PS04} P.~Polesello and P.~Schapira,
{\em Stacks of quantization-deformation modules  over complex
symplectic manifolds,}  
\/Int. Math. Res. Notices {\bf 49} p.~2637--2664 (2004).

\bibitem{ST92} P.~Schapira and~N. Tose,
{\em Morse inequalities for constructible sheaves,}
Advances in Math. {\bf 93} p.~1-8 (1992).

\bibitem{Ta} D.~Tamarkin, 
{\em Microlocal conditions for non-displaceability,}\\
\/\texttt{arXiv:0809.1584}

\end{thebibliography}
\end{document}